\newtheorem{Lem}{Lemma}[section]
\newtheorem{Prop}[Lem]{Proposition}
\newtheorem*{Def}{Definition}
\theoremstyle{plain}
\newtheorem{Thm}[Lem]{Theorem}
\newtheorem{Cor}[Lem]{Corollary}
\newtheorem{thm}{Theorem}
\theoremstyle{definition}
\declaretheorem[numbered=no,name=Example,qed={\lower-0.3ex\hbox{$\triangleleft$}}]{Ex}
\newtheorem*{Rem}{Remark}
\newtheorem*{Rems}{Remarks}
\newcommand{\Hom}{\text{\textnormal{Hom}}}
\newcommand{\Mor}{\text{\textnormal{Mor}}}
\newcommand{\End}{\text{\textnormal{End}}}
\newcommand{\Aut}{\text{\textnormal{Aut}}}
\newcommand{\rank}{\text{\textnormal{rank}}}
\newcommand{\ev}{\text{\textnormal{ev}}}
\newcommand{\coev}{\text{\textnormal{coev}}}
\newcommand{\tr}{\text{\textnormal{tr}}}
\mathchardef\mhyphen="2D
\newcommand{\Obj}{\text{\textnormal{Obj}}}
\newcommand{\git}{/\!\!/}
\newcommand{\HInd}{\text{\textnormal{HInd}}}
\newcommand{\refl}{\text{\textnormal{ref}}}
\newcommand{\op}{\text{\textnormal{op}}}
\newcommand{\Map}{\text{\textnormal{Map}}}
\newcommand{\Mod}{\mhyphen\text{\textnormal{mod}}}
\newcommand{\ModR}{\mhyphen\text{\textnormal{mod}}^{\textnormal{R}}}
\newcommand{\Vect}{\mathsf{Vect}}
\newcommand{\pr}{\text{\textnormal{pr}}}
\newcommand{\sh}{\mathsf{sh}}
\newcommand{\Grp}{\mathsf{Grp}}
\newcommand{\id}{\text{\textnormal{id}}}
\newcommand{\Tr}{\mathbb{T}\text{\textnormal{r}}}
\providecommand*{\twoheadrightarrowfill@}{%
  \arrowfill@\relbar\relbar\twoheadrightarrow
}
\providecommand*{\xtwoheadrightarrow}[2][]{%
  \ext@arrow 0579\twoheadrightarrowfill@{#1}{#2}%
}
\providecommand\@dotsep{5}
\renewcommand{\listoftodos}[1][\@todonotes@todolistname]{%
  \@starttoc{tdo}{#1}}
\begin{document}

\title[Twisted loop transgression]{Twisted loop transgression and higher Jandl gerbes over finite groupoids}

\author[B. Noohi]{Behrang Noohi}
\address{School of Mathematical Sciences \\Queen Mary University of London\\ Mile End Road, London E1 4NS, United Kingdom}
\email{b.noohi@qmul.ac.uk}

\author[M.\,B. Young]{Matthew B. Young}
\address{Max Planck Institute for Mathematics\\
Vivatsgasse 7\\
53111 Bonn, Germany}
\email{myoung@mpim-bonn.mpg.de}

\date{\today}

\keywords{Groupoids. Twisted equivariant $KR$-theory. Real representation theory.}
\subjclass[2010]{Primary: 57R56; Secondary 19L50, 20C30}

\begin{abstract}
Given a double cover $\pi: \mathcal{G} \rightarrow \hat{\mathcal{G}}$ of finite groupoids, we explicitly construct twisted loop transgression maps, $\uptau_{\pi}$ and $\uptau_{\pi}^{\refl}$, thereby associating to a Jandl $n$-gerbe $\hat{\lambda}$ on $\hat{\mathcal{G}}$ a Jandl $(n-1)$-gerbe $\uptau_{\pi}(\hat{\lambda})$ on the quotient loop groupoid of $\mathcal{G}$ and an ordinary $(n-1)$-gerbe $\uptau^{\refl}_{\pi}(\hat{\lambda})$ on the unoriented quotient loop groupoid of $\mathcal{G}$. For $n =1,2$, we interpret the character theory (resp. centre) of the category of Real $\hat{\lambda}$-twisted $n$-vector bundles over $\hat{\mathcal{G}}$ in terms of flat sections of the $(n-1)$-vector bundle associated to $\uptau_{\pi}^{\refl}(\hat{\lambda})$ (resp. the Real $(n-1)$-vector bundle associated to $\uptau_{\pi}(\hat{\lambda})$). We relate our results to Real versions of twisted Drinfeld doubles and pointed fusion categories and to discrete torsion in orientifold string and M-theory.
\end{abstract}

\maketitle

\tableofcontents

\setcounter{footnote}{0}

\section*{Introduction}

The goal of this paper is to construct and compute twisted versions of loop transgression maps on the cohomology of finite groupoids and to explain their relevance to Real (categorical) representation theory, monoidal categories and discrete torsion in string and M-theory with orientifolds.

To put our results into context, recall that the ordinary loop transgression map $\uptau$ associates to a degree $n$ cohomology class on a sufficiently nice space $X$ a degree $(n-1)$ class on its free loop space $\Map(S^1, X)$. See, for example, \cite{brylinski1993}. When $X$ is replaced with a finite groupoid $\mathcal{G}$, which is the focus of this paper, Willerton \cite{willerton2008} constructed and computed a loop transgression cochain map $\uptau : C^{\bullet} (\mathcal{G}) \rightarrow C^{\bullet -1} (\Lambda \mathcal{G})$ for $\mathsf{U}(1)$-valued simplicial cochains. Here $\Lambda \mathcal{G} = \Hom_{\mathsf{Cat}}(B \mathbb{Z}, \mathcal{G})$ is the loop groupoid of $\mathcal{G}$, whose geometric realization $\vert \Lambda \mathcal{G} \vert$ is homotopy equivalent to $\Map(S^1, \vert \mathcal{G} \vert)$. In the setting of orbifolds, a similar map was constructed by Lupercio--Uribe \cite{lupercio2002}. The map $\uptau$ has found applications in many areas of mathematics, including representation theory and topological field theory. See, for example, \cite{freed1994}, \cite{brylinski1994}, \cite{willerton2008}, \cite{fuchs2014}, \cite{kong2019}.

Suppose now that $\pi: \hat{\mathcal{G}} \rightarrow B \mathbb{Z}_2$ is a finite $\mathbb{Z}_2$-graded groupoid, such as the classifying space of a $\mathbb{Z}_2$-graded group $\hat{\mathsf{G}} \rightarrow \mathbb{Z}_2$. Let $\mathcal{G} \rightarrow \hat{\mathcal{G}}$ be the associated double cover and $C^{\bullet + \pi}(\hat{\mathcal{G}})$ the $\pi$-twisted cochain complex of $\hat{\mathcal{G}}$. Define quotient loop groupoids $\Lambda_{\pi} \hat{\mathcal{G}} := \Lambda \mathcal{G} \git \mathbb{Z}_2$ and $\Lambda^{\refl}_{\pi} \hat{\mathcal{G}} := \Lambda \mathcal{G} \git \mathbb{Z}_2$ by $\mathbb{Z}_2$-actions on $\Lambda \mathcal{G}$ given by deck transformations of $\mathcal{G}$ and the diagonal action of deck transformations and reflection of the circle $B \mathbb{Z}$, respectively. Our first result is as follows.

\begin{thm}[{Theorems \ref{thm:quotTrans} and \ref{thm:oriTwistTrans}}]
\label{thm:twistTransIntro}
There exist twisted loop transgression maps
\[
\uptau^{\refl}_{\pi} : C^{\bullet + \pi} (\hat{\mathcal{G}}) \rightarrow C^{\bullet -1} (\Lambda^{\refl}_{\pi} \hat{\mathcal{G}})
\]
and
\[
\uptau_{\pi} : C^{\bullet + \pi} (\hat{\mathcal{G}}) \rightarrow C^{\bullet  -1 + \pi_{\Lambda_{\pi} \hat{\mathcal{G}}}} (\Lambda_{\pi} \hat{\mathcal{G}}),
\]
both of which are cochain maps and have explicit combinatorial expressions. Here $\pi_{\Lambda_{\pi} \hat{\mathcal{G}}} : \Lambda \mathcal{G} \rightarrow \Lambda_{\pi} \hat{\mathcal{G}}$ is the canonical double cover.
\end{thm}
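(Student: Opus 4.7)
The plan is to generalize Willerton's cochain-level formula for the ordinary loop transgression map $\uptau : C^\bullet(\mathcal{G}) \to C^{\bullet-1}(\Lambda \mathcal{G})$ by systematically encoding the $\mathbb{Z}_2$-grading $\pi$. Recall that Willerton expresses $\uptau(\lambda)$ on an $(n-1)$-simplex of $\Lambda \mathcal{G}$ as a signed product of values of $\lambda$ on the $n$-simplices of $\mathcal{G}$ obtained from a standard simplicial decomposition of the prism $S^1 \times \Delta^{n-1}$. For the twisted transgressions I would use the same prism decomposition but replace each value of $\lambda$ by a value of $\hat{\lambda}$ on its image in $\hat{\mathcal{G}}$, and insert a $\mathbb{Z}_2$-action on the coefficient group at every $1$-simplex whose grading under $\pi$ is non-trivial. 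In the reflection case, the reflection of $S^1$ contributes a second orientation reversal which exactly cancels the one coming from $\pi$, so the output is an ordinary (untwisted) cochain on $\Lambda^{\refl}_\pi \hat{\mathcal{G}}$. In the non-reflection case the two twists combine to the pullback of $\pi$ along the canonical double cover $\pi_{\Lambda_\pi \hat{\mathcal{G}}} : \Lambda \mathcal{G} \rightarrow \Lambda_\pi \hat{\mathcal{G}}$, giving the twisted target in the statement.

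Having written down the combinatorial formula, I would then verify the two properties required by the theorem. First, invariance under the relevant $\mathbb{Z}_2$-action on $\Lambda \mathcal{G}$: two lifts of a simplex of $\Lambda_\pi \hat{\mathcal{G}}$ (resp.\ of $\Lambda^{\refl}_\pi \hat{\mathcal{G}}$) to $\Lambda \mathcal{G}$ differ by the deck involution (resp.\ the diagonal action of the deck involution and the reflection of $B \mathbb{Z}$), so the corresponding $n$-simplices of $\mathcal{G}$ differ by an orientation-reversing $1$-simplex; the $\pi$-twisted cocycle condition on $\hat{\lambda}$ then shows that the two formulas agree, modulo exactly the twist present on the target. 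Second, the cochain-map property $d \circ \uptau^{\refl}_\pi = \uptau^{\refl}_\pi \circ d$ and the analogue for $\uptau_\pi$: I would organize this as in Willerton's argument, as a cancellation in the simplicial bicomplex associated to the decomposition of $S^1 \times \Delta^n$. Interior faces cancel in pairs, while boundary faces reassemble to produce the coboundary of $\uptau_\pi(\hat{\lambda})$ on one side and $\uptau_\pi(d \hat{\lambda})$ on the other.

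The main obstacle is bookkeeping the $\pi$-twist corrections and their interaction with Willerton's cancellation: each time the prism decomposition traverses an orientation-reversing edge, the corresponding factor of $\hat{\lambda}$ is modified by the $\mathbb{Z}_2$-action, and one must verify that these modifications are consistent with the twisted coboundary on $C^{\bullet + \pi}(\hat{\mathcal{G}})$ so that the interior cancellations persist. In the reflection case there is the further subtlety that reflecting $S^1$ reverses the cyclic ordering of the triangulation, so the two halves of the prism that would normally cancel now meet only after a re-indexing; one must check that this re-indexing matches the conjugation produced by the $\pi$-twist before the boundary terms can be collected. Once these sign and twist alignments are correctly set up, both the equivariance and the cocycle-map property reduce to the untwisted arguments of Willerton and Lupercio--Uribe.
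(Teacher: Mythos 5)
For $\uptau_{\pi}$ your outline is essentially fine and matches the paper: with the reflection absent, the $\mathbb{Z}_2$-action on $B\mathbb{Z}$ is trivial, Willerton's prism formula survives verbatim (Theorem \ref{thm:quotTrans}), and descent to the quotient is exactly the coefficient-module bookkeeping of Lemma \ref{lem:topVsAlg} combined with the pushforward of Lemma \ref{lem:twistedPush}. The genuine gap is in the reflection case. The cochain you propose --- Willerton's formula with conjugation/sign insertions along degree $-1$ edges --- is not the right one: the actual transgression $\uptau^{\refl}_{\pi}$ (Theorem \ref{thm:oriTwistTrans}) contains additional factors, weighted by the indicators $\Delta_{\omega_n,\dots,\omega_{n-j}}$ of strings of consecutive degree $-1$ morphisms, in which several alternating copies of $\gamma^{\pm 1}$ are shuffled into the simplex (the terms $\sh_{n+1-j}(\hat{\lambda})$ with $j<n$). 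Already in degree two the answer is $\hat{\theta}([\gamma^{-1}\vert\gamma])^{-\Delta_{\omega}}\,\hat{\theta}([\omega\gamma^{\pi(\omega)}\omega^{-1}\vert\omega])\,\hat{\theta}([\omega\vert\gamma^{\pi(\omega)}])^{-1}$, and the first factor, which your recipe does not produce, is essential: it is exactly what makes the Real character of a twisted bundle a flat section of the transgressed line bundle (Proposition \ref{prop:twistedChar}), and without it the expression is in general neither invariant under the reflection action nor compatible with the differentials.

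The reason your intended reduction to ``the untwisted arguments of Willerton and Lupercio--Uribe'' cannot work as stated is that the reflection does not act compatibly with the simplicial data underlying Willerton's transgression. At the chain level that transgression is $\lambda\mapsto\lambda\circ\mathsf{EZ}\circ([1]\otimes -)$, and the reflection sends $[1]$ to $[-1]$, which equals $-[1]$ only up to a boundary, not on the nose (this is the Remark following Proposition \ref{prop:pushforwardDefined}); equivalently, the two halves of the reflected prism do not match under any re-indexing, but only up to a chain homotopy. The homotopy corrections, recorded in the paper by the higher chains $s^{\epsilon}_{i+2}\in C_{i+2}(B\mathbb{Z})$ entering the map $f$, are precisely what generate the extra $\Delta$-weighted factors above. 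Accordingly, the paper does not guess a formula and verify it; it constructs a strictly reflection-equivariant substitute for $[1]\otimes -$, proves $f(\zeta\cdot c)=-\zeta\cdot f(c)$ and $f(\partial c)=-\partial f(c)$ modulo degenerate elements (Proposition \ref{prop:invarianceProperties}), and then obtains descent and the cochain-map property formally from Lemma \ref{lem:topVsAlg}, the Eilenberg--Zilber map and Proposition \ref{prop:pushforwardDefined}, reading off the explicit formula afterwards. To repair your argument you would either have to rediscover these correction terms and redo the cancellation of Proposition \ref{prop:invarianceProperties} by hand --- the direct verification the paper explicitly flags as impractical beyond low degrees --- or adopt an equivariant chain-level construction of this kind.
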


For example, when $\hat{\theta} \in C^{2 + \pi}(\hat{\mathcal{G}})$ and $[\omega] \gamma \in C_1(\Lambda_{\pi}^{\refl} \hat{\mathcal{G}})$, we have
\[
\uptau^{\refl}_{\pi}(\hat{\theta})([\omega]\gamma) = \hat{\theta}([\gamma^{-1} \vert \gamma])^{\frac{\pi(\omega) -1}{2}} \frac{\hat{\theta}([\omega \gamma^{\pi(\omega)} \omega^{-1} \vert \omega])}{\hat{\theta}([\omega \vert \gamma^{\pi(\omega)}])}.
\]
The map $\uptau_{\pi}^{\refl}$ has already appeared, in the form of its explicit expressions in low degrees, in work on Real $2$-representation theory \cite{mbyoung2018c} and unoriented topological field theory \cite{mbyoung2019}, where its geometry was also foreshadowed. Theorem \ref{thm:twistTransIntro} gives an \textit{a priori} construction of $\uptau_{\pi}^{\refl}$ and $\uptau_{\pi}$, in all degrees, and establishes that they are cochain maps, which is important for applications in \cite{mbyoung2018c}, \cite{mbyoung2019} and difficult to verify directly in all but the simplest cases. Upon restriction along $\mathcal{G} \rightarrow \hat{\mathcal{G}}$, both maps $\uptau_{\pi}^{\refl}$, $\uptau_{\pi}$ recover Willerton's transgression map $\uptau$. Our proof of Theorem \ref{thm:twistTransIntro} is necessarily different from Willerton's proof for $\uptau$, however, since the latter relies on an explicit homotopy equivalence $\vert \Lambda \mathcal{G} \vert \sim \Map(S^1, \vert \mathcal{G} \vert)$ which is not equivariant for circle reflection. We instead work categorically, without passing to geometric realizations, where all constructions are equivariant.

In the remainder of the paper we explain the relevance of Theorem \ref{thm:twistTransIntro} to Real (categorical) representation theory and related areas. Following Atiyah \cite{atiyah1966}, the term ``Real" indicates a $\mathbb{C}$-linear object with a $\mathbb{C}$-anti-linear involution. We take a geometric approach and, using terminology of \cite{schreiber2007}, \cite{willerton2008}, interpret a twisted cocycle $\hat{\lambda} \in Z^{n +1 + \pi_{\hat{\mathcal{G}}}}(\hat{\mathcal{G}})$ as a (flat) Jandl (or Real) $n$-gerbe on $\hat{\mathcal{G}}$, that is, an $n$-gerbe on $\mathcal{G}$ with conjugation-twisted equivariance data for the double cover $\mathcal{G} \rightarrow \hat{\mathcal{G}}$. For $n=-1,0,1$, this is a Real $\mathsf{U}(1)$-valued function, a Real $\mathsf{U}(1)$-bundle and a Jandl gerbe on $\hat{\mathcal{G}}$, respectively. Jandl gerbes play an important role in unoriented topological field theory \cite{kapustin2017}, \cite{mbyoung2019}, orientifold string theory \cite{schreiber2007}, \cite{distler2011} and topological phases of matter with time reversal symmetry \cite{barkeshli2019}. The twisted transgression maps associate to the Jandl $n$-gerbe $\hat{\lambda}$ an ordinary $(n-1)$-gerbe $\uptau^{\refl}_{\pi}(\hat{\lambda})$ on $\Lambda_{\pi}^{\refl} \hat{\mathcal{G}}$ and a Jandl $(n-1)$-gerbe $\uptau_{\pi}(\hat{\lambda})$ on $\Lambda_{\pi} \hat{\mathcal{G}}$. Our results indicate that $\uptau^{\refl}_{\pi}(\hat{\lambda})$ and $\uptau_{\pi}(\hat{\lambda})$ control the character theory and centre, respectively, of the category of $\hat{\lambda}$-twisted $n$-vector bundles over $\hat{\mathcal{G}}$. We formulate precisely and prove these statements for $n \leq 2$.

In Section \ref{sec:degTwoCocyc}, after briefly treating the rather simple case of $n=0$, we study the case $n=1$. As is well-known \cite{freed2011}, \cite{moutuou2012}, \cite{fok2014}, complex vector bundles over $\hat{\mathcal{G}}$ can be twisted by a Jandl gerbe $\hat{\theta} \in Z^{2 + \pi_{\hat{\mathcal{G}}}}(\hat{\mathcal{G}})$. The collection of such twisted bundles forms an $\mathbb{R}$-linear category $\Vect_{\mathbb{C}}^{\hat{\theta}}(\hat{\mathcal{G}})$ whose Grothendieck group $K^{\hat{\theta}}(\hat{\mathcal{G}})$ is a twisted form of the $KR$-theory of $\mathcal{G}$. We prove that the character theory of $\Vect_{\mathbb{C}}^{\hat{\theta}}(\hat{\mathcal{G}})$ is naturally described in terms of the transgressed $\mathsf{U}(1)$-bundle $\uptau_{\pi}^{\refl}(\hat{\theta})$ on $\Lambda_{\pi}^{\refl} \hat{\mathcal{G}}$.

\begin{thm}[{Theorem \ref{thm:charIso}}]
\label{thm:charIsoIntro}
The Real character map induces an isomorphism of complex inner product spaces
\[
\chi: K^{\hat{\theta}}(\hat{\mathcal{G}}) \otimes_{\mathbb{Z}} \mathbb{C} \xrightarrow[]{\sim} \Gamma_{\Lambda_{\pi}^{\refl} \hat{\mathcal{G}}} (\uptau^{\refl}_{\pi}(\hat{\theta})_{\mathbb{C}})
\]
with target the space of flat sections of the associated complex line bundle $\uptau^{\refl}_{\pi}(\hat{\theta})_{\mathbb{C}}$.
\end{thm}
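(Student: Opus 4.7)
The plan is to follow the classical template for proving that characters give an isomorphism to class functions, but adapted to the Real, twisted, groupoid-theoretic setting where the transgression cocycle $\uptau^{\refl}_{\pi}(\hat{\theta})$ encodes exactly the cocycle-twist by which the trace is forced to transform.

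First I would reduce to the connected case. Both $K^{\hat{\theta}}(\hat{\mathcal{G}}) \otimes_{\mathbb{Z}} \mathbb{C}$ and the flat-section space on $\Lambda_{\pi}^{\refl} \hat{\mathcal{G}}$ decompose along connected components of $\hat{\mathcal{G}}$, so I may assume $\hat{\mathcal{G}} = B\hat{\mathsf{G}}$ for a $\mathbb{Z}_2$-graded finite group $\hat{\mathsf{G}} \to \mathbb{Z}_2$ with kernel $\mathsf{G}$. Under this reduction a Real $\hat{\theta}$-twisted vector bundle becomes a finite-dimensional complex vector space $V$ together with an assignment $g \mapsto \rho(g)$ that is $\mathbb{C}$-linear for $g \in \mathsf{G}$ and $\mathbb{C}$-antilinear for $g \in \hat{\mathsf{G}} \setminus \mathsf{G}$, satisfying the $\hat{\theta}$-twisted cocycle relation $\rho(g)\rho(h) = \hat{\theta}([g \vert h]) \rho(gh)$. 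I would then define the Real character $\chi_V(g) := \tr\bigl(\rho(g)\bigr)$, where the trace of an antilinear operator is the usual Hilbert--Schmidt trace after composing with a fixed Real structure when needed, or equivalently the sum of diagonal entries of the matrix of $\rho(g)$ with respect to any basis; for antilinear $\rho(g)$ the value $\chi_V(g)$ still lives in the fibre of the local system $\uptau^{\refl}_{\pi}(\hat{\theta})_{\mathbb{C}}$ rather than in $\mathbb{C}$ itself.

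Next I would verify that $\chi_V$ is a flat section. This reduces to showing that for each morphism $[\omega]\gamma$ in $\Lambda_{\pi}^{\refl} \hat{\mathcal{G}}$, the identity $\chi_V(\omega\gamma^{\pi(\omega)}\omega^{-1}) = \uptau^{\refl}_{\pi}(\hat{\theta})([\omega]\gamma) \cdot \chi_V(\gamma)$ holds. This is a direct cocycle manipulation: applying the $\hat{\theta}$-relation twice to $\rho(\omega)\rho(\gamma^{\pi(\omega)})\rho(\omega)^{-1}$, taking traces, and using the invariance of the trace under conjugation (with a sign/complex conjugation bookkeeping coming from the antilinearity of $\rho(\omega)$ when $\pi(\omega) = -1$) produces exactly the combinatorial formula for $\uptau^{\refl}_{\pi}(\hat{\theta})$ displayed in the introduction. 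The half-power $\hat{\theta}([\gamma^{-1}\vert\gamma])^{(\pi(\omega)-1)/2}$ in that formula comes from the need to express $\rho(\gamma^{-1})$ in terms of $\rho(\gamma)^{-1}$, and the ratio $\hat{\theta}([\omega\gamma^{\pi(\omega)}\omega^{-1}\vert\omega]) / \hat{\theta}([\omega\vert\gamma^{\pi(\omega)}])$ comes from the two applications of the cocycle relation.

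Having established that $\chi$ lands in $\Gamma_{\Lambda_{\pi}^{\refl} \hat{\mathcal{G}}} (\uptau^{\refl}_{\pi}(\hat{\theta})_{\mathbb{C}})$, I would prove the isomorphism by a twisted Real Schur orthogonality argument. Irreducible Real $\hat{\theta}$-twisted representations come in Frobenius--Schur--type classes (real, complex, quaternionic in the Real sense), and averaging $\langle \chi_V, \chi_W \rangle$ over the groupoid using the inner product on flat sections, then applying a twisted analogue of Schur's lemma to the endomorphism $\frac{1}{\vert \hat{\mathsf{G}} \vert} \sum_{g \in \hat{\mathsf{G}}} \rho_W(g^{-1}) f \rho_V(g)$ for any intertwiner candidate $f$, yields $\langle \chi_V, \chi_W \rangle = \dim \Hom(V, W)$. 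This proves injectivity of $\chi$ and that distinct irreducibles give orthonormal characters. For surjectivity, I would compute both dimensions and show they agree: the left side has rank equal to the number of isomorphism classes of Real $\hat{\theta}$-twisted irreducibles, while the right side, by the definition of flat sections over the groupoid $\Lambda_{\pi}^{\refl} \hat{\mathcal{G}}$, has dimension equal to the number of Real $\uptau^{\refl}_{\pi}(\hat{\theta})$-regular conjugacy classes in $\hat{\mathsf{G}}$; Real Brauer-type counting arguments, which go back to Frobenius--Schur in the untwisted Real setting, identify these two numbers.

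The principal obstacle is the cocycle bookkeeping in the second step: tracking the interaction between $\mathbb{C}$-antilinearity of $\rho(\omega)$ for orientation-reversing $\omega$ and the twisted cocycle relation. The transgression formula was essentially designed to absorb exactly this interaction, so once the signs and complex conjugations are tracked carefully the identity will fall out, but the verification is combinatorially delicate. Once flatness is in hand, orthogonality and completeness are fairly routine adaptations of their untwisted ordinary counterparts.
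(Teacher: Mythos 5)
Your first two steps are essentially the paper's: flatness of $\chi_F$ (Proposition \ref{prop:twistedChar}) is exactly the cocycle manipulation you describe, and injectivity via a twisted Schur orthogonality (Lemma \ref{lem:innProdCompat} plus Schur's lemma) is also how the paper argues. Two remarks there: characters are only evaluated on objects of $\Lambda^{\refl}_{\pi}\hat{\mathcal{G}}$, i.e.\ on degree $+1$ loops, so no trace of an antilinear operator is ever needed (antilinearity enters only through the equivariance under odd $\omega$); and since $\End$ of a simple Real object can be $\mathbb{R}$, $\mathbb{C}$ or $\mathbb{H}$, the averaging argument gives $\dim_{\mathbb{R}}\Hom(F_1,F_2)=\tfrac12\langle\chi_{F_1},\chi_{F_2}\rangle$, so distinct simples have orthogonal but not orthonormal characters --- harmless for injectivity, but your ``orthonormal'' claim is not correct as stated.

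The genuine gap is in surjectivity. You reduce it to the assertion that the number of simple objects equals the dimension of the flat-section space, and you delegate that equality to ``Real Brauer-type counting arguments.'' In the generality needed here (a $\mathsf{U}(1)$-valued cocycle $\hat{\theta}\in Z^{2+\pi}$ with antilinear action of the odd part) no such counting theorem is available off the shelf --- the references you would want (Reynolds, Karpilovsky, Frobenius--Schur) cover only real-coefficient projective cocycles --- and in the paper the corresponding count (Corollary \ref{cor:RealSchur}) is deduced \emph{from} Theorem \ref{thm:charIso}, so invoking it would be circular. The paper instead proves surjectivity constructively: a flat section $s$ is expanded in Willerton's basis of ordinary $\theta$-twisted characters via the non-Real isomorphism \eqref{eq:twistedKIso}, the Real equivariance of $s$ forces the coefficients to be constant on $Q^{\varsigma}$-orbits in $\textnormal{Irr}^{\theta}(\mathsf{G})$, and each orbit sum is realized as the character of an honest object using hyperbolic induction ($\chi_{\HInd(V)}=\chi_V+\chi_{\varsigma\cdot V}$). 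You would need an argument of this kind, or an independent proof of the counting claim, to close the proof. A secondary gap: your reduction to $\hat{\mathcal{G}}=B\hat{\mathsf{G}}$ is not legitimate. By Proposition \ref{prop:grpdDecomp} a connected component whose objects have no odd automorphisms is equivalent to the two-object model $\hat{\mathcal{G}}_{\{x,\overline{x}\}}$, which is \emph{not} of the form $B\hat{\mathsf{G}}$ over $B\mathbb{Z}_2$; the paper handles this case by a separate surjectivity argument, constructing the twisted bundle $V_x\oplus V_{\overline{x}}$ with $\rho(\varsigma)$ the (antilinear) identity map. Your proposal omits this case entirely.
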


The proof of Theorem \ref{thm:charIsoIntro} uses Theorem \ref{thm:twistTransIntro} to identify $\Gamma_{\Lambda_{\pi}^{\refl} \hat{\mathcal{G}}} (\uptau^{\refl}_{\pi}(\hat{\theta})_{\mathbb{C}})$ as the appropriate space of class functions. For certain simple choices of $\hat{\mathcal{G}}$ and $\hat{\theta}$, Theorem \ref{thm:charIsoIntro} reduces to known results in the real and quaternionic representation theory of finite groups \cite{reynolds1971}, \cite{brocker1995}. As an application of Theorem \ref{thm:charIsoIntro}, we prove in Corollary \ref{cor:RealSchur} a Real version of a theorem of Schur, computing the number of simple objects of $\Vect_{\mathbb{C}}^{\hat{\theta}}(\hat{\mathcal{G}})$ as a $\hat{\theta}$-weighted count of Real conjugacy classes.

Unlike the ordinary (non-Real) case, the centre $Z(\Vect_{\mathbb{C}}^{\hat{\theta}}(\hat{\mathcal{G}}))$ and Grothendieck group $K^{\hat{\theta}}(\hat{\mathcal{G}})$ of $\Vect_{\mathbb{C}}^{\hat{\theta}}(\hat{\mathcal{G}})$ are not directly related. Instead,  $Z(\Vect_{\mathbb{C}}^{\hat{\theta}}(\hat{\mathcal{G}}))$ can be understood in terms of the twisted transgression map $\uptau_{\pi}$.

\begin{thm}[{Theorem \ref{thm:RealCentre}}]
\label{thm:RealCentreIntro}
There is an $\mathbb{R}$-algebra isomorphism
\[
Z(\Vect_{\mathbb{C}}^{\hat{\theta}}(\hat{\mathcal{G}})) \simeq \Gamma_{\Lambda_{\pi}\hat{\mathcal{G}}}(\uptau_{\pi}(\hat{\theta})^{-1}_{\mathbb{C}}),
\]
where the right hand side is the space of flat sections of the Real line bundle $\uptau_{\pi}(\hat{\theta})^{-1}_{\mathbb{C}}$.
\end{thm}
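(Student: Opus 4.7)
The strategy is to bootstrap from the oriented analogue: the well-known identification $Z(\Vect_{\mathbb{C}}^{\theta}(\mathcal{G})) \simeq \Gamma_{\Lambda \mathcal{G}}(\uptau(\theta)^{-1}_{\mathbb{C}})$ (implicit in Freed and Willerton) describes the underlying $\mathbb{C}$-algebra structure for the double cover $\mathcal{G}$, and it remains to impose the correct Real equivariance data on both sides and verify they match.

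First I would unpack $Z(\Vect_{\mathbb{C}}^{\hat{\theta}}(\hat{\mathcal{G}}))$ concretely. A central element is a natural endomorphism $\alpha: \id \Rightarrow \id$. Using that $\Vect_{\mathbb{C}}^{\hat{\theta}}(\hat{\mathcal{G}})$ is generated by the simple Real $\hat{\theta}$-twisted bundles supported on individual orbits of $\hat{\mathcal{G}}$, and applying Schur's lemma together with naturality, the data of $\alpha$ reduces to a scalar-valued function $f$ on the set of loops $\gamma : x \to x$ in $\mathcal{G}$. The transformation law of $f$ under conjugation by a morphism $\omega : x \to y$ of $\mathcal{G}$ is exactly the oriented transgression cocycle of Willerton applied to $\hat{\theta}\vert_{\mathcal{G}}$, which reproduces the oriented identification and accounts for invariance along the fibres of $\Lambda \mathcal{G} \to \Lambda_{\pi}\hat{\mathcal{G}}$.

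The novelty of the Real case is that $\hat{\mathcal{G}}$ also contains morphisms $\omega$ projecting nontrivially to $\mathbb{Z}_2$. Such a morphism acts on the simple bundle by a $\mathbb{C}$-antilinear map, so naturality forces $f$ to transform by complex conjugation together with a correction factor determined by the Jandl equivariance data of $\hat{\theta}$. The content of Theorem \ref{thm:twistTransIntro}, specialized to $n = 1$, is precisely that the oriented and orientation-reversing correction factors assemble into a single $\pi_{\Lambda_{\pi}\hat{\mathcal{G}}}$-twisted cocycle $\uptau_{\pi}(\hat{\theta}) \in Z^{1 + \pi_{\Lambda_{\pi}\hat{\mathcal{G}}}}(\Lambda_{\pi}\hat{\mathcal{G}})$. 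Because the scalar $f$ acts dually to the twisting bundle data (twists multiply vectors but divide the scalars representing $\alpha$), the combined constraints on $f$ cut out flat sections of the \emph{inverse} Real line bundle $\uptau_{\pi}(\hat{\theta})^{-1}_{\mathbb{C}}$, which accounts for the inverse on the right-hand side.

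Finally I would verify that the resulting bijection is an $\mathbb{R}$-algebra isomorphism: composition of natural transformations corresponds to fibrewise multiplication of sections, and the $\mathbb{R}$-linearity (as opposed to $\mathbb{C}$-linearity) of both sides is forced by the antilinear Real compatibility. The main obstacle is cocycle bookkeeping --- simultaneously tracking $\hat{\theta}$-twists, antilinearity, and inverses --- and in particular verifying that the constraint imposed by orientation-reversing morphisms matches the Jandl equivariance data of $\uptau_{\pi}(\hat{\theta})^{-1}_{\mathbb{C}}$ rather than of $\uptau_{\pi}(\hat{\theta})_{\mathbb{C}}$, and that the sign and cocycle conventions agree on both sides. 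Once this matching is in place, both the isomorphism and its multiplicative structure follow directly from the explicit combinatorial formula for $\uptau_{\pi}$ furnished by Theorem \ref{thm:twistTransIntro}.
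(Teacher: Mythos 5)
Your overall strategy---reduce the centre to twisted-conjugation-equivariant functions on degree $+1$ loops and match that equivariance, via the explicit degree-two formula for $\uptau_{\pi}$, with the flat-section condition for $\uptau_{\pi}(\hat{\theta})^{-1}_{\mathbb{C}}$---is the same as the paper's, which organizes the computation through the twisted groupoid algebra: $\Vect_{\mathbb{C}}^{\hat{\theta}}(\hat{\mathcal{G}}) \simeq \mathbb{C}^{\hat{\theta}}[\hat{\mathcal{G}}] \ModR$ (Proposition \ref{prop:modInterp}), $Z(A\ModR) \simeq Z(A)_1$ for a Real algebra $A$ (Lemma \ref{lem:genCent}), and an explicit computation of $Z(\mathbb{C}^{\hat{\theta}}[\hat{\mathcal{G}}])$ in the basis $\{l_{\omega}\}$ (Proposition \ref{prop:centreTwist}). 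However, your key reduction is not justified as stated: ``Schur's lemma together with naturality'' does not make a central natural transformation $\alpha$ into a function on loops. Schur's lemma gives one element of the centre of $\End(F)$ (a real division algebra $\mathbb{R}$, $\mathbb{C}$ or $\mathbb{H}$) for each simple $F$, and passing from that description to a twisted-equivariant function on loops is essentially the content of the theorem, not an input to it. The honest mechanism is to evaluate $\alpha$ on free objects, i.e.\ to use the module interpretation and the Real-algebra centre lemma, and then to compute $Z(\mathbb{C}^{\hat{\theta}}[\hat{\mathcal{G}}])$ directly: commuting with the elements $c\, l_{\id_x}$, $c \in \mathbb{C}$, forces a central element to be supported on degree $+1$ loops (this is where the appearance of $\Lambda_{\pi}\hat{\mathcal{G}}$, rather than all loops of $\hat{\mathcal{G}}$, actually comes from), and commuting with each $l_{\omega}$ produces exactly the relation that, by equation \eqref{eq:keyIden2Cocycle}, is flatness for $\uptau_{\pi}(\hat{\theta})^{-1}_{\mathbb{C}}$.

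There is also an error in your final step: composition of central natural transformations corresponds to the $\hat{\theta}$-twisted \emph{convolution} product on sections, namely the product inherited from the embedding $s \mapsto \sum_{\gamma} s_{\gamma} l_{\gamma}$ into $\mathbb{C}^{\hat{\theta}}[\hat{\mathcal{G}}]$ (well defined again by equation \eqref{eq:keyIden2Cocycle}), and this is the $\mathbb{R}$-algebra structure on $\Gamma_{\Lambda_{\pi}\hat{\mathcal{G}}}(\uptau_{\pi}(\hat{\theta})^{-1}_{\mathbb{C}})$ in the statement, not fibrewise multiplication. With fibrewise multiplication the claim is false: for $\hat{\mathsf{G}} = \mathbb{Z}_3 \times \mathbb{Z}_2$ and $\hat{\theta} = 1$ the left-hand side is $Z(\mathbb{R}[\mathbb{Z}_3]) \cong \mathbb{R} \times \mathbb{C}$, while real-valued functions on $\mathbb{Z}_3$ under pointwise product form $\mathbb{R}^3$. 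Once these two points are repaired, the remainder of your outline (oriented morphisms giving Willerton's condition, degree $-1$ morphisms giving the conjugation-twisted condition, and the inverse cocycle appearing because the central scalars transform dually to the twist) coincides with the paper's argument.
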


In Section \ref{sec:degThreeCocyc} we study $2$-categorical analogues of the results of Section \ref{sec:degTwoCocyc}. Fix a Jandl $2$-gerbe $\hat{\eta} \in Z^{3 + \pi}(B\hat{\mathsf{G}})$. Motivated by Willerton's interpretation of the twisted Drinfeld double $D^{\eta}(\mathsf{G})$, $\eta \in Z^3(B \mathsf{G})$, of a finite group $\mathsf{G}$ as the $\uptau(\eta)$-twisted groupoid algebra of $\Lambda B \mathsf{G}$ \cite{willerton2008}, we use $\uptau_{\pi}^{\refl}(\hat{\lambda})$ and $\uptau_{\pi}(\hat{\lambda})$ to define thickened twisted Drinfeld doubles $\mathbb{D}^{\hat{\eta}}(\hat{\mathsf{G}})$ and $D^{\hat{\eta}}(\hat{\mathsf{G}})$, respectively. These are complex vector spaces with possibly anti-linear associative multiplications which, for $\mathsf{G} = \ker(\pi)$, contain $D^{\eta}(\mathsf{G})$ as a subalgebra. Characters of $D^{\hat{\eta}}(\hat{\mathsf{G}})$-modules, which because of their connection with two dimensional topological field theory we call twisted one-loop characters, are shown in Proposition \ref{prop:charThickDD} to give an interesting extension of twisted elliptic characters of $\mathsf{G}$ by a Klein bottle sector. Next, we construct from $\hat{\eta}$ a Real fusion category $\Vect_{\mathbb{C}}^{\hat{\eta}}(\hat{\mathsf{G}})$, which we view as a categorified twisted Real group algebra of $\mathsf{G}$, and show in Proposition \ref{prop:RPFusGrpd} that categories of this form exhaust Real pointed fusion categories. We also identify their Drinfeld centres, giving the following $2$-categorical version of Theorem \ref{thm:RealCentreIntro}.

\begin{thm}[{Theorem \ref{thm:realFusCat}}]
\label{thm:realFusCatIntro}
There is an $\mathbb{R}$-linear monoidal equivalence
\[
Z_{D}(\Vect_{\mathbb{C}}^{\hat{\eta}}(\hat{\mathsf{G}}))
\simeq
\Vect_{\mathbb{C}}^{\uptau_{\pi}(\hat{\eta})^{-1}}(\Lambda_{\pi} B \hat{\mathsf{G}}).
\]
\end{thm}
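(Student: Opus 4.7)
The plan is to adapt Willerton's proof that $Z(\Vect_{\mathbb{C}}^{\eta}(\mathsf{G})) \simeq \Vect_{\mathbb{C}}^{\uptau(\eta)^{-1}}(\Lambda B\mathsf{G})$ to the Real/Jandl setting, using Theorem~\ref{thm:twistTransIntro} to supply the transgressed cocycle on the loop side. The core of the argument is a direct dictionary between objects of $Z_D(\Vect_{\mathbb{C}}^{\hat{\eta}}(\hat{\mathsf{G}}))$ and $\uptau_{\pi}(\hat{\eta})^{-1}$-twisted vector bundles on $\Lambda_{\pi} B\hat{\mathsf{G}}$, with the monoidal structures matched via the combinatorial formula for $\uptau_{\pi}$.

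First I would unpack both sides explicitly. An object of $Z_D(\Vect_{\mathbb{C}}^{\hat{\eta}}(\hat{\mathsf{G}}))$ consists of an object of $\Vect_{\mathbb{C}}^{\hat{\eta}}(\hat{\mathsf{G}})$ together with a half-braiding. Since $\Vect_{\mathbb{C}}^{\hat{\eta}}(\hat{\mathsf{G}})$ is a Real categorified twisted group algebra of $\mathsf{G} = \ker \pi$, its underlying object decomposes as a $\mathsf{G}$-graded complex vector space $V = \bigoplus_{g \in \mathsf{G}} V_g$, and the combined data of the half-braiding and of the Real structure produces $\mathbb{C}$-linear maps $\gamma_h : V_g \to V_{h g h^{-1}}$ for $h \in \mathsf{G}$ together with $\mathbb{C}$-antilinear maps $\gamma_{\hat{h}} : V_g \to V_{\hat{h} g \hat{h}^{-1}}$ for $\hat{h} \in \hat{\mathsf{G}} \setminus \mathsf{G}$. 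On the target side, $\Lambda_{\pi} B\hat{\mathsf{G}}$ has objects $\mathsf{G}$ and morphisms $\hat{h} : g \to \hat{h} g \hat{h}^{-1}$ for $\hat{h} \in \hat{\mathsf{G}}$, with the double cover $\Lambda B\mathsf{G} \to \Lambda_{\pi} B\hat{\mathsf{G}}$ separating morphisms by the parity of $\pi(\hat{h})$. A $\uptau_{\pi}(\hat{\eta})^{-1}$-twisted Real vector bundle on $\Lambda_{\pi} B\hat{\mathsf{G}}$ is therefore literally the same underlying collection of $(V_g, \gamma_h, \gamma_{\hat{h}})$.

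The key computational step is to verify that the hexagon and Real compatibility identities satisfied by $\{\gamma_h, \gamma_{\hat{h}}\}$ coincide with the $\uptau_{\pi}(\hat{\eta})^{-1}$-twisted cocycle condition on a composable pair of morphisms of $\Lambda_{\pi} B\hat{\mathsf{G}}$. Here I would invoke the explicit combinatorial formula for $\uptau_{\pi}$ provided by Theorem~\ref{thm:twistTransIntro}: the $\hat{\eta}$-factors arising from the associators along the two sides of the hexagon, and from the compatibilities between half-braiding and Real structure, reassemble into exactly the product of $\hat{\eta}$-factors on $3$-simplices of $B\hat{\mathsf{G}}$ that defines $\uptau_{\pi}(\hat{\eta})$. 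This yields an equivalence of $\mathbb{R}$-linear categories, and the monoidal structure is matched by a parallel calculation showing that the tensor product of half-braidings corresponds to the $\uptau_{\pi}(\hat{\eta})^{-1}$-twisted convolution on $\Lambda_{\pi} B\hat{\mathsf{G}}$.

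The main obstacle will be the bookkeeping in the antilinear sectors: the $\mathbb{C}$-antilinearity of $\gamma_{\hat{h}}$ for $\hat{h} \in \hat{\mathsf{G}} \setminus \mathsf{G}$ produces complex conjugates of $\hat{\eta}$-factors which, together with the non-commutativity of deck transformations with conjugation in $\mathsf{G}$, must be tracked carefully to land on precisely $\uptau_{\pi}(\hat{\eta})$ rather than a cohomologous variant. The safety net is that Theorem~\ref{thm:twistTransIntro} already certifies $\uptau_{\pi}$ to be a twisted cochain map in all degrees, so any apparent discrepancy must be a coboundary; restricting to $\hat{h} \in \mathsf{G}$ recovers Willerton's non-Real theorem and pins down the remaining normalizations.
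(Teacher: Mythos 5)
There is a genuine gap at the very first step of your dictionary. You assert that the underlying object of an object of $Z_{D}(\Vect_{\mathbb{C}}^{\hat{\eta}}(\hat{\mathsf{G}}))$ "decomposes as a $\mathsf{G}$-graded complex vector space", citing the description of $\Vect_{\mathbb{C}}^{\hat{\eta}}(\hat{\mathsf{G}})$ as a Real categorified group algebra of $\mathsf{G}$. But objects of $\Vect_{\mathbb{C}}^{\hat{\eta}}(\hat{\mathsf{G}})$ are graded by the whole group $\hat{\mathsf{G}}$, not by $\mathsf{G}=\ker\pi$: a priori $V=\bigoplus_{\omega\in\hat{\mathsf{G}}}V_{\omega}$ has components in degrees lying in $\hat{\mathsf{G}}\setminus\mathsf{G}$. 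The statement that these components vanish for any object admitting a half-braiding is exactly the nontrivial, Real-specific content of the theorem, and it is what makes $\Lambda_{\pi}B\hat{\mathsf{G}}$ (whose objects are only the degree $+1$ loops, i.e.\ elements of $\mathsf{G}$) the correct target. The paper proves it by a naturality argument: if $V_{\delta}\neq 0$ with $\pi(\delta)=-1$, the component of the half-braiding at the unit is a map $\mathbb{C}_e\otimes_{\mathbb{C}}V_{\delta}\to V_{\delta}\otimes_{\mathbb{C}}{}^{\pi(\delta)}\mathbb{C}_e$ sending $c\otimes v_{\delta}\mapsto v_{\delta}\otimes\overline{c}$, and naturality of $\beta$ at $\mathbb{C}_e$ against the scalar endomorphism $m_{\lambda}$ forces $\lambda=\overline{\lambda}$ for all $\lambda\in\mathbb{C}$, a contradiction; hence $V$ is supported on $\mathsf{G}$. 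Without this step your "literal dictionary" compares the wrong data, since a putative degree $-1$ component of $V$ has no counterpart on the loop-groupoid side.

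Once that support statement is supplied, the rest of your plan is essentially the paper's route: the hexagon axiom for the remaining components $\beta_{g,\omega}$, $(g,\omega)\in\mathsf{G}\times\hat{\mathsf{G}}$, is the $\uptau_{\pi}(\hat{\eta})^{-1}$-twisted composition rule of Lemma \ref{lem:generatorRRep}, with the degree $-1$ morphisms acting antilinearly, and the monoidal comparison is carried by the transgressed convolution, i.e.\ the Real quasi-bialgebra structure on $\mathbb{D}^{\hat{\eta}}(\hat{\mathsf{G}})$. One caution on your "safety net": the fact that $\uptau_{\pi}$ is a cochain map does not by itself show that whatever cocycle you extract from the hexagon is cohomologous to $\uptau_{\pi}(\hat{\eta})^{-1}$, nor would restriction to $\mathsf{G}$ control the degree $-1$ part of a possible discrepancy; the identification has to come from the actual bookkeeping, so treat that remark as a consistency check rather than a substitute for the computation.
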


We prove that the monoidal structure of $\Vect_{\mathbb{C}}^{\uptau_{\pi}(\hat{\eta})}(\Lambda_{\pi} B \hat{\mathsf{G}})$ arising from Theorem \ref{thm:realFusCatIntro} is induced by a Real quasi-bialgebra structure on $\mathbb{D}^{\hat{\eta}}(\hat{\mathsf{G}})$, providing a tighter connection with $D^{\eta}(\mathsf{G})$. In Section \ref{sec:Real2CharThy} we explain that Real $\Vect_{\mathbb{C}}^{\hat{\eta}}(\hat{\mathsf{G}})$-module categories recover the bicategory $2\Vect_{\mathbb{C}}^{\hat{\eta}}(\hat{\mathcal{G}})$ of $\hat{\eta}$-twisted $2$-vector bundles over $B \hat{\mathsf{G}}$, as developed in \cite{mbyoung2018c} in the form of Real $2$-representation theory. One of the main results of \cite{mbyoung2018c} is the existence of a categorical version of the Real character map from Theorem \ref{thm:charIsoIntro}. This Real categorical character theory is most naturally formulated in terms of $D^{\hat{\eta}}(\hat{\mathsf{G}})$-modules and twisted one-loop characters, illustrating the character theoretic meaning of $\uptau_{\pi}^{\refl}(\hat{\eta})$. Finally, in Section \ref{sec:discTor} we interpret $\uptau_{\pi}^{\refl}$ in terms of discrete torsion phase factors of non-orientable $2$- and $3$-manifolds in string and M-theory with orientifolds, thereby recovering and providing a new perspective on computations of Bantay \cite{bantay2003} and Sharpe \cite{sharpe2011}.

In this paper we have restricted attention to finite groupoids, both for simplicity and because of our applications. In work in progress, we study more general twisted transgression maps in the geometric setting of topological stacks.

\subsubsection*{Acknowledgements}
The authors thank Gr\'{e}gory Ginot and Mahmoud Zeinalian for discussions. M. B. Y. thanks Chi-Kwong Fok, Konrad Waldorf and Siye Wu for discussions. The authors are grateful to the Max Planck Institute for Mathematics for its hospitality and financial support during the preparation of this paper.

\section{Loop groupoids and their quotients}
\addtocontents{toc}{\protect\setcounter{tocdepth}{2}}
\label{sec:background}

\subsection{Essentially finite groupoids}
\label{sec:groupoids}

We recall some basic results about groupoids.

A groupoid $\mathcal{G}$ is said to be essentially finite if its set of connected components, $\pi_0(\mathcal{G})$, is finite and each object of $\mathcal{G}$ has finitely many automorphisms. Unless mentioned otherwise, all groupoids in this paper are assumed to be essentially finite.

\begin{Ex}
Let $\mathsf{G}$ be a group and $X$ a (left) $\mathsf{G}$-set. The action groupoid $X \git \mathsf{G}$ has objects $X$ and morphisms $\Hom_{X \git \mathsf{G}}(x_1,x_2)
=
\{ g \in \mathsf{G} \mid g x_1 = x_2\}$. Morphisms are composed using multiplication in $\mathsf{G}$. 
The groupoid $X \git \mathsf{G}$ is essentially finite if and only if $X$ has finitely many $\mathsf{G}$-orbits and each orbit has finite stabilizer.

When $X$ consists of a single point, we write $B \mathsf{G}$ in place of $X \git \mathsf{G}$.  
\end{Ex}

Let $C_{\bullet}(\mathcal{G}) := C_{\bullet}(\mathcal{G}; \mathbb{Z})$ be the complex of simplicial chains on $\mathcal{G}$. Explicitly, $C_n(\mathcal{G})$ is the free abelian group generated by symbols $[g_n \vert \cdots \vert g_1]$, where $x_1 \xrightarrow[]{g_1} \cdots \xrightarrow[]{g_n} x_{n+1}$ is a diagram in $\mathcal{G}$. The differential of $C_{\bullet}(\mathcal{G})$ is
\[
\partial [g_n \vert \cdots \vert g_1] = [g_{n-1} \vert \cdots \vert g_1] + \sum_{j=1}^{n-1} (-1)^{n-j} [g_n \vert \cdots \vert g_{j+1} g_j \vert \cdots \vert g_1] + (-1)^n [g_n \vert \cdots \vert g_2].
\]

Let $\mathsf{A}$ be an abelian group, written multiplicatively, and $\kappa: \mathcal{G} \rightarrow B \Aut (\mathsf{A})$ a functor. The complex of $\kappa$-twisted cochains on $\mathcal{G}$ is the abelian group $\Hom_{\mathbb{Z}}(C_{\bullet}(\mathcal{G}), \mathsf{A})$. The differential $d$ sends $\lambda \in \Hom_{\mathbb{Z}}(C_{n-1}(\mathcal{G}), \mathsf{A})$ to the $n$-cochain
\begin{multline*}
d \lambda ([g_n \vert \cdots \vert g_1])
=
\kappa(g_n) \cdot \lambda([g_{n-1} \vert \cdots \vert g_1])
\prod_{j=1}^{n-1} \lambda([g_n \vert \cdots \vert g_{j+1} g_j\vert \cdots \vert g_1])^{(-1)^{n-j}} \\
\times \lambda([g_n \vert \cdots \vert g_2])^{(-1)^n}.
\end{multline*}
The inclusion of the subcomplex $C^{\bullet + \kappa}(\mathcal{G}; \mathsf{A}): = \Hom^{\textnormal{n}}_{\mathbb{Z}}(C_{\bullet}(\mathcal{G}), \mathsf{A})$ of normalized cochains of $\Hom_{\mathbb{Z}}(C_{\bullet}(\mathcal{G}), \mathsf{A})$ is a quasi-isomorphism. Denote by $Z^{\bullet + \kappa}(\mathcal{G}; \mathsf{A})$ and $H^{\bullet + \kappa}(\mathcal{G}; \mathsf{A})$ the cocycles and cohomology of  $C^{\bullet + \kappa}(\mathcal{G}; \mathsf{A})$, respectively. Write $C^{\bullet + \kappa}(\mathcal{G})$ for $C^{\bullet+ \kappa}(\mathcal{G}; \mathsf{A})$ if $\mathsf{A}$ is fixed and omit $\kappa$ from the notation if it is trivial.

\subsection{Essentially finite groupoids over \texorpdfstring{$B\mathbb{Z}_2$}{}}
\label{sec:groupoidsOverBZ2}

Let $\mathbb{Z}_2$ be the multiplicative group $\{ \pm 1\}$. We sometimes denote its non-identity element by $\zeta$.

A groupoid over $B \mathbb{Z}_2$, or a $\mathbb{Z}_2$-graded groupoid, is a functor $\pi_{\hat{\mathcal{G}}}: \hat{\mathcal{G}} \rightarrow B \mathbb{Z}_2$. Write $\pi$ for $\pi_{\hat{\mathcal{G}}}$ if it will not cause confusion. The degree of a morphism $\omega \in \Mor(\hat{\mathcal{G}})$ is $\pi(\omega) \in \mathbb{Z}_2$. We assume that $\pi$ is strongly non-trivial in the sense that, for each $x \in \hat{\mathcal{G}}$, there exists a morphism of degree $-1$ with source $x$. This assumption is easily removed at the cost of slightly complicating the statements which follow.

There is a canonical decomposition $\pi_0(\hat{\mathcal{G}}) = \pi_0(\hat{\mathcal{G}})_{-1} \sqcup \pi_0(\hat{\mathcal{G}})_{1}$, with $\pi_0(\hat{\mathcal{G}})_{-1}$ the subset of connected components consisting of objects which have at least one automorphism of degree $-1$. By strong non-triviality, for each representative $x \in \pi_0(\hat{\mathcal{G}})_1$, we can choose a morphism of degree $-1$ with source $x$, whose target we denote by $\overline{x}$. Note that $x \neq \overline{x}$. Let $\hat{\mathcal{G}}_{\{x,\overline{x}\}} \subset \hat{\mathcal{G}}$ be the full subcategory on $\{x,\overline{x}\}$.

\begin{Prop}
\label{prop:grpdDecomp}
There is an equivalence of $\mathbb{Z}_2$-graded groupoids
\[
\hat{\mathcal{G}}
\simeq
\bigsqcup_{x \in \pi_0(\hat{\mathcal{G}})_{-1}} B \Aut_{\hat{\mathcal{G}}}(x) \sqcup \bigsqcup_{x \in \pi_0(\hat{\mathcal{G}})_1} \hat{\mathcal{G}}_{\{x,\overline{x}\}}.
\]
\end{Prop}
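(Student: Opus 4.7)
The plan is to treat the claim as a graded refinement of the standard skeleton theorem for essentially finite groupoids. The right-hand side is a candidate skeleton of $\hat{\mathcal{G}}$ built from one representative per connected component when that component admits an automorphism of degree $-1$, and from the two-object full subcategory $\hat{\mathcal{G}}_{\{x,\overline{x}\}}$ otherwise. First I would define a functor $F$ from the right-hand side to $\hat{\mathcal{G}}$ as the disjoint union of the inclusions of these full subcategories. Such inclusions preserve the grading strictly, so $F$ is automatically a morphism of $\mathbb{Z}_2$-graded groupoids. Full faithfulness is by construction, so the content lies in essential surjectivity in the graded sense, meaning that every $y \in \hat{\mathcal{G}}$ is related to one of the chosen representatives by an isomorphism of degree $+1$.

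Next I would verify graded essential surjectivity componentwise, in two cases. Suppose $y$ lies in a connected component $[x] \in \pi_0(\hat{\mathcal{G}})_{-1}$. Connectedness furnishes some isomorphism $\phi : y \to x$; if $\pi(\phi) = -1$, then postcomposing with any automorphism of $x$ of degree $-1$, which exists by definition of $\pi_0(\hat{\mathcal{G}})_{-1}$, yields a degree $+1$ isomorphism. Suppose instead that $y$ lies in $[x] \in \pi_0(\hat{\mathcal{G}})_1$, with chosen degree $-1$ morphism $\phi_0 : x \to \overline{x}$. Pick any isomorphism $\phi : y \to x$; if $\pi(\phi) = +1$ we are done, while if $\pi(\phi) = -1$ then $\phi_0 \circ \phi : y \to \overline{x}$ has degree $+1$.

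A couple of subsidiary observations would round out the argument. One must check that $x \neq \overline{x}$ in the second case, as otherwise $\phi_0$ would itself be a degree $-1$ automorphism of $x$, contradicting $[x] \in \pi_0(\hat{\mathcal{G}})_1$. The pseudo-inverse $G$ to $F$ is then assembled by sending each $y$ to the chosen $x$ or $\overline{x}$ via one of the degree $+1$ isomorphisms produced above, and by defining $G$ on morphisms via conjugation by these isomorphisms. The resulting natural isomorphism $F \circ G \simeq \id_{\hat{\mathcal{G}}}$ has components of degree $+1$ and hence is a $2$-morphism in the $2$-category of $\mathbb{Z}_2$-graded groupoids, which is what one needs for an equivalence over $B \mathbb{Z}_2$.

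The main obstacle I anticipate is purely bookkeeping: keeping the equivalence compatible with the grading while making the finitely many choices of isomorphisms. The whole point of the strong non-triviality assumption, and of the bifurcation of $\pi_0(\hat{\mathcal{G}})$ introduced just before the proposition, is precisely to make such degree $+1$ corrections always available. Once this is in place, the argument reduces to the usual skeleton theorem applied one connected component at a time.
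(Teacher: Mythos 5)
Your proposal is correct and follows essentially the same route as the paper: include the representative full subcategories, produce for each object a degree $+1$ isomorphism to a chosen representative (correcting a degree $-1$ isomorphism by the degree $-1$ automorphism or by $\phi_0$), and build the quasi-inverse by conjugation along these chosen isomorphisms, checking compatibility with the maps to $B\mathbb{Z}_2$. The only difference is that you spell out the $\pi_0(\hat{\mathcal{G}})_{-1}$ case and the existence of the degree $+1$ morphisms, which the paper leaves to the reader or asserts without proof.
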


\begin{proof}
It suffices to prove the statement for connected $\hat{\mathcal{G}}$. The case $\pi_0(\hat{\mathcal{G}}) = \pi_0(\hat{\mathcal{G}})_{-1}$ is left to the reader. Suppose that $\pi_0(\hat{\mathcal{G}}) = \pi_0(\hat{\mathcal{G}})_1$. For each $y \in \hat{\mathcal{G}}$, there exists a morphism $g_y$ of degree $+1$ from $y$ to $x$ or from $y$ to $\overline{x}$, but not both. Fix a choice of such morphisms. Then the functor $\hat{\mathcal{G}} \rightarrow \hat{\mathcal{G}}_{\{x,\overline{x}\}}$ which sends an object $y$ to the target of $g_y$ and a morphism $\omega: y_1 \rightarrow y_2$ to $g_{y_2} \omega g_{y_1}^{-1}$ is quasi-inverse to the inclusion $\hat{\mathcal{G}}_{\{x,\overline{x}\}} \hookrightarrow \hat{\mathcal{G}}$ and is compatible with the structure maps to $B \mathbb{Z}_2$.
\end{proof}

The functor $\pi: \hat{\mathcal{G}} \rightarrow B \mathbb{Z}_2$ classifies a double cover $\pi: \mathcal{G}_{\pi} \rightarrow \hat{\mathcal{G}}$. The use of $\pi$ for both the classifying map and the double cover should not cause confusion. We often write $\mathcal{G}$ for $\mathcal{G}_{\pi}$. The functor $\pi: \mathcal{G} \rightarrow \hat{\mathcal{G}}$ admits the following explicit model \cite[\S 10.4]{freed2013b}. The objects and morphisms of $\mathcal{G}$ are $\Obj(\hat{\mathcal{G}}) \times \mathbb{Z}_2$ and
\[
\Hom_{\mathcal{G}}((x_1, \epsilon_1), (x_2, \epsilon_2)) = \{ \omega \in \Hom_{\hat{\mathcal{G}}}(x_1,x_2) \mid \pi(\omega) \epsilon_1 = \epsilon_2 \}.
\]
Morphisms are composed as in $\hat{\mathcal{G}}$. The functor $\pi$ sends a diagram $(x_1, \epsilon_1) \xrightarrow[]{\omega} (x_2, \epsilon_2)$ in $\mathcal{G}$ to $x_1 \xrightarrow[]{\omega} x_2$. The deck transformation $\sigma_{\mathcal{G}}: \mathcal{G} \rightarrow \mathcal{G}$ is the strict involution which sends a diagram $(x_1, \epsilon_1) \xrightarrow[]{\omega} (x_2, \epsilon_2)$ to $(x_1, -\epsilon_1) \xrightarrow[]{\omega} (x_2, -\epsilon_2)$.

\begin{Ex}
Let $1 \rightarrow \mathsf{G} \rightarrow \hat{\mathsf{G}} \xrightarrow[]{\pi} \mathbb{Z}_2 \rightarrow 1$ be an exact sequence of groups. We call $\hat{\mathsf{G}}$ a $\mathbb{Z}_2$-graded group. Let $X$ be a $\hat{\mathsf{G}}$-set. There is an induced $\mathbb{Z}_2$-grading $\pi: X \git \hat{\mathsf{G}} \rightarrow B \mathbb{Z}_2$. The choice of an element $\varsigma \in \hat{\mathsf{G}} \setminus \mathsf{G}$ identifies $X \git \mathsf{G} \rightarrow X \git \hat{\mathsf{G}}$ with the double cover $(X \git \hat{\mathsf{G}})_{\pi} \rightarrow X \git \hat{\mathsf{G}}$, under which the deck transformation $\sigma^{\varsigma}_{X \git \mathsf{G}} : X \git \mathsf{G} \rightarrow X \git \mathsf{G}$ is the weak involution given on morphisms by $(x_1 \xrightarrow[]{g} x_2) \mapsto \varsigma x_1 \xrightarrow[]{\varsigma g \varsigma^{-1}} \varsigma x_2$. The action of $\varsigma^2 \in \mathsf{G}$ defines a natural isomorphism $1_{X \git \hat{\mathsf{G}}} \Rightarrow (\sigma_{X \git \mathsf{G}}^{\varsigma})^2$.
\end{Ex}

For an abelian group $\mathsf{A}$, let $\mathbb{Z}_2 \hookrightarrow \Aut(\mathsf{A})$ be the subgroup generated by inversion. A $\mathbb{Z}_2$-grading $\pi_{\hat{\mathcal{G}}}: \hat{\mathcal{G}} \rightarrow B \mathbb{Z}_2$ therefore defines a twisted cochain complex $C^{\bullet + \pi_{\hat{\mathcal{G}}}}(\hat{\mathcal{G}})$. A description of $C^{\bullet + \pi_{\hat{\mathcal{G}}}}(\hat{\mathcal{G}})$ in terms of the double cover $\mathcal{G}$ is as follows. Let $[\omega_n \vert \cdots \vert \omega_1]_{\epsilon_1} \in C_n(\mathcal{G})$ be the chain associated to the diagram
\[
(x_1,\epsilon_1) \xrightarrow[]{\omega_1}  \cdots \xrightarrow[]{\omega_n} (x_{n+1},\epsilon_{n+1})
\]
in $\mathcal{G}$. The notation is unambiguous, since $\epsilon_i = \pi(\omega_{\leq i-1}) \epsilon_1$, where $\omega_{\leq i} := \omega_i \cdots \omega_1$. The complex $C_{\bullet}(\mathcal{G})$ is a $\mathbb{Z}[\mathbb{Z}_2]$-module via
\begin{equation}
\label{eq:groupActionChain}
\zeta \cdot [\omega_n \vert \cdots \vert \omega_1]_{\epsilon_1} = [\omega_n \vert \cdots \vert \omega_1]_{-\epsilon_1}.
\end{equation}
Write $\mathsf{A}$ and $\mathsf{A}_-$ for the trivial and non-trivial $\mathbb{Z}[\mathbb{Z}_2]$-module structures on $\mathsf{A}$, respectively. For example, $c_0 + c_1 \zeta \in \mathbb{Z}[\mathbb{Z}_2]$ acts on $\mathsf{A}_-$ by $z \mapsto z^{c_0 - c_1}$.

\begin{Lem}
\label{lem:topVsAlg}
There are mutually inverse cochain isomorphisms
\[
\Phi_-: 
C^{\bullet + \pi_{\hat{\mathcal{G}}}}(\hat{\mathcal{G}}) \leftrightarrows \Hom^{\textnormal{n}}_{\mathbb{Z}[\mathbb{Z}_2]}(C_{\bullet}(\mathcal{G}),  \mathsf{A}_-)
: \Psi_-
\]
given by
\[
\Phi_-(\hat{\lambda})([\omega_n \vert \cdots \vert \omega_1]_{\epsilon_1})
=
\hat{\lambda}([\omega_n \vert \cdots \vert \omega_1])^{\epsilon_{n+1}}
\]
and
\[
\Psi_-(\hat{\mu})([\omega_n \vert \cdots \vert \omega_1])
=
\hat{\mu}([\omega_n \vert \cdots \vert \omega_1]_{\pi(\omega_{\leq n})}).
\]
\end{Lem}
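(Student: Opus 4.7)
The plan is to verify in turn that $\Phi_-$ lands in the stated subspace, that $\Psi_-$ is well-defined, that the two maps are mutually inverse as maps of abelian groups, and finally that they intertwine the two differentials. All three steps amount to careful tracking of signs and $\epsilon$-indices.

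First I would check well-definedness. For $\Phi_-(\hat{\lambda})$, the $\mathbb{Z}_2$-equivariance follows since the $\mathbb{Z}_2$-action \eqref{eq:groupActionChain} flips $\epsilon_1 \mapsto -\epsilon_1$, hence $\epsilon_{n+1} \mapsto -\epsilon_{n+1}$, so $\Phi_-(\hat{\lambda})(\zeta \cdot c) = \hat{\lambda}(\pi(c))^{-\epsilon_{n+1}} = \Phi_-(\hat{\lambda})(c)^{-1}$, which is precisely the $\mathsf{A}_-$-action of $\zeta$. Normalization is obvious. For $\Psi_-(\hat{\mu})$, normalization again reduces to normalization of $\hat{\mu}$.

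Next I would check that $\Phi_-$ and $\Psi_-$ are mutually inverse. The composition $\Psi_- \circ \Phi_-$ is immediate: it evaluates $\hat{\lambda}$ with exponent $\epsilon_{n+1} = \pi(\omega_{\leq n})^2 = 1$. For $\Phi_- \circ \Psi_-$, one must consider an arbitrary lift $[\omega_n|\cdots|\omega_1]_{\epsilon_1}$ and use $\mathbb{Z}_2$-equivariance of $\hat{\mu}$ to rewrite it in terms of the distinguished lift with starting index $\pi(\omega_{\leq n})$; the inversion picked up exactly cancels against $\epsilon_{n+1} = \pi(\omega_{\leq n})\epsilon_1$, giving back $\hat{\mu}$ on the original lift.

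The main step, and the only one that carries any real content, is compatibility with differentials; I would verify it for $\Phi_-$. Applying the simplicial differential of $C_\bullet(\mathcal{G})$ to a lifted chain $[\omega_n|\cdots|\omega_1]_{\epsilon_1}$, the first face (dropping $\omega_n$) is $[\omega_{n-1}|\cdots|\omega_1]_{\epsilon_1}$, whose endpoint index is $\epsilon_n = \pi(\omega_n)\epsilon_{n+1}$; the middle faces retain both endpoints unchanged; and the last face (dropping $\omega_1$) is $[\omega_n|\cdots|\omega_2]_{\epsilon_2}$, whose endpoint index is still $\epsilon_{n+1}$. Evaluating $\Phi_-(\hat{\lambda})$ term by term therefore produces the exponent $\epsilon_n = \pi(\omega_n)\epsilon_{n+1}$ on the first term and $\epsilon_{n+1}$ on all remaining terms. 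On the other hand, $\Phi_-(d\hat{\lambda})$ raises the whole $\pi$-twisted differential $d\hat{\lambda}$ to the power $\epsilon_{n+1}$, and the only factor that differs from the simplicial differential of $\hat{\lambda}$ on $\hat{\mathcal{G}}$ is the twist $\kappa(\omega_n) = \pi(\omega_n)$ appearing in front of $\hat{\lambda}([\omega_{n-1}|\cdots|\omega_1])$. The two formulas agree exactly because $\pi(\omega_n)\cdot \epsilon_{n+1} = \epsilon_n$, which is the key identity: the twist $\kappa$ on the $\hat{\mathcal{G}}$ side is precisely compensated by the change of endpoint index under face maps on the $\mathcal{G}$ side.

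The main obstacle is purely bookkeeping: one has to keep straight the conventions relating $\epsilon_i$, $\pi(\omega_{\leq i})$, $\epsilon_1$ and $\epsilon_{n+1}$, and check that after a middle face (combining $\omega_{j+1}\omega_j$) the degree $\pi(\omega_n\cdots\omega_{j+1}\omega_j\cdots\omega_1) = \pi(\omega_{\leq n})$ is unchanged, so no extra twist appears in those terms. Once these are organised, each assertion reduces to a one-line identity, and the lemma follows.
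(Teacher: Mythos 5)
Your proposal is correct and is exactly the direct calculation the paper has in mind (its proof is stated simply as ``a direct calculation''): the well-definedness and mutual-inverse checks are as you describe, and the cochain-map property hinges precisely on your key identity $\pi(\omega_n)\,\epsilon_{n+1}=\epsilon_n$, which matches the twist $\kappa(\omega_n)$ in the differential of $C^{\bullet+\pi_{\hat{\mathcal{G}}}}(\hat{\mathcal{G}})$ against the change of endpoint index under the face dropping $\omega_n$, all other faces leaving $\epsilon_{n+1}$ unchanged. Nothing further is needed, since the compatibility of $\Psi_-$ with the differentials follows from that of its inverse $\Phi_-$.
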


\begin{proof}
This is a direct calculation.
\end{proof}

There is an untwisted version of Lemma \ref{lem:topVsAlg}, in which $\mathsf{A}$ replaces $\mathsf{A}_-$ and maps $\Phi$ and $\Psi$ are defined as for $\Phi_-$ and $\Psi_-$, but with the signs removed.

\subsection{Loop groupoids}
\label{sec:loopGroupoids}

Following Willerton \cite{willerton2008} (see Lupercio--Uribe \cite{lupercio2002}), the loop groupoid of an essentially finite groupoid $\mathcal{G}$ is defined to be the functor category
\[
\Lambda \mathcal{G}
:=
\Hom_{\mathsf{Cat}}(B \mathbb{Z}, \mathcal{G}).
\]
Objects of $\Lambda \mathcal{G}$ can be identified with loops $(x,\gamma)$ in $\mathcal{G}$, that is, morphisms $\gamma: x \rightarrow x$, in which case a morphism $(x_1, \gamma_1) \rightarrow (x_2,\gamma_2)$ is a morphism $g: x_1 \rightarrow x_2$ in $\mathcal{G}$ which satisfies $\gamma_2 = g \gamma_1 g^{-1}$. Composition of morphisms is as in $\mathcal{G}$. The groupoid $\Lambda \mathcal{G}$ is essentially finite if $\mathcal{G}$ is so. Since $\mathcal{G}$ is essentially finite, its loop groupoid and inertia groupoid coincide, hence our notation.

It is known (\cite[Theorem 2]{willerton2008}, \cite[Proposition 6.1.1]{lupercio2002}) that the geometric realization of $\Lambda \mathcal{G}$ is homotopy equivalent to the free loop space of the geometric realization of $\mathcal{G}$, that is, $\vert \Lambda \mathcal{G} \vert \sim \Map(S^1, \vert \mathcal{G} \vert)$.

\subsection{Quotients of loop groupoids}
\label{sec:tLGGroupoids}

Let $\pi: \hat{\mathcal{G}} \rightarrow B \mathbb{Z}_2$ be an essentially finite $\mathbb{Z}_2$-graded groupoid with double cover $\mathcal{G} \rightarrow \hat{\mathcal{G}}$. We introduce two quotients of $\Lambda \mathcal{G}$.

\begin{Def}
The quotient loop groupoid of $\mathcal{G}$ is the groupoid $\Lambda_{\pi} \hat{\mathcal{G}}$ with objects the loops $(x, \gamma)$ of degree $+1$ in $\hat{\mathcal{G}}$ and morphisms
\[
\Hom_{\Lambda_{\pi} \hat{\mathcal{G}}} ((x_1,\gamma_1), (x_2,\gamma_2)) = \{ \omega \in \Hom_{\hat{\mathcal{G}}}(x_1, x_2) \mid \gamma_2 = \omega \gamma_1 \omega^{-1}\}.
\]
\end{Def}

There is a strongly non-trivial grading $\pi_{\Lambda_{\pi} \hat{\mathcal{G}}} : \Lambda_{\pi} \hat{\mathcal{G}} \rightarrow B \mathbb{Z}_2$ which sends a morphism $\omega$ to $\pi(\omega)$. To identify the associated double cover, let $p : \Lambda \mathcal{G} \rightarrow \Lambda_{\pi} \hat{\mathcal{G}}$ be the functor which sends a diagram $((x_1, \epsilon_1), \gamma_1) \xrightarrow[]{\omega} ((x_2, \epsilon_2), \gamma_2)$ to $(x_1, \gamma_1) \xrightarrow[]{\omega} (x_2, \gamma_2)$.

\begin{Lem}
\label{lem:loopGrpdCover}
The double cover classified by $\pi_{\Lambda_{\pi} \hat{\mathcal{G}}}$ is equivalent to $p$.
\end{Lem}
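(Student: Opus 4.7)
The plan is to verify the lemma by unpacking both sides into the explicit model for double covers developed earlier in Section \ref{sec:groupoidsOverBZ2} and exhibiting a functor over $\Lambda_{\pi}\hat{\mathcal{G}}$ which, on the nose, realizes an isomorphism of groupoids.

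First, applying the explicit model of the double cover classified by a $\mathbb{Z}_2$-grading to $\pi_{\Lambda_{\pi}\hat{\mathcal{G}}}$, the canonical cover $(\Lambda_{\pi}\hat{\mathcal{G}})_{\pi_{\Lambda_{\pi}\hat{\mathcal{G}}}} \to \Lambda_{\pi}\hat{\mathcal{G}}$ has objects $\Obj(\Lambda_{\pi}\hat{\mathcal{G}}) \times \mathbb{Z}_2$, which amounts to triples $((x,\gamma),\epsilon)$ with $\gamma\colon x\to x$ a morphism of degree $+1$ and $\epsilon\in\mathbb{Z}_2$, and morphisms
\[
\Hom\bigl(((x_1,\gamma_1),\epsilon_1),((x_2,\gamma_2),\epsilon_2)\bigr)
=
\{\omega\in\Hom_{\hat{\mathcal{G}}}(x_1,x_2)\mid \gamma_2=\omega\gamma_1\omega^{-1},\ \pi(\omega)\epsilon_1=\epsilon_2\},
\]
with the projection forgetting the second coordinate.

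Next, I would compare this with $\Lambda\mathcal{G}$. An object of $\Lambda\mathcal{G}$ is a loop $\gamma\colon(x,\epsilon)\to(x,\epsilon)$ in $\mathcal{G}$; by the definition of morphisms in $\mathcal{G}$ recalled in Section \ref{sec:groupoidsOverBZ2}, such a $\gamma$ is a morphism $\gamma\colon x\to x$ in $\hat{\mathcal{G}}$ with $\pi(\gamma)\epsilon=\epsilon$, i.e.\ automatically of degree $+1$. Hence $(x,\gamma)$ is an object of $\Lambda_{\pi}\hat{\mathcal{G}}$, and the assignment $((x,\epsilon),\gamma)\mapsto((x,\gamma),\epsilon)$ defines a bijection on objects. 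A morphism $((x_1,\epsilon_1),\gamma_1)\xrightarrow{\omega}((x_2,\epsilon_2),\gamma_2)$ in $\Lambda\mathcal{G}$ is by definition an $\omega\in\Hom_{\hat{\mathcal{G}}}(x_1,x_2)$ satisfying $\pi(\omega)\epsilon_1=\epsilon_2$ (so that $\omega$ is a morphism of $\mathcal{G}$) and $\gamma_2=\omega\gamma_1\omega^{-1}$ (so that it is a morphism of the loop groupoid). These are precisely the conditions defining morphisms of the canonical double cover above, so the same assignment on morphisms defines an isomorphism of categories $F\colon\Lambda\mathcal{G}\xrightarrow{\sim}(\Lambda_{\pi}\hat{\mathcal{G}})_{\pi_{\Lambda_{\pi}\hat{\mathcal{G}}}}$.

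Finally, I would verify commutativity with the projections: the composite of $F$ with the forgetful functor to $\Lambda_{\pi}\hat{\mathcal{G}}$ sends $((x_1,\epsilon_1),\gamma_1)\xrightarrow{\omega}((x_2,\epsilon_2),\gamma_2)$ to $(x_1,\gamma_1)\xrightarrow{\omega}(x_2,\gamma_2)$, which is by definition $p$. Thus $F$ is an equivalence of double covers over $\Lambda_{\pi}\hat{\mathcal{G}}$. There is no real obstacle here; the content of the lemma is bookkeeping, and the only point that demands a moment's attention is the observation that the loop condition in $\mathcal{G}$ forces the underlying morphism in $\hat{\mathcal{G}}$ to have degree $+1$, which is what makes the second $\mathbb{Z}_2$-coordinate $\epsilon$ free and thereby produces the double cover structure.
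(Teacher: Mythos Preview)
Your proof is correct and follows essentially the same approach as the paper: both define the functor $\Lambda\mathcal{G}\to(\Lambda_{\pi}\hat{\mathcal{G}})_{\pi_{\Lambda_{\pi}\hat{\mathcal{G}}}}$ sending $((x,\epsilon),\gamma)\mapsto((x,\gamma),\epsilon)$ and check that it is an equivalence compatible with the structure maps to $\Lambda_{\pi}\hat{\mathcal{G}}$. Your version simply unpacks the definitions more explicitly and records the observation that loops in $\mathcal{G}$ automatically have degree $+1$, which the paper leaves implicit.
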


\begin{proof}
Let $\Lambda \mathcal{G} \rightarrow 
(\Lambda_{\pi} \hat{\mathcal{G}})_{\pi_{\Lambda_{\pi} \hat{\mathcal{G}}}}$ be the functor defined on diagrams by
\[
\left[
((x_1, \epsilon_1),\gamma_1) \xrightarrow[]{\omega} ((x_2, \epsilon_2),\gamma_2)
\right]
\mapsto ((x_1, \gamma_1),\epsilon_1) \xrightarrow[]{\omega}
((x_2, \gamma_2),\epsilon_2). 
\]
This is an equivalence and is compatible with the structure maps to $\Lambda_{\pi} \hat{\mathcal{G}}$.
\end{proof}

Because of Lemma \ref{lem:loopGrpdCover}, we henceforth write $\pi_{\Lambda_{\pi} \hat{\mathcal{G}}}$ for $p$. Under the equivalence of Lemma \ref{lem:loopGrpdCover} (and its obvious inverse functor), the deck transformation $\sigma_{\Lambda \mathcal{G}}: \Lambda \mathcal{G} \rightarrow \Lambda \mathcal{G}$ is the strict involution given on objects by $((x, \epsilon), \gamma) \mapsto ((x, -\epsilon), \gamma)$ and on morphisms by the identity or, in terms of functors,
\[
\sigma_{\Lambda \mathcal{G}} : \Hom_{\mathsf{Cat}}(B \mathbb{Z}, \mathcal{G}) \rightarrow \Hom_{\mathsf{Cat}}(B \mathbb{Z}, \mathcal{G}),
\qquad
F \mapsto \sigma_{\mathcal{G}} \circ F.
\]
In particular, $\Lambda_{\pi} \hat{\mathcal{G}}$ is the quotient of $\Lambda \mathcal{G}$ by $\sigma_{\Lambda \mathcal{G}}$.

Next, we define a modification of $\Lambda_{\pi} \hat{\mathcal{G}}$ which incorporates reflection of the circle.

\begin{Def}
The unoriented quotient loop groupoid of $\mathcal{G}$ is the groupoid $\Lambda^{\refl}_{\pi} \hat{\mathcal{G}}$ with objects the loops $(x, \gamma)$ of degree $+1$ in $\hat{\mathcal{G}}$ and morphisms
\[
\Hom_{\Lambda^{\refl}_{\pi} \hat{\mathcal{G}}} ((x_1,\gamma_1), (x_2,\gamma_2)) = \{ \omega \in \Hom_{\hat{\mathcal{G}}}(x_1, x_2) \mid \gamma_2 = \omega \gamma_1^{\pi(\omega)} \omega^{-1}\}.
\]
\end{Def}

There is a strongly non-trivial grading $\pi_{\Lambda^{\refl}_{\pi} \hat{\mathcal{G}}} : \Lambda^{\refl}_{\pi} \hat{\mathcal{G}} \rightarrow B \mathbb{Z}_2$. Let $p^{\refl}: \Lambda \mathcal{G} \rightarrow \Lambda_{\pi}^{\refl} \hat{\mathcal{G}}$ be the functor given by $ \big[ ((x_1, \epsilon_1), \gamma_1) \xrightarrow[]{\omega} ((x_2, \epsilon_2), \gamma_2) \big] \mapsto (x_1, \gamma^{\epsilon_1}_1) \xrightarrow[]{\omega} (x_2, \gamma^{\epsilon_2}_2)$.

\begin{Lem}
\label{lem:refLoopGrpdCover}
The double cover classified by $\pi_{\Lambda^{\refl}_{\pi} \hat{\mathcal{G}}}$ is equivalent to $p^{\refl}$.
\end{Lem}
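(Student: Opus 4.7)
The plan is to mimic the proof of Lemma \ref{lem:loopGrpdCover}, building an explicit equivalence of $\mathbb{Z}_2$-graded groupoids over $\Lambda^{\refl}_{\pi} \hat{\mathcal{G}}$, but with an extra twist on the loop data accounting for the reflection exponent $\pi(\omega)$ appearing in the definition of $\Lambda^{\refl}_{\pi} \hat{\mathcal{G}}$.

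Concretely, I would define a functor
\[
F : \Lambda \mathcal{G} \longrightarrow (\Lambda^{\refl}_{\pi} \hat{\mathcal{G}})_{\pi_{\Lambda^{\refl}_{\pi} \hat{\mathcal{G}}}}
\]
on diagrams by
\[
\bigl[((x_1,\epsilon_1),\gamma_1) \xrightarrow[]{\omega} ((x_2,\epsilon_2),\gamma_2)\bigr]
\;\longmapsto\;
\bigl((x_1,\gamma_1^{\epsilon_1}),\epsilon_1\bigr) \xrightarrow[]{\omega} \bigl((x_2,\gamma_2^{\epsilon_2}),\epsilon_2\bigr).
\]
Compared with the functor used in Lemma \ref{lem:loopGrpdCover}, the new feature is that the loop at $x_i$ is raised to the power $\epsilon_i$. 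The point of this modification is that composing $F$ with projection to $\Lambda_{\pi}^{\refl} \hat{\mathcal{G}}$ recovers exactly $p^{\refl}$ by construction, which will give the required compatibility with the structure maps.

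The key check is that $F$ actually lands in $(\Lambda^{\refl}_{\pi} \hat{\mathcal{G}})_{\pi_{\Lambda^{\refl}_{\pi} \hat{\mathcal{G}}}}$: given the defining relation $\gamma_2 = \omega \gamma_1 \omega^{-1}$ of a morphism in $\Lambda \mathcal{G}$, together with $\epsilon_2 = \pi(\omega)\epsilon_1$, one computes
\[
\gamma_2^{\epsilon_2} = \omega \gamma_1^{\epsilon_2}\omega^{-1} = \omega \gamma_1^{\pi(\omega)\epsilon_1}\omega^{-1} = \omega (\gamma_1^{\epsilon_1})^{\pi(\omega)}\omega^{-1},
\]
which is precisely the hom-condition in $\Lambda^{\refl}_{\pi}\hat{\mathcal{G}}$. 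Functoriality and compatibility with the $B\mathbb{Z}_2$-gradings are immediate since $F$ is the identity on the underlying morphism $\omega$.

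Finally, I would verify that $F$ is an equivalence. It is essentially surjective: an object $((x,\gamma),\epsilon)$ of the target is hit by $((x,\epsilon),\gamma^{\epsilon}) \in \Lambda \mathcal{G}$, using $(\gamma^{\epsilon})^{\epsilon} = \gamma$. It is fully faithful because hom-sets on both sides are cut out of $\Hom_{\hat{\mathcal{G}}}(x_1,x_2)$ by the same two conditions (the grading constraint $\pi(\omega)\epsilon_1 = \epsilon_2$ and the appropriate loop relation), and under the substitution $\gamma_i \leftrightarrow \gamma_i^{\epsilon_i}$ these two sets of conditions match tautologically. The only mildly delicate step is the exponent bookkeeping in the computation above, which is where the reflection twist in $\Lambda^{\refl}_{\pi} \hat{\mathcal{G}}$ is absorbed into the $\epsilon$-powers on objects; everything else is a direct transcription of the argument for Lemma \ref{lem:loopGrpdCover}.
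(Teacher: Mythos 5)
Your proposal is correct and coincides with the paper's proof: the paper defines exactly the same functor $\bigl[((x_1,\epsilon_1),\gamma_1) \xrightarrow{\omega} ((x_2,\epsilon_2),\gamma_2)\bigr] \mapsto ((x_1,\gamma_1^{\epsilon_1}),\epsilon_1) \xrightarrow{\omega} ((x_2,\gamma_2^{\epsilon_2}),\epsilon_2)$ and leaves the verifications implicit. Your exponent computation $\gamma_2^{\epsilon_2} = \omega(\gamma_1^{\epsilon_1})^{\pi(\omega)}\omega^{-1}$ and the checks of equivalence and compatibility with the structure maps are exactly the details the paper omits.
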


\begin{proof}
The equivalence $\Lambda \mathcal{G} \rightarrow 
(\Lambda_{\pi}^{\refl} \hat{\mathcal{G}})_{\pi_{\Lambda_{\pi}^{\refl} \hat{\mathcal{G}}}}$ is defined by
\[
\left[
((x_1, \epsilon_1),\gamma_1) \xrightarrow[]{\omega} ((x_2, \epsilon_2),\gamma_2)
\right]
\mapsto ((x_1, \gamma_1^{\epsilon_1}),\epsilon_1) \xrightarrow[]{\omega}
((x_2, \gamma_2^{\epsilon_2}),\epsilon_2). 
\]
\end{proof}

We henceforth write $\pi_{\Lambda_{\pi}^{\refl} \hat{\mathcal{G}}}$ for $p^{\refl}$. Under the equivalence of Lemma \ref{lem:refLoopGrpdCover}, the deck transformation $\sigma_{\Lambda \mathcal{G}}^{\refl}: \Lambda \mathcal{G} \rightarrow \Lambda \mathcal{G}$ is the strict involution $\sigma_{\Lambda \mathcal{G}}^{\refl}((x, \epsilon), \gamma) = ((x, -\epsilon), \gamma^{-1})$ or, in terms of functors, $\sigma_{\Lambda \mathcal{G}}^{\refl}(F) =  \sigma_{\mathcal{G}} \circ F \circ B \iota$, where $\iota: \mathbb{Z} \rightarrow \mathbb{Z}$ is negation.

\begin{Ex}
Let $\hat{\mathsf{G}}$ be a $\mathbb{Z}_2$-graded group. The Real conjugation action of $\hat{\mathsf{G}}$ on $\mathsf{G}$ is $\omega \cdot g = \omega g^{\pi(\omega)}  \omega^{-1}$, $(\omega,g) \in \hat{\mathsf{G}} \times \mathsf{G}$. With this notation, there are equivalences
\[
\Lambda B \mathsf{G} \simeq \mathsf{G} \git \mathsf{G}, \qquad \Lambda_{\pi} B \hat{\mathsf{G}} \simeq \mathsf{G} \git \hat{\mathsf{G}}, \qquad \Lambda^{\refl}_{\pi} B \hat{\mathsf{G}} \simeq \mathsf{G} \git_{\textnormal{R}} \hat{\mathsf{G}},
\]
where the group actions on $\mathsf{G}$ are by (Real) conjugation. The choice of an element $\varsigma \in \hat{\mathsf{G}} \setminus \mathsf{G}$ identifies the double covers
\[
\Lambda B \mathsf{G} \rightarrow \Lambda_{\pi} B \hat{\mathsf{G}},
\qquad
\Lambda B \mathsf{G} \rightarrow \Lambda^{\refl}_{\pi} B \hat{\mathsf{G}}
\]
with the canonical functors
\[
\mathsf{G} \git \mathsf{G} \rightarrow \mathsf{G} \git \hat{\mathsf{G}},
\qquad
\mathsf{G} \git \mathsf{G} \rightarrow \mathsf{G} \git_{\textnormal{R}} \hat{\mathsf{G}}.
\]
The deck transformations are given on objects by $\sigma_{\Lambda_{\pi} B \hat{\mathsf{G}}}(\gamma) = \varsigma \gamma \varsigma^{-1}$ and $\sigma_{\Lambda^{\refl}_{\pi} B \hat{\mathsf{G}}}(\gamma) = \varsigma \gamma^{-1} \varsigma^{-1}$ and on morphisms by $\sigma_{\Lambda^{(\refl)}_{\pi} B \hat{\mathsf{G}}}(\omega) = \varsigma \omega \varsigma^{-1}$.
\end{Ex}

\section{Twisted loop transgression}
\label{sec:essFinGroupoids}

\subsection{Oriented loop transgression}
\label{sec:transGrpd}

We begin by recalling the ordinary (oriented) loop transgression map in the setting of essentially finite groupoids. For detailed discussions, the reader is referred to \cite{willerton2008} or, in the geometric setting, \cite{brylinski1993}, \cite{lupercio2002}.

Let $\mathcal{G}$ be an essentially finite groupoid. The evaluation functor $\ev : B \mathbb{Z} \times \Lambda \mathcal{G} \rightarrow \mathcal{G}$ is given on morphisms by
\[
\left[
(x_1,\gamma_1) \xrightarrow[]{(n,g)} (x_2,\gamma_2)
\right]
\mapsto
x_1 \xrightarrow[]{g \gamma_1^n = \gamma_2^n g} x_2.
\]
Let $\pr_{\Lambda \mathcal{G}} : B \mathbb{Z} \times \Lambda \mathcal{G} \rightarrow \Lambda \mathcal{G}$ be the projection and consider the span of groupoids
\begin{equation}
\label{diag:oriLoopDiagGrpd}
\begin{tikzpicture}[baseline= (a).base]
\node[scale=1] (a) at (0,0){
\begin{tikzcd}[column sep={8.0em,between origins}, row sep={2.0em,between origins}]
{} & B \mathbb{Z} \times \Lambda \mathcal{G}  \arrow{dl}[above]{\ev} \arrow{dr}[above]{\pr_{\Lambda \mathcal{G}}} & {} \\
\mathcal{G} & {} & \Lambda \mathcal{G}.
\end{tikzcd}
};
\end{tikzpicture}
\end{equation}
Passing to geometric realizations gives a diagram which is homotopy equivalent to the standard evaluation-projection correspondence for $\Map(S^1, \vert \mathcal{G} \vert)$.

Let $[1] : = [\bullet \xrightarrow[]{1} \bullet] \in C_1(B \mathbb{Z})$, which we view as a fundamental cycle of $\vert B \mathbb{Z} \vert \sim S^1$. The composition
\begin{equation}
\label{eq:homologyTrans}
C_{\bullet}(\Lambda \mathcal{G})
\xrightarrow[]{[1] \otimes -}
C_1(B \mathbb{Z}) \otimes_{\mathbb{Z}} C_{\bullet}(\Lambda \mathcal{G})
\xrightarrow{\mathsf{EZ}}
C_{\bullet+1}(B \mathbb{Z} \times \Lambda \mathcal{G})
\xrightarrow[]{\ev_*}
C_{\bullet+1}(\mathcal{G})
\end{equation}
defines the chain level loop transgression map. Here $\mathsf{EZ}$ is the Eilenberg--Zilber shuffle map. Let $\mathsf{ez}_{[1]}$ be the composition $\mathsf{EZ} \circ ([1] \otimes -)$. Then pushforward along $\pr_{\Lambda \mathcal{G}}$ is the map
\begin{equation}
\label{eq:untwistPush}
\pr_{\Lambda \mathcal{G} !} : C^{\bullet} (B \mathbb{Z} \times \Lambda \mathcal{G}) \rightarrow C^{\bullet-1}(\Lambda \mathcal{G}), \qquad \lambda \mapsto \lambda \circ \mathsf{ez}_{[1]}
\end{equation}
and the loop transgression map is defined by push-pull along the diagram \eqref{diag:oriLoopDiagGrpd}:
\[
\uptau : C^{\bullet}(\mathcal{G}) \xrightarrow[]{\pr_{\Lambda \mathcal{G} !} \circ \ev^*} C^{\bullet-1}(\Lambda \mathcal{G}).
\]
The map $\uptau$ anti-commutes with the differentials, $d \uptau(\lambda) = \uptau(d\lambda)^{-1}$, and so descends to a map on cocycles and cohomologies.

To compute $\uptau$, let $[g_n \vert \cdots \vert g_1]\gamma \in C_n(\Lambda \mathcal{G})$ be the chain associated to the diagram $\gamma = \gamma_1 \xrightarrow[]{g_1} \gamma_2 \xrightarrow[]{g_2} \cdots \xrightarrow[]{g_n} \gamma_{n+1}$ in $\Lambda \mathcal{G}$. Then we have
\[
\mathsf{ez}_{[1]}([g_n \vert \cdots \vert g_1]\gamma)
=
\sum_{i=0}^n (-1)^{n-i}
[g_n \vert \cdots \vert g_{i+1} \vert 1 \vert g_i \vert \cdots \vert g_1] \gamma_1,
\]
where we have introduced the shorthands $g_i = (0,g_i)$ and $1 = (1,\textnormal{id}_x)$ for morphisms in $B \mathbb{Z} \times \Lambda \mathcal{G}$.
The result of applying the map \eqref{eq:homologyTrans} to $[g_n \vert \cdots \vert g_1]\gamma$ is thus
\[
\sum_{i=0}^n
(-1)^{(n-i)} [g_n \vert \cdots \vert g_{i+1} \vert \gamma_{i+1} \vert g_i \vert \cdots \vert g_1].
\]
Dualizing and passing to $\mathsf{A}$-coefficients gives for $\lambda \in C^{n+1}(\mathcal{G})$ the expression
\begin{equation}
\label{eq:willertonTrans}
\uptau(\lambda)([g_n \vert \cdots \vert g_1]\gamma)
=
\prod_{i=0}^n \lambda ([g_n \vert \cdots \vert g_{i+1} \vert \gamma_{i+1} \vert g_i \vert \cdots \vert g_1])^{(-1)^{n-i}},
\end{equation}
which is precisely the result of Willerton \cite[Theorem 3]{willerton2008}. Willerton's derivation of equation \eqref{eq:willertonTrans} is rather different from that presented here, relying on a particular homotopy equivalence $\vert \Lambda \mathcal{G} \vert \sim \Map(S^1, \vert \mathcal{G} \vert)$, the so-called Parmesan map \cite[Theorem 2]{willerton2008}. The Parmesan map is not equivariant for the reflection action on the circle, and so is not well-suited for the purposes of this paper.

\subsection{Twisted pushforwards for groupoids}
\label{sec:pushGrpd}

We begin by generalizing the untwisted pushforward map \eqref{eq:untwistPush} so as to include a twist on the codomain.

\begin{Lem}
\label{lem:twistedPush}
Let $\mathcal{G}$ be an essentially finite groupoid and $\kappa: \mathcal{G} \rightarrow B \Aut(\mathsf{A})$ a functor. Then the abelian group homomorphism
\[
\pr_{\mathcal{G}!}: 
C^{\bullet + \pr_{\mathcal{G}}^*\kappa}(B \mathbb{Z} \times \mathcal{G}) \rightarrow C^{\bullet -1 + \kappa}(\mathcal{G}),
\qquad
\lambda \mapsto \lambda \circ \mathsf{ez}_{[1]}
\]
anti-commutes with the differentials.
\end{Lem}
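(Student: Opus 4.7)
The plan is to dualize a chain-level identity: I expect $\mathsf{ez}_{[1]}$ to anti-commute with simplicial boundaries, and I expect the twist $\kappa$ to intervene symmetrically on both sides of the anti-commutation, so that the twisted statement reduces to the untwisted one.

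First, apply the Eilenberg--Zilber chain map property $\partial \circ \mathsf{EZ} = \mathsf{EZ} \circ (\partial \otimes 1 + (-1)^{|\cdot|} \otimes \partial)$ to the element $[1] \otimes c$. Because $B\mathbb{Z}$ has a single object, both faces of the $1$-simplex $[1]$ coincide, so $\partial[1] = 0$ in $C_\bullet(B\mathbb{Z})$, and the identity collapses to
\[
\partial \circ \mathsf{ez}_{[1]} = -\,\mathsf{ez}_{[1]} \circ \partial
\]
on $C_\bullet(\mathcal{G})$. Dualizing with untwisted coefficients immediately gives the lemma in the case $\kappa$ trivial.

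For the general twisted case, I would expand both $d(\pr_{\mathcal{G}!}\lambda)$ and $\pr_{\mathcal{G}!}(d\lambda)$ on a generic chain $[g_n | \cdots | g_1]$ by combining the twisted coboundary formula of Section \ref{sec:groupoids} with the explicit expression
\[
\mathsf{ez}_{[1]}([g_n | \cdots | g_1]) = \sum_{i=0}^{n} (-1)^{n-i}[g_n | \cdots | g_{i+1} | 1 | g_i | \cdots | g_1].
\]
The key observation is that the twist $\kappa$ enters the coboundary only as the scalar $\kappa(g_n)$ attached to the outermost face, and that in every summand above the leftmost morphism is either $g_n$, carrying twist $\pr_{\mathcal{G}}^*\kappa(g_n) = \kappa(g_n)$, or the inserted edge $(1, \id_x)$, whose twist $\kappa(\id_x) = 1$ is trivial. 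Consequently the factor $\kappa(g_n)$ appears identically on both sides of the claimed identity, and what remains is the untwisted combinatorial equality already handled in the first step.

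The main obstacle will be organising the sign bookkeeping in the cancellation of interior face terms: both $d(\pr_{\mathcal{G}!}\lambda)$ and $\pr_{\mathcal{G}!}(d\lambda)$ unfold into products of $\mathsf{A}$-values indexed by pairs consisting of the position of the inserted $1$ and the position of the face being collapsed, and one must verify that the off-diagonal terms pair off while the extremal terms reproduce the anti-commutation sign. This is ultimately the usual check that $\mathsf{EZ}$ is a chain map, written out in cochain form. A more structural alternative, which I would pursue in parallel, is to realize a $\kappa$-twisted cochain as an equivariant cochain on the local system over $\mathcal{G}$ classified by $\kappa$; the product $B\mathbb{Z} \times \mathcal{G}$ lifts to a product along this cover, and the lemma reduces to the untwisted anti-commutation, since $\mathsf{ez}_{[1]}$ is manifestly equivariant along the untouched $B\mathbb{Z}$-factor.
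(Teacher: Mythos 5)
Your proposal is correct and follows essentially the same route as the paper, whose entire proof is the identity $\partial \circ \mathsf{ez}_{[1]} = -\,\mathsf{ez}_{[1]} \circ \partial$, which you derive exactly as intended from $\partial[1]=0$ and the fact that $\mathsf{EZ}$ is a chain map. Your additional observation about the twist — that $\kappa$ only enters through the leftmost morphism, which in every summand of $\mathsf{ez}_{[1]}([g_n|\cdots|g_1])$ is either $g_n$ (giving the same factor $\kappa(g_n)$ on both sides, since $\kappa(g_n)$ is a homomorphism and so commutes with the products and inverses) or the inserted edge $(1,\id_x)$ with trivial twist — is precisely the point the paper leaves implicit, and it closes the dualization step correctly.
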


\begin{proof}
This follows from the equality $\partial \circ \mathsf{ez}_{[1]} = - \mathsf{ez}_{[1]} \circ \partial$.
\end{proof}

Let now $\hat{\mathcal{G}}$ be an essentially finite $\mathbb{Z}_2$-graded groupoid. Consider the strict $\mathbb{Z}_2$-action on $B \mathbb{Z}$ which negates morphisms. Then $\pr_{\mathcal{G}} : B \mathbb{Z} \times \mathcal{G} \rightarrow \mathcal{G}$ is strictly equivariant for the diagonal $\mathbb{Z}_2$-action on $B \mathbb{Z} \times \mathcal{G}$ and so descends to a functor
\[
\widetilde{\pr}_{\mathcal{G}}: B \mathbb{Z} \times_{\mathbb{Z}_2} \mathcal{G} \rightarrow \hat{\mathcal{G}}.
\]
The category $B \mathbb{Z} \times_{\mathbb{Z}_2} \mathcal{G} := (B \mathbb{Z} \times \mathcal{G}) \git \mathbb{Z}_2$ is the naive quotient, which is well-defined because the $\mathbb{Z}_2$-action is free on objects. Write $\pi_{B \mathbb{Z} \times_{\mathbb{Z}_2} \mathcal{G}} : B \mathbb{Z} \times \mathcal{G} \rightarrow B \mathbb{Z} \times_{\mathbb{Z}_2} \mathcal{G}$ for the canonical double cover. The objective of this section is to construct a pushforward
\[
\widetilde{\pr}_{\mathcal{G}!} : C^{\bullet + \pi_{B \mathbb{Z} \times_{\mathbb{Z}_2} \mathcal{G}} + \pi_{B \mathbb{Z} \times_{\mathbb{Z}_2} \mathcal{G}}^* \kappa}(B \mathbb{Z} \times_{\mathbb{Z}_2} \mathcal{G})
\rightarrow
C^{\bullet-1 + \kappa}(\hat{\mathcal{G}}).
\]
In view of our later applications, we consider only the case in which $\kappa$ is trivial or $\kappa = \pi_{\hat{\mathcal{G}}}$. Note that, in the latter case, $\pi_{B \mathbb{Z} \times_{\mathbb{Z}_2} \mathcal{G}} + \pi_{B \mathbb{Z} \times_{\mathbb{Z}_2} \mathcal{G}}^* \kappa$ is the trivial twist.

We begin with some notation. Given morphisms $\omega_1, \dots, \omega_n$ in $\hat{\mathcal{G}}$, set
\[
\Delta_{\omega_n, \dots, \omega_1}
=
\begin{cases}
1 & \mbox{if } \pi(\omega_n) = \cdots = \pi(\omega_1)=-1,\\
0& \mbox{otherwise.}
\end{cases}
\]
By convention, $\Delta_{\varnothing} =1$. For each $\epsilon \in \mathbb{Z}_2$ and  $i \geq 1$, let
\[
s^{\epsilon}_i =[\epsilon \vert -\epsilon \vert \cdots \vert (-1)^{i+1} \epsilon] \in C_i(B \mathbb{Z}).
\]
Define a $\mathbb{Z}$-linear map
\[
f_n: C_n(\mathcal{G}) \rightarrow \bigoplus_{k=1}^{n+1}  C_k(B \mathbb{Z}) \otimes_{\mathbb{Z}} C_{n+1-k}(\mathcal{G})
\]
by
\begin{multline*}
f_n([\omega_n \vert \cdots \vert \omega_1]_{\epsilon_1})
=
\epsilon_{n+1} s_1^{\epsilon_{n+1}} \otimes [\omega_n \vert \cdots \vert \omega_1]_{\epsilon_1} +
\\
\sum_{i=0}^{n-1} (-1)^{i} \epsilon_{n+1-i} \Delta_{\omega_n, \dots, \omega_{n-i}} s^{\epsilon_{n+1}}_{i+2} \otimes [\omega_{n-1-i} \vert \cdots \vert \omega_1]_{\epsilon_1}.
\end{multline*}
For notational simplicity, we often write $s_i^j$ for $s_i^{\epsilon_j}$ and $\Delta_{n, \dots, 1}$ for $\Delta_{\omega_n, \dots, \omega_1}$.

We regard $C_{\bullet}(B \mathbb{Z}) \otimes_{\mathbb{Z}} C_{\bullet}(\mathcal{G})$ as a $\mathbb{Z}[\mathbb{Z}_2]$-module with its standard tensor product differential. An element of $C_{\bullet}(B \mathbb{Z}) \otimes_{\mathbb{Z}} C_{\bullet}(\mathcal{G})$ is called degenerate if it can be written in the form $\sum_i a_i \otimes b_i$ where, for each $i$, at least one of $a_i$ or $b_i$ is a degenerate chain.

\begin{Prop}
\label{prop:invarianceProperties}
Let $c \in C_{\bullet}(\mathcal{G})$. The following equalities hold:
\begin{enumerate}[label=(\roman*)]
\item $f(\zeta \cdot c) = -\zeta \cdot f(c)$.

\item $f(\partial c) = -\partial f(c) + \textnormal{(degenerate elements)}$.
\end{enumerate}
\end{Prop}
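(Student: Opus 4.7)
The plan is to verify both claims by direct computation from the explicit formula for $f_n$, with part (i) essentially immediate and part (ii) requiring a careful but mechanical calculation.

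For part (i), I observe that the $\zeta$-action on $c = [\omega_n|\cdots|\omega_1]_{\epsilon_1}$ preserves the $\omega_j$ (hence the $\Delta$ factors) while negating every $\epsilon_j$ that is determined by $\epsilon_1$ and the degrees $\pi(\omega_k)$. With respect to the diagonal $\mathbb{Z}_2$-action on the tensor product, the factors $s_{i+2}^{\epsilon_{n+1}}$ and $[\omega_{n-1-i}|\cdots|\omega_1]_{\epsilon_1}$ transform into their $\zeta$-images, while the scalar prefactors $\epsilon_{n+1-i}$ in the formula each pick up a sign. Assembling these observations yields $f(\zeta \cdot c) = -\zeta \cdot f(c)$.

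For part (ii), the key computational input is that composition in $B\mathbb{Z}$ is addition of integers and the alternating path $s_i^\epsilon = [\epsilon|-\epsilon|\cdots|(-1)^{i+1}\epsilon]$ has every consecutive pair summing to zero. Consequently, all interior face terms $[g_i|\cdots|g_{j+1}g_j|\cdots|g_1]$ in $\partial s_i^\epsilon$ contain a $0$ entry and are therefore degenerate, so only the two outermost faces survive:
\[
\partial s_i^\epsilon \equiv s_{i-1}^{-\epsilon} + (-1)^i s_{i-1}^\epsilon \pmod{\text{degenerate chains}}
\]
for $i \geq 2$, with $\partial s_1^\epsilon = 0$. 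I would then expand $\partial f_n(c)$ by the Leibniz rule $\partial(a \otimes b) = \partial a \otimes b + (-1)^{|a|} a \otimes \partial b$, apply this identity to each $\partial_{B\mathbb{Z}}$-contribution, and compare term by term with $f_{n-1}(\partial c)$.

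The matching proceeds as follows. The face of $\partial c$ that removes $\omega_n$, combined with the shift $-\epsilon_{n+1} = \pi(\omega_n)\epsilon_n$, accounts for the $s_{i+1}^{-\epsilon_{n+1}}$ contributions arising from $\partial s_{i+2}^{\epsilon_{n+1}}$. The face removing $\omega_1$ accounts for the rightmost $\partial_\mathcal{G}$ Leibniz piece after the base-point shift $\epsilon_1 \to \epsilon_2$ is absorbed. The interior faces of $\partial c$, which compose $\omega_{j+1}\omega_j$, match the remaining $\partial_\mathcal{G}$-Leibniz pieces, with the critical observation that when both $\omega_{j+1}$ and $\omega_j$ have degree $-1$ the composite has degree $+1$, so the corresponding $\Delta$ in $f_{n-1}$ vanishes and that term drops out automatically. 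The main obstacle will be the combinatorial bookkeeping of signs, indices, and $\Delta$ factors, with cases distinguished by the length of the rightmost run of degree $-1$ morphisms in $(\omega_n, \omega_{n-1}, \ldots)$; structurally, however, this is a standard twisted Eilenberg--Zilber-type verification and introduces no conceptual difficulty.
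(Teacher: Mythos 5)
Your overall strategy coincides with the paper's: part (i) by direct calculation, and for part (ii) the identity $\partial s_i^{\epsilon} = s_{i-1}^{-\epsilon} + (-1)^i s_{i-1}^{\epsilon}$ modulo degenerate chains, a Leibniz expansion of $\partial f_n(c)$, and a term-by-term comparison with $f_{n-1}(\partial c)$ organized by the run of degree $-1$ morphisms starting at $\omega_n$. Part (i) and the boundary formula for $s_i^{\epsilon}$ are correct as you state them (one small slip: since $\epsilon_{n+1}=\pi(\omega_n)\epsilon_n$, the shift $-\epsilon_{n+1}=\epsilon_n$ you invoke holds only when $\pi(\omega_n)=-1$, i.e.\ precisely in the cases where the relevant $\Delta$ factor is nonzero).

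The gap is that the matching scheme you describe for part (ii) would not close if carried out literally, and it omits exactly the cancellations that constitute the substance of the proof. First, the correspondence is not the clean three-way one you propose: when $\pi(\omega_n)=+1$ the face of $\partial c$ removing $\omega_n$ must be matched against the Leibniz piece $s_1^{\epsilon_{n+1}}\otimes\partial[\omega_n\vert\cdots\vert\omega_1]$, not against any $s^{-\epsilon_{n+1}}$ term; and when $\Delta_{\omega_n,\dots,\omega_{n-i}}=1$ the $(-1)^i s_{i+1}^{\epsilon_{n+1}}$ half of $\partial s_{i+2}^{\epsilon_{n+1}}$ cancels \emph{internally} against the index $i-1$ Leibniz piece of $\partial f_n(c)$, not against anything in $f_{n-1}(\partial c)$. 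More seriously, your ``critical observation'' (both morphisms of degree $-1$ give a composite of degree $+1$, killing its $\Delta$) handles only the easy sub-case. The delicate case is when exactly one of $\omega_n,\dots,\omega_{n-i}$ has degree $+1$ and $\pi(\omega_{n-i})=-1$: then the interior faces $[\omega_n\vert\cdots\vert\omega_{j+1}\omega_j\vert\cdots\vert\omega_1]$ adjacent to that morphism have composites of degree $-1$, their $\Delta$ factors survive, and $f_{n-1}$ of these faces yields terms $s_{i+1}^{\epsilon_{n+1}}\otimes[\omega_{n-1-i}\vert\cdots\vert\omega_1]$ with no counterpart in $\partial f_n(c)$ at all (there $\Delta_{\omega_n,\dots,\omega_{n-i}}=0$). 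One must show these cancel among themselves — two contributions with consecutive values of $j$ and opposite signs — or, when $\pi(\omega_n)=+1$, against the contribution of the face removing $\omega_n$ (whose $\Delta_{\omega_{n-1},\dots,\omega_{n-i}}=1$). This is precisely the case analysis occupying the bulk of the paper's proof; declaring it routine while sketching a matching that misses it leaves the main step of (ii) unproved.
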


\begin{proof}
The first statement follows from a direct calculation using equation \eqref{eq:groupActionChain}.

Consider then the second statement. To begin, note that
\[
\partial s_i^{\epsilon} = s_{i-1}^{-\epsilon} + (-1)^i s_{i-1}^{\epsilon}  + \textnormal{(degenerate chains)}.
\]
Since we are not concerned with degenerate elements, we henceforth omit them from all equalities. Using this, we find that the terms of $\partial f_n([\omega_n \vert \cdots \vert \omega_1])$ and $-f_{n-1}(\partial [\omega_n \vert \cdots \vert \omega_1])$ which involve $s_1^{\pm}$ are
\[
- \epsilon_{n+1} s_1^{n+1} \otimes \partial [\omega_n \vert \cdots \vert \omega_1] +
\epsilon_{n+1} \Delta_{n}
\Big(
s^{-(n+1)}_{1} + s^{n+1}_1
\Big) \otimes [\omega_{n-1} \vert \cdots \vert \omega_1]
\]
and
\begin{multline*}
-\epsilon_n s_1^n \otimes [\omega_{n-1} \vert \cdots \vert \omega_1]
- \epsilon_{n+1} \sum_{j=1}^{n-1} (-1)^{n-j} s_1^{n+1} \otimes [\omega_n \vert \cdots \vert \omega_{j+1} \omega_j \vert \cdots \vert \omega_1]
\\
- \epsilon_{n+1} (-1)^{n} s_1^{n+1} \otimes [\omega_n \vert \cdots \vert \omega_2],
\end{multline*}
respectively. The term $[\omega_n \vert \cdots \vert \omega_2]$ appears with coefficient $(-1)^{n+1} \epsilon_{n+1} s_1^{n+1}$ in both of these expressions while $[\omega_{n-1} \vert \cdots \vert \omega_1]$ appears with coefficients
\[
- \epsilon_{n+1} s_1^{n+1} +
 \epsilon_{n+1} \Delta_{n}
\Big(
s^{-(n+1)}_{1} + s^{n+1}_1
\Big)
\qquad
\mbox{and}
\qquad
- \epsilon_n s_1^n.
\]
When $\Delta_n=0$, these are plainly equal. When $\Delta_n=1$, the first becomes
\[
\epsilon_n s_1^{-n} -
\epsilon_{n}
\Big(
s^{n}_{1} + s^{-n}_1
\Big)
=
- \epsilon_n s_1^{n}.
\]

Fix now $i \geq 1$. The terms of $\partial f_n([\omega_n \vert \cdots \vert \omega_1])$ and $-f_{n-1}(\partial [\omega_n \vert \cdots \vert \omega_1])$ which involve $s_{i+1}^{\pm}$ are
\begin{align}
\begin{split}
\label{eq:firstTerm}
(-1)^{i} \epsilon_{n+1-i} \Delta_{n, \dots, n-i}
\Big(
s^{-(n+1)}_{i+1} + & (-1)^{i} s^{n+1}_{i+1}  \Big) \otimes [\omega_{n-1-i} \vert \cdots \vert \omega_1]
\\
& +
\epsilon_{n+2-i} \Delta_{n, \dots, n+1-i}
s^{n+1}_{i+1} \otimes \partial [\omega_{n-i} \vert \cdots \vert \omega_1]
\end{split}
\end{align}
and
\begin{align}
\begin{split}
\label{eq:secondTerm}
- & (-1)^{i-1} \epsilon_{n+1-i} \Delta_{n-1, \dots, n-i} s_{i+1}^n \otimes [\omega_{n-1-i} \vert \cdots \vert \omega_1]
\\
& -(-1)^{n-1}
\sum_{j < n - i} (-1)^{j +i} \epsilon_{n-i} \Delta_{n, \dots, n+1-i} s_{i+1}^{n+1} \otimes [\omega_{n-i} \vert \cdots \vert \omega_{j+1} \omega_j \vert \cdots \vert \omega_1]
\\
& -
(-1)^{n-1+i} \epsilon_{j_*+2}\Delta_{n, \dots, (j_*+1)j_*} s^{n+1}_{i+1} \otimes [\omega_{j_*-1} \vert \cdots \vert \omega_1]
\\
& -
\sum_{j > n-i} (-1)^{n-1+j +i} \epsilon_{n+1-i} \Delta_{n, \dots, (j+1) j, \dots, n-i} s_{i+1}^{n+1} \otimes [\omega_{n-i-1} \vert \cdots \vert \omega_1],
\end{split}
\end{align}
respectively, where $j_*=n-i$. The coefficients of $[\omega_{n-i} \vert \cdots \vert \omega_{k+1} \omega_k \vert \cdots \vert \omega_1]$ in \eqref{eq:firstTerm} and \eqref{eq:secondTerm}
are both $(-1)^{n-i-k}\epsilon_{n+2-i} \Delta_{n, \dots, n+1-i} s^{n+1}_{i+1}$ while the coefficients of $[\omega_{n-1-i} \vert \cdots \vert \omega_1]$ are
\begin{equation}
\label{eq:firstTermDiff}
(-1)^{i} \epsilon_{n+1-i} \Delta_{n, \dots, n-i}
\Big(
s^{-(n+1)}_{i+1} + (-1)^{i} s^{n+1}_{i+1} \Big)+ \epsilon_{n+2-i} \Delta_{n, \dots, n+1-i}
s^{n+1}_{i+1}
\end{equation}
and
\begin{align}
\begin{split}
\label{eq:secondTermDiff}
-(-1)^{i-1} \epsilon_{n+1-i} \Delta_{n-1, \dots, n-i} & s_{i+1}^n - 
(-1)^{n-1+i + j_*} \epsilon_{j_*+2} \Delta_{n, \dots, (j_*+1)j_*} s^{n+1}_{i+1}
\\
& -
\sum_{j > n-i} (-1)^{n-1+j+i} \epsilon_{n+1-i} \Delta_{n, \dots, (j+1) j, \dots, n-i} s_{i+1}^{n+1}.
\end{split}
\end{align}
The sum \eqref{eq:firstTermDiff} is non-zero in exactly two cases:
\begin{itemize}
\item $\Delta_{n, \dots, n-i} =1$, in which case \eqref{eq:firstTermDiff} is
\[
(-1)^i \epsilon_{n+1-i} s^{n}_{i+1} + (-1)^{i}  \epsilon_{n+1-i} ((-1)^i + (-1)^{i-1}  ) s^{-n}_{i+1}
=
(-1)^i \epsilon_{n+1-i} s^{n}_{i+1}
\]
which is equal to \eqref{eq:secondTermDiff}.

\item $\Delta_{n, \dots, n+1-i} =1$ and $\pi(\omega_{n-i})=1$, in which case  \eqref{eq:firstTermDiff} is $\epsilon_{n+2-i} 
s^{n+1}_{i+1}$, which is equal to \eqref{eq:secondTermDiff} (corresponding to the term with $j_* = n-i$).
\end{itemize}

It remains to consider the case in which \eqref{eq:firstTermDiff} is zero. It suffices to assume that exactly one of $\omega_n, \dots, \omega_{n-i}$ has degree $+1$; otherwise \eqref{eq:secondTermDiff} is zero. We can also assume that $\pi(\omega_{n-i}) = -1$, the case $\pi(\omega_{n-i}) =1$ having been treated above. We need to show that \eqref{eq:secondTermDiff} is zero. If $\pi(\omega_n) = 1$, then \eqref{eq:secondTermDiff} is equal to (take $j=n-1$)
\[
-(-1)^{i-1} \epsilon_{n+1-i} s_{i+1}^n -(-1)^{n-1+n-1+i}
\epsilon_{n+1-i} s_{i+1}^{n+1}
=0.
\]
In all other cases, \eqref{eq:secondTermDiff} is equal to
\[
-\sum_{j > n-i} (-1)^{n-1+j+i} \epsilon_{n+1-i} \Delta_{n, \dots, (j+1) j, \dots, n-i} s_{i+1}^{n+1} \otimes [\omega_{n-i-1} \vert \cdots \vert \omega_1].
\]
This sum vanishes, as its two non-zero terms have consecutive $j$ indices.
\end{proof}

Let $\mathsf{ez}_f : C_{\bullet}(\mathcal{G}) \rightarrow C_{\bullet+1}(B \mathbb{Z} \times \mathcal{G})$ be the composition $\mathsf{EZ} \circ f$.

\begin{Prop}
\label{prop:pushforwardDefined}
Let $\kappa : \hat{\mathcal{G}} \rightarrow B \mathbb{Z}_2$ be either the trivial functor or the $\mathbb{Z}_2$-grading $\pi_{\hat{\mathcal{G}}}$. Then the map
\[
\Hom_{\mathbb{Z}}(C_{\bullet}(B \mathbb{Z} \times \mathcal{G}), \mathsf{A}) \rightarrow
\Hom_{\mathbb{Z}}(C_{\bullet-1}(\mathcal{G}), \mathsf{A}),
\qquad
\hat{\phi} \mapsto \hat{\phi} \circ \mathsf{ez}_f
\]
defines an abelian group homomorphism
\[
\widetilde{\pr}_{\mathcal{G}!} : C^{\bullet + \pi_{B \mathbb{Z} \times_{\mathbb{Z}_2} \mathcal{G}} + \pi_{B \mathbb{Z} \times_{\mathbb{Z}_2} \mathcal{G}}^* \kappa}(B \mathbb{Z} \times_{\mathbb{Z}_2} \mathcal{G})
\rightarrow
C^{\bullet-1 + \kappa}(\hat{\mathcal{G}}).
\]
which anti-commutes with the differentials.
\end{Prop}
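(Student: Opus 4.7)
The plan is to deduce the proposition from Proposition \ref{prop:invarianceProperties} together with the translation between twisted cochains on $\hat{\mathcal{G}}$ (respectively on $B \mathbb{Z} \times_{\mathbb{Z}_2} \mathcal{G}$) and $\mathbb{Z}[\mathbb{Z}_2]$-equivariant normalised cochains on the associated double cover provided by Lemma \ref{lem:topVsAlg} and its untwisted analogue. All the combinatorial work has already been done at the chain level; what remains is a careful matching of signs and twists.

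First, I would reinterpret both sides of the putative map as spaces of equivariant normalised cochains. For $\kappa$ trivial, the domain becomes $\Hom^{\textnormal{n}}_{\mathbb{Z}[\mathbb{Z}_2]}(C_{\bullet}(B \mathbb{Z} \times \mathcal{G}), \mathsf{A}_-)$ and the codomain becomes $\Hom^{\textnormal{n}}_{\mathbb{Z}[\mathbb{Z}_2]}(C_{\bullet}(\mathcal{G}), \mathsf{A})$; for $\kappa = \pi_{\hat{\mathcal{G}}}$, the total twist on the domain is trivial, giving $\mathsf{A}$-valued equivariant cochains on $B \mathbb{Z} \times \mathcal{G}$ and $\mathsf{A}_-$-valued equivariant cochains on $\mathcal{G}$. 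In either case the coefficient action on the two sides differs by a sign. Under these identifications, the formula $\hat{\phi} \mapsto \hat{\phi} \circ \mathsf{ez}_f$ is the pullback along $\mathsf{ez}_f = \mathsf{EZ} \circ f$, so it suffices to check that this pullback sends normalised equivariant cochains to normalised equivariant cochains and anti-commutes with differentials in each case.

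For equivariance, I would use that $\mathsf{EZ}$ is strictly $\mathbb{Z}_2$-equivariant for the diagonal action, so Proposition \ref{prop:invarianceProperties}(i) upgrades to $\mathsf{ez}_f(\zeta \cdot c) = -\zeta \cdot \mathsf{ez}_f(c)$. Writing $\mathsf{A}$ multiplicatively, if $\hat{\phi}$ is equivariant for the coefficient action $\mathsf{A}_{\varepsilon}$, $\varepsilon \in \{\pm\}$, then
\[
(\hat{\phi} \circ \mathsf{ez}_f)(\zeta \cdot c) = \hat{\phi}(-\zeta \cdot \mathsf{ez}_f(c)) = (\hat{\phi} \circ \mathsf{ez}_f)(c)^{-\varepsilon 1},
\]
so the extra inversion coming from Proposition \ref{prop:invarianceProperties}(i) toggles the coefficient action from $\mathsf{A}_\pm$ to $\mathsf{A}_\mp$, which is precisely the flip recorded above in each of the two cases of $\kappa$. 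For the differential, I would compute $d(\hat{\phi} \circ \mathsf{ez}_f)(c) = \hat{\phi}(\mathsf{ez}_f(\partial c))$ and apply Proposition \ref{prop:invarianceProperties}(ii): the right-hand side equals $\hat{\phi}(-\partial \mathsf{ez}_f(c))$ up to a sum of tensors in which some factor is a degenerate chain. Since $\hat{\phi}$ is normalised this remainder is annihilated, yielding $d(\hat{\phi} \circ \mathsf{ez}_f) = (d \hat{\phi} \circ \mathsf{ez}_f)^{-1}$, which is the asserted anti-commutation.

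The main obstacle is not the differential compatibility itself, which is immediate from Proposition \ref{prop:invarianceProperties}(ii) once normalisation is invoked, but rather coordinating the sign in Proposition \ref{prop:invarianceProperties}(i) with the non-trivial $\mathbb{Z}_2$-action on $\mathsf{A}_-$ so that the target complex is hit correctly in both cases of $\kappa$. This amounts to the short case check above, after which the proposition drops out.
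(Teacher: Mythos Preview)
Your proposal is correct and follows essentially the same approach as the paper: translate via Lemma \ref{lem:topVsAlg} to equivariant normalised cochains on the double cover, then use Proposition \ref{prop:invarianceProperties}(i) together with the $\mathbb{Z}_2$-equivariance of $\mathsf{EZ}$ for the equivariance check, and Proposition \ref{prop:invarianceProperties}(ii) together with normalisation and the fact that $\mathsf{EZ}$ is a chain map preserving degeneracy for the anti-commutation with differentials. Your unified treatment of the two cases of $\kappa$ via the sign-flip in the coefficient action is a mild streamlining of the paper's ``the other case is completely analogous''.
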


\begin{proof}
We consider the case in which $\kappa$ is trivial; the other case is completely analogous. Let $\hat{\phi} \in \Hom^{\textnormal{n}}_{\mathbb{Z}[\mathbb{Z}_2]}(C_{\bullet}(B \mathbb{Z} \times \mathcal{G}), \mathsf{A}_-)$ and $c \in C_{\bullet-1}(\mathcal{G})$. We compute
\begin{multline*}
\widetilde{\pr}_{\mathcal{G}!} (\hat{\phi})(\zeta \cdot c)
=
\hat{\phi} (\mathsf{EZ}(f( \zeta \cdot c)))
=
\hat{\phi} (\mathsf{EZ}(- \zeta \cdot f(c)))
=
\hat{\phi} (- \zeta \cdot \mathsf{EZ}(f(c))) \\
=
\hat{\phi} (\mathsf{EZ}(f(c)))
=
\widetilde{\pr}_{\mathcal{G}!} (\hat{\phi})(c).
\end{multline*}
The second, third and fourth equalities follow from Proposition \ref{prop:invarianceProperties}(i), the naturality (and hence $\mathbb{Z}_2$-equivariance) of $\mathsf{EZ}$ and the $\mathbb{Z}[\mathbb{Z}_2]$-linearity of $\hat{\phi}$, respectively. Lemma \ref{lem:topVsAlg} therefore implies that we obtain a map $C^{\bullet + \pi_{B \mathbb{Z} \times_{\mathbb{Z}_2} \mathcal{G}}}(B \mathbb{Z} \times_{\mathbb{Z}_2} \mathcal{G}) \rightarrow C^{\bullet-1}(\hat{\mathcal{G}})$.

To see that $\widetilde{\pr}_{\mathcal{G}!}$ anti-commutes with the differentials, we compute
\begin{multline*}
(d\widetilde{\pr}_{\mathcal{G}!}(\hat{\phi}))(c)
=
\hat{\phi}(\mathsf{EZ}(f(\partial c)))
=
\hat{\phi}(\mathsf{EZ}(-\partial f(c)))
=
\hat{\phi}(-\partial \mathsf{EZ}(f(c))) \\
=
(d\hat{\phi})(\mathsf{EZ}(f(c)))^{-1}
=
\widetilde{\pr}_{\mathcal{G}!}(d \hat{\phi})(c)^{-1}.
\end{multline*}
The second and third equalities follow from Proposition \ref{prop:invarianceProperties}(ii), the normalization of $\hat{\phi}$ and that $\mathsf{EZ}$ is a chain map which sends degenerate elements to degenerate chains.
\end{proof}

\begin{Rem}
The map $[1] \otimes - : C_{\bullet}(\mathcal{G}) \rightarrow C_1(B \mathbb{Z}) \otimes_{\mathbb{Z}} C_{\bullet}(\mathcal{G})$, used to define $\pr_{\mathcal{G}!}$ in Lemma \ref{lem:twistedPush}, satisfies part (ii) of Proposition \ref{prop:invarianceProperties} (without working modulo degenerate elements), but not part (i). Indeed, $[-1]$ and $-[1]$ are homologous but not equal.
\end{Rem}

The map $\widetilde{\pr}_{\mathcal{G}!}$ has the expected functorial properties of a pushforward.

\begin{Prop}
Let $F: \hat{\mathcal{G}} \rightarrow \hat{\mathcal{H}}$ be a functor of $\mathbb{Z}_2$-graded groupoids with induced morphism of double covers $F: \mathcal{G} \rightarrow \mathcal{H}$. Then there is a commutative diagram
\[
\begin{tikzpicture}[baseline= (a).base]
\node[scale=1] (a) at (0,0){
\begin{tikzcd}[column sep={15.0em,between origins}, row sep={4.0em,between origins}]
C^{\bullet + \pi_{B \mathbb{Z} \times_{\mathbb{Z}_2} \mathcal{H}} + \pi_{B \mathbb{Z} \times_{\mathbb{Z}_2} \mathcal{H}}^* \kappa}(B \mathbb{Z} \times_{\mathbb{Z}_2} \mathcal{H}) \arrow{d}[left]{(\id_{B \mathbb{Z}} \times F)^*} \arrow{r}[above]{\widetilde{\pr}_{\mathcal{H}!}} & C^{\bullet-1 + \kappa}(\hat{\mathcal{H}}) \arrow{d}[right]{F^*}\\
C^{\bullet + \pi_{B \mathbb{Z} \times_{\mathbb{Z}_2} \mathcal{G}} + \pi_{B \mathbb{Z} \times_{\mathbb{Z}_2} \mathcal{G}}^* F^*\kappa}(B \mathbb{Z} \times_{\mathbb{Z}_2} \mathcal{G}) \arrow{r}[below]{\widetilde{\pr}_{\mathcal{G}!}} & C^{\bullet-1 + F^* \kappa}(\hat{\mathcal{G}}).
\end{tikzcd}
};
\end{tikzpicture}
\]
\end{Prop}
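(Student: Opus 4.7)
The plan is to trace the claim back to naturality of the chain-level ingredients defining $\widetilde{\pr}_{\mathcal{G}!}$. Unwinding definitions, a class $\hat{\phi} \in C^{\bullet + \pi + \pi^*\kappa}(B\mathbb{Z} \times_{\mathbb{Z}_2} \mathcal{H})$ is, via Lemma \ref{lem:topVsAlg}, the same thing as a $\mathbb{Z}[\mathbb{Z}_2]$-linear cochain on $C_\bullet(B\mathbb{Z} \times \mathcal{H})$ with values in the appropriate twisted module, and $\widetilde{\pr}_{\mathcal{H}!}(\hat{\phi})$ is the cochain on $\hat{\mathcal{H}}$ corresponding (again by Lemma \ref{lem:topVsAlg}) to $\hat{\phi} \circ \mathsf{ez}_f$. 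Going around the two sides of the square therefore amounts to checking that, for every $c \in C_\bullet(\mathcal{G})$,
\[
\hat{\phi}\bigl((\id_{B\mathbb{Z}} \times F)_* \circ \mathsf{ez}_f^{\mathcal{G}}(c)\bigr) = \hat{\phi}\bigl(\mathsf{ez}_f^{\mathcal{H}} \circ F_*(c)\bigr).
\]
Thus it suffices to prove the chain-level naturality
\[
(\id_{B\mathbb{Z}} \times F)_* \circ \mathsf{ez}_f^{\mathcal{G}} = \mathsf{ez}_f^{\mathcal{H}} \circ F_* : C_\bullet(\mathcal{G}) \rightarrow C_{\bullet+1}(B\mathbb{Z} \times \mathcal{H}).
\]

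The first step is to observe that $F$ is a functor over $B\mathbb{Z}_2$, so $\pi_{\hat{\mathcal{H}}} \circ F = \pi_{\hat{\mathcal{G}}}$. Consequently, the induced functor on double covers $F : \mathcal{G} \rightarrow \mathcal{H}$ is strictly $\mathbb{Z}_2$-equivariant for the deck transformations, and on chains it acts by $F_*[\omega_n \vert \cdots \vert \omega_1]_{\epsilon_1} = [F\omega_n \vert \cdots \vert F\omega_1]_{\epsilon_1}$, since the sign data $\epsilon_i = \pi(\omega_{\leq i-1})\epsilon_1$ depends only on the grading.

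Next, I would verify naturality of $f$ itself, i.e.\ $(\id \otimes F_*) \circ f = f \circ F_*$ as maps $C_\bullet(\mathcal{G}) \rightarrow \bigoplus_k C_k(B\mathbb{Z}) \otimes C_{\bullet+1-k}(\mathcal{H})$. This is a direct inspection of the formula defining $f_n$: the scalars $\epsilon_j$ and the indicator $\Delta_{\omega_n,\ldots,\omega_{n-i}}$ depend only on the degrees $\pi(\omega_j)$, which $F$ preserves; the chains $s_i^\epsilon \in C_i(B\mathbb{Z})$ are intrinsic to $B\mathbb{Z}$; and the residual $\mathcal{G}$-chain $[\omega_{n-1-i}\vert\cdots\vert\omega_1]_{\epsilon_1}$ is mapped to its $F$-image. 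Naturality of the Eilenberg--Zilber map $\mathsf{EZ}$ with respect to $\id_{B\mathbb{Z}} \times F$ then gives naturality of $\mathsf{ez}_f = \mathsf{EZ} \circ f$.

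The only mildly delicate point, and the one I would take the most care with, is to confirm compatibility of the $\mathbb{Z}[\mathbb{Z}_2]$-linearity constraints with $F$: the pullback $(\id_{B\mathbb{Z}} \times F)^*\hat{\phi}$ must remain $\mathbb{Z}[\mathbb{Z}_2]$-linear so that it represents a cochain on the quotient $B\mathbb{Z} \times_{\mathbb{Z}_2} \mathcal{G}$ with twist $\pi + \pi^* F^*\kappa$. This is immediate from $\mathbb{Z}_2$-equivariance of $\id_{B\mathbb{Z}} \times F$, but writing it out cleanly is where one confirms that the twists match on the left-hand vertical arrow. Beyond this bookkeeping, the proof is a routine naturality check and involves no new combinatorial input beyond the definitions.
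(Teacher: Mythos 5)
Your proposal is correct and follows essentially the same route as the paper's proof: write out the induced functor $F:\mathcal{G}\rightarrow\mathcal{H}$ explicitly, check by direct inspection that $(\id_{C_\bullet(B\mathbb{Z})}\otimes F_*)\circ f_{\mathcal{G}} = f_{\mathcal{H}}\circ F_*$ (possible because the coefficients $\epsilon_j$, $\Delta_{\omega_n,\dots,\omega_{n-i}}$ and the chains $s_i^\epsilon$ depend only on the $\mathbb{Z}_2$-grading, which $F$ preserves), and conclude via naturality of $\mathsf{EZ}$ together with the identification of $\widetilde{\pr}_{!}$ through Proposition \ref{prop:pushforwardDefined} and Lemma \ref{lem:topVsAlg}. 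Your extra care with the $\mathbb{Z}[\mathbb{Z}_2]$-linearity of the pullback is sound bookkeeping that the paper leaves implicit.
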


\begin{proof}
Explicitly, the functor $F: \mathcal{G} \rightarrow \mathcal{H}$ is given on a diagram by
\[
\Big[
(x_1, \epsilon_1) \xrightarrow[]{\omega} (x_2, \epsilon_2)
\Big]
\mapsto
(F(x_1), \epsilon_1) \xrightarrow[]{F(\omega)} (F(x_2), \epsilon_2).
\]
Direct inspection then shows that $(\id_{C_{\bullet}(B \mathbb{Z})} \otimes F_*) \circ f_{\mathcal{G}}
=
f_{\mathcal{H}} \circ F_*$. Now use the naturality of $\mathsf{EZ}$ and Proposition \ref{prop:pushforwardDefined}.
\end{proof}

\subsection{Twisted loop transgression}
\label{sec:twistedTransGrpdFin}

We define and explicitly compute variants of loop transgression maps, producing possibly twisted cochains on the (unoriented) quotient loop groupoid $\Lambda_{\pi}^{(\refl)} \hat{\mathcal{G}}$ from possibly twisted cochains on $\hat{\mathcal{G}}$.

Let $\pi_{\hat{\mathcal{G}}}: \hat{\mathcal{G}} \rightarrow B \mathbb{Z}_2$ be an essentially finite $\mathbb{Z}_2$-graded groupoid. Consider $B \mathbb{Z}$ with its trivial $\mathbb{Z}_2$-action. Then \eqref{diag:oriLoopDiagGrpd} is a diagram of strictly equivariant functors of groupoids with strict $\mathbb{Z}_2$-actions. It follows that there is a strictly commutative diagram
\begin{equation}
\label{diag:quotCorr}
\begin{tikzpicture}[baseline= (a).base]
\node[scale=1.0] (a) at (0,0){
\begin{tikzcd}[column sep={10.0em,between origins}, row sep={2.0em,between origins}]
{} & B \mathbb{Z} \times \Lambda \mathcal{G} \arrow{ld}[above]{\ev} \arrow{rd}[above]{\pr_{\Lambda \mathcal{G}}} \arrow{dd}[left]{\pi_{B \mathbb{Z} \times_{\mathbb{Z}_2} \Lambda \mathcal{G}}}& {} \\
\mathcal{G} \arrow{dd}[left]{\pi_{\hat{\mathcal{G}}}} & {} & \Lambda \mathcal{G} \arrow{dd}[right]{\pi_{\Lambda_{\pi}} \hat{\mathcal{G}}} \\ 
{} & B \mathbb{Z} \times_{\mathbb{Z}_2} \Lambda \mathcal{G} \arrow{ld}[below]{\widetilde{\ev}} \arrow{rd}[below]{\widetilde{\pr}_{\Lambda \mathcal{G}}} & {} \\
\hat{\mathcal{G}} & {} & \Lambda_{\pi} \hat{\mathcal{G}}
\end{tikzcd}
};
\end{tikzpicture}
\end{equation}
whose squares are Cartesian. There is a natural equivalence $B \mathbb{Z} \times_{\mathbb{Z}_2} \Lambda \mathcal{G} \simeq B \mathbb{Z} \times \Lambda_{\pi} \hat{\mathcal{G}}$ under which $\widetilde{\pr}_{\Lambda \mathcal{G}}$ is identified with $\pr_{\Lambda_{\pi} \hat{\mathcal{G}}}$ and $\pi_{B \mathbb{Z} \times_{\mathbb{Z}_2} \Lambda \mathcal{G}}$ with $\id_{B\mathbb{Z}} \times \pi_{\Lambda_{\pi} \hat{\mathcal{G}}}$.

Define the twisted loop transgression map as the composition
\begin{multline*}
\uptau_{\pi}:
C^{\bullet + \pi_{\hat{\mathcal{G}}}}(\hat{\mathcal{G}})
\xrightarrow[]{\widetilde{\ev}^*}
C^{\bullet +\widetilde{\ev}^*\pi_{\hat{\mathcal{G}}}}(B \mathbb{Z} \times_{\mathbb{Z}_2} \Lambda \mathcal{G})
\simeq
C^{\bullet +\widetilde{\pr}_{\Lambda \mathcal{G}}^*\pi_{\Lambda_{\pi} \hat{\mathcal{G}}}}(B \mathbb{Z} \times_{\mathbb{Z}_2} \Lambda \mathcal{G})
\\
\xrightarrow[]{\widetilde{\pr}_{\Lambda \mathcal{G}!}}
C^{\bullet-1 + \pi_{\Lambda_{\pi} \hat{\mathcal{G}}}}(\Lambda_{\pi} \hat{\mathcal{G}}).
\end{multline*}
The middle isomorphism is constructed using the Cartesian squares of diagram \eqref{diag:quotCorr}. The final map $\widetilde{\pr}_{\Lambda \mathcal{G}!} = (\id_{B\mathbb{Z}} \times \pi_{\Lambda_{\pi} \hat{\mathcal{G}}})_!$ is that of Lemma \ref{lem:twistedPush}. The map $\uptau_{\pi}$ anti-commutes with the differentials.

\begin{Thm}
\label{thm:quotTrans}
Let $\hat{\lambda} \in C^{n+1 + \pi_{\hat{\mathcal{G}}}}(\hat{\mathcal{G}})$ and $[\omega_n \vert \cdots \vert \omega_1]\gamma \in C_n(\Lambda_{\pi} \hat{\mathcal{G}})$. There is an equality
\[
\uptau_{\pi}(\hat{\lambda})([\omega_n \vert \cdots \vert \omega_1]\gamma)
= 
\prod_{i=0}^{n} \hat{\lambda}([\omega_n \vert \cdots \vert \omega_{i+1} \vert \gamma_{i+1} \vert \omega_i \vert \cdots \vert \omega_1])^{(-1)^{n-i}}.
\]
\end{Thm}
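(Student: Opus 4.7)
The plan is to rewrite all three stages of $\uptau_{\pi}$ in the $\mathbb{Z}[\mathbb{Z}_2]$-equivariant language of Lemma~\ref{lem:topVsAlg}, then carry out an explicit chain computation in direct parallel with Willerton's derivation of~\eqref{eq:willertonTrans}. I start by observing two simplifications that make the pushforward concrete. First, the $\mathbb{Z}_2$-action on $B \mathbb{Z}$ in diagram~\eqref{diag:quotCorr} is trivial, whence $B \mathbb{Z} \times_{\mathbb{Z}_2} \Lambda \mathcal{G} = B \mathbb{Z} \times \Lambda_{\pi} \hat{\mathcal{G}}$ and the pushforward $\widetilde{\pr}_{\Lambda \mathcal{G}!}$ is the elementary one of Lemma~\ref{lem:twistedPush}, i.e.\ precomposition with $\mathsf{ez}_{[1]} = \mathsf{EZ} \circ ([1] \otimes -)$; the more elaborate $\mathsf{ez}_f$ of Proposition~\ref{prop:pushforwardDefined} is not needed here. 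Second, on a morphism $(a, \omega)$ of $B \mathbb{Z} \times \Lambda_{\pi} \hat{\mathcal{G}}$ one has $\widetilde{\ev}^{*} \pi_{\hat{\mathcal{G}}}(a, \omega) = \pi(\omega \gamma^{a}) = \pi(\omega) = \widetilde{\pr}^{*}_{\Lambda \mathcal{G}} \pi_{\Lambda_{\pi} \hat{\mathcal{G}}}(a, \omega)$, so the Cartesian isomorphism is the identity on cochains. Together, these identify the equivariant incarnation of $\uptau_{\pi}(\hat{\lambda})$:
\[
\Phi_{-}(\uptau_{\pi}(\hat{\lambda})) = \Phi_{-}(\hat{\lambda}) \circ \ev_{*} \circ \mathsf{ez}_{[1]} : C_n(\Lambda \mathcal{G}) \rightarrow \mathsf{A}_{-}.
\]

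Next, I lift $[\omega_n \vert \cdots \vert \omega_1] \gamma$ to $[\omega_n \vert \cdots \vert \omega_1]_{\epsilon_1} \gamma \in C_n(\Lambda \mathcal{G})$ with $\epsilon_1 := \pi(\omega_{\leq n})$, as dictated by $\Psi_{-}$. The Eilenberg--Zilber shuffle formula used to derive~\eqref{eq:willertonTrans} applies verbatim, and $\ev_{*}$ replaces the inserted morphism $(1, \id_{\gamma_{i+1}})$ in each slot by $\gamma_{i+1}$, giving
\[
\ev_{*} \mathsf{ez}_{[1]}([\omega_n \vert \cdots \vert \omega_1]_{\epsilon_1} \gamma) = \sum_{i=0}^{n} (-1)^{n-i} [\omega_n \vert \cdots \vert \omega_{i+1} \vert \gamma_{i+1} \vert \omega_i \vert \cdots \vert \omega_1]_{\epsilon_1}
\]
in $C_{n+1}(\mathcal{G})$.

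Pairing each summand with $\Phi_{-}(\hat{\lambda})$ yields $\hat{\lambda}([\omega_n \vert \cdots \vert \gamma_{i+1} \vert \cdots \vert \omega_1])^{\epsilon_{n+2}}$, where $\epsilon_{n+2}$ is the terminal sign of the lifted $(n+1)$-chain. The crucial observation is that each $\gamma_{i+1}$ is a loop of degree $+1$ in $\hat{\mathcal{G}}$ (by the very definition of $\Lambda_{\pi} \hat{\mathcal{G}}$), so inserting $\gamma_{i+1}$ does not alter the cumulative grading, and $\epsilon_{n+2} = \pi(\omega_{\leq n}) \epsilon_1 = \pi(\omega_{\leq n})^{2} = 1$. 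Each factor therefore carries exponent $(-1)^{n-i}$, and the product over $i$ is the advertised formula. There is no deep obstacle: the only care needed is in parsing the definition of $\uptau_{\pi}$ so as to recognise the simple form of $\widetilde{\pr}_{\Lambda \mathcal{G}!}$, and in tracking the terminal sign, whose collapse to $+1$ thanks to $\pi(\gamma_{i+1}) = +1$ is precisely what prevents extraneous $\hat{\lambda}([\gamma \vert \gamma^{-1}])$-type corrections (of the kind visible in the formula for $\uptau_{\pi}^{\refl}$) from arising.
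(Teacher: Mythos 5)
Your proposal is correct and takes essentially the same route as the paper's proof: translate everything through the isomorphisms $\Phi_-,\Psi_-$ of Lemma \ref{lem:topVsAlg}, identify $\widetilde{\pr}_{\Lambda \mathcal{G}!}$ with precomposition by $\mathsf{ez}_{[1]}$ from Lemma \ref{lem:twistedPush}, and rerun the Eilenberg--Zilber computation of Section \ref{sec:transGrpd} on the lift with initial sign $\pi(\omega_{\leq n})$. Your observation that the inserted loops $\gamma_{i+1}$ have degree $+1$, so the terminal sign collapses to $+1$ and no extra corrections appear, is exactly the sign cancellation invoked in the paper.
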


\begin{proof}
Consider the commutative diagram
\[
\begin{tikzpicture}[baseline= (a).base]
\node[scale=0.85] (a) at (0,0){
\begin{tikzcd}[column sep={15.0em,between origins}, row sep={2.0em,between origins}]
{} & \Hom^{\textnormal{n}}_{\mathbb{Z}[\mathbb{Z}_2]}(C_{\bullet}(B \mathbb{Z} \times \Lambda \mathcal{G}), \mathsf{A}_-) \arrow{rd}[above]{\pr_{\Lambda \mathcal{G}!}} \arrow{dd}[left]{\Psi_-} & {} \\
\Hom^{\textnormal{n}}_{\mathbb{Z}[\mathbb{Z}_2]}(C_{\bullet}(\mathcal{G}), \mathsf{A}_-) \arrow{dd}[left]{\Psi_-} \arrow{ur}[above]{\ev^*} & {} & \Hom^{\textnormal{n}}_{\mathbb{Z}[\mathbb{Z}_2]}(C_{\bullet-1}(\Lambda \mathcal{G}), \mathsf{A}_-) \arrow{dd}[right]{\Psi_-}
\\
{} & C^{\bullet + \pi_{B \mathbb{Z} \times_{\mathbb{Z}_2} \Lambda \mathcal{G}}}(B \mathbb{Z} \times_{\mathbb{Z}_2} \Lambda \mathcal{G}) \arrow{rd}[below]{\widetilde{\pr}_{\Lambda \mathcal{G}!}} & {} \\
C^{\bullet + \pi_{\hat{\mathcal{G}}}}(\hat{\mathcal{G}}) \arrow{ur}[below]{\widetilde{\ev}^*} & {} & C^{\bullet -1 + \pi_{\Lambda_{\pi} \hat{\mathcal{G}}}}(\Lambda_{\pi} \hat{\mathcal{G}}).
\end{tikzcd}
};
\end{tikzpicture}
\]
The vertical maps are chain isomorphisms by Lemma \ref{lem:topVsAlg}. Setting $\epsilon_1 = +1$, we find
\begin{eqnarray*}
\uptau_{\pi}(\hat{\lambda})([\omega_n \vert \cdots \vert \omega_1]\gamma)
&=&
(\Psi_- \circ \pr_{\Lambda \mathcal{G}!} \circ \ev^* \circ \Phi_-)(\hat{\lambda})([\omega_n \vert \cdots \vert \omega_1]\gamma)\\
&=&
\Phi_-(\hat{\lambda})(\ev_* \mathsf{EZ}([1] \otimes [\omega_n \vert \cdots \vert \omega_1](\gamma,\epsilon_{n+1}))) \\
&=&
\prod_{i=0}^{n} \hat{\lambda}([\omega_n \vert \cdots \vert \omega_{i+1} \vert \gamma_{i+1} \vert \omega_i \vert \cdots \vert \omega_1])^{(-1)^{n-i}}.
\end{eqnarray*}
The last equality follows from calculations similar to Section \ref{sec:transGrpd} and the equality $\pi(\omega_{\leq n}) = \epsilon_{n+1}$, which ensures that the sign introduced by $\Phi_-$ cancels with $\epsilon_{n+1}$.
\end{proof}

Suppose now that $\mathbb{Z}_2$ acts by negation on $B \mathbb{Z}$. Then $\zeta \in \mathbb{Z}_2$ acts on morphisms in $B \mathbb{Z} \times \Lambda \mathcal{G}$ by
\[
\zeta \cdot \left[
((x_1,\epsilon_1),\gamma_1) \xrightarrow[]{(n,\omega)} ((x_2,\epsilon_2),\gamma_2)
\right]
=
((x_1,-\epsilon_1),\gamma_1^{-1}) \xrightarrow[]{(-n,\omega)} ((x_2,-\epsilon_2),\gamma_2^{-1}).
\]
Again, both functors $\ev : B \mathbb{Z} \times \Lambda \mathcal{G} \rightarrow \mathcal{G}$ and $\pr_{\Lambda \mathcal{G}}: B \mathbb{Z} \times \Lambda \mathcal{G} \rightarrow \Lambda \mathcal{G}$ are strictly equivariant and we obtain a strictly commutative diagram of Cartesian squares similar to \eqref{diag:quotCorr}, but with $\Lambda_{\pi} \hat{\mathcal{G}}$ replaced by $\Lambda_{\pi}^{\refl} \hat{\mathcal{G}}$. Passing to (twisted) cochains gives the commutative diagram
\begin{equation}
\label{diag:twistedTransCover}
\begin{tikzpicture}[baseline= (a).base]
\node[scale=0.85] (a) at (0,0){
\begin{tikzcd}[column sep={14.0em,between origins}, row sep={2.0em,between origins}]
{} & \Hom^{\textnormal{n}}_{\mathbb{Z}[\mathbb{Z}_2]}(C_{\bullet}(B \mathbb{Z} \times \Lambda \mathcal{G}), \mathsf{A}_-) \arrow{rd}[above]{\widetilde{\pr}_{\Lambda \mathcal{G}!}} \arrow{dd}[left]{\Psi_-} & {} \\
\Hom^{\textnormal{n}}_{\mathbb{Z}[\mathbb{Z}_2]}(C_{\bullet}(\mathcal{G}), \mathsf{A}_-) \arrow{dd}[left]{\Psi_-} \arrow{ur}[above]{\ev^*} & {} & \Hom^{\textnormal{n}}_{\mathbb{Z}[\mathbb{Z}_2]}(C_{\bullet-1}(\Lambda \mathcal{G}), \mathsf{A}) \arrow{dd}[right]{\Psi}
\\
{} & C^{\bullet + \pi_{B \mathbb{Z} \times_{\mathbb{Z}_2} \Lambda \mathcal{G}}}(B \mathbb{Z} \times_{\mathbb{Z}_2} \Lambda \mathcal{G}) \arrow{rd}[below]{\widetilde{\pr}_{\Lambda \mathcal{G}!}} & {} \\
C^{\bullet + \pi_{\hat{\mathcal{G}}}}(\hat{\mathcal{G}}) \arrow{ur}[below]{\widetilde{\ev}^*} & {} & C^{\bullet -1}(\Lambda_{\pi}^{\refl} \hat{\mathcal{G}}).
\end{tikzcd}
};
\end{tikzpicture}
\end{equation}
By Lemma \ref{lem:topVsAlg}, the vertical arrows are isomorphisms. Using Proposition \ref{prop:pushforwardDefined}, we define the reflection twisted loop transgression map
\[
\uptau_{\pi}^{\refl}: C^{\bullet + \pi_{\mathcal{G}}}(\hat{\mathcal{G}}) \xrightarrow[]{\widetilde{\ev}^*}
C^{\bullet + \pi_{B \mathbb{Z} \times_{\mathbb{Z}_2} \Lambda \mathcal{G}}}(B \mathbb{Z} \times_{\mathbb{Z}_2} \Lambda \mathcal{G})
\xrightarrow[]{\widetilde{\pr}_{\Lambda \mathcal{G}!}}
C^{\bullet -1}(\Lambda_{\pi}^{\refl} \hat{\mathcal{G}})
\]
which anti-commutes with the differentials. The map $\widetilde{\pr}_{\Lambda \mathcal{G}!}$ is that of Proposition \ref{prop:pushforwardDefined} with trivial twist $\kappa$.

To compute $\uptau_{\pi}^{\refl}$, we introduce some notation. For $1 \leq i \leq n+1$, let $\mathfrak{S}_{i,n+1} \subset \mathfrak{S}_{n+1}$ be the subset of $i$-shuffles. Given $\mathfrak{s}\in \mathfrak{S}_{i,n+1}$, denote by $\mathfrak{s} \cdot [\omega_n \vert \cdots \vert \omega_1]\gamma$ the $(n+1)$-simplex of $\hat{\mathcal{G}}$ whose $\mathfrak{s}(j)$\textsuperscript{th} entry, $1 \leq j \leq i$, is $\gamma^{(-1)^{j+1}\pi(\omega_{\leq n})}_{\mathfrak{s}(j) - j}$ and whose remaining entries are $\omega_{n-i+1}, \dots, \omega_1$, with $\omega_{k+1}$ appearing after $\omega_k$. In symbols,
\[
\mathfrak{s} \cdot [\omega_n \vert \cdots \vert \omega_1]\gamma
=
[\cdots \vert \gamma^{- \pi(\omega_{\leq n})}_{\mathfrak{s}(2) -2}\vert \omega_{\mathfrak{s}(2)-2} \vert \cdots \vert \omega_{\mathfrak{s}(1)} \vert \gamma^{\pi(\omega_{\leq n})}_{\mathfrak{s}(1)-1} \vert \omega_{\mathfrak{s}(1)-1} \vert \cdots \vert \omega_1].
\]
Given $\hat{\lambda} \in C^{n+1+\pi_{\hat{\mathcal{G}}}}(\hat{\mathcal{G}})$, put
\[
\sh_i(\hat{\lambda})([\omega_n \vert \cdots \vert \omega_1] \gamma)
:=
\prod_{\mathfrak{s} \in \mathfrak{S}_{i,n+1}} \hat{\lambda}(\mathfrak{s} \cdot [\omega_n \vert \cdots \vert \omega_1]\gamma)^{\textnormal{sgn}(\mathfrak{s})}.
\]
The map $\uptau_{\pi}^{\refl}$ can now be described as follows.

\begin{Thm}
\label{thm:oriTwistTrans}
Let $\hat{\lambda} \in C^{n+1 + \pi_{\hat{\mathcal{G}}}}(\hat{\mathcal{G}})$ and $[\omega_n \vert \cdots \vert \omega_1]\gamma \in C_n(\Lambda_{\pi}^{\refl} \hat{\mathcal{G}})$. There is an equality
\begin{equation}
\label{eq:oriTwistTransForm}
\uptau_{\pi}^{\refl} (\hat{\lambda})([\omega_n \vert \cdots \vert \omega_1]\gamma)
=
\prod_{j=0}^{n} \big( \sh_{n+1-j}(\hat{\lambda})([\omega_n \vert \cdots \vert \omega_1] \gamma) \big)^{(-1)^{n+j}\Delta_{n, \dots, n-j}}.
\end{equation}
\end{Thm}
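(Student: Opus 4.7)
The plan is to mimic the proof of Theorem \ref{thm:quotTrans} by evaluating $\uptau_{\pi}^{\refl}(\hat{\lambda})([\omega_n|\cdots|\omega_1]\gamma)$ along the upper route of the commutative diagram \eqref{diag:twistedTransCover}. Using Lemma \ref{lem:topVsAlg} and Proposition \ref{prop:pushforwardDefined}, this quantity equals $\Phi_-(\hat{\lambda})\bigl(\ev_*\mathsf{EZ}(f_{\Lambda\mathcal{G}}(\tilde{c}))\bigr)$, where $\tilde{c}$ is the canonical lift of $[\omega_n|\cdots|\omega_1]\gamma$ to $C_n(\Lambda\mathcal{G})$. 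Since the right-hand vertical arrow of \eqref{diag:twistedTransCover} is the untwisted $\Psi$, the lift is normalized by requiring $\epsilon_{n+1} = +1$, so that $\epsilon_i = \pi(\omega_{\geq i})$ and, via the equivalence of Lemma \ref{lem:refLoopGrpdCover}, the loops at object $(x_i,\epsilon_i)$ are $\gamma_i^{\epsilon_i}$.

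First I will unfold $f_{\Lambda\mathcal{G}}$ explicitly. With $\epsilon_{n+1} = +1$ only the $s_k^{+}$ generators appear, and the formula from Section \ref{sec:pushGrpd} yields
\[
s_1^{+} \otimes [\omega_n|\cdots|\omega_1]_{\epsilon_1} + \sum_{i=0}^{n-1} (-1)^i\,\epsilon_{n+1-i}\,\Delta_{n,\ldots,n-i}\, s_{i+2}^{+} \otimes [\omega_{n-1-i}|\cdots|\omega_1]_{\epsilon_1}.
\]
Next, $\mathsf{EZ}$ applied to $s_k^{+} \otimes [\omega_{n+1-k}|\cdots|\omega_1]_{\epsilon_1}$ produces a signed sum indexed by the shuffles $\mathfrak{s}\in\mathfrak{S}_{k,n+1}$, each contributing $\textnormal{sgn}(\mathfrak{s})$. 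Under $\ev_*$ the vertical morphism $((-1)^{\ell+1},\id)$ at position $\mathfrak{s}(\ell)$ becomes $\gamma_{\mathfrak{s}(\ell)-\ell}^{(-1)^{\ell+1}\epsilon_{\mathfrak{s}(\ell)-\ell}}$ in $\mathcal{G}$, while the horizontal morphisms remain as the corresponding $\omega_k$; finally, $\Phi_-$ contributes the sign $\epsilon_{n+1} = +1$, introducing no further twist.

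The substantive step is to identify each $\ev_*\mathsf{EZ}$-image with the shuffle product $\sh_{k}(\hat{\lambda})$ appearing in the theorem. The $\Delta_{n,\ldots,n-i}$ condition forces $\pi(\omega_n) = \cdots = \pi(\omega_{n-i}) = -1$, and under this constraint the intermediate epsilons $\epsilon_{\mathfrak{s}(\ell)-\ell}$ occurring in the exponent of the inserted $\gamma$'s can be uniformly rewritten in terms of $\pi(\omega_{\leq n})$, matching the template $\gamma_{\mathfrak{s}(j)-j}^{(-1)^{j+1}\pi(\omega_{\leq n})}$ from the definition of $\mathfrak{s}\cdot [\omega_n|\cdots|\omega_1]\gamma$. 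Reindexing with $j$ so that the shuffle dimension is $n+1-j$, and combining the coefficient $(-1)^i\,\epsilon_{n+1-i}$ from $f$ with the shuffle signs from $\mathsf{EZ}$, the contribution of the summand indexed by $j$ becomes precisely $\bigl(\sh_{n+1-j}(\hat{\lambda})([\omega_n|\cdots|\omega_1]\gamma)\bigr)^{(-1)^{n+j}\Delta_{n,\ldots,n-j}}$, and multiplying over $j$ gives \eqref{eq:oriTwistTransForm}.

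The main obstacle is the sign bookkeeping in this last step, which is considerably more delicate than in Theorem \ref{thm:quotTrans}. Unlike the oriented case, the pushforward here uses the elaborate $\mathsf{ez}_f$ rather than $\mathsf{ez}_{[1]}$, so several shuffle dimensions contribute simultaneously; moreover, the $\Delta$-factors must absorb the dependence on the intermediate $\epsilon$'s in precisely the right way so that the prefactor $(-1)^i\,\epsilon_{n+1-i}$ of $f$, together with the $\Phi_-$ sign and the local epsilons appearing in the exponents of the inserted $\gamma$'s, collapses into the clean expression $(-1)^{n+j}\Delta_{n,\ldots,n-j}$.
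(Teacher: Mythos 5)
Your route is exactly the paper's: evaluate along diagram \eqref{diag:twistedTransCover}, so that $\uptau_{\pi}^{\refl}(\hat{\lambda})$ on a chain equals $\Phi_-(\hat{\lambda})\big(\ev_*\mathsf{EZ}(f_n(\cdot))\big)$, unfold $f_n$ explicitly, and use that $\sh$ realizes $\ev_*\circ\mathsf{EZ}$. Your only deviation is the normalization $\epsilon_{n+1}=+1$ instead of the paper's $\epsilon_1=+1$; this is harmless, since the pushed-forward cochain is $\mathbb{Z}_2$-invariant (Proposition \ref{prop:pushforwardDefined} with trivial $\kappa$), so the choice of lift does not affect the value.

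There is, however, a genuine error in the one step that carries all the content beyond Theorem \ref{thm:quotTrans}, namely the sign bookkeeping, which you also explicitly defer as ``the main obstacle''. Your claim that ``$\Phi_-$ contributes the sign $\epsilon_{n+1}=+1$, introducing no further twist'' is only true for the full-length term of $f_n$. For the $i$-th truncated term $s^{+}_{i+2}\otimes[\omega_{n-1-i}\vert\cdots\vert\omega_1]_{\epsilon_1}$, the resulting chain in $\mathcal{G}$ terminates at $(x_{n-i},\epsilon_{n-i})$, so $\Phi_-$ contributes the exponent $\epsilon_{n-i}$, not $\epsilon_{n+1}$. In your normalization this matters decisively: when $\Delta_{n,\dots,n-i}=1$ one has $\epsilon_{n+1-i}=\pi(\omega_{\geq n+1-i})=(-1)^{i}$, so the $f$-coefficient $(-1)^{i}\epsilon_{n+1-i}$ is $+1$, and the entire sign $(-1)^{n+j}$ (with $j=n-1-i$) comes from $\epsilon_{n-i}=\pi(\omega_{\geq n-i})=(-1)^{i+1}$. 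Taken at face value, your paragraph would instead yield the unsigned exponents $\Delta_{n,\dots,n-j}$, i.e.\ the formula of Theorem \ref{thm:oriExtraTwistTrans} for $\tilde{\uptau}_{\pi}^{\refl}$ rather than \eqref{eq:oriTwistTransForm}; your closing paragraph contradicts the earlier assertion by re-admitting ``the $\Phi_-$ sign'' into the collapse, but never computes it. The fix is short: record that $\Phi_-$ contributes $\epsilon_{n-i}$ on the $i$-th term and $\epsilon_{n+1}$ on the leading term, and check $(-1)^{i}\epsilon_{n+1-i}\epsilon_{n-i}=(-1)^{i}\pi(\omega_{n-i})=(-1)^{i+1}=(-1)^{n+j}$ (this is literally the paper's computation, done there with $\epsilon_1=+1$); with that inserted, your argument is complete.
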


\begin{proof}
Setting $\epsilon_1=+1$, we have
\begin{eqnarray*}
\uptau_{\pi}^{\refl}(\hat{\lambda})([\omega_n \vert \cdots \vert \omega_1]\gamma)
&=&
(\Psi \circ \widetilde{\pr}_{\Lambda \mathcal{G}!} \circ \ev^* \circ \Phi_-)(\hat{\lambda})([\omega_n \vert \cdots \vert \omega_1]\gamma) \\
&=&
\Phi_-(\hat{\lambda})(\ev_* \mathsf{EZ}(f_n([\omega_n \vert \cdots \vert \omega_1](\gamma,\epsilon_1))))
\end{eqnarray*}
where
\begin{multline*}
f_n ([\omega_n \vert \cdots \vert \omega_1 ](\gamma, \epsilon_1))
=
\epsilon_{n+1} s_1^{n+1} \otimes [\omega_n \vert \cdots \vert \omega_1](\gamma,\epsilon_1) +
\\
\sum_{i=0}^{n-1} (-1)^{i} \epsilon_{n+1-i} \Delta_{n, \dots, n-i} s^{n+1}_{i+2} \otimes [\omega_{n-1-i} \vert \cdots \vert \omega_1](\gamma,\epsilon_1).
\end{multline*}
The definition of $\Phi_-$ shows that the first term of the right hand side contributes to $\uptau_{\pi}^{\refl} (\hat{\lambda})([\omega_n \vert \cdots \vert \omega_1]\gamma)$ with an overall sign of $\epsilon_{n+1}^2 = 1$, yielding the $j=n$ factor of the product \eqref{eq:oriTwistTransForm} while, if at all, the $i$\textsuperscript{th} term of the sum contributes with an overall sign of
\[
(-1)^{i} \epsilon_{n+1-i} \epsilon_{n-i} =(-1)^{i} \pi(\omega_{n-i}) =(-1)^{i+1}.
\]
This gives the $j=n-1-i$ factor of the product \eqref{eq:oriTwistTransForm}. In each of these statements we have used that, by construction, the map $\mathsf{sh}$ realizes the composition $\ev_* \circ \mathsf{EZ}$.
\end{proof}

For example, when $\hat{\alpha} \in C^{1+\pi_{\hat{\mathcal{G}}}}(\hat{\mathcal{G}})$ Theorem \ref{thm:oriTwistTrans} gives $\uptau^{\refl}_{\pi}(\hat{\alpha})([ \, ]\gamma) = \hat{\alpha}([\gamma])$. If instead $\hat{\theta} \in C^{2+\pi_{\hat{\mathcal{G}}}}(\hat{\mathcal{G}})$ and $\hat{\eta} \in C^{3 + \pi_{\hat{\mathcal{G}}}}(\hat{\mathcal{G}})$, then
\[
\uptau^{\refl}_{\pi}(\hat{\theta})([\omega]\gamma) = \hat{\theta}([\gamma^{-1} \vert \gamma])^{- \Delta_{\omega}} \frac{\hat{\theta}([\omega \gamma^{\pi(\omega)} \omega^{-1} \vert \omega])}{\hat{\theta}([\omega \vert \gamma^{\pi(\omega)}])}
\]
while $\uptau_{\pi}^{\refl}(\hat{\eta})([\omega_2 \vert \omega_1 ]\gamma)$ is equal to
\begin{multline*}
\hat{\eta}([\gamma \vert \gamma^{-1} \vert \gamma])^{\Delta_{\omega_2,\omega_1}}  \left(
\frac{\hat{\eta}([\omega_1 \gamma^{-\pi(\omega_1)} \omega_1^{-1} \vert \omega_1 \gamma^{\pi(\omega_1)} \omega_1^{-1} \vert \omega_1]) \hat{\eta}([\omega_1 \vert \gamma^{-\pi(\omega_1)} \vert \gamma^{\pi(\omega_1)}])}{\hat{\eta}([\omega_1 \gamma^{-\pi(\omega_1)} \omega_1^{-1} \vert \omega_1 \vert \gamma^{\pi(\omega_1)}])}
\right)^{- \Delta_{\omega_2}} \\
\times \frac{\hat{\eta}([\omega_2 \vert \omega_1 \vert \gamma^{\pi(\omega_2 \omega_1)}]) \hat{\eta}([\omega_2 \omega_1 \gamma^{\pi(\omega_2 \omega_1)} \omega_1^{-1} \omega_2^{-1} \vert \omega_2 \vert \omega_1])}{\hat{\eta}([\omega_2 \vert \omega_1 \gamma^{\pi(\omega_2 \omega_1)} \omega_1^{-1} \vert \omega_1])}.
\end{multline*}

By direct observation, there is a commutative diagram
\begin{equation}
\label{diag:restrToWillerton}
\begin{tikzpicture}[baseline= (a).base]
\node[scale=1] (a) at (0,0){
\begin{tikzcd}[column sep={12.0em,between origins}, row sep={3.5em,between origins}]
C^{\bullet + \pi_{\hat{\mathcal{G}}}}(\hat{\mathcal{G}}) \arrow{d}[left]{\pi_{\hat{\mathcal{G}}}^*} \arrow{r}[above]{\uptau_{\pi}^{\refl}} & C^{\bullet-1}(\Lambda_{\pi}^{\refl} \hat{\mathcal{G}})\arrow{d}[right]{\pi_{\Lambda_{\pi}\hat{\mathcal{G}}}^*}\\
C^{\bullet}(\mathcal{G}) \arrow{r}[below]{\uptau} & C^{\bullet-1}(\Lambda \mathcal{G}).
\end{tikzcd}
};
\end{tikzpicture}
\end{equation}
This allows us to interpret the terms involving $\Delta_{n, \dots, n-i}$, $1 \leq i \leq n$, in equation \eqref{eq:oriTwistTransForm} as corrections to Willerton's expression \eqref{eq:willertonTrans} which take into account the failure of the map $\widetilde{\pr}_{\Lambda \mathcal{G}}$ to be orientable.

Continuing, define a third twisted loop transgression map
\[
\tilde{\uptau}_{\pi}^{\refl}: C^{\bullet}(\hat{\mathcal{G}}) \xrightarrow[]{\widetilde{\ev}^*}
C^{\bullet}(B \mathbb{Z} \times_{\mathbb{Z}_2} \Lambda \mathcal{G})
\simeq
C^{\bullet + 2\pi_{B \mathbb{Z} \times_{\mathbb{Z}_2} \Lambda \mathcal{G}}}(B \mathbb{Z} \times_{\mathbb{Z}_2} \Lambda \mathcal{G})
\xrightarrow[]{\widetilde{\pr}_{\Lambda \mathcal{G}!}}
C^{\bullet -1 + \pi_{\Lambda_{\pi}^{\refl} \hat{\mathcal{G}}}}(\Lambda_{\pi}^{\refl} \hat{\mathcal{G}}).
\]
The final map is that of Proposition \ref{prop:pushforwardDefined} when the twist $\kappa$ is $\pi_{\Lambda_{\pi}^{\refl} \hat{\mathcal{G}}}: \Lambda \mathcal{G} \rightarrow \Lambda_{\pi}^{\refl} \hat{\mathcal{G}}$. Again, $\tilde{\uptau}_{\pi}^{\refl}$ anti-commutes with the differentials.

\begin{Thm}
\label{thm:oriExtraTwistTrans}
Let $\tilde{\lambda} \in C^{n+1}(\hat{\mathcal{G}})$ and $[\omega_n \vert \cdots \vert \omega_1]\gamma \in C_n(\Lambda_{\pi}^{\refl} \hat{\mathcal{G}})$. There is an equality
\[
\tilde{\uptau}_{\pi}^{\refl} (\tilde{\lambda})([\omega_n \vert \cdots \vert \omega_1]\gamma)
=
\prod_{j=0}^{n} \big( \sh_{n+1-j}(\tilde{\lambda})([\omega_n \vert \cdots \vert \omega_1] \gamma) \big)^{\Delta_{n, \dots, n-j}}.
\]
\end{Thm}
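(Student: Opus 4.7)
The plan is to follow the proof of Theorem \ref{thm:oriTwistTrans} essentially verbatim, adjusting only the isomorphisms of Lemma \ref{lem:topVsAlg} used at each end to reflect the new twist pattern. First I would write down the analog of diagram \eqref{diag:twistedTransCover}, in which the vertical isomorphism on the input column is $\Phi$ (since $\tilde{\lambda}$ is untwisted) and the vertical isomorphism on the output column is $\Psi_-$ (since the target carries the twist $\pi_{\Lambda^{\refl}_{\pi} \hat{\mathcal{G}}}$). The middle column is untwisted because $\pi_{B \mathbb{Z} \times_{\mathbb{Z}_2} \Lambda \mathcal{G}} + \widetilde{\pr}_{\Lambda \mathcal{G}}^* \pi_{\Lambda^{\refl}_{\pi} \hat{\mathcal{G}}} = 2 \pi_{B \mathbb{Z} \times_{\mathbb{Z}_2} \Lambda \mathcal{G}} \equiv 0$, so the middle vertical is $\Psi$. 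The required pushforward $\widetilde{\pr}_{\Lambda \mathcal{G} !}$, viewed as a map between $\mathbb{Z}[\mathbb{Z}_2]$-equivariant cochain complexes valued in $\mathsf{A}$ and $\mathsf{A}_-$ respectively, is supplied by Proposition \ref{prop:pushforwardDefined} with $\kappa = \pi_{\Lambda^{\refl}_{\pi} \hat{\mathcal{G}}}$.

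Unwinding this diagram and using the lift dictated by $\Psi_-$ (namely $\epsilon_1 = \pi(\omega_{\leq n})$, so that $\epsilon_{n+1} = +1$), the evaluation reduces to
\[
\tilde{\uptau}^{\refl}_{\pi}(\tilde{\lambda})([\omega_n \vert \cdots \vert \omega_1] \gamma) = \Phi(\tilde{\lambda})\bigl(\ev_* \mathsf{EZ}(f_n([\omega_n \vert \cdots \vert \omega_1](\gamma, \pi(\omega_{\leq n}))))\bigr).
\]
Unlike in Theorem \ref{thm:oriTwistTrans}, the isomorphism $\Phi$ introduces no $\epsilon$-sign, so only the scalar coefficients internal to $f_n$ remain to be evaluated.

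Expanding the definition of $f_n$: the first summand carries coefficient $\epsilon_{n+1} = +1$ and, after $\ev_* \mathsf{EZ}$, realizes the shuffle sum $\mathsf{sh}_1(\tilde{\lambda})([\omega_n \vert \cdots \vert \omega_1] \gamma)$, which is the $j = n$ factor of the product (with exponent $\Delta_{\varnothing} = 1$). For $i \geq 0$, the $i$-th summand has coefficient $(-1)^i \epsilon_{n+1-i} \Delta_{n, \dots, n-i}$. The ratio $\epsilon_{n+1-i} / \epsilon_{n+1}$ equals $\pi(\omega_{n-i+1} \cdots \omega_n)$, and whenever $\Delta_{n, \dots, n-i} \neq 0$ each of $\omega_{n-i}, \ldots, \omega_n$ has degree $-1$, so $\pi(\omega_{n-i+1} \cdots \omega_n) = (-1)^i$ and therefore $\epsilon_{n+1-i} = (-1)^i$. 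The coefficient collapses to $(-1)^i \cdot (-1)^i \cdot \Delta_{n, \dots, n-i} = \Delta_{n, \dots, n-i}$. Using the same bijection between terms of $f_n$ and shuffle sums established in the proof of Theorem \ref{thm:oriTwistTrans}, this summand contributes the $j = n-1-i$ factor $\mathsf{sh}_{i+2}(\tilde{\lambda})([\omega_n \vert \cdots \vert \omega_1] \gamma)^{\Delta_{n, \ldots, n-i}}$. Assembling the contributions for all $j$ recovers the claimed product.

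The main obstacle is not conceptual but combinatorial: one must track how the twist-switch ($\Phi_- \rightsquigarrow \Phi$ and $\Psi \rightsquigarrow \Psi_-$) interacts with the $\epsilon$-signs built into $f_n$. The point is that the identification of $\ev_* \mathsf{EZ}(s_{i+2}^{n+1} \otimes [\omega_{n-1-i} \vert \cdots \vert \omega_1])$ with $\mathsf{sh}_{i+2}$ is exactly as in Theorem \ref{thm:oriTwistTrans}; only the overall prefactor changes. Removing the single factor $\epsilon_{n+1}$ that $\Phi_-$ previously contributed converts the alternating exponents $(-1)^{n+j}$ of \eqref{eq:oriTwistTransForm} into the uniform $+1$'s of the present statement.
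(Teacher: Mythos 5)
Your proposal is correct and follows the paper's own proof essentially verbatim: the same modified version of diagram \eqref{diag:twistedTransCover} with $\Phi$ on the input and $\Psi_-$ on the output, the same lift with terminal label $+1$ (the paper phrases this as taking the initial object $(\gamma,\epsilon_{n+1})$ with $\epsilon_1=+1$, which is your $\epsilon_1=\pi(\omega_{\leq n})$ relabelled), and the same key observation that $\Delta_{n,\dots,n-i}\neq 0$ forces $\pi(\omega_{\geq n-i+1})=(-1)^i$, so each surviving coefficient collapses to $\Delta_{n,\dots,n-i}$ before invoking the shuffle identification from Theorem \ref{thm:oriTwistTrans}. Only your closing heuristic is slightly imprecise --- in Theorem \ref{thm:oriTwistTrans} the factor contributed by $\Phi_-$ is $\epsilon_{n-i}$ term by term rather than a single $\epsilon_{n+1}$, and the choice of lift changes as well --- but your actual computation in the middle paragraph accounts for this correctly.
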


\begin{proof}
Set $\epsilon_1=+1$. The obvious analogue of diagram \eqref{diag:twistedTransCover}  gives
\begin{eqnarray*}
\tilde{\uptau}_{\pi}^{\refl}(\tilde{\lambda})([\omega_n \vert \cdots \vert \omega_1]\gamma)
&=&
(\Psi_- \circ \widetilde{\pr}_{\Lambda \mathcal{G}!} \circ \ev^* \circ \Phi)(\tilde{\lambda})([\omega_n \vert \cdots \vert \omega_1]\gamma) \\
&=&
\Phi(\tilde{\lambda})(\ev_* \mathsf{EZ}(f_n([\omega_n \vert \cdots \vert \omega_1](\gamma,\epsilon_{n+1}))))
\end{eqnarray*}
where, because of the initial object $(\gamma, \epsilon_{n+1})$, we have
\begin{multline*}
f_n ([\omega_n \vert \cdots \vert \omega_1 ](\gamma, \epsilon_{n+1}))
=
 s_1^{1} \otimes [\omega_n \vert \cdots \vert \omega_1](\gamma,\epsilon_{n+1}) +
\\
 \sum_{i=0}^{n-1} (-1)^{i} \pi(\omega_{\geq n-i+1}) \Delta_{n, \dots, n-i} s^{\pi(\omega_{\geq n-i+1})}_{i+2} \otimes [\omega_{n-1-i} \vert \cdots \vert \omega_1](\gamma,\epsilon_{n+1}).
\end{multline*}
After noting that if $\Delta_{n, \dots, n-i} \neq 0$, then $\pi(\omega_{\geq n-i+1}) = (-1)^{i}$, the remainder of the proof is similar to that of Theorem \ref{thm:oriTwistTrans}.
\end{proof}

For example, when $\tilde{\theta} \in Z^2(\hat{\mathcal{G}})$ Theorem \ref{thm:oriExtraTwistTrans} gives
\[
\tilde{\uptau}^{\refl}_{\pi}(\tilde{\theta})([\omega]\gamma) = \tilde{\theta}([\gamma^{-1} \vert \gamma])^{ \Delta_{\omega}}
\frac{\tilde{\theta}([\omega \gamma^{\pi(\omega)} \omega^{-1} \vert \omega])}{\tilde{\theta}([\omega \vert \gamma^{\pi(\omega)}])}.
\]

\section{Jandl twisted vector bundles}
\label{sec:degTwoCocyc}

We explain the appearance of twisted transgression in the study of Jandl gerbes.

Throughout this section, $\mathcal{G}$ is an essentially finite groupoid and $\hat{\mathcal{G}}$ is an essentially finite groupoid over $B \mathbb{Z}_2$. If $\hat{\mathcal{G}}$ is the object of interest, then $\mathcal{G}$ is its associated double cover. The coefficient group is $\mathsf{A} = \mathsf{U}(1)$. Given a complex vector space $V$, let $\overline{V}$ be its complex conjugate. Set ${^{+1}}V := V$ and ${^{-1}}V: = \overline{V}$, with similar notation ${^{\epsilon}}z$ for $z \in \mathbb{C}$. A map $\varphi: V \rightarrow W$ between complex vector spaces is called $\epsilon$-linear if $\varphi: {^{\epsilon}}V \rightarrow W$ is $\mathbb{C}$-linear.

\subsection{Real functions and Real line bundles}
\label{sec:0and1cocycles}

A $0$-cocycle $\beta \in Z^0(\mathcal{G})$ is a locally constant $\mathsf{U}(1)$-valued function on (the objects of) $\mathcal{G}$. The integral of $\beta$ is
\[
\int_{\mathcal{G}} \beta := \sum_{x \in \mathcal{G}} \frac{\beta(x)}{\vert x \rightarrow \vert} = \sum_{x \in \pi_0(\mathcal{G})} \frac{\beta(x)}{\vert \Aut_{\mathcal{G}}(x) \vert},
\]
where $\vert x \rightarrow \vert$ is the number of morphisms in $\mathcal{G}$ with source $x$. The equality follows from the closedness of $\beta$.

Similarly, $\hat{\beta} \in Z^{0 + \pi_{\hat{\mathcal{G}}}}(\hat{\mathcal{G}})$ is a $\mathsf{U}(1)$-valued function on $\hat{\mathcal{G}}$ which satisfies $\hat{\beta}(x_2)=\hat{\beta}(x_1)^{\pi(\omega)}$ for each morphism $\omega: x_1 \rightarrow x_2$.

As explained in \cite[\S 2.2]{willerton2008}, a $1$-cocycle $\alpha \in Z^1(\mathcal{G})$ defines a trivialized flat complex line bundle $\alpha_{\mathbb{C}}$ over $ \mathcal{G}$. This is the data of trivialized complex lines $L_x$, $x \in \mathcal{G}$, and linear (multiplication) maps
\[
\alpha(x_1 \xrightarrow[]{g} x_2): L_{x_1} \rightarrow L_{x_2}, \qquad g \in \Mor(\mathcal{G})
\]
which satisfy the obvious associativity constraints.\footnote{Note that $\alpha_{\mathbb{C}}$ is also the associated complex line bundle of a $\mathsf{U}(1)$-bundle with connection on $\mathcal{G}$ determined by $\alpha$. Similar comments apply below.} Flat sections of $\alpha_{\mathbb{C}}$, that is, collections of complex numbers $s_x \in L_x$, $x \in \mathcal{G}$, satisfying $\alpha(x_1 \xrightarrow[]{g} x_2) s_{x_1} = s_{x_2}$, $g \in \Mor(\mathcal{G})$, form a complex vector space $\Gamma_{\mathcal{G}}(\alpha_{\mathbb{C}})$. Given $s_1, s_2 \in \Gamma_{\mathcal{G}}(\alpha_{\mathbb{C}})$, the fibrewise product $\overline{s}_1 s_2$ is in $Z^0(\mathcal{G})$. Define an inner product on $\Gamma_{\mathcal{G}}(\alpha_{\mathbb{C}})$ by
\[
\langle s_1, s_2 \rangle = \int_{\mathcal{G}} \overline{s}_1 s_2.
\]

Similarly, $\hat{\alpha} \in Z^{1+\pi_{\hat{\mathcal{G}}}}(\hat{\mathcal{G}})$ defines a trivialized flat Real line bundle $\hat{\alpha}_{\mathbb{C}}$ over $\hat{\mathcal{G}}$. This is the data of trivialized complex lines $L_x$, $x \in \hat{\mathcal{G}}$, and linear maps
\[
\hat{\alpha}(x_1 \xrightarrow[]{\omega} x_2): {^{\pi(\omega)}}L_{x_1} \rightarrow L_{x_2}, \qquad \omega \in \Mor(\hat{\mathcal{G}})
\]
which satisfy the associativity condition
\[
\hat{\alpha}(x_1 \xrightarrow[]{\omega_2 \omega_1} x_3) =
\hat{\alpha}(x_2 \xrightarrow[]{\omega_2} x_3) \cdot {^{\pi(\omega_2)}}
\hat{\alpha}(x_1 \xrightarrow[]{\omega_1} x_2) .
\]
A flat section of $\hat{\alpha}_{\mathbb{C}}$ is a collection of complex numbers $s_x \in L_x$, $x \in \hat{\mathcal{G}}$, satisfying
\[
\hat{\alpha}(x_1 \xrightarrow[]{\omega} x_2) ({^{\pi(\omega)}}s_{x_1} ) = s_{x_2}, 
\qquad
\omega \in \Mor(\hat{\mathcal{G}}).
\]
Setting $\langle s_1, s_2 \rangle = \int_{\hat{\mathcal{G}}} \overline{s}_1 s_2$, flat sections of $\hat{\alpha}_{\mathbb{C}}$ form a real inner product space $\Gamma_{\hat{\mathcal{G}}}(\hat{\alpha}_{\mathbb{C}})$.

\begin{Prop}
\label{prop:dimFlatSect}
For each $\hat{\alpha} \in Z^{1+\pi_{\hat{\mathcal{G}}}}(\hat{\mathcal{G}})$, there is an equality
\[
\frac{1}{2} \dim_{\mathbb{R}} \Gamma_{\hat{\mathcal{G}}}(\hat{\alpha}_{\mathbb{C}}) =  \int_{\Lambda_{\pi}^{\refl} \hat{\mathcal{G}}} \uptau^{\refl}_{\pi}(\hat{\alpha}).
\]
\end{Prop}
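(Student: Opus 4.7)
The plan is to reduce the claim to an untwisted analogue on the double cover $\pi\colon \mathcal{G}\to \hat{\mathcal{G}}$, via the compatibility diagram \eqref{diag:restrToWillerton}. The first step is to establish the untwisted version: for any $\mu\in Z^1(\mathcal{G})$,
\[
\dim_{\mathbb{C}} \Gamma_{\mathcal{G}}(\mu_{\mathbb{C}}) = \int_{\Lambda \mathcal{G}} \uptau(\mu).
\]
After decomposing $\mathcal{G}$ into its connected components, each equivalent to $BG$ for some finite group $G$, this reduces to the assertion that $\dim_{\mathbb{C}} \Gamma_{BG}(\mu_{\mathbb{C}})$ equals $1$ or $0$ according to whether $\mu|_G$ is trivial, matching $\int_{\Lambda BG}\uptau(\mu)=\frac{1}{|G|}\sum_{g\in G}\mu(g)$ by character orthogonality.

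Second, I would transport the Real structure on $\hat{\mathcal{G}}$-sections up to the cover. By Lemma \ref{lem:topVsAlg}, $\hat{\alpha}$ corresponds to a $\mathbb{Z}_2$-equivariant $1$-cocycle $\mu:=\Phi_-(\hat{\alpha})\in Z^1(\mathcal{G})$ satisfying $\mu(\zeta\cdot c)=\mu(c)^{-1}$. The deck involution $\sigma_{\mathcal{G}}$ then induces a $\mathbb{C}$-anti-linear involution $\tau$ on $\Gamma_{\mathcal{G}}(\mu_{\mathbb{C}})$ defined by $(\tau s)_y := \overline{s_{\sigma_{\mathcal{G}}(y)}}$; preservation of the flat section condition follows from the $\mathbb{Z}_2$-equivariance of $\mu$ together with $\overline{\mu(\omega)^{-1}}=\mu(\omega)$, which holds because $\mu$ is $\mathsf{U}(1)$-valued. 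The assignment sending $s\in \Gamma_{\hat{\mathcal{G}}}(\hat{\alpha}_{\mathbb{C}})$ to the flat section of $\mu_{\mathbb{C}}$ with $\tilde{s}_{(x,+1)}:=s_x$ and $\tilde{s}_{(x,-1)}:=\overline{s_x}$ identifies $\Gamma_{\hat{\mathcal{G}}}(\hat{\alpha}_{\mathbb{C}})$ with the real form $\Gamma_{\mathcal{G}}(\mu_{\mathbb{C}})^{\tau}$, whence
\[
\dim_{\mathbb{R}} \Gamma_{\hat{\mathcal{G}}}(\hat{\alpha}_{\mathbb{C}}) = \dim_{\mathbb{C}} \Gamma_{\mathcal{G}}(\mu_{\mathbb{C}}).
\]

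Finally, I would invoke the elementary integration identity $\int_{\mathcal{H}} p^*f = 2\int_{\hat{\mathcal{H}}} f$ for any double cover $p\colon \mathcal{H}\to\hat{\mathcal{H}}$ of essentially finite groupoids and any $f\in Z^0(\hat{\mathcal{H}})$. This is verified componentwise via Proposition \ref{prop:grpdDecomp}: on components of type $B\hat{\mathsf{G}}$ the cover is $B\mathsf{G}$ with $|\mathsf{G}|=|\hat{\mathsf{G}}|/2$, while on components of type $\hat{\mathcal{H}}_{\{x,\bar{x}\}}$ the cover splits as $B\mathsf{G}\sqcup B\mathsf{G}$; in both cases the factor of $2$ appears. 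Applying this to $p=\pi_{\Lambda_{\pi}^{\refl}\hat{\mathcal{G}}}$ with $f=\uptau_{\pi}^{\refl}(\hat{\alpha})$, and using the commutativity of diagram \eqref{diag:restrToWillerton} (which reads $\pi_{\Lambda_{\pi}\hat{\mathcal{G}}}^{*}\uptau_{\pi}^{\refl}(\hat{\alpha}) = \uptau(\mu)$), gives
\[
\int_{\Lambda \mathcal{G}} \uptau(\mu) = 2\int_{\Lambda_{\pi}^{\refl}\hat{\mathcal{G}}} \uptau_{\pi}^{\refl}(\hat{\alpha}).
\]
Chaining this with the previous two displays proves the proposition. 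The main obstacle is the second step: one must track the $\epsilon$-exponent in the formula defining $\Phi_-$ to verify that the anti-linear involution on $\Gamma_{\mathcal{G}}(\mu_{\mathbb{C}})$ has fixed-point set exactly the image of $\Gamma_{\hat{\mathcal{G}}}(\hat{\alpha}_{\mathbb{C}})$, which requires careful bookkeeping between the twisted cocycle condition on $\hat{\mathcal{G}}$ and the equivariance condition on $\mathcal{G}$.
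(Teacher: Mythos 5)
Your proof is correct, but it is not the route the paper takes. The paper argues directly: both sides are additive under disjoint union and equivalence over $B\mathbb{Z}_2$, so it reduces via Proposition \ref{prop:grpdDecomp} to the two model cases $B\hat{\mathsf{G}}$ and $\hat{\mathcal{G}}_{\{x,\overline{x}\}}$, where the flat-section condition is solved by hand (giving $0$, $\mathbb{R}$ or $\mathbb{C}$) and $\int_{\Lambda_{\pi}^{\refl}\hat{\mathcal{G}}}\uptau^{\refl}_{\pi}(\hat{\alpha})$ is evaluated from Theorem \ref{thm:oriTwistTrans} by character orthogonality. Your argument is precisely the alternative strategy the paper records in the second remark following the proposition (and deploys later for Proposition \ref{prop:antiLinHomo}): exhibit $\Gamma_{\hat{\mathcal{G}}}(\hat{\alpha}_{\mathbb{C}})$ as a real form of $\Gamma_{\mathcal{G}}(\mu_{\mathbb{C}})$, quote the untwisted count of \cite[Theorem 6]{willerton2008}, and convert $\int_{\Lambda\mathcal{G}}$ into $2\int_{\Lambda_{\pi}^{\refl}\hat{\mathcal{G}}}$ by the doubling identity for double covers together with diagram \eqref{diag:restrToWillerton}. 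Your bookkeeping does go through: with $\mu=\Phi_-(\hat{\alpha})$, i.e. $\mu([\omega]_{\epsilon_1})=\hat{\alpha}([\omega])^{\pi(\omega)\epsilon_1}$, the assignment $\tilde{s}_{(x,+1)}=s_x$, $\tilde{s}_{(x,-1)}=\overline{s_x}$ lands in flat sections and identifies $\Gamma_{\hat{\mathcal{G}}}(\hat{\alpha}_{\mathbb{C}})$ with the fixed points of the anti-linear involution $(\tau t)_{(x,\epsilon)}=\overline{t_{(x,-\epsilon)}}$, whose well-definedness is exactly the $\mathbb{Z}[\mathbb{Z}_2]$-equivariance of $\mu$; and in the relevant degree both $\uptau(\mu)$ and $(\pi_{\Lambda_{\pi}^{\refl}\hat{\mathcal{G}}})^{*}\uptau^{\refl}_{\pi}(\hat{\alpha})$ send $((x,\epsilon),\gamma)$ to $\hat{\alpha}([\gamma])^{\epsilon}$, so the square commutes as you use it. The one point worth stating explicitly is that the left vertical arrow of \eqref{diag:restrToWillerton} must be read as the pullback of twisted cochains with the canonical trivialization of the pulled-back twist (that is, $\Phi_-$, hence your $\mu$); with the naive pullback $\pi^{*}\hat{\alpha}$ the square would not commute, and this is the $\epsilon$-exponent issue you flag. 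Comparing the two: the paper's computation is self-contained and also exposes the local structure of $\Gamma_{\hat{\mathcal{G}}}(\hat{\alpha}_{\mathbb{C}})$ in each model case, while yours explains the factor $\tfrac12$ conceptually as double-cover doubling of the integral and outsources all counting to the untwisted statement.
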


\begin{proof}
As both sides of the equality are additive with respect to disjoint union and equivalence of groupoids over $B \mathbb{Z}_2$, it suffices to consider the model cases of Proposition \ref{prop:grpdDecomp}. When $\hat{\mathcal{G}} = B \hat{\mathsf{G}}$, a section $s \in \mathbb{C} \setminus 0$ of $\hat{\alpha}_{\mathbb{C}}$ is flat if and only if $\hat{\alpha}([g]) s = s$ for $g \in \mathsf{G}$ and $\hat{\alpha}([\omega]) \overline{s} = s$ for $\omega \in \hat{\mathsf{G}} \setminus \mathsf{G}$. The first condition implies $\hat{\alpha}_{\vert \mathsf{G}} = 1$. By the cocycle condition, $\hat{\alpha}_{\vert \hat{\mathsf{G}} \setminus \mathsf{G}}$ is constant. The second condition is
\[
\mbox{Arg}(s) \equiv \frac{1}{2} \mbox{Arg}(\hat{\alpha}_{\vert \hat{\mathsf{G}} \setminus \mathsf{G}}) \mod \pi \mathbb{Z}.
\]
It follows that $\Gamma_{B \hat{\mathsf{G}}}(\hat{\alpha}_{\mathbb{C}})$ is $\{0\}$ unless $\hat{\alpha}_{\vert \mathsf{G}} = 1$, in which case $\Gamma_{B \hat{\mathsf{G}}}(\hat{\alpha}_{\mathbb{C}}) \simeq \mathbb{R}$. On the other hand, Theorem \ref{thm:oriTwistTrans} gives
\[
\int_{\Lambda_{\pi}^{\refl} B \hat{\mathsf{G}}} \uptau^{\refl}_{\pi}(\hat{\alpha}) = \sum_{\gamma \in \mathsf{G}} \frac{\hat{\alpha}([\gamma])}{2 \vert \mathsf{G} \vert}.
\]
As in \cite[Theorem 6]{willerton2008}, the sum is zero unless $\hat{\alpha}_{\vert \mathsf{G}} = 1$, in which case it is $\frac{1}{2}$.

When $\hat{\mathcal{G}} = \hat{\mathcal{G}}_{\{x, \overline{x}\}}$, a flat section of $\hat{\alpha}_{\mathbb{C}}$ consists of $s_x, s_{\overline{x}} \in \mathbb{C}$. The equality $\hat{\alpha}(x \xrightarrow[]{\omega} \overline{x})\overline{s_x} = s_{\overline{x}}$ for $\pi(\omega)=-1$ implies that, if the section is non-zero, both $s_x$ and $s_{\overline{x}}$ are non-zero. Hence, $\hat{\alpha}$ is the identity on degree $1$ morphisms and constant on degree $-1$ morphisms and $\Gamma_{\hat{\mathcal{G}}}(\hat{\alpha}_{\mathbb{C}}) \simeq \mathbb{C}$. On the other hand,
\[
\int_{\Lambda_{\pi}^{\refl} \hat{\mathcal{G}}} \uptau^{\refl}_{\pi}(\hat{\alpha}) = \sum_{\gamma \in \Aut_{\hat{\mathcal{G}}}^1(x)} \frac{\hat{\alpha}([\gamma])}{2 \vert \Aut_{\hat{\mathcal{G}}}^1(x) \vert} + \sum_{\gamma \in \Aut_{\hat{\mathcal{G}}}^1(\overline{x})} \frac{\hat{\alpha}([\gamma])}{2 \vert \Aut_{\hat{\mathcal{G}}}^1(\overline{x}) \vert}
\]
is zero unless $\hat{\alpha}$ is the identity on degree $1$ morphisms, in which case it is one.
\end{proof}

\begin{Rems}
\begin{enumerate}[label=(\roman*)]
\item For later comparison, observe that Proposition \ref{prop:dimFlatSect} can also be stated as the equality $\frac{1}{2} \dim_{\mathbb{R}} \Gamma_{\hat{\mathcal{G}}}(\hat{\alpha}_{\mathbb{C}}) =  \int_{\Lambda_{\pi} \hat{\mathcal{G}}} \uptau_{\pi}(\hat{\alpha})^{-1}$. The freedom to use $\uptau_{\pi}(\hat{\alpha})^{-1}$ or $\uptau^{\refl}_{\pi}(\hat{\alpha})$ is an artifact of the low cohomological degree of $\hat{\alpha}$.

\item As an alternative proof of Proposition \ref{prop:dimFlatSect}, it can be shown that $\Gamma_{\hat{\mathcal{G}}}(\hat{\alpha}_{\mathbb{C}})$ defines a real structure on $\Gamma_{\mathcal{G}}(\alpha_{\mathbb{C}})$, in that $\Gamma_{\hat{\mathcal{G}}}(\hat{\alpha}_{\mathbb{C}}) \otimes_{\mathbb{R}} \mathbb{C} \simeq \Gamma_{\mathcal{G}}(\alpha_{\mathbb{C}})$. It follows that $\dim_{\mathbb{R}} \Gamma_{\hat{\mathcal{G}}}(\hat{\alpha}_{\mathbb{C}}) = \dim_{\mathbb{C}} \Gamma_{\mathcal{G}}(\alpha_{\mathbb{C}})$, the right hand side of which is computed in \cite[Theorem 6]{willerton2008}. This strategy is used in Proposition \ref{prop:antiLinHomo} below.
\end{enumerate}
\end{Rems}

\subsection{Jandl twisted vector bundles}
\label{sec:vbJandl}

We explain how $Z^{2+\pi_{\hat{\mathcal{G}}}}(\hat{\mathcal{G}})$ can be used to twist complex vector bundles over $\hat{\mathcal{G}}$, following the non-Real \cite[\S 2.3]{willerton2008} and continuous cases \cite{schreiber2007}, \cite{gawedzki2011}, \cite{moutuou2012}, \cite{freed2013b}. Since our groupoids are essentially finite, we can give a simplified treatment.

Let $\hat{\theta} \in Z^{2+\pi_{\hat{\mathcal{G}}}}(\hat{\mathcal{G}})$. Define a groupoid ${^{\hat{\theta}}}\hat{\mathcal{G}}$ by
\[
\Obj({^{\hat{\theta}}}\hat{\mathcal{G}}) =\Obj(\hat{\mathcal{G}}), \qquad
\Hom_{{^{\hat{\theta}}}\hat{\mathcal{G}}}(x,y) = \mathsf{U}(1) \times \Hom_{\hat{\mathcal{G}}}(x,y).
\]
Morphisms in ${^{\hat{\theta}}}\hat{\mathcal{G}}$ are composed according to the rule
\[
(a_2,\omega_2) \circ (a_1, \omega_1) = (\hat{\theta}([\omega_2 \vert \omega_1 ]) a_2 a_1^{\pi(\omega_2)} , \omega_2 \omega_1).
\]
The category ${^{\hat{\theta}}}\hat{\mathcal{G}}$ is a twisted central extension of $\hat{\mathcal{G}}$ by $B\mathsf{U}(1)$, in that there are canonical functors
\[
\Obj(\hat{\mathcal{G}}) \times B \mathsf{U}(1) \overset{i}{\longrightarrow} {^{\hat{\theta}}}\hat{\mathcal{G}} \overset{p}{\longrightarrow} \hat{\mathcal{G}}
\]
with $p$ surjective on objects and full and $i$ an isomorphism onto the subgroupoid of morphisms which map to an identity in $\hat{\mathcal{G}}$. The twisted centrality condition is
\[
\omega \circ i(x_1 \xrightarrow[]{a} x_1)=
i(x_2 \xrightarrow[]{a^{\pi(\omega)}} x_2) \circ \omega,
\]
where $a \in \mathsf{U}(1)$ and $\omega: x_1 \rightarrow x_2$ in $\hat{\mathcal{G}}$. Compare with \cite[\S 1.2]{freed2013b}. Following the geometric case \cite{schreiber2007}, a twisted central extension of $\hat{\mathcal{G}}$ by $B \mathsf{U}(1)$ is called a Jandl gerbe over $\hat{\mathcal{G}}$. By choosing sections of $p$, we verify that any Jandl gerbe over $\hat{\mathcal{G}}$ is equivalent to ${^{\hat{\theta}}}\hat{\mathcal{G}}$ for some $\hat{\theta} \in Z^{2+\pi_{\hat{\mathcal{G}}}}(\hat{\mathcal{G}})$, well-defined up to exact $2$-cocycles. In this way, $H^{2+\pi_{\hat{\mathcal{G}}}}(\hat{\mathcal{G}})$ classifies equivalence classes of Jandl gerbes over $\hat{\mathcal{G}}$.

Consider $\Vect_{\mathbb{C}}$ as the defining $2$-representation of $B \mathsf{U}(1)$. The anti-linear complex conjugation functor on $\Vect_{\mathbb{C}}$ is compatible with the complex conjugation action on $B \mathsf{U}(1)$. We can therefore associate to ${^{\hat{\theta}}}\hat{\mathcal{G}}$ a Real $2$-line bundle $p: \hat{\theta}_{\mathbb{C}} \rightarrow \hat{\mathcal{G}}$. Concretely, let $\mathsf{RVect}_{\mathbb{C}}$ be the category of finite dimensional complex vector spaces and their complex linear or anti-linear maps. There is a natural functor $\mathsf{RVect}_{\mathbb{C}} \rightarrow B\mathbb{Z}_2$ which records the linearity of morphisms. Then $\hat{\theta}_{\mathbb{C}}$ is the category with objects $\Obj(\mathsf{RVect}_{\mathbb{C}}) \times \Obj(\hat{\mathcal{G}})$, morphisms
\begin{multline*}
\Hom_{\hat{\theta}_{\mathbb{C}}}((V_1, x_1),(V_2,x_2)) = \\ \{ (\varphi, \omega) \in \Hom_{\mathsf{RVect}_{\mathbb{C}}}(V_1,V_2) \times \Hom_{\hat{\mathcal{G}}}(x_1,x_2) \mid \varphi \mbox{ is } \pi(\omega)\mbox{-linear}\}
\end{multline*}
and composition law
\[
(\varphi_2,\omega_2) \circ (\varphi_1, \omega_1) = (\hat{\theta}([\omega_2 \vert \omega_1]) \varphi_2 \varphi_1, \omega_2 \omega_1).
\]
The functor $p: \hat{\theta}_{\mathbb{C}} \rightarrow \hat{\mathcal{G}}$ sends an object $(V,x)$ to $x$ and a morphism $(\varphi, \omega)$ to $\omega$.

\begin{Def}
A $\hat{\theta}$-twisted vector bundle over $\hat{\mathcal{G}}$ is a functor $F: {^{\hat{\theta}}}\hat{\mathcal{G}} \rightarrow \mathsf{RVect}_{\mathbb{C}}$ over $B \mathbb{Z}_2$ such that, for each $(a, x) \in  \mathsf{U}(1) \times \hat{\mathcal{G}}$, the map $F(a, \id_x): F(x) \rightarrow F(x)$ is multiplication by $a$.
\end{Def}

Let $\Vect_{\mathbb{C}}^{\hat{\theta}}(\hat{\mathcal{G}})$ be the category of $\hat{\theta}$-twisted vector bundles over $\hat{\mathcal{G}}$ and their $\mathbb{C}$-linear natural transformations. This category is $\mathbb{R}$-linear and additive.

\begin{Lem}
\label{lem:generatorRRep}
An object $F \in \Vect_{\mathbb{C}}^{\hat{\theta}}(\hat{\mathcal{G}})$ is equivalent to each of the following data:
\begin{enumerate}[label=(\roman*)]
\item Complex vector spaces $F(x)$, $x \in \hat{\mathcal{G}}$, together with complex linear maps
\[
F(x_1 \xrightarrow[]{\omega} x_2) : {^{\pi(\omega)}}F(x_1) \rightarrow F(x_2),
\qquad
\omega \in \Mor(\hat{\mathcal{G}})
\]
which satisfy $F(\id_x) = \id_{F(x)}$, $x \in \hat{\mathcal{G}}$, and
\begin{equation}
\label{eq:twistedComposition}
F(\omega_2) F(\omega_1) = \hat{\theta}([\omega_2 \vert \omega_1]) F(\omega_2 \omega_1),
\qquad (\omega_1,\omega_2) \in \Mor^{(2)}(\hat{\mathcal{G}}).
\end{equation}

\item A section $F: \hat{\mathcal{G}} \rightarrow \hat{\theta}_{\mathbb{C}}$ of $p: \hat{\theta}_{\mathbb{C}} \rightarrow \hat{\mathcal{G}}$.
\end{enumerate}
\end{Lem}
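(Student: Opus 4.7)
The lemma is a three-way equivalence of data, and its proof is essentially an unpacking of definitions. The plan is to derive (i) directly from the original definition of a $\hat{\theta}$-twisted vector bundle, and then to observe that (ii) is a repackaging of (i).

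Given $F: {^{\hat{\theta}}}\hat{\mathcal{G}} \to \mathsf{RVect}_{\mathbb{C}}$ as in the definition, I would set $F(x) := F(x)$ on objects and $F(\omega) := F(1, \omega)$ on morphisms. The normalisation of $\hat{\theta}$ ensures that every morphism in ${^{\hat{\theta}}}\hat{\mathcal{G}}$ factors as $(a, \omega) = (a, \id_{x_2}) \circ (1, \omega)$, so the multiplication property $F(a, \id_{x_2}) = a \cdot \id$ forces $F(a, \omega) = a \cdot F(1, \omega)$; hence $F$ is completely determined by the restricted data. The $\pi(\omega)$-linearity of $F(\omega)$ follows from the fact that $F$ lies over $B \mathbb{Z}_2$. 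Applying $F$ to the composition $(1, \omega_2) \circ (1, \omega_1) = (\hat{\theta}([\omega_2 \vert \omega_1]), \omega_2\omega_1)$ in ${^{\hat{\theta}}}\hat{\mathcal{G}}$ produces the cocycle identity \eqref{eq:twistedComposition}. Conversely, starting from data as in (i), the rule $F(a, \omega) := a F(\omega)$ extends to a functor: functoriality reduces to \eqref{eq:twistedComposition} once one observes that a scalar $a_1$ passes through the $\pi(\omega_2)$-linear map $F(\omega_2)$ as $a_1^{\pi(\omega_2)}$, matching the factor $a_1^{\pi(\omega_2)}$ appearing in the composition law of ${^{\hat{\theta}}}\hat{\mathcal{G}}$.

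To pass to (ii), note that by construction a section $s$ of $p: \hat{\theta}_{\mathbb{C}} \to \hat{\mathcal{G}}$ assigns to each object $x$ a pair $(V_x, x)$ and to each morphism $\omega$ a pair $(\varphi_\omega, \omega)$, with $\varphi_\omega$ a $\pi(\omega)$-linear map, subject to the twisted composition law of $\hat{\theta}_{\mathbb{C}}$. That law is the same $\hat{\theta}$-twisted cocycle condition as in (i), up to the conventions defining $\hat{\theta}_{\mathbb{C}}$. The correspondence (i) $\leftrightarrow$ (ii) is then a relabelling: starting from (i), put $s(x) := (F(x), x)$ and $s(\omega) := (F(\omega), \omega)$; starting from a section, read off its components to recover the vector spaces and the twisted morphisms.

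The only real obstacle is the bookkeeping: tracking how scalars commute past $\pi(\omega)$-linear maps, picking up complex conjugation when $\pi(\omega) = -1$, and invoking the normalisation of $\hat{\theta}$ at the places where identity morphisms appear so that $F(\id_x) = \id_{F(x)}$. Nothing beyond these careful checks is needed.
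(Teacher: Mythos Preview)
Your proposal is correct and follows exactly the approach the paper takes: the paper's proof consists of the single sentence ``That the first data is equivalent to a $\hat{\theta}$-twisted vector bundle is a direct verification. The second data is clearly equivalent to the first,'' and your argument is precisely that direct verification spelled out. Your bookkeeping (the factorisation $(a,\omega)=(a,\id_{x_2})\circ(1,\omega)$ via normalisation of $\hat{\theta}$, and the observation that $a_1\in\mathsf{U}(1)$ passes through a $\pi(\omega_2)$-linear map as $a_1^{\pi(\omega_2)}$) is accurate and is all that is required.
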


\begin{proof}
That the first data is equivalent to a $\hat{\theta}$-twisted vector bundle is a direct verification. The second data is clearly equivalent to the first.
\end{proof}

In view of Lemma \ref{lem:generatorRRep}(ii), we regard $\Vect_{\mathbb{C}}^{\hat{\theta}}(\hat{\mathcal{G}})$ as the category of sections $\Gamma_{\hat{\mathcal{G}}}(\hat{\theta}_{\mathbb{C}})$.

\begin{Ex}
Let $\hat{\mathsf{G}} = \mathsf{G} \times \mathbb{Z}_2$ with $\pi$ the projection. Then $\Vect_{\mathbb{C}}^{\hat{\theta}}(B\hat{\mathsf{G}})$ is equivalent to the category of real (resp. quaternionic) representations of $\mathsf{G}$ when $\hat{\theta}=1$ (resp. $\hat{\theta}([\omega_2 \vert \omega_1]) = e^{\pi i \Delta_{\omega_2,\omega_1}}$). Similarly, $\Vect_{\mathbb{C}}^{\hat{\theta} =1}(B\hat{\mathsf{G}})$ consists of Real representations (with respect to $\hat{\mathsf{G}}$) of $\mathsf{G}$, in the sense of Atiyah--Segal \cite{atiyah1969} and Karoubi \cite{karoubi1970}. In general, $\Vect_{\mathbb{C}}^{\hat{\theta}}(B\hat{\mathsf{G}})$ consists of $\hat{\theta}$-projective Real representations of $\mathsf{G}$. For this reason, we often refer to $\hat{\theta}$-twisted vector bundles as $\hat{\theta}$-twisted representations.
\end{Ex}

To give a module theoretic interpretation of $\Vect_{\mathbb{C}}^{\hat{\theta}}(\hat{\mathcal{G}})$, let $\mathbb{C}^{\hat{\theta}}[\hat{\mathcal{G}}]$ be the complex vector space with basis $\{l_{\omega}\}_{\omega \in\Mor(\hat{\mathcal{G}})}$ and associative $\mathbb{C}$-semi-linear product
\[
(c_2 l_{\omega_2}) \cdot (c_1 l_{\omega_1}) =
\begin{cases}
c_2 (\prescript{\pi(\omega_2)}{}{c}_1) \hat{\theta}([\omega_2 \vert \omega_1]) l_{\omega_2 \omega_1} & \mbox{if } (\omega_1,\omega_2) \in \Mor^{(2)}(\hat{\mathcal{G}}), \\
0& \mbox{otherwise}.
\end{cases}
\]
We call $\mathbb{C}^{\hat{\theta}}[\hat{\mathcal{G}}]$ the $\hat{\theta}$-twisted groupoid algebra of $\hat{\mathcal{G}}$. A Real representation of $\mathbb{C}^{\hat{\theta}}[\hat{\mathcal{G}}]$ is a complex vector space $V$ together with a $\mathbb{C}$-linear map $\rho: \mathbb{C}^{\hat{\theta}}[\hat{\mathcal{G}}] \rightarrow \End_{\mathbb{R}}(V)$ which is an $\mathbb{R}$-algebra homomorphism and is such that $\rho(l_{\omega})$ is $\pi(\omega)$-linear. Real representations and their $\mathbb{C}$-linear $\mathbb{C}^{\hat{\theta}}[\hat{\mathcal{G}}]$-module homomorphisms form an $\mathbb{R}$-linear category $\mathbb{C}^{\hat{\theta}}[\hat{\mathcal{G}}] \ModR$.

\begin{Prop}
\label{prop:modInterp}
There is an $\mathbb{R}$-linear equivalence $\Vect_{\mathbb{C}}^{\hat{\theta}}(\hat{\mathcal{G}}) \simeq \mathbb{C}^{\hat{\theta}}[\hat{\mathcal{G}}] \ModR$.
\end{Prop}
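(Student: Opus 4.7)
The plan is to construct explicit quasi-inverse $\mathbb{R}$-linear functors using the data presentation of Lemma \ref{lem:generatorRRep}(i). In the forward direction, send $F \in \Vect_{\mathbb{C}}^{\hat{\theta}}(\hat{\mathcal{G}})$ to $V := \bigoplus_{x \in \hat{\mathcal{G}}} F(x)$, which is a finite dimensional complex vector space by essential finiteness (replacing $\hat{\mathcal{G}}$ by a skeleton if necessary), equipped with the $\mathbb{C}$-linear map $\rho : \mathbb{C}^{\hat{\theta}}[\hat{\mathcal{G}}] \to \End_{\mathbb{R}}(V)$ defined on the basis by sending $l_\omega$, $\omega : x_1 \to x_2$, to the endomorphism acting as $F(\omega)$ on the summand $F(x_1)$ and by zero on the others. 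Since $F(\omega)$ is $\pi(\omega)$-linear, so is $\rho(l_\omega)$. Conversely, given $(V,\rho) \in \mathbb{C}^{\hat{\theta}}[\hat{\mathcal{G}}]\ModR$, use that $\hat{\theta}$ is normalized to see that $\{l_{\id_x}\}_{x \in \hat{\mathcal{G}}}$ is a complete family of orthogonal idempotents in $\mathbb{C}^{\hat{\theta}}[\hat{\mathcal{G}}]$, so $V = \bigoplus_{x} F(x)$ with $F(x) := \rho(l_{\id_x}) V$; set $F(\omega) := \rho(l_\omega)|_{F(x_1)}$, which lands in $F(x_2)$ because $l_{\id_{x_2}} \cdot l_\omega = l_\omega$.

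The crux of the proof is verifying that $\rho$ is an $\mathbb{R}$-algebra homomorphism, as all remaining checks (functoriality, passage to morphisms, quasi-inverse property) are bookkeeping. For composable $\omega_1, \omega_2$ with $c_1, c_2 \in \mathbb{C}$, applying the composite to $v \in F(x_1)$ gives
\[
\rho(c_2 l_{\omega_2}) \circ \rho(c_1 l_{\omega_1})(v) = c_2 F(\omega_2)\bigl( c_1 F(\omega_1)(v) \bigr) = c_2 \, c_1^{\pi(\omega_2)} F(\omega_2) F(\omega_1)(v),
\]
where the second equality uses the $\pi(\omega_2)$-linearity of $F(\omega_2)$. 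By equation \eqref{eq:twistedComposition}, this equals $c_2 \, c_1^{\pi(\omega_2)} \hat{\theta}([\omega_2 \vert \omega_1]) F(\omega_2 \omega_1)(v)$, which matches $\rho((c_2 l_{\omega_2}) \cdot (c_1 l_{\omega_1}))(v)$ by the definition of the semi-linear product. In the reverse direction the special case $c_1 = c_2 = 1$ recovers the twisted composition law for $F(\omega_2)F(\omega_1)$. Morphisms correspond because a $\mathbb{C}$-linear natural transformation $F \Rightarrow F'$ is, by definition, a family of $\mathbb{C}$-linear maps $F(x) \to F'(x)$ compatible with the structure maps $F(\omega)$, equivalently a $\mathbb{C}$-linear map $V \to V'$ intertwining the $\rho(l_\omega)$, i.e.\ a $\mathbb{C}^{\hat{\theta}}[\hat{\mathcal{G}}]$-module homomorphism.

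The main obstacle is philosophical rather than technical: one must recognize that the at-first-sight ad hoc $\mathbb{C}$-semi-linear product on $\mathbb{C}^{\hat{\theta}}[\hat{\mathcal{G}}]$ has been tailored so that a globally $\mathbb{C}$-linear map $\rho$ can faithfully encode the mixed collection of $\mathbb{C}$-linear and $\mathbb{C}$-antilinear structure maps $F(\omega)$. Once this compatibility is isolated in the calculation above, both functors, their naturality, and the quasi-inverse property follow from unwinding definitions.
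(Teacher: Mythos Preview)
Your proof is correct and is precisely the unpacking of the paper's one-line proof, which simply cites Lemma~\ref{lem:generatorRRep}(i); you have made explicit the functors in both directions and the key compatibility check between the twisted composition law~\eqref{eq:twistedComposition} and the semi-linear product on $\mathbb{C}^{\hat{\theta}}[\hat{\mathcal{G}}]$. One notational quibble: your $c_1^{\pi(\omega_2)}$ should be ${}^{\pi(\omega_2)}c_1$ in the paper's conventions (complex conjugation when $\pi(\omega_2)=-1$, not inversion), though your accompanying justification makes the intended meaning clear.
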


\begin{proof}
This follows from Lemma \ref{lem:generatorRRep}(i).
\end{proof}

\begin{Prop}[{Cf. \cite[Theorem 18]{willerton2008}}]
\label{prop:grpdDecompCat}
Fix an equivalence as in Proposition \ref{prop:grpdDecomp}. Then the restriction along this equivalence defines an equivalence
\[
\Vect_{\mathbb{C}}^{\hat{\theta}}(\hat{\mathcal{G}})
\xrightarrow[]{\sim}
\bigoplus_{x \in \pi_0(\hat{\mathcal{G}})_{-1}} \Vect_{\mathbb{C}}^{\hat{\theta}_{\vert x}}(B \Aut_{\hat{\mathcal{G}}}(x)) \oplus \bigoplus_{x \in \pi_0(\hat{\mathcal{G}})_1} \Vect_{\mathbb{C}}^{\hat{\theta}_{\vert \{x,\overline{x}\}}}( \hat{\mathcal{G}}_{\{x,\overline{x}\}}).
\]
\end{Prop}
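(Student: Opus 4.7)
The plan is to reduce the statement to two general functorialities of the assignment $\hat{\mathcal{G}} \mapsto \Vect_{\mathbb{C}}^{\hat{\theta}}(\hat{\mathcal{G}})$. The first is compatibility with disjoint unions: if $\hat{\mathcal{G}} = \hat{\mathcal{G}}_1 \sqcup \hat{\mathcal{G}}_2$ as $\mathbb{Z}_2$-graded groupoids, then the Real $2$-line bundle $\hat{\theta}_{\mathbb{C}}$ visibly splits along the decomposition, and Lemma \ref{lem:generatorRRep}(ii) immediately yields an $\mathbb{R}$-linear equivalence $\Vect_{\mathbb{C}}^{\hat{\theta}}(\hat{\mathcal{G}}_1 \sqcup \hat{\mathcal{G}}_2) \simeq \Vect_{\mathbb{C}}^{\hat{\theta}_1}(\hat{\mathcal{G}}_1) \oplus \Vect_{\mathbb{C}}^{\hat{\theta}_2}(\hat{\mathcal{G}}_2)$ with $\hat{\theta}_i := \hat{\theta}_{\vert \hat{\mathcal{G}}_i}$. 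The second, and substantive, functoriality is that any equivalence $F: \hat{\mathcal{G}}' \to \hat{\mathcal{G}}$ of $\mathbb{Z}_2$-graded groupoids induces by pullback an $\mathbb{R}$-linear equivalence $F^*: \Vect_{\mathbb{C}}^{\hat{\theta}}(\hat{\mathcal{G}}) \xrightarrow{\sim} \Vect_{\mathbb{C}}^{F^*\hat{\theta}}(\hat{\mathcal{G}}')$. Granting both, I apply the second to the equivalence of Proposition \ref{prop:grpdDecomp}, whose quasi-inverse is the inclusion of the disjoint union of full subgroupoids $B\Aut_{\hat{\mathcal{G}}}(x)$ and $\hat{\mathcal{G}}_{\{x,\overline{x}\}}$; along a full subcategory inclusion the pulled-back cocycle is literally the restriction of $\hat{\theta}$ to morphisms of the subgroupoid. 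The first functoriality then yields the claimed direct sum decomposition.

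For the second step I construct a quasi-inverse to $F^*$. Choose, for each $y \in \hat{\mathcal{G}}$, an object $x_y \in \hat{\mathcal{G}}'$ together with a degree $+1$ isomorphism $g_y: y \to F(x_y)$, taking $g_y$ to be an identity when $y$ is itself in the image; such choices exist by essential surjectivity of $F$ together with the strong non-triviality of $\pi$. Given $F' \in \Vect_{\mathbb{C}}^{F^*\hat{\theta}}(\hat{\mathcal{G}}')$, I define an extension $\widetilde{F}'$ on $\hat{\mathcal{G}}$, using Lemma \ref{lem:generatorRRep}(i), by $\widetilde{F}'(y) := F'(x_y)$ and, for $\omega: y_1 \to y_2$ in $\hat{\mathcal{G}}$,
\[
\widetilde{F}'(\omega) := F'(\widetilde{\omega}),
\]
where $\widetilde{\omega}: x_{y_1} \to x_{y_2}$ is the unique morphism in $\hat{\mathcal{G}}'$ with $F(\widetilde{\omega}) = g_{y_2} \omega g_{y_1}^{-1}$. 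Since $g_{y_1}$ and $g_{y_2}$ have degree $+1$, $\widetilde{\omega}$ has the same degree as $\omega$, so the required $\pi(\omega)$-linearity is preserved; the twisted composition law \eqref{eq:twistedComposition} then reduces to the $2$-cocycle equation for $\hat{\theta}$. Different choices of $\{g_y\}$ yield naturally isomorphic extensions, which supplies the unit and counit of the desired equivalence.

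The main obstacle is the cocycle bookkeeping of the second step: verifying that the twisted composition law is respected by $\widetilde{F}'$ and that the extension functor is inverse to $F^*$ up to natural isomorphism. Because the transport morphisms $g_y$ are chosen of degree $+1$, no anti-linear compositions enter into the $\hat{\theta}$-corrections, so the cocycle calculations are formally identical to those of the untwisted case treated in \cite[Theorem 18]{willerton2008}. The $\mathbb{Z}_2$-grading intervenes only through the global compatibility of $F^*$ with the projection to $B\mathbb{Z}_2$, which is automatic since $F$ is a functor over $B\mathbb{Z}_2$.
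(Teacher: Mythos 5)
Your overall strategy coincides with the paper's: additivity of $\Vect_{\mathbb{C}}^{\hat{\theta}}(-)$ under disjoint union together with invariance under the graded equivalence of Proposition \ref{prop:grpdDecomp}, the latter established by building an extension functor quasi-inverse to restriction from chosen degree $+1$ transport morphisms $g_y$. The gap is in your formula for that quasi-inverse. With $\widetilde{F}'(\omega) := F'(\widetilde{\omega})$, where $F(\widetilde{\omega}) = g_{y_2}\omega g_{y_1}^{-1}$, the composition law you actually obtain is
\[
\widetilde{F}'(\omega_2)\,\widetilde{F}'(\omega_1)
=
\hat{\theta}\big([\,g_{y_3}\omega_2 g_{y_2}^{-1} \,\vert\, g_{y_2}\omega_1 g_{y_1}^{-1}\,]\big)\,
\widetilde{F}'(\omega_2\omega_1),
\]
since $F'$ satisfies \eqref{eq:twistedComposition} for the pulled-back cocycle $F^*\hat{\theta}$ and $\widetilde{\omega_2}\widetilde{\omega_1} = \widetilde{\omega_2\omega_1}$. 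The law you need has coefficient $\hat{\theta}([\omega_2\vert\omega_1])$, and a $2$-cocycle is not invariant under conjugating its arguments by the transports $g_y$: the two coefficients differ by a coboundary that is non-trivial in general (this is precisely the phenomenon quantified, in the one-object case, by equation \eqref{eq:keyIden2Cocycle}). So your claim that the twisted composition law ``reduces to the $2$-cocycle equation for $\hat{\theta}$'' fails: your $\widetilde{F}'$ is a twisted bundle for a cocycle cohomologous to $\hat{\theta}$, not for $\hat{\theta}$ itself, and for the same reason the components $G(g_y)$ do not assemble into the natural isomorphism needed for the unit and counit of the equivalence.

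The repair is to insert the cocycle correction factor, exactly as in the paper's formula \eqref{eq:vbExtension} (and as in the untwisted argument of Willerton that you cite, which also carries this factor): set
\[
\widetilde{F}'(y_1 \xrightarrow[]{\omega} y_2)
=
\frac{\hat{\theta}([g_{y_2}\vert \omega])}{\hat{\theta}([g_{y_2}\omega g_{y_1}^{-1} \vert g_{y_1}])}\, F'(\widetilde{\omega}),
\]
and then verify \eqref{eq:twistedComposition} via the twisted cocycle identity, keeping track of the $\pi(\omega_2)$-conjugation of scalar factors when composing; this computation, not formal bookkeeping, is the content of the step. A secondary caveat: your assertion that degree $+1$ transports $g_y$ exist for an arbitrary graded equivalence is too strong — for a component of type $\pi_0(\hat{\mathcal{G}})_1$ the essential image could lie entirely in one ``half'', forcing odd transports — but it does hold for the specific equivalence of Proposition \ref{prop:grpdDecomp}, whose subgroupoid contains both $x$ and $\overline{x}$, so this does not affect the intended application.
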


\begin{proof}
As in Proposition \ref{prop:dimFlatSect}, it suffices to prove the statement for connected $\hat{\mathcal{G}}$. Suppose that $\pi_0(\hat{\mathcal{G}}) = \pi_0(\hat{\mathcal{G}})_{-1}$ and let $\rho \in \Vect_{\mathbb{C}}^{\hat{\theta}_{\vert x}}(B \Aut_{\hat{\mathcal{G}}}(x))$ with fibre $V_x$. For each $y \in \hat{\mathcal{G}}$, fix a degree $1$ morphism $g_y: y \rightarrow x$ and define a $F_{\rho} \in \Vect_{\mathbb{C}}^{\hat{\theta}}(\hat{\mathcal{G}})$ by $F_{\rho}(y) = V_x$, $y \in \hat{\mathcal{G}}$, and (cf. \cite[\S 2.4.1]{willerton2008})
\begin{equation}
\label{eq:vbExtension}
F_{\rho}(y \xrightarrow[]{\omega} z) = \frac{\hat{\theta}([g_z \vert \omega])}{\hat{\theta}([g_z \omega g_y^{-1} \vert g_y])} \rho(g_z \omega g_y^{-1}).
\end{equation}
This defines a functor $\Vect^{\hat{\theta}_{\vert x}}_{\mathbb{C}}(B \Aut_{\hat{\mathcal{G}}}(x)) \rightarrow \Vect_{\mathbb{C}}^{\hat{\theta}}(\hat{\mathcal{G}})$ which is quasi-inverse to restriction to $\{x\}$. Indeed, let $F \in \Vect_{\mathbb{C}}^{\hat{\theta}}(\hat{\mathcal{G}})$ with $F_{\vert x} = \rho$. Then $\Phi_y = F(g_y)$ are the components of a natural isomorphism $\Phi: F \Rightarrow F_{\rho}$.

If instead $\pi_0(\hat{\mathcal{G}}) = \pi_0(\hat{\mathcal{G}})_1$, let $\rho \in \Vect_{\mathbb{C}}^{\hat{\theta}_{\{x, \overline{x}\}}}(\hat{\mathcal{G}}_{\{x, \overline{x}\}})$ with fibres $V_x$ and $V_{\overline{x}}$. Using notation from the proof of Proposition \ref{prop:grpdDecomp}, define $F \in \Vect_{\mathbb{C}}^{\hat{\theta}}(\hat{\mathcal{G}})$ by $F_{\rho}(y) = V_{\textnormal{target}(g_y)}$, $y \in \hat{\mathcal{G}}$, with $F_{\rho}(y \xrightarrow[]{\omega} z)$ as in equation \eqref{eq:vbExtension}. The resulting functor $\Vect_{\mathbb{C}}^{\hat{\theta}_{\vert \{x,\overline{x}\}}}( \hat{\mathcal{G}}_{\{x,\overline{x}\}}) \rightarrow \Vect_{\mathbb{C}}^{\hat{\theta}}(\hat{\mathcal{G}})$ is quasi-inverse to restriction to $\{x,\overline{x}\}$.
\end{proof}

\begin{Prop}
\label{prop:completeReduc}
The category $\Vect_{\mathbb{C}}^{\hat{\theta}}(\hat{\mathcal{G}})$ is semisimple.
\end{Prop}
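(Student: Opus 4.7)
The plan is to prove complete reducibility by a Maschke-style argument: average an arbitrary Hermitian form on the fibre to obtain an invariant one, then take orthogonal complements. By Proposition \ref{prop:grpdDecompCat} the category $\Vect^{\hat{\theta}}_{\mathbb{C}}(\hat{\mathcal{G}})$ splits as a direct sum over connected components of $\hat{\mathcal{G}}$, so it suffices to treat each connected case, which by Proposition \ref{prop:grpdDecomp} means either $\hat{\mathcal{G}} = B \hat{\mathsf{G}}$ for a $\mathbb{Z}_2$-graded finite group $\hat{\mathsf{G}}$, or $\hat{\mathcal{G}} = \hat{\mathcal{G}}_{\{x,\overline{x}\}}$.

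For $\hat{\mathcal{G}} = B\hat{\mathsf{G}}$, fix $F \in \Vect^{\hat{\theta}}_{\mathbb{C}}(B\hat{\mathsf{G}})$ with underlying complex vector space $V$, choose any Hermitian inner product $h$ on $V$, and set
\[
h_{\mathrm{inv}}(v_1, v_2) := \frac{1}{|\hat{\mathsf{G}}|} \sum_{\omega \in \hat{\mathsf{G}}} {}^{\pi(\omega)} h(F(\omega) v_1, F(\omega) v_2).
\]
The outer conjugation ${}^{\pi(\omega)}$ compensates precisely for the $\pi(\omega)$-semi-linearity of $F(\omega)$, so each summand is sesquilinear and positive semi-definite in $(v_1,v_2)$; hence $h_{\mathrm{inv}}$ is a Hermitian inner product. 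Because $\hat{\theta}$ is $\mathsf{U}(1)$-valued, the scalars $\hat{\theta}([\omega \vert g])$ appearing in the twisted composition $F(\omega) F(g) = \hat{\theta}([\omega \vert g]) F(\omega g)$ have unit modulus and therefore disappear from the pairing; reindexing $\omega \mapsto \omega g^{-1}$ and using $\pi(\omega) = \pi(\omega g) \pi(g)$ yields the equivariance
\[
h_{\mathrm{inv}}(F(g) v_1, F(g) v_2) = {}^{\pi(g)} h_{\mathrm{inv}}(v_1, v_2),
\]
which is the natural form of invariance for a semi-linear action. Given a sub-object $W \subseteq V$, for any $v \in W^\perp$ and $w \in W$ one then computes $h_{\mathrm{inv}}(F(g)v, w) = {}^{\pi(g)} h_{\mathrm{inv}}(v, F(g)^{-1} w) = 0$ since $F(g)^{-1} w \in W$, so $W^\perp$ is again a sub-object and $V = W \oplus W^\perp$ in $\Vect^{\hat{\theta}}_{\mathbb{C}}(B\hat{\mathsf{G}})$. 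Complete reducibility follows by induction on $\dim_{\mathbb{C}} V$.

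For $\hat{\mathcal{G}} = \hat{\mathcal{G}}_{\{x,\overline{x}\}}$, the same strategy applies object-by-object: choose Hermitian forms on $V_x$ and $V_{\overline{x}}$ and average each one over all morphisms with the appropriate source, producing invariant forms whose orthogonal complements give complementary sub-objects as before. The only potential obstacle is the bookkeeping of the complex conjugations ${}^{\pi(\omega)}$ interacting with the twist factors of $\hat{\theta}$; the key simplification that renders this routine is that $\hat{\theta}$ is $\mathsf{U}(1)$-valued, so $|\hat{\theta}([\omega \vert g])|^2 = 1$ and the twist disappears from the Hermitian pairing.
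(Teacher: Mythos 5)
Your proposal is correct and follows essentially the same route as the paper: reduce to the two model cases of Proposition \ref{prop:grpdDecomp} via Proposition \ref{prop:grpdDecompCat}, unitarize by a conjugation-twisted variant of Weyl's averaging trick (your formula with the outer ${}^{\pi(\omega)}$ and the cancellation of the $\mathsf{U}(1)$-valued twist is precisely the ``variation'' the paper alludes to), and then take orthogonal complements of sub-objects. You have simply made explicit the details the paper leaves as a sketch.
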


\begin{proof}
This can be proved in the two model cases by first employing a variation of Weyl's unitary trick to unitarize a $\hat{\theta}$-twisted bundle and then taking orthogonal complements of subbundles. Since semisimplicity is preserved under equivalences, the general case then follows from Proposition \ref{prop:grpdDecompCat}.
\end{proof}

\begin{Def}
The $\hat{\theta}$-twisted $K$-theory of $\hat{\mathcal{G}}$ is the Grothendieck group $K^{\hat{\theta}}(\hat{\mathcal{G}}) : = K_0(\Vect_{\mathbb{C}}^{\hat{\theta}}(\hat{\mathcal{G}}))$.
\end{Def}

This is a special case of the twisted $K$-theory studied in \cite{freed2013b} and reduces to the $KR$-theory of \cite{atiyah1969}, \cite{karoubi1970} when $\hat{\theta}$ is trivial. By Proposition \ref{prop:completeReduc}, $K^{\hat{\theta}}(\hat{\mathcal{G}})$ is the free abelian group generated by isomorphism classes of simple $\hat{\theta}$-twisted vector bundles.

We end this section with a fixed point interpretation of $\Vect_{\mathbb{C}}^{\hat{\theta}}(\hat{\mathcal{G}})$. For simplicity, take $\hat{\mathcal{G}} = B \hat{\mathsf{G}}$. Each $\varsigma \in \hat{\mathsf{G}} \setminus \mathsf{G}$ determines an anti-linear involution $(Q^{\varsigma}, \Theta^{\varsigma})$ of the $\mathbb{C}$-linear category $\Vect_{\mathbb{C}}^{\theta}(B\mathsf{G})$ of $\theta$-twisted vector bundles over $B \mathsf{G}$. The functor $Q^{\varsigma} : \Vect_{\mathbb{C}}^{\theta}(B\mathsf{G}) \rightarrow  \Vect_{\mathbb{C}}^{\theta}(B\mathsf{G})$ is given on objects by $Q^{\varsigma}(V,\rho) = (\overline{V}, \rho^{\varsigma})$, where $\rho^{\varsigma}(g) = \uptau_{\pi}(\hat{\theta})([\varsigma]g) \cdot \overline{\rho(\varsigma g \varsigma^{-1})}$. That $Q^{\varsigma}(V,\rho)$ is indeed a $\theta$-twisted representation follows from the identity
\begin{equation}
\label{eq:keyIden2Cocycle}
\frac{\hat{\theta}([\omega g_2 \omega^{-1} \vert \omega g_1 \omega^{-1}])}{\hat{\theta}([g_2 \vert g_1])^{\pi(\omega)}}
=
\frac{\uptau_{\pi}(\hat{\theta})([\omega]g_2) \uptau_{\pi}(\hat{\theta})([\omega]g_1)}{\uptau_{\pi}(\hat{\theta})([\omega]g_2 g_1)},
\qquad
g_i \in \mathsf{G}, \; \omega \in \hat{\mathsf{G}}
\end{equation}
which can be proved directly or, after slightly changing conventions, by using \cite[Proposition 8.1]{kim2015}. The natural isomorphism $\Theta^{\varsigma}: 1_{\Vect^{\theta}(\mathsf{G})} \Rightarrow (Q^{\varsigma})^2$ has components $\Theta^{\varsigma}_{\rho} = \hat{\theta}([\varsigma \vert \varsigma])^{-2} \rho(\varsigma^{-2})$ and satisfies $Q^{\varsigma}(\Theta_{\rho}) = \Theta_{Q^{\varsigma}(\rho)}$. Up to equivalence of categories with involution, $(\Vect_{\mathbb{C}}^{\theta}(B\mathsf{G}),Q^{\varsigma}, \Theta^{\varsigma})$ depends only on the pair $(\hat{\mathsf{G}}, \hat{\theta})$.


\begin{Prop}
\label{prop:fixedPointEquiv}
There is an equivalence $\Vect_{\mathbb{C}}^{\theta}(B\mathsf{G})^{h(Q^{\varsigma}, \Theta^{\varsigma})} \simeq \Vect_{\mathbb{C}}^{\hat{\theta}}(B\hat{\mathsf{G}})$ of $\mathbb{R}$-linear categories, where the left hand side denotes the homotopy fixed point category.
\end{Prop}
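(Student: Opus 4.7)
The plan is to write down explicitly what a homotopy fixed point of $(Q^{\varsigma},\Theta^{\varsigma})$ is, and then build mutually quasi-inverse functors back and forth to $\Vect_{\mathbb{C}}^{\hat{\theta}}(B\hat{\mathsf{G}})$. An object of $\Vect_{\mathbb{C}}^{\theta}(B\mathsf{G})^{h(Q^{\varsigma},\Theta^{\varsigma})}$ is a triple $((V,\rho),\phi)$ where $(V,\rho) \in \Vect_{\mathbb{C}}^{\theta}(B\mathsf{G})$ and $\phi \colon (V,\rho) \to Q^{\varsigma}(V,\rho) = (\overline{V},\rho^{\varsigma})$ is an isomorphism (that is, an anti-linear automorphism of $V$ intertwining $\rho$ with $\rho^{\varsigma}$) satisfying the homotopy fixed point condition $Q^{\varsigma}(\phi) \circ \phi = \Theta^{\varsigma}_{(V,\rho)}$. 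A morphism of homotopy fixed points is a morphism in $\Vect_{\mathbb{C}}^{\theta}(B\mathsf{G})$ commuting with the $\phi$'s.

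First I would construct the functor $\Phi \colon \Vect_{\mathbb{C}}^{\theta}(B\mathsf{G})^{h(Q^{\varsigma},\Theta^{\varsigma})} \to \Vect_{\mathbb{C}}^{\hat{\theta}}(B\hat{\mathsf{G}})$ as follows. Given $((V,\rho),\phi)$, define $\hat{\rho} \colon \hat{\mathsf{G}} \to \End_{\mathbb{R}}(V)$ on the coset $\mathsf{G}$ by $\hat{\rho}(g) = \rho(g)$ and on the coset $\varsigma\mathsf{G}$ by $\hat{\rho}(\varsigma g) = c(g)\, \phi \circ \rho(g)$ for a suitable scalar normalization $c(g)$ built from $\hat{\theta}$ (and ultimately from $\uptau_{\pi}(\hat{\theta})([\varsigma]g)$). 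The intertwining property of $\phi$, namely $\rho^{\varsigma}(g) \circ \phi = \phi \circ \rho(g)$, is exactly the identity \eqref{eq:keyIden2Cocycle} specialized to $\omega = \varsigma$ and rearranged; this is what guarantees the $\hat{\theta}$-cocycle condition $\hat{\rho}(\omega_{2})\hat{\rho}(\omega_{1}) = \hat{\theta}([\omega_{2}\vert\omega_{1}])\hat{\rho}(\omega_{2}\omega_{1})$ holds whenever at least one of $\omega_{1},\omega_{2}$ lies in $\varsigma\mathsf{G}$. Finally, the case $\omega_{1},\omega_{2} \in \varsigma\mathsf{G}$ of the cocycle condition reduces, after unwinding definitions, to the homotopy fixed point equation $Q^{\varsigma}(\phi)\circ\phi = \hat{\theta}([\varsigma\vert\varsigma])^{-2}\rho(\varsigma^{-2})$, once the normalization $c(g)$ is chosen correctly.

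Next I would construct the quasi-inverse $\Psi$. Given $\hat{\rho} \in \Vect_{\mathbb{C}}^{\hat{\theta}}(B\hat{\mathsf{G}})$ with underlying vector space $V$, set $\rho = \hat{\rho}\vert_{\mathsf{G}}$ and let $\phi \colon V \to V$ be the anti-linear operator $c'\cdot \hat{\rho}(\varsigma)$ for the inverse normalization $c'$. That $\phi$ indeed lifts to a morphism $(V,\rho) \to Q^{\varsigma}(V,\rho)$ amounts, once more, to the identity \eqref{eq:keyIden2Cocycle}. The homotopy fixed point equation then follows by applying the $\hat{\theta}$-cocycle relation to the word $\varsigma\cdot\varsigma = \varsigma^{2}$. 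On morphisms, both $\Phi$ and $\Psi$ are the identity on underlying $\mathbb{C}$-linear maps, so the two compositions $\Phi\Psi$ and $\Psi\Phi$ are naturally isomorphic (in fact equal, with the normalization chosen consistently) to the respective identity functors, and $\mathbb{R}$-linearity is manifest.

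The main obstacle I expect is purely bookkeeping: pinning down the correct scalar normalization $c(g)$ so that (a) the resulting $\hat{\rho}$ satisfies the $\hat{\theta}$-cocycle condition on all four combinations of cosets $\mathsf{G}\times\mathsf{G}$, $\mathsf{G}\times\varsigma\mathsf{G}$, $\varsigma\mathsf{G}\times\mathsf{G}$, $\varsigma\mathsf{G}\times\varsigma\mathsf{G}$, and (b) the homotopy fixed point condition with its specific factor $\hat{\theta}([\varsigma\vert\varsigma])^{-2}\rho(\varsigma^{-2})$ matches the cocycle evaluation $\hat{\theta}([\varsigma\vert\varsigma])\rho(\varsigma^{2})$ coming from $\hat{\rho}(\varsigma)^{2} = \hat{\theta}([\varsigma\vert\varsigma])\hat{\rho}(\varsigma^{2})$. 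All of these are algebraic consequences of \eqref{eq:keyIden2Cocycle} and the definitions of $Q^{\varsigma}$ and $\Theta^{\varsigma}$, but assembling them cleanly is the content of the proof.
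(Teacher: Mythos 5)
Your proposal follows essentially the same route as the paper: the paper's proof likewise sends a homotopy fixed point $\psi_{\rho}\colon (V,\rho)\to Q^{\varsigma}(V,\rho)$ to the extension of $\rho$ defined by $\rho(\omega)=\hat{\theta}([\omega\,\vert\,\varsigma^{-1}])\,\rho(\omega\varsigma^{-1})\circ\psi_{\rho}$ for $\omega\in\hat{\mathsf{G}}\setminus\mathsf{G}$, acts as the identity on morphisms, and inverts by restriction, with the cocycle and fixed-point checks resting on \eqref{eq:keyIden2Cocycle} exactly as you outline. The only piece you leave open, the normalization $c(g)$, is supplied by that explicit factor $\hat{\theta}([\omega\,\vert\,\varsigma^{-1}])$ (your convention with $\phi$ on the left differs only by scalars via the intertwining relation), so your plan is correct and matches the paper's argument.
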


\begin{proof}
At the level of objects, the equivalence $\Vect_{\mathbb{C}}^{\theta}(B\mathsf{G})^{h(Q^{\varsigma}, \Theta^{\varsigma})} \rightarrow \Vect_{\mathbb{C}}^{\hat{\theta}}(B\mathsf{G})$ assigns to a homotopy fixed point $\psi_{\rho}: (V,\rho) \rightarrow Q^{\varsigma}(V,\rho)$ the $\hat{\theta}$-twisted representation $\rho$ which is equal to $\rho$ as a $\theta$-twisted representation and has
\[
\rho(\omega) = \hat{\theta}([\omega \vert \varsigma^{-1}]) \rho(\omega \varsigma^{-1}) \circ \psi_{\rho}, \qquad \omega \in \hat{\mathsf{G}} \setminus \mathsf{G}.
\]
On morphisms $F^{\varsigma}$ is the identity.
\end{proof}

Using Proposition \ref{prop:fixedPointEquiv}, define the hyperbolic induction functor
\[
\HInd_{\mathsf{G}}^{\hat{\mathsf{G}}}: \Vect_{\mathbb{C}}^{\theta}(B\mathsf{G}) \rightarrow \Vect_{\mathbb{C}}^{\hat{\theta}}(B\hat{\mathsf{G}})
\]
so that it assigns to $(V, \rho) \in \Vect_{\mathbb{C}}^{\theta}(B \mathsf{G})$ the $\hat{\theta}$-twisted representation with underlying $\theta$-twisted representation $V \oplus Q^{\varsigma}(V)$ and on which $\omega \in \hat{\mathsf{G}} \setminus \mathsf{G}$ acts on by
\[
\rho_{\HInd(\rho)}(
\omega) (v_1, v_2) = \Big( \hat{\theta}([\omega \vert \varsigma^{-1}]) \rho(\omega \varsigma^{-1}) v_2, \hat{\theta}([\varsigma \vert \omega]) \cdot  \overline{\rho(\varsigma \omega)} v_1 \Big).
\]

\subsection{Character theory of \texorpdfstring{$\Vect_{\mathbb{C}}^{\hat{\theta}}(\hat{\mathcal{G}})$}{}}
\label{sec:realChar}

We begin by characterizing the equivariance of characters of $\hat{\theta}$-twisted vector bundles.

\begin{Prop}
\label{prop:twistedChar}
The assignment of an object $F \in \Vect_{\mathbb{C}}^{\hat{\theta}}(\hat{\mathcal{G}})$ to the function
\[
\chi_{F} : \Obj(\Lambda_{\pi}^{\refl}\hat{\mathcal{G}}) \rightarrow \mathbb{C},
\qquad
(x, \gamma)
\mapsto
\tr_{F(x)}F(\gamma)
\]
defines an abelian group homomorphism $\chi: K^{\hat{\theta}}(\hat{\mathcal{G}}) \rightarrow \Gamma_{\Lambda_{\pi}^{\refl} \hat{\mathcal{G}}} (\uptau^{\refl}_{\pi}(\hat{\theta})_{\mathbb{C}})$.
\end{Prop}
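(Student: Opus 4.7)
The proposition contains two assertions: that $\chi_F$ is a flat section of $\uptau_\pi^{\refl}(\hat\theta)_{\mathbb C}$, and that the assignment $[F]\mapsto \chi_F$ descends to an abelian group homomorphism from the Grothendieck group. The second claim is immediate: a natural isomorphism $F\Rightarrow F'$ in $\Vect_{\mathbb C}^{\hat\theta}(\hat{\mathcal{G}})$ conjugates the values $F(\gamma),F'(\gamma)$ by a linear isomorphism of $F(x)$, so $\chi_F=\chi_{F'}$; and the trace is additive on direct sums, so $\chi_{F\oplus F'}=\chi_F+\chi_{F'}$. By semisimplicity (Proposition~\ref{prop:completeReduc}) this gives a well-defined map from $K^{\hat\theta}(\hat{\mathcal{G}})$ once flatness is established.

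The entire content is therefore the flatness condition. Given a morphism $\omega:(x_1,\gamma_1)\to (x_2,\gamma_2)$ in $\Lambda_\pi^{\refl}\hat{\mathcal{G}}$, so that $\gamma_2=\omega\gamma_1^{\pi(\omega)}\omega^{-1}$, I must verify
\[
\chi_F(x_2,\gamma_2)
=
\uptau_\pi^{\refl}(\hat\theta)([\omega]\gamma_1)\cdot {}^{\pi(\omega)}\chi_F(x_1,\gamma_1).
\]
The plan is to expand $F(\gamma_2)$ using the twisted composition rule \eqref{eq:twistedComposition} to get
\[
F(\gamma_2)
=
\frac{1}{\hat\theta([\omega\gamma_1^{\pi(\omega)}\omega^{-1}\vert\omega])\,\hat\theta([\omega\vert\gamma_1^{\pi(\omega)}])^{-1}}\,F(\omega)\,F(\gamma_1^{\pi(\omega)})\,F(\omega^{-1}),
\]
rewrite $F(\omega^{-1})$ in terms of $F(\omega)^{-1}$ using $F(\omega^{-1})F(\omega)=\hat\theta([\omega^{-1}\vert\omega])F(\id_{x_1})$, and then take traces. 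The ratio $\hat\theta([\omega\gamma_1^{\pi(\omega)}\omega^{-1}\vert\omega])/\hat\theta([\omega\vert\gamma_1^{\pi(\omega)}])$ that emerges is precisely the $\omega$-dependent part of the transgression formula of Theorem~\ref{thm:oriTwistTrans} for a degree-one cocycle, so the remaining task is to control the inner trace.

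For $\pi(\omega)=+1$ the map $F(\omega)$ is $\mathbb{C}$-linear, and cyclicity of the trace gives $\tr\!\bigl(F(\omega)F(\gamma_1)F(\omega)^{-1}\bigr)=\tr F(\gamma_1)=\chi_F(x_1,\gamma_1)$, with no correction and $\Delta_\omega=0$, matching the formula. The essential case is $\pi(\omega)=-1$, where $F(\omega)$ is $\mathbb C$-anti-linear and an extra factor $\hat\theta([\gamma_1^{-1}\vert\gamma_1])^{-1}$ must be produced. The two inputs I will use here are: (i) for any invertible anti-linear $A:V_1\to V_2$ and linear $T:V_1\to V_1$, a direct basis calculation gives $\tr_{V_2}(A T A^{-1})=\overline{\tr_{V_1} T}$; and (ii) since the automorphism group of $x_1$ in $\hat{\mathcal{G}}$ is finite and $\hat\theta$ is $\mathsf{U}(1)$-valued, the projectively finite-order operator $F(\gamma_1)$ is diagonalisable with unit-modulus eigenvalues, whence $\tr F(\gamma_1)^{-1}=\overline{\tr F(\gamma_1)}$. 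Combined with the identity $F(\gamma_1^{-1})=\hat\theta([\gamma_1^{-1}\vert\gamma_1])\,F(\gamma_1)^{-1}$, this produces exactly the factor $\hat\theta([\gamma_1^{-1}\vert\gamma_1])^{-\Delta_\omega}$ and replaces $\chi_F(x_1,\gamma_1)$ by its complex conjugate ${}^{-1}\chi_F(x_1,\gamma_1)$, matching the formula for $\uptau_\pi^{\refl}(\hat\theta)$ recorded after Theorem~\ref{thm:oriTwistTrans}.

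The main obstacle is the bookkeeping in the anti-linear case: several applications of the cocycle relation produce a product of $\hat\theta$-factors that only simplify to the claimed transgression expression after one uses the $2$-cocycle identity on the triple $(\omega\gamma_1^{-1}\omega^{-1},\omega,\gamma_1^{-1})$ and the normalisation $F(\id)=\id$. I expect this to be the only delicate step; once it is carried out, comparison with the explicit low-degree formula displayed immediately after Theorem~\ref{thm:oriTwistTrans} finishes the proof.
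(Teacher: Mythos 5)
Your computational machinery is essentially the paper's: expand $F(\omega\gamma_1^{\pi(\omega)}\omega^{-1})$ via the twisted composition rule \eqref{eq:twistedComposition} and the $2$-cocycle identity, then handle $\pi(\omega)=-1$ using the trace of an anti-linear conjugation together with $\tr F(\gamma_1)^{-1}=\overline{\tr F(\gamma_1)}$. However, the equivariance you set out to verify is not the right one. The transgressed cocycle $\uptau^{\refl}_{\pi}(\hat{\theta})$ lies in $Z^{1}(\Lambda_{\pi}^{\refl}\hat{\mathcal{G}})$, i.e.\ it is \emph{untwisted}, so a flat section of $\uptau^{\refl}_{\pi}(\hat{\theta})_{\mathbb{C}}$ satisfies the $\mathbb{C}$-linear condition $\chi_F(x_2,\gamma_2)=\uptau^{\refl}_{\pi}(\hat{\theta})([\omega]\gamma_1)\,\chi_F(x_1,\gamma_1)$ with no ${}^{\pi(\omega)}$-conjugation; this is precisely why $\Gamma_{\Lambda_{\pi}^{\refl}\hat{\mathcal{G}}}(\uptau^{\refl}_{\pi}(\hat{\theta})_{\mathbb{C}})$ is a complex vector space in Theorem \ref{thm:charIso}. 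The conjugation-twisted section condition you wrote is the one for a Real line bundle attached to a class in $Z^{1+\pi}$ (the situation for $\uptau_{\pi}(\hat{\theta})$ on $\Lambda_{\pi}\hat{\mathcal{G}}$), not for $\uptau^{\refl}_{\pi}(\hat{\theta})$, and it is false in general: it would force $\chi_F(\gamma_1)$ to be real.

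Correspondingly, your concluding claim that the computation ``replaces $\chi_F(x_1,\gamma_1)$ by its complex conjugate'' is not what your own inputs deliver. For $\pi(\omega)=-1$, input (i) (conjugation by the anti-linear $F(\omega)$) contributes one complex conjugation and input (ii) ($\tr F(\gamma_1)^{-1}=\overline{\tr F(\gamma_1)}$) contributes a second; they cancel, leaving $\chi_F(x_1,\gamma_1)$ unconjugated, while the scalar $\hat{\theta}([\gamma_1^{-1}\vert\gamma_1])$ gets conjugated as it passes the anti-linear map, producing exactly $\hat{\theta}([\gamma_1^{-1}\vert\gamma_1])^{-\Delta_{\omega}}$. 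Carried out consistently, your argument therefore proves the unconjugated identity $\tr_{F(x_2)}F(\omega\gamma_1^{\pi(\omega)}\omega^{-1})=\uptau^{\refl}_{\pi}(\hat{\theta})([\omega]\gamma_1)\tr_{F(x_1)}F(\gamma_1)$, which is the correct flatness condition and is how the paper concludes; as written, though, the proposal asserts it verifies a different (and generally false) identity. Once the target is corrected, your proof coincides with the paper's (which likewise finishes with $\chi_{F_1\oplus F_2}=\chi_{F_1}+\chi_{F_2}$). Two minor bookkeeping slips to fix along the way: with $F(\omega)^{-1}$ the prefactor in the expansion of $F(\gamma_2)$ is $\hat{\theta}([\omega\gamma_1^{\pi(\omega)}\omega^{-1}\vert\omega])/\hat{\theta}([\omega\vert\gamma_1^{\pi(\omega)}])$, not its reciprocal, and replacing $F(\omega)^{-1}$ by $F(\omega^{-1})$ introduces an extra factor $\hat{\theta}([\omega^{-1}\vert\omega])$ that must be accounted for.
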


\begin{proof}
Let $\omega: x_1 \rightarrow x_2$ be a morphism in $\hat{\mathcal{G}}$. Equation \eqref{eq:twistedComposition} and the twisted $2$-cocycle condition on $\hat{\theta}$ imply
\begin{equation}
\label{eq:Fdecomp}
F(\omega \gamma^{\pi(\omega)} \omega^{-1}) = \frac{\hat{\theta}([\omega \gamma^{\pi(\omega)} \omega^{-1} \vert \omega])}{\hat{\theta}([\omega \vert \gamma^{\pi(\omega)}])} F(\omega) F( \gamma^{\pi(\omega)}) F(\omega)^{-1}.
\end{equation}
When $\pi(\omega) =-1$ we can use equation \eqref{eq:twistedComposition} to replace $F( \gamma^{-1})$ with $\hat{\theta}([\gamma \vert \gamma^{-1}])F(\gamma)^{-1}$. Doing so and taking the trace of equation \eqref{eq:Fdecomp} gives
\[
\tr_{F(x_2)} F(\omega \gamma^{\pi(\omega)} \omega^{-1}) = \uptau^{\refl}_{\pi}(\hat{\theta}) ([\omega] \gamma) \tr_{F(x_2)} ( F(\omega) F(\gamma)^{\pi(\omega)} F(\omega)^{-1}).
\]
Since $F(\omega)$ is $\pi(\omega)$-linear and $\tr_{F(x_1)} (F(\gamma)^{-1}) = \overline{\tr_{F(x_1)}F(\gamma)}$ (see the proof of \cite[Proposition 10]{willerton2008}), we arrive at the equality
\[
\tr_{F(x_2)}F( \omega \gamma^{\pi(\omega)} \omega^{-1}) = \uptau^{\refl}_{\pi}(\hat{\theta}) ([\omega] \gamma) \tr_{F(x_1)}F(\gamma),
\]
that is, $\chi_{F} \in \Gamma_{\Lambda_{\pi}^{\refl} \hat{\mathcal{G}}} (\uptau^{\refl}_{\pi}(\hat{\theta})_{\mathbb{C}})$. Since $\chi_{F_1 \oplus F_2} = \chi_{F_1} + \chi_{F_2}$, this completes the proof.
\end{proof}

We call $\chi_{F}$ the Real character of $F$. The pullback of $\chi_{F}$ along $\Lambda \mathcal{G} \rightarrow \Lambda_{\pi}^{\refl} \hat{\mathcal{G}}$ is the character of the $\theta$-twisted vector bundle which underlying $F$, as defined in \cite[\S 2.3.3]{willerton2008}. Proposition \ref{prop:twistedChar} is a twisted reality condition on $\chi_{F}$. Indeed, when $\hat{\mathcal{G}} = B \hat{\mathsf{G}}$ with $\hat{\mathsf{G}} = \mathsf{G} \times \mathbb{Z}_2$ and $\hat{\theta} =1$, Proposition \ref{prop:twistedChar} reduces to the reality of the character of a representation of $\mathsf{G}$ over $\mathbb{R}$.

Given a trivially twisted vector bundle $F \in \Vect_{\mathbb{C}}^{1}(\hat{\mathcal{G}})$, let $\Gamma_{\hat{\mathcal{G}}}(F)$ be the real vector space of its flat sections. We have the following generalization of Proposition \ref{prop:dimFlatSect}. 

\begin{Prop}[{cf. \cite[Proposition 7]{willerton2008}}]
\label{prop:dimTrivFlatSect}
The equality $\frac{1}{2} \dim_{\mathbb{R}} \Gamma_{\hat{\mathcal{G}}} (F) = \int_{\Lambda_{\pi}^{\refl} \hat{\mathcal{G}}} \chi_{F}$ holds.
\end{Prop}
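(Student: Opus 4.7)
The plan is to mirror the case analysis used in the proof of Proposition~\ref{prop:dimFlatSect}. Both sides of the claimed equality are additive under disjoint unions and invariant under equivalences of $\mathbb{Z}_2$-graded groupoids (the left by functoriality of flat sections, the right by the character-theoretic definition together with the formula for the integral), so Proposition~\ref{prop:grpdDecomp} reduces the claim to the two connected model cases $\hat{\mathcal{G}} = B\hat{\mathsf{G}}$ and $\hat{\mathcal{G}} = \hat{\mathcal{G}}_{\{x,\bar x\}}$.

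For $\hat{\mathcal{G}} = B\hat{\mathsf{G}}$, I fix a splitting element $\varsigma \in \hat{\mathsf{G}} \setminus \mathsf{G}$ and regard $F$ as a Real (untwisted) representation $(V,\rho)$ of $\hat{\mathsf{G}}$. A flat section is a vector $v \in V^{\mathsf{G}}$ satisfying $\rho(\varsigma)\overline v = v$; equivalently, a fixed point of the anti-linear involution $J := \rho(\varsigma) \circ \overline{(\cdot)}$ of $V^{\mathsf{G}}$ (involutivity uses $\rho(\varsigma)^2 = \rho(\varsigma^2)$ together with $\varsigma^2 \in \mathsf{G}$ acting trivially on $V^{\mathsf{G}}$). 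Standard real-form theory then gives $\dim_{\mathbb{R}} \Gamma_{B\hat{\mathsf{G}}}(F) = \dim_{\mathbb{C}} V^{\mathsf{G}}$. On the other hand, the objects of $\Lambda_{\pi}^{\refl} B\hat{\mathsf{G}}$ are the elements $\gamma \in \mathsf{G}$, each with $\vert \gamma \rightarrow \vert = \vert \hat{\mathsf{G}} \vert = 2\vert \mathsf{G} \vert$, so the standard projector identity for the $\mathsf{G}$-average yields
\[
\int_{\Lambda_{\pi}^{\refl} B\hat{\mathsf{G}}} \chi_F
= \frac{1}{2 \vert \mathsf{G} \vert} \sum_{\gamma \in \mathsf{G}} \tr \rho(\gamma)
= \frac{1}{2} \dim_{\mathbb{C}} V^{\mathsf{G}},
\]
which matches $\tfrac{1}{2} \dim_{\mathbb{R}} \Gamma_{B\hat{\mathsf{G}}}(F)$.

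For $\hat{\mathcal{G}} = \hat{\mathcal{G}}_{\{x,\bar x\}}$, I would show that a flat section $(s_x, s_{\bar x})$ is fully determined by $s_x \in V_x^{\Aut_{\hat{\mathcal{G}}}(x)}$ via $s_{\bar x} = F(\omega_0)(\overline{s_x})$ for any chosen degree $-1$ morphism $\omega_0 : x \to \bar x$; the untwisted cocycle relation $F(\omega_2) F(\omega_1) = F(\omega_2 \omega_1)$ makes all remaining flatness conditions automatic and ensures independence of the choice of $\omega_0$. This produces a real-linear isomorphism $\Gamma_{\hat{\mathcal{G}}_{\{x,\bar x\}}}(F) \cong V_x^{\Aut_{\hat{\mathcal{G}}}(x)}$, so $\dim_{\mathbb{R}} \Gamma = 2\dim_{\mathbb{C}} V_x^{\Aut_{\hat{\mathcal{G}}}(x)}$. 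The character integral splits into contributions from loops at $x$ and at $\bar x$, which conjugation by $\omega_0$ identifies; applying the projector argument to each gives $\int_{\Lambda_{\pi}^{\refl}\hat{\mathcal{G}}_{\{x,\bar x\}}} \chi_F = \dim_{\mathbb{C}} V_x^{\Aut_{\hat{\mathcal{G}}}(x)}$, again matching. The main obstacle is the bookkeeping in this second case: carefully tracking the interaction of complex conjugation with the $\pi(\omega)$-linear morphism actions and verifying the asserted independence of $\omega_0$. These are delicate but routine manipulations, directly analogous to those appearing in the proof of Proposition~\ref{prop:dimFlatSect}.
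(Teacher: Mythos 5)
Your proof is correct and is essentially the paper's intended argument: Proposition \ref{prop:dimTrivFlatSect} is stated there without proof as a generalization of Proposition \ref{prop:dimFlatSect}, and your reduction via Proposition \ref{prop:grpdDecomp} to the two model cases, with the rank-one computation replaced by the anti-linear involution $J=\rho(\varsigma)\circ\overline{(\cdot)}$ on $V^{\mathsf{G}}$ (giving $\dim_{\mathbb{R}}\Gamma = \dim_{\mathbb{C}}V^{\mathsf{G}}$) in the $B\hat{\mathsf{G}}$ case and the identification $\Gamma \simeq V_x^{\Aut_{\hat{\mathcal{G}}}(x)}$ (as real vector spaces) in the $\hat{\mathcal{G}}_{\{x,\overline{x}\}}$ case, matches the averaging computation of the integral on the other side exactly as in the proof of Proposition \ref{prop:dimFlatSect}. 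The only remark worth adding is that Remark (ii) following Proposition \ref{prop:dimFlatSect} points to a slightly shorter route—show $\Gamma_{\hat{\mathcal{G}}}(F)$ is a real form of $\Gamma_{\mathcal{G}}(F\vert_{\mathcal{G}})$ and quote Willerton's Proposition 7 together with the double cover $\Lambda\mathcal{G} \rightarrow \Lambda_{\pi}^{\refl}\hat{\mathcal{G}}$—but your direct verification is equally valid.
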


Consider the $\Hom$ bifunctor
\[
\langle-, - \rangle: \Vect_{\mathbb{C}}^{\hat{\theta}}(\hat{\mathcal{G}})^{\op} \times \Vect_{\mathbb{C}}^{\hat{\theta}}(\hat{\mathcal{G}}) \rightarrow \Vect_{\mathbb{R}}, \qquad (F_1, F_2) \mapsto \Hom_{\Vect_{\mathbb{C}}^{\hat{\theta}}(\hat{\mathcal{G}})}(F_1, \hat{F_2}),
\]
which we regard as a categorical inner product on $\Vect_{\mathbb{C}}^{\hat{\theta}}(\hat{\mathcal{G}})$. There is a canonical real vector space isomorphism
\begin{equation}
\label{eq:innProd}
\langle F_1, \hat{F_2} \rangle \simeq \Gamma_{\hat{\mathcal{G}}} (F_1^{\vee} \otimes F_2),
\end{equation}
the right hand side being the space of flat sections of $F_1^{\vee} \otimes F_2 \in \Vect_{\mathbb{C}}^{1}(\hat{\mathcal{G}})$.

\begin{Lem}
\label{lem:innProdCompat}
Let $F_1, F_2 \in \Vect_{\mathbb{C}}^{\hat{\theta}}(\hat{\mathcal{G}})$. The equality $\dim_{\mathbb{R}} \langle F_1, \hat{F_2} \rangle = \frac{1}{2} \langle \chi_{F_1}, \chi_{F_2} \rangle$ holds.
\end{Lem}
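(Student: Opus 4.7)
The strategy is to reduce to Proposition \ref{prop:dimTrivFlatSect} via the canonical isomorphism \eqref{eq:innProd}. Since $F_1$ is $\hat{\theta}$-twisted and $F_2$ is $\hat{\theta}$-twisted, the dual bundle $F_1^{\vee}$ is $\hat{\theta}^{-1}$-twisted, so the tensor product $F_1^{\vee} \otimes F_2$ lies in the untwisted category $\Vect_{\mathbb{C}}^{1}(\hat{\mathcal{G}})$; this is a direct check using the composition rule \eqref{eq:twistedComposition}. Applying Proposition \ref{prop:dimTrivFlatSect} to $F_1^{\vee} \otimes F_2$ therefore expresses $\dim_{\mathbb{R}} \Gamma_{\hat{\mathcal{G}}}(F_1^{\vee} \otimes F_2)$ as a multiple of $\int_{\Lambda_{\pi}^{\refl} \hat{\mathcal{G}}} \chi_{F_1^{\vee} \otimes F_2}$, and, by \eqref{eq:innProd}, the same expression computes $\dim_{\mathbb{R}} \langle F_1, \hat{F_2} \rangle$.

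The second step is to establish the pointwise identity
\[
\chi_{F_1^{\vee} \otimes F_2}(x, \gamma) = \overline{\chi_{F_1}(x, \gamma)} \cdot \chi_{F_2}(x, \gamma)
\]
for each $(x, \gamma) \in \Lambda_{\pi}^{\refl} \hat{\mathcal{G}}$. Multiplicativity of characters on tensor products is routine, noting that $\pi(\gamma) = +1$ forces each $F_j(\gamma)$ to be $\mathbb{C}$-linear. The conjugation identity $\chi_{F_j^{\vee}}(x, \gamma) = \overline{\chi_{F_j}(x, \gamma)}$ reduces to $\tr F_j(\gamma^{-1}) = \overline{\tr F_j(\gamma)}$. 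Since $\hat{\mathcal{G}}$ is essentially finite, $\gamma^N = \id_x$ for some $N \geq 1$, so iterating \eqref{eq:twistedComposition} shows that $F_j(\gamma)^N$ is a $\hat{\theta}$-scalar multiple of the identity; after choosing a compatible Hermitian structure (as in the proof of Proposition \ref{prop:completeReduc}), we conclude that $F_j(\gamma)$ is unitary with eigenvalues on $\mathsf{U}(1)$, and the trace identity follows as in the proof of Proposition \ref{prop:twistedChar}.

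Combining these two steps with the definition $\langle \chi_{F_1}, \chi_{F_2} \rangle = \int_{\Lambda_{\pi}^{\refl} \hat{\mathcal{G}}} \overline{\chi_{F_1}} \chi_{F_2}$ yields the claimed identity up to the stated normalization. The main obstacle is the trace identity $\chi_{F_j^{\vee}} = \overline{\chi_{F_j}}$ in the presence of the $\hat{\theta}$-twist: one must verify that the cocycle-dependent scalars appearing when inverting $F_j(\gamma)$ cancel so that complex conjugation and inversion of eigenvalues on $\mathsf{U}(1)$ agree. This is precisely where the essential finiteness of $\hat{\mathcal{G}}$ and the twisted $2$-cocycle condition on $\hat{\theta}$ are used in concert; the same finiteness also underlies unitarizability, so it is efficient to carry out both arguments simultaneously by first reducing (via Proposition \ref{prop:grpdDecompCat}) to the two model cases of Proposition \ref{prop:grpdDecomp}, where everything becomes an elementary computation with characters of (possibly Real) twisted group representations.
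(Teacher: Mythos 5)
Your proposal is correct and follows essentially the same route as the paper: reduce via the isomorphism \eqref{eq:innProd}, apply Proposition \ref{prop:dimTrivFlatSect} to the trivially twisted bundle $F_1^{\vee}\otimes F_2$, and conclude with the identity $\chi_{F_1^{\vee}\otimes F_2}=\overline{\chi_{F_1}}\cdot\chi_{F_2}$. The extra detail you supply on the trace identity $\tr F(\gamma)^{-1}=\overline{\tr F(\gamma)}$ is exactly the point the paper delegates to the proof of Willerton's Proposition 10 (as in Proposition \ref{prop:twistedChar}), so nothing is missing or genuinely different.
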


\begin{proof}
The proof is as in \cite[Proposition 10]{willerton2008}. We compute
\[
\dim_{\mathbb{R}} \langle F_1, \hat{F_2} \rangle
\overset{\scriptsize \mbox{Eq. } \eqref{eq:innProd}}{=}
\dim_{\mathbb{R}} \Gamma_{\hat{\mathcal{G}}}(F_1^{\vee} \otimes \hat{F_2}) 
\overset{\scriptsize \mbox{Prop. } \eqref{prop:dimTrivFlatSect}}{=}
\frac{1}{2} \int_{\Lambda_{\pi}^{\refl} \hat{\mathcal{G}}} \chi_{F_1^{\vee} \otimes \hat{F_2}}.
\]
Since $\chi_{F_1^{\vee} \otimes \hat{F_2}} = \overline{\chi_{F_1}} \cdot \chi_{\hat{F_2}}$, the right hand side is equal to $\frac{1}{2} \langle \chi_{F_1}, \chi_{F_2} \rangle$.
\end{proof}

It is proved in \cite[Theorem 11]{willerton2008} that, for any $\theta \in Z^2(\mathcal{G})$, the (ordinary) character map induces an isomorphism
\begin{equation}
\label{eq:twistedKIso}
\chi: K^{\theta}(\mathcal{G}) \otimes_{\mathbb{Z}} \mathbb{C} \xrightarrow[]{\sim} \Gamma_{\Lambda \mathcal{G}}(\uptau(\theta)_{\mathbb{C}}).
\end{equation}
The next result is a Real generalization of this isomorphism. Special cases, in terms of projective representations of $\mathsf{G}$ over $\mathbb{R}$ twisted by $Z^2(B \mathsf{G}; \mathbb{R}^{\times})$, can be found in \cite[Theorem 6]{reynolds1971}, \cite[\S 10.2]{karpilovsky1985}.

\begin{Thm}
\label{thm:charIso}
The character map induces an isomorphism
\[
\chi: K^{\hat{\theta}}(\hat{\mathcal{G}}) \otimes_{\mathbb{Z}} \mathbb{C} \xrightarrow[]{\sim} \Gamma_{\Lambda_{\pi}^{\refl} \hat{\mathcal{G}}} (\uptau^{\refl}_{\pi}(\hat{\theta})_{\mathbb{C}})
\]
of complex inner product spaces.
\end{Thm}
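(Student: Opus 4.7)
My plan is to combine an injectivity argument based on the inner-product compatibility of Lemma \ref{lem:innProdCompat} with a dimension count obtained by pulling back along $\pi_{\hat{\mathcal{G}}}: \mathcal{G} \to \hat{\mathcal{G}}$ and invoking Willerton's non-Real character isomorphism \eqref{eq:twistedKIso}. Throughout, set $\theta := \pi_{\hat{\mathcal{G}}}^* \hat{\theta} \in Z^2(\mathcal{G})$.

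For injectivity, Proposition \ref{prop:completeReduc} supplies a basis of $K^{\hat{\theta}}(\hat{\mathcal{G}}) \otimes_{\mathbb{Z}} \mathbb{C}$ indexed by isomorphism classes of simple objects of $\Vect_{\mathbb{C}}^{\hat{\theta}}(\hat{\mathcal{G}})$. Lemma \ref{lem:innProdCompat} shows that $\chi$ carries the $\Hom$-pairing on the source to $\tfrac{1}{2}$ times the canonical pairing on flat sections, so it suffices to verify that the resulting Gram matrix of characters of simples is nondegenerate. In the Real setting the endomorphism algebra of a simple object is $\mathbb{R}$, $\mathbb{C}$, or $\mathbb{H}$; Schur's lemma then makes the Gram matrix diagonal with positive real entries, hence nonsingular. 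Linear independence of $\{\chi_{F_\alpha}\}$ follows, and $\chi$ is injective after complexification.

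For the dimension count I would exploit the commutative square
\[
\begin{tikzcd}[column sep=large]
K^{\hat{\theta}}(\hat{\mathcal{G}}) \otimes_{\mathbb{Z}} \mathbb{C} \arrow{d}[left]{\pi_{\hat{\mathcal{G}}}^*} \arrow{r}[above]{\chi} & \Gamma_{\Lambda_{\pi}^{\refl} \hat{\mathcal{G}}} (\uptau^{\refl}_{\pi}(\hat{\theta})_{\mathbb{C}}) \arrow{d}[right]{\pi_{\Lambda_{\pi}^{\refl} \hat{\mathcal{G}}}^*} \\
K^{\theta}(\mathcal{G}) \otimes_{\mathbb{Z}} \mathbb{C} \arrow{r}[above]{\chi} & \Gamma_{\Lambda \mathcal{G}}(\uptau(\theta)_{\mathbb{C}})
\end{tikzcd}
\]
whose commutativity follows from diagram \eqref{diag:restrToWillerton} together with functoriality of the character map, and whose bottom row is Willerton's isomorphism. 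Proposition \ref{prop:grpdDecompCat} reduces the verification to the two model cases of Proposition \ref{prop:grpdDecomp}. The case $\hat{\mathcal{G}} = \hat{\mathcal{G}}_{\{x, \overline{x}\}}$ is essentially immediate: restriction to $\{x\}$ induces an equivalence with a category of twisted vector bundles over $B \Aut_{\hat{\mathcal{G}}}(x)$ and, via the description of $\Lambda_{\pi}^{\refl}$ from Section \ref{sec:tLGGroupoids}, a matching identification on the target side, so Willerton's theorem applies verbatim. In the case $\hat{\mathcal{G}} = B \hat{\mathsf{G}}$, Proposition \ref{prop:fixedPointEquiv} realizes $\Vect_{\mathbb{C}}^{\hat{\theta}}(B\hat{\mathsf{G}})$ as the homotopy fixed-point category for the anti-linear involution $(Q^\varsigma, \Theta^\varsigma)$ acting on $\Vect_{\mathbb{C}}^{\theta}(B\mathsf{G})$; I would then show that both vertical pullbacks in the square above are injections onto the fixed subspaces of compatible anti-linear involutions on the lower row, so that Willerton's isomorphism descends to the required dimension equality.

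The principal technical obstacle is this last identification. On the source it requires a careful enumeration over the three Schur types: fixed simples of real or quaternionic type and pairs of simples swapped by the involution each contribute one simple Real object, and this count must match that of the fixed subspace of the induced involution on $K^{\theta}(\mathcal{G}) \otimes_{\mathbb{Z}} \mathbb{C}$. On the target, the explicit cochain formula for $\uptau^{\refl}_{\pi}(\hat{\theta})$ from Theorem \ref{thm:oriTwistTrans} must be shown to encode the twisted equivariance of characters under $\sigma^{\refl}_{\Lambda \mathcal{G}}$; the identity \eqref{eq:keyIden2Cocycle}, which underlies the definition of $Q^\varsigma$, should provide the bridge between the two sides.
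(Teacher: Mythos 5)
Your injectivity step is the paper's own (Lemma \ref{lem:innProdCompat} plus Schur's lemma for twisted bundles), and your overall surjectivity strategy -- reduce to the model cases of Proposition \ref{prop:grpdDecomp}, compare with Willerton's isomorphism \eqref{eq:twistedKIso} along the double covers, and exploit $(Q^{\varsigma},\Theta^{\varsigma})$ -- is close in spirit to the paper, which however proves surjectivity directly: it expands $s \in \Gamma_{\Lambda_{\pi}^{\refl} B\hat{\mathsf{G}}}(\uptau_{\pi}^{\refl}(\hat{\theta})_{\mathbb{C}}) \subset \Gamma_{\Lambda B\mathsf{G}}(\uptau(\theta)_{\mathbb{C}})$ in irreducible $\theta$-twisted characters, uses the extra equivariance to show the coefficient function is constant on $Q^{\varsigma}$-orbits, and realizes each orbit sum as $\chi_{\HInd_{\mathsf{G}}^{\hat{\mathsf{G}}}(V)} = \chi_V + \chi_{\varsigma\cdot V}$ (with an analogous explicit construction in the $\hat{\mathcal{G}}_{\{x,\overline{x}\}}$ case).

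The genuine gap is in the mechanism you propose for the dimension count. The images of your two vertical pullbacks are \emph{complex} subspaces of the bottom row: the flat-section condition cutting $\Gamma_{\Lambda_{\pi}^{\refl}\hat{\mathcal{G}}}(\uptau_{\pi}^{\refl}(\hat{\theta})_{\mathbb{C}})$ out of $\Gamma_{\Lambda\mathcal{G}}(\uptau(\theta)_{\mathbb{C}})$ involves no complex conjugation, because $\uptau_{\pi}^{\refl}(\hat{\theta})$ is an \emph{untwisted} cocycle on $\Lambda_{\pi}^{\refl}\hat{\mathcal{G}}$; likewise the complexified image of $K^{\hat{\theta}}(\hat{\mathcal{G}})$ in $K^{\theta}(\mathcal{G})\otimes_{\mathbb{Z}}\mathbb{C}$ is a $\mathbb{C}$-subspace. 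Since the fixed set of an anti-linear involution is only a real form, your identification of these images as ``fixed subspaces of compatible anti-linear involutions'' cannot hold as stated. What you need instead are $\mathbb{C}$-linear involutions: $[V]\mapsto[Q^{\varsigma}(V)]$ extended linearly on $K^{\theta}(B\mathsf{G})\otimes_{\mathbb{Z}}\mathbb{C}$, and on sections $s\mapsto\bigl(\gamma\mapsto \uptau_{\pi}^{\refl}(\hat{\theta})([\varsigma]\gamma)^{-1}\,s(\varsigma\gamma^{-1}\varsigma^{-1})\bigr)$. The assertion that Willerton's isomorphism intertwines these -- equivalently, that $\chi_{Q^{\varsigma}(V)}$ is obtained from $\chi_V$ by the degree $-1$ equivariance of Proposition \ref{prop:twistedChar} -- is exactly the step you defer as the ``principal technical obstacle,'' and \eqref{eq:keyIden2Cocycle} alone does not supply it: that identity concerns $\uptau_{\pi}$, whereas here one also needs the factor $\hat{\theta}([\gamma^{-1}\vert\gamma])^{-\Delta_{\omega}}$ arising from $\overline{\tr\,\rho(\delta)}=\tr\,\rho(\delta)^{-1}$, i.e.\ precisely the $\Delta$-correction in $\uptau_{\pi}^{\refl}$. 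Finally, to see that the $K$-theoretic image exhausts the fixed subspace you must produce, for each $Q^{\varsigma}$-fixed simple $V$, a Real object whose character is a nonzero multiple of $\chi_V$; hyperbolic induction does this uniformly and lets you bypass the real/quaternionic case analysis you sketch. With these repairs your route works, but it then essentially reproduces the paper's argument in a slightly different packaging.
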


\begin{proof}
Proposition \ref{prop:grpdDecompCat} gives an isomorphism of abelian groups
\begin{equation}
\label{eq:grothGrpDecomp}
K^{\hat{\theta}}(\hat{\mathcal{G}}) \simeq \bigoplus_{x \in \pi_0(\mathcal{G})_{-1}} K^{\hat{\theta}_{\vert x}}(B \Aut_{\hat{\mathcal{G}}}(x)) \oplus \bigoplus_{x \in \pi_0(\mathcal{G})_1} K^{\hat{\theta}_{\vert \{x,\overline{x}\}}}( \hat{\mathcal{G}}_{\{x,\overline{x}\}}).
\end{equation}
Lemma \ref{lem:innProdCompat}, together with Schur's Lemma for $\hat{\theta}$-twisted vector bundles, then implies that $\chi$ is injective.

In view of the isomorphism \eqref{eq:grothGrpDecomp}, it suffices to prove surjectivity of $\chi$ in the two model cases. Suppose that $\hat{\mathcal{G}} = B \hat{\mathsf{G}}$. Let $\textnormal{Irr}^{\theta}(\mathsf{G})$ is the set of isomorphism classes of simple objects of $\Vect_{\mathbb{C}}^{\theta}(B \mathsf{G})$. Use the isomorphism \eqref{eq:twistedKIso} to write $s \in \Gamma_{\Lambda_{\pi}^{\refl} B \hat{\mathsf{G}}}(\uptau_{\pi}^{\refl}(\hat{\theta})_{\mathbb{C}}) \subset \Gamma_{\Lambda B \mathsf{G}}(\uptau(\theta)_{\mathbb{C}})$ as
\[
s = \sum_{V \in \textnormal{Irr}^{\theta}(\mathsf{G})} \langle \chi_V, s \rangle \chi_V.
\]
The additional symmetry conditions on $s$, namely
\[
s(\omega \gamma^{-1} \omega^{-1}) = \uptau^{\refl}_{\pi}(\hat{\theta})([\omega] \gamma) s(\gamma), \qquad \gamma \in \mathsf{G}, \;\; \omega \in \hat{\mathsf{G}} \setminus \mathsf{G},
\]
imply that the function $V \mapsto \langle \chi_V, s \rangle$ descends to the orbit space $\textnormal{Irr}^{\theta}(\mathsf{G}) \slash \langle Q^{\varsigma} \rangle$. It follows that for any fixed $\varsigma \in \hat{\mathsf{G}} \setminus \mathsf{G}$,
\[
s = \sum_{\mathcal{O} \in \textnormal{Irr}^{\theta}(\mathsf{G}) \slash \langle Q^{\varsigma} \rangle} a_{\mathcal{O}} \sum_{V \in \mathcal{O}} \chi_V = \frac{1}{2} \sum_{\mathcal{O} \in \textnormal{Irr}^{\theta}(\mathsf{G}) \slash \langle Q^{\varsigma} \rangle} a_{\mathcal{O}} \sum_{V \in \mathcal{O}} (\chi_V + \chi_{\varsigma \cdot V})
\]
for some $a_{\mathcal{O}} \in \mathbb{C}$. Noting that $\chi_{\HInd_{\mathsf{G}}^{\hat{\mathsf{G}}}(V)} = \chi_V + \chi_{\varsigma \cdot V}$, we see that $\sum_{V \in \mathcal{O}} \chi_V$, and hence $s$, is in the image of $K^{\hat{\theta}}(B \hat{\mathsf{G}}) \otimes_{\mathbb{Z}} \mathbb{C}$.

Suppose instead that $\hat{\mathcal{G}} = \hat{\mathcal{G}}_{\{x, \overline{x}\}}$ and let $s \in \Gamma_{\Lambda_{\pi}^{\refl} \hat{\mathcal{G}}}(\uptau_{\pi}^{\refl}(\hat{\theta})_{\mathbb{C}}) \subset \Gamma_{\Lambda \mathcal{G}}(\uptau(\theta)_{\mathbb{C}})$. Write
\[
s = \sum_{V \in \textnormal{Irr}^{\hat{\theta}_{\vert x}} (\Aut_{\hat{\mathcal{G}}}(x))} \langle \chi_V, s \rangle \chi_V + \sum_{V^{\prime} \in  \textnormal{Irr}^{\hat{\theta}_{\vert \overline{x}}} (\Aut_{\hat{\mathcal{G}}}(\overline{x}))} \langle  \chi_{V^{\prime}} , s \rangle \chi_{V^{\prime}}.
\]
Fix a morphism $\varsigma: x \rightarrow \overline{x}$ of degree $-1$ and use the isomorphism $\Aut_{\hat{\mathcal{G}}}(x) \xrightarrow[]{\sim} \Aut_{\hat{\mathcal{G}}}(\overline{x})$, $\gamma \mapsto \varsigma \gamma \varsigma^{-1}$, to identify $\textnormal{Irr}^{\hat{\theta}_{\vert x}}(\Aut_{\hat{\mathcal{G}}}(x))$ and $\textnormal{Irr}^{\hat{\theta}_{\vert x}}(\Aut_{\hat{\mathcal{G}}}(\overline{x}))$. The additional symmetry condition on $s$ implies that
\[
s = \sum_{V \in \textnormal{Irr}^{\hat{\theta}_{\vert x}} (\Aut_{\hat{\mathcal{G}}}(x))} \langle \chi_{V_x} , s\rangle (\chi_{V_x} + \chi_{V_{\overline{x}}}),
\]
where $V_{\overline{x}} \in \Vect_{\mathbb{C}}^{\hat{\theta}_{\vert \overline{x}}}(B \Aut_{\hat{\mathcal{G}}}(\overline{x}))$ is the pullback of $\overline{V}_x$ along $\varsigma$. Explicitly, a loop $\varsigma \gamma \varsigma^{-1}$ at $\overline{x}$ acts on $V_{\overline{x}} = \overline{V}_x$ by
\[
\frac{ \hat{\theta}([\varsigma \vert \gamma]) \hat{\theta}([\varsigma \gamma \vert \varsigma^{-1}])}{\hat{\theta}([\varsigma \gamma \varsigma^{-1} \vert \varsigma])} \rho(\gamma).
\]
The sum $V_x \oplus V_{\overline{x}}$ becomes a $\hat{\theta}$-twisted vector bundle by taking $\rho(\varsigma): V_x \rightarrow V_{\overline{x}}$ to be the identity map (which is anti-linear) and setting $\rho(\varsigma \gamma) := \hat{\theta}([\varsigma \vert \gamma])^{-1} \rho(\varsigma) \rho(\gamma)$. Moreover, $\chi_{V_x \oplus V_{\overline{x}}} = \chi_{V_x} + \chi_{V_{\overline{x}}}$, whence $s$ is in the image of $K^{\hat{\theta}}(\hat{\mathcal{G}}) \otimes_{\mathbb{Z}} \mathbb{C}$.

The statement about inner products follows from Lemma \ref{lem:innProdCompat}.
\end{proof}

\begin{Cor}
\label{cor:twoCycDim}
The category $\Vect_{\mathbb{C}}^{\hat{\theta}}(\hat{\mathcal{G}})$ has exactly $\int_{\Lambda \Lambda_{\pi}^{\refl} \hat{\mathcal{G}}} \uptau \uptau^{\refl}_{\pi}(\hat{\theta})$ simple objects.
\end{Cor}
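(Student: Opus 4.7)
The plan is to combine Theorem \ref{thm:charIso} with the standard fact that the dimension of the space of flat sections of the $\mathbb{C}$-linearization of a $1$-cocycle on any essentially finite groupoid is computed by integrating its loop transgression. Since $\Vect_{\mathbb{C}}^{\hat{\theta}}(\hat{\mathcal{G}})$ is semisimple by Proposition \ref{prop:completeReduc}, the number of isomorphism classes of simple objects equals the rank of $K^{\hat{\theta}}(\hat{\mathcal{G}})$, which in turn equals
\[
\dim_{\mathbb{C}} \left( K^{\hat{\theta}}(\hat{\mathcal{G}}) \otimes_{\mathbb{Z}} \mathbb{C} \right) = \dim_{\mathbb{C}} \Gamma_{\Lambda_{\pi}^{\refl} \hat{\mathcal{G}}} (\uptau^{\refl}_{\pi}(\hat{\theta})_{\mathbb{C}})
\]
by Theorem \ref{thm:charIso}.

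It remains to identify this dimension with $\int_{\Lambda \Lambda_{\pi}^{\refl} \hat{\mathcal{G}}} \uptau \uptau^{\refl}_{\pi}(\hat{\theta})$. The input needed is the untwisted analogue of Proposition \ref{prop:dimFlatSect}: for an essentially finite groupoid $\mathcal{K}$ and a cocycle $\alpha \in Z^1(\mathcal{K})$, one has
\[
\dim_{\mathbb{C}} \Gamma_{\mathcal{K}}(\alpha_{\mathbb{C}}) = \int_{\Lambda \mathcal{K}} \uptau(\alpha).
\]
This is essentially \cite[Proposition 7]{willerton2008} (it also follows by the same connected-component decomposition used in Proposition \ref{prop:dimFlatSect}: the component indexed by $[x] \in \pi_0(\mathcal{K})$ contributes $1$ to $\dim_{\mathbb{C}} \Gamma_{\mathcal{K}}(\alpha_{\mathbb{C}})$ when $\alpha$ restricts trivially to $\Aut_{\mathcal{K}}(x)$ and $0$ otherwise, while the corresponding term in the integral is $\tfrac{1}{\vert \Aut_{\mathcal{K}}(x) \vert} \sum_{\gamma \in \Aut_{\mathcal{K}}(x)} \alpha(\gamma)$, which equals exactly this by orthogonality of characters of $\Aut_{\mathcal{K}}(x)$).

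Applying this identity with $\mathcal{K} = \Lambda^{\refl}_{\pi} \hat{\mathcal{G}}$ and $\alpha = \uptau^{\refl}_{\pi}(\hat{\theta}) \in Z^1(\Lambda^{\refl}_{\pi} \hat{\mathcal{G}})$ (which is indeed a cocycle since $\hat{\theta}$ is closed and $\uptau^{\refl}_{\pi}$ anti-commutes with differentials) yields
\[
\dim_{\mathbb{C}} \Gamma_{\Lambda_{\pi}^{\refl} \hat{\mathcal{G}}} (\uptau^{\refl}_{\pi}(\hat{\theta})_{\mathbb{C}}) = \int_{\Lambda \Lambda^{\refl}_{\pi} \hat{\mathcal{G}}} \uptau(\uptau^{\refl}_{\pi}(\hat{\theta})),
\]
which chains with the previous equality to give the desired formula. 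There is no real obstacle here beyond quoting Theorem \ref{thm:charIso} and the Willerton dimension formula; the only thing to double-check is that $\uptau^{\refl}_{\pi}(\hat{\theta})$ is genuinely a $1$-cocycle (so that the flat sections and loop integral on $\Lambda^{\refl}_{\pi} \hat{\mathcal{G}}$ are well defined), which is immediate from the cochain-map property of $\uptau^{\refl}_{\pi}$ established in Theorem \ref{thm:oriTwistTrans}.
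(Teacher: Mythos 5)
Your argument is correct and takes essentially the same route as the paper: semisimplicity (Proposition \ref{prop:completeReduc}) reduces the count to the rank of $K^{\hat{\theta}}(\hat{\mathcal{G}})$, Theorem \ref{thm:charIso} identifies this with $\dim_{\mathbb{C}} \Gamma_{\Lambda_{\pi}^{\refl} \hat{\mathcal{G}}}(\uptau^{\refl}_{\pi}(\hat{\theta})_{\mathbb{C}})$, and Willerton's dimension formula converts that dimension into $\int_{\Lambda \Lambda_{\pi}^{\refl} \hat{\mathcal{G}}} \uptau \uptau^{\refl}_{\pi}(\hat{\theta})$, exactly as in the paper's citation of \cite[Theorem 6]{willerton2008}. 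The only cosmetic quibble is that the closedness of $\uptau^{\refl}_{\pi}(\hat{\theta})$ comes from the cochain-map (anti-commutation) property established when $\uptau^{\refl}_{\pi}$ is constructed in Section \ref{sec:twistedTransGrpdFin}, rather than from Theorem \ref{thm:oriTwistTrans}, which only records the explicit formula.
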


\begin{proof}
By Proposition \ref{prop:completeReduc}, the rank of $K^{\hat{\theta}}(\hat{\mathcal{G}})$ equals the number of simple $\hat{\theta}$-twisted vector bundles. The corollary now follows from Theorem \ref{thm:charIso} and the equality $\int_{\Lambda \Lambda_{\pi}^{\refl} \hat{\mathcal{G}}} \uptau \uptau^{\refl}_{\pi}(\hat{\theta}) = \dim_{\mathbb{C}}\Gamma_{\Lambda_{\pi}^{\refl} \hat{\mathcal{G}}} (\uptau^{\refl}_{\pi}(\hat{\theta})_{\mathbb{C}})$, which follows from \cite[Theorem 6]{willerton2008}.
\end{proof}

Specializing Corollary \ref{cor:twoCycDim} to $\hat{\mathcal{G}} = B \hat{\mathsf{G}}$ gives the following result.

\begin{Cor}
\label{cor:RealSchur}
The number of simple $\hat{\theta}$-twisted representations of $\hat{\mathsf{G}}$ is
\begin{equation}
\label{eq:RealSchur}
\int_{\Lambda \Lambda_{\pi}^{\refl} B \hat{\mathsf{G}}} \uptau \uptau^{\refl}_{\pi}(\hat{\theta}) = \frac{1}{2 \vert \mathsf{G} \vert} \sum_{\substack{(\gamma, \omega) \in \mathsf{G} \times \hat{\mathsf{G}} \\ \gamma = \omega \gamma^{\pi(\omega)} \omega^{-1}}} \hat{\theta}([\gamma^{-1} \vert \gamma])^{- \Delta_{\omega}} \frac{\hat{\theta}([\gamma \vert \omega ])}{\hat{\theta}([\omega \vert \gamma^{\pi(\omega)} ])}.
\end{equation}
\end{Cor}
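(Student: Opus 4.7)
The plan is to derive Corollary \ref{cor:RealSchur} by specializing Corollary \ref{cor:twoCycDim} to $\hat{\mathcal{G}} = B\hat{\mathsf{G}}$ and then computing the right-hand integral explicitly. Applying Corollary \ref{cor:twoCycDim} immediately identifies the number of simple $\hat{\theta}$-twisted representations of $\hat{\mathsf{G}}$ with $\int_{\Lambda \Lambda_\pi^{\refl} B\hat{\mathsf{G}}} \uptau \uptau^{\refl}_\pi(\hat{\theta})$, so everything reduces to evaluating this integral in combinatorial terms.

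First, I would write down $\uptau^{\refl}_\pi(\hat{\theta})$ explicitly on $\Lambda_\pi^{\refl} B\hat{\mathsf{G}}$ using the degree-$1$ formula displayed after Theorem \ref{thm:oriTwistTrans}, namely
\[
\uptau^{\refl}_\pi(\hat{\theta})([\omega]\gamma) = \hat{\theta}([\gamma^{-1}\vert \gamma])^{-\Delta_\omega}\, \frac{\hat{\theta}([\omega\gamma^{\pi(\omega)}\omega^{-1}\vert \omega])}{\hat{\theta}([\omega\vert \gamma^{\pi(\omega)}])}.
\]
Next I would transgress once more via Willerton's untwisted formula \eqref{eq:willertonTrans}. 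At degree $n=0$ this formula degenerates to evaluation: if $\mu$ is a $1$-cocycle and $\omega$ is a loop in $\Lambda_\pi^{\refl} B\hat{\mathsf{G}}$ based at $\gamma$, then $\uptau(\mu)([\,]\omega) = \mu([\omega])$, so
\[
\uptau\uptau^{\refl}_\pi(\hat{\theta})([\,]\omega_\gamma) = \uptau^{\refl}_\pi(\hat{\theta})([\omega]\gamma).
\]
Using the defining constraint $\gamma = \omega \gamma^{\pi(\omega)} \omega^{-1}$ that an object $(\gamma,\omega)$ of $\Lambda \Lambda_\pi^{\refl} B\hat{\mathsf{G}}$ must satisfy, the numerator factor $\hat{\theta}([\omega\gamma^{\pi(\omega)}\omega^{-1}\vert \omega])$ simplifies to $\hat{\theta}([\gamma\vert \omega])$, matching the desired summand.

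Finally I would spell out the integral. Objects of $\Lambda \Lambda_\pi^{\refl} B\hat{\mathsf{G}}$ are precisely pairs $(\gamma,\omega) \in \mathsf{G} \times \hat{\mathsf{G}}$ with $\gamma = \omega \gamma^{\pi(\omega)} \omega^{-1}$: the outer $\Lambda$ produces a loop (element of $\hat{\mathsf{G}}$) based at an object $\gamma$ of $\Lambda_\pi^{\refl} B\hat{\mathsf{G}} \simeq \mathsf{G} \git_{\textnormal{R}} \hat{\mathsf{G}}$, and the automorphism condition in $\Lambda_\pi^{\refl} B\hat{\mathsf{G}}$ is exactly the Real centralizer relation. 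The number of morphisms out of $(\gamma,\omega)$ equals the number of morphisms out of $\gamma$ in $\Lambda_\pi^{\refl} B\hat{\mathsf{G}}$ (conjugation automatically defines the target loop), which is $\vert \hat{\mathsf{G}}\vert = 2\vert \mathsf{G}\vert$. Invoking the integration formula in Section \ref{sec:0and1cocycles}, I therefore get
\[
\int_{\Lambda \Lambda_\pi^{\refl} B\hat{\mathsf{G}}} \uptau \uptau^{\refl}_\pi(\hat{\theta}) = \frac{1}{2\vert \mathsf{G}\vert} \sum_{\substack{(\gamma,\omega)\in\mathsf{G}\times\hat{\mathsf{G}} \\ \gamma = \omega\gamma^{\pi(\omega)}\omega^{-1}}} \uptau^{\refl}_\pi(\hat{\theta})([\omega]\gamma),
\]
which, after the simplification above, is precisely the stated right-hand side.

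The only point that might need care is the identification of the inertia of the unoriented quotient loop groupoid with the set of Real-centralizing pairs and the correct counting of morphisms out of each such pair; everything else is a direct substitution of formulas established earlier. I do not anticipate any substantial obstacle, as the heavy lifting is already done by Theorems \ref{thm:oriTwistTrans} and \ref{thm:charIso} and Corollary \ref{cor:twoCycDim}.
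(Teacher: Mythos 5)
Your proposal is correct and follows the paper's own route: the paper obtains Corollary \ref{cor:RealSchur} precisely by specializing Corollary \ref{cor:twoCycDim} to $\hat{\mathcal{G}} = B\hat{\mathsf{G}}$ and unwinding the integral, with objects of $\Lambda\Lambda_{\pi}^{\refl} B\hat{\mathsf{G}}$ identified with graded-commuting pairs, each having $\vert\hat{\mathsf{G}}\vert = 2\vert\mathsf{G}\vert$ outgoing morphisms, and the integrand given by the degree-one formula for $\uptau_{\pi}^{\refl}(\hat{\theta})$ since $\uptau$ in degree zero is evaluation. Your simplification $\hat{\theta}([\omega\gamma^{\pi(\omega)}\omega^{-1}\vert\omega]) = \hat{\theta}([\gamma\vert\omega])$ via the constraint $\gamma = \omega\gamma^{\pi(\omega)}\omega^{-1}$ is exactly what yields the stated summand.
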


The right hand side of equation \eqref{eq:RealSchur} decomposes into two sums, corresponding to $\pi(\omega)=1$ and $\pi(\omega)=-1$. The former sum is one half the number of simple $\theta$-twisted class functions of $\mathsf{G}$ which, by Schur (see \cite[\S 3.6]{karpilovsky1985}), is one half the number of simple $\theta$-twisted representations. The latter sum is one half the number of Real simple $\theta$-twisted class functions, where Real means that $\chi(\omega \gamma^{\pi(\omega)} \omega^{-1}) = \uptau_{\pi}^{\refl}(\hat{\theta)}([\omega]\gamma) \chi(\gamma)$ for some (and hence any) $\omega \in \hat{\mathsf{G}} \setminus \mathsf{G}$. Corollary \ref{cor:RealSchur} is thus a Real version of Schur's result. When $\hat{\mathsf{G}} = \mathsf{G} \times \mathbb{Z}_2$ and $\hat{\theta}_{\vert \mathsf{G}} = 1$, we recover standard results in the real/quaternionic representation theory of $\mathsf{G}$ \cite[Theorem II.6.3]{brocker1995}.

\subsection{The centre of \texorpdfstring{$\Vect_{\mathbb{C}}^{\hat{\theta}}(\hat{\mathcal{G}})$}{}}
\label{sec:twistedCent}

The centre $Z(\Vect_{\mathbb{C}}^{\theta}(\mathcal{G}))$, that is, the algebra of $\mathbb{C}$-linear endofunctors of $\Vect_{\mathbb{C}}^{\theta}(\mathcal{G})$, is isomorphic to $\Gamma_{\Lambda \mathcal{G}}(\uptau(\theta)^{-1}_{\mathbb{C}})$. In fact, the map
\[
K^{\theta}(\mathcal{G}) \times Z(\Vect_{\mathbb{C}}^{\theta}(\mathcal{G})) \rightarrow \mathbb{C}, \qquad (V, \eta) \mapsto \tr_V \eta_V
\]
defines a perfect pairing between $K^{\theta}(\mathcal{G}) \otimes_{\mathbb{Z}} \mathbb{C}$ and $Z(\Vect_{\mathbb{C}}^{\theta}(\mathcal{G}))$, giving a compatibility between two decategorifications of $\Vect_{\mathbb{C}}^{\theta}(\mathcal{G})$ \cite[\S 2.3.4]{willerton2008}. There is no analogous compatibility in the Real setting. Instead, we will describe $Z(\Vect_{\mathbb{C}}^{\hat{\theta}}(\hat{\mathcal{G}}))$ using $\uptau_{\pi}$.

The image of the $\mathbb{R}$-linear embedding $\Gamma_{\Lambda_{\pi} \mathcal{G}}(\uptau_{\pi}(\hat{\theta})^{-1}_{\mathbb{C}}) \rightarrow \mathbb{C}^{\hat{\theta}}[\hat{\mathcal{G}}]$, $s \mapsto \sum_{\gamma \in \Lambda_{\pi} \hat{\mathcal{G}}} s_{\gamma} l_{\gamma}$, is stable under multiplication of $\mathbb{C}^{\hat{\theta}}[\hat{\mathcal{G}}]$, as follows from equation \eqref{eq:keyIden2Cocycle}, and so gives $\Gamma_{\Lambda_{\pi} \mathcal{G}}(\uptau_{\pi}(\hat{\theta})^{-1}_{\mathbb{C}})$ the structure of an $\mathbb{R}$-algebra.

\begin{Prop}
\label{prop:centreTwist}
The centre of the $\mathbb{R}$-algebra $\mathbb{C}^{\hat{\theta}}[\hat{\mathcal{G}}]$ is isomorphic to $\Gamma_{\Lambda_{\pi}\hat{\mathcal{G}}}(\uptau_{\pi}(\hat{\theta})^{-1}_{\mathbb{C}})$.
\end{Prop}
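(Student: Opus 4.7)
The plan is to show that the $\mathbb{R}$-linear embedding $\Gamma_{\Lambda_{\pi}\hat{\mathcal{G}}}(\uptau_{\pi}(\hat{\theta})^{-1}_{\mathbb{C}}) \hookrightarrow \mathbb{C}^{\hat{\theta}}[\hat{\mathcal{G}}]$, $s \mapsto \sum_{\gamma} s_{\gamma} l_{\gamma}$ (with $\gamma$ ranging over the degree $+1$ loops), which was already constructed above the statement, has image equal to the centre. The key structural point is that $\mathbb{C}^{\hat{\theta}}[\hat{\mathcal{G}}]$ is generated as an $\mathbb{R}$-algebra by the basis morphisms $l_{\omega}$ together with the complex scalings $c\, l_{\id_{x}}$ for $c \in \mathbb{C}$ and $x \in \Obj(\hat{\mathcal{G}})$; centrality of $z$ therefore reduces to commuting with these two families.

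First I would write $z = \sum_{\alpha} s_{\alpha} l_{\alpha}$ and extract support constraints. Commutation with $l_{\id_{x}}$ for each $x$ forces $s_{\alpha} = 0$ unless $\alpha$ is a loop, since $l_{\id_{x}}$ picks out morphisms with source, respectively target, equal to $x$ on the two sides. Then commutation with $c\, l_{\id_{x}}$ for arbitrary $c \in \mathbb{C}$ gives, for a loop $\gamma$ at $x$, the identity $s_{\gamma}({}^{\pi(\gamma)}c - c) = 0$ valid for all $c$; taking $c = i$ kills every $s_{\gamma}$ with $\pi(\gamma) = -1$. Hence the centre is automatically supported on degree $+1$ loops, i.e., on $\Obj(\Lambda_{\pi}\hat{\mathcal{G}})$.

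Next, for such $z$ and any $\omega: y \to x$ in $\hat{\mathcal{G}}$, I would compare the coefficient of $l_{\gamma\omega} = l_{\omega \cdot (\omega^{-1}\gamma\omega)}$ in $z \cdot l_{\omega}$ and $l_{\omega} \cdot z$ to obtain the single relation
\[
s_{\gamma}\, \hat{\theta}([\gamma\vert\omega]) = \hat{\theta}([\omega\vert\omega^{-1}\gamma\omega]) \cdot {}^{\pi(\omega)} s_{\omega^{-1}\gamma\omega}.
\]
Substituting $\gamma = \omega\gamma'\omega^{-1}$ and invoking the $n=1$ instance of Theorem~\ref{thm:quotTrans}, which gives $\uptau_{\pi}(\hat{\theta})([\omega]\gamma') = \hat{\theta}([\omega\gamma'\omega^{-1}\vert\omega])/\hat{\theta}([\omega\vert\gamma'])$, rewrites this as
\[
s_{\omega\gamma'\omega^{-1}} = \uptau_{\pi}(\hat{\theta})^{-1}([\omega]\gamma') \cdot {}^{\pi(\omega)} s_{\gamma'},
\]
which is exactly the flat-section condition defining $\Gamma_{\Lambda_{\pi}\hat{\mathcal{G}}}(\uptau_{\pi}(\hat{\theta})^{-1}_{\mathbb{C}})$. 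Conversely, any flat section produces an element commuting with each $l_{\omega}$ by reversing the computation, and commuting trivially with every $c\, l_{\id_{x}}$, hence a centre element.

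The main obstacle I anticipate is essentially bookkeeping in this last step, namely tracking the $\pi(\omega)$-semilinearity of multiplication so that the coefficient equation matches the transgression formula exactly; however, identity~\eqref{eq:keyIden2Cocycle}, which was already used above the proposition to show the embedding is closed under multiplication, is precisely the cocycle version of the flat-section condition and makes the matching routine once the coefficient comparison is written down.
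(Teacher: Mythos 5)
Your proposal is correct and follows essentially the same route as the paper's proof: commuting with the elements $c\,l_{\id_x}$ forces support on degree $+1$ loops, and comparing coefficients in $z\,l_{\omega}$ versus $l_{\omega}\,z$ yields exactly the relation $s_{\omega\gamma\omega^{-1}} = \uptau_{\pi}(\hat{\theta})([\omega]\gamma)^{-1}\,{}^{\pi(\omega)}s_{\gamma}$, which via the $n=1$ case of Theorem \ref{thm:quotTrans} is the flat-section condition for $\uptau_{\pi}(\hat{\theta})^{-1}_{\mathbb{C}}$. The only cosmetic remark is that identity \eqref{eq:keyIden2Cocycle} is not actually needed for this coefficient comparison (it is what makes $\Gamma_{\Lambda_{\pi}\hat{\mathcal{G}}}(\uptau_{\pi}(\hat{\theta})^{-1}_{\mathbb{C}})$ a subalgebra, as noted before the proposition), but this does not affect the argument.
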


\begin{proof}
For each morphism $\omega: x_1 \rightarrow x_2$ in $\hat{\mathcal{G}}$ and $c_{x_1} \in \mathbb{C}$, we have equalities
\[
l_{\omega} (c_{x_1} l_{\id_{x_1}}) = {^{\pi(\omega)}}c_{x_1} l_{\omega}, \qquad (c_{x_1} l_{\id_{x_1}}) l_{\omega} = \delta_{x_1,x_2} c_{x_1} l_{\omega}
\]
in $\mathbb{C}^{\hat{\theta}}[\hat{\mathcal{G}}]$. Elements of the centre $Z(\mathbb{C}^{\hat{\theta}}[\hat{\mathcal{G}}])$ are therefore of the form $\sum_{\gamma \in \Lambda_{\pi}\hat{\mathcal{G}}} c_{\gamma} l_{\gamma}$. Requiring this element to commute with $l_{\omega}$ gives
\begin{multline*}
l_{\omega} \sum_{\gamma \in \Lambda_{\pi}\hat{\mathcal{G}}} c_{\gamma} l_{\gamma}
=
\Big( \sum_{\gamma \in \Lambda_{\pi}\hat{\mathcal{G}}} c_{\gamma} l_{\gamma} \Big) l_{\omega}
=
\sum_{\delta \in \Lambda_{\pi}\hat{\mathcal{G}}} \frac{\hat{\theta}([\omega \delta \omega^{-1} \vert \omega])}{\hat{\theta}([\omega \vert \delta])} c_{\omega \delta \omega^{-1}}l_{\omega} l_{\delta} \\
=
l_{\omega} \sum_{\delta \in \Lambda_{\pi}\hat{\mathcal{G}}} {^{\pi(\omega)}}\big(\frac{\hat{\theta}([\omega \delta \omega^{-1} \vert \omega])}{\hat{\theta}([\omega \vert \delta])} c_{\omega \delta \omega^{-1}} \big) l_{\delta}.
\end{multline*}
It follows that
\begin{equation}
\label{eq:conjRelations}
c_{\gamma}
=
{^{\pi(\omega)}}
\Big(
\frac{\hat{\theta}([\omega \gamma \omega^{-1} \vert \omega])}{\hat{\theta}([\omega \vert \gamma])} c_{\omega \gamma \omega^{-1}} 
\Big),
\qquad
\gamma \in \Lambda_{\pi} \hat{\mathcal{G}}.
\end{equation}
Conversely, the equalities \eqref{eq:conjRelations} ensure that $\sum_{\gamma \in \Lambda_{\pi}\hat{\mathcal{G}}} c_{\gamma} l_{\gamma}$ commutes with $l_{\omega}$. The map
\[
Z(\mathbb{C}^{\hat{\theta}}[\hat{\mathcal{G}}]) \rightarrow \Gamma_{\Lambda_{\pi}\hat{\mathcal{G}}}(\uptau_{\pi}(\hat{\theta})^{-1}_{\mathbb{C}}), \qquad \sum_{\gamma \in \Lambda_{\pi}\hat{\mathcal{G}}} c_{\gamma} l_{\gamma} \mapsto (\gamma \mapsto c_{\gamma})
\]
is therefore well-defined and gives the desired isomorphism.
\end{proof}

The $\mathbb{R}$-algebra $Z(\mathbb{C}^{\hat{\theta}}[\hat{\mathcal{G}}])$ is isomorphic to the centre of $\mathbb{C}^{\hat{\theta}}[\hat{\mathcal{G}}] \Mod$, the category of modules over the $\mathbb{R}$-algebra $\mathbb{C}^{\hat{\theta}}[\hat{\mathcal{G}}]$. To relate this to the centre of $\mathbb{C}^{\hat{\theta}}[\hat{\mathcal{G}}] \ModR \simeq \Vect_{\mathbb{C}}^{\hat{\theta}}(\hat{\mathcal{G}})$, let $A$ be a finite dimensional Real algebra, that is, a $\mathbb{Z}_2$-graded complex vector space which has the structure of a unital $\mathbb{R}$-algebra which satisfies
\[
(c_2 a_2) \cdot (c_1 a_1) = c_2 ({^{\pi(a_2)}}c_1) a_2 a_1
\]
for all $c_i \in \mathbb{C}$ and homogeneous $a_i \in A$.

\begin{Lem}
\label{lem:genCent}
The $\mathbb{R}$-algebra $Z(A \ModR)$ is isomorphic to $Z(A)_1$, the degree $1$ subalgebra of $Z(A)$.
\end{Lem}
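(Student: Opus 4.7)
The plan is to construct mutually inverse $\mathbb{R}$-algebra maps $\Phi \colon Z(A)_1 \to Z(A\ModR)$ and $\Psi \colon Z(A\ModR) \to Z(A)_1$. The map $\Phi$ sends $z \in Z(A)_1$ to the natural endotransformation whose component at a Real $A$-module $V$ is $\Phi(z)_V \colon v \mapsto z \cdot v$. The map $\Psi$ sends $\eta \in Z(A\ModR)$ to $\Psi(\eta) := \eta_A(1)$, where $A$ is viewed as the left regular Real module over itself.

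To see that $\Phi$ is well-defined, each component $\Phi(z)_V$ must be a morphism in $A\ModR$, i.e.\ both $A$-linear and $\mathbb{C}$-linear. The $A$-linearity amounts to $(za - az) \cdot v = 0$ for all $a \in A$ and $v \in V$, which holds because $z \in Z(A)$. The $\mathbb{C}$-linearity amounts to $(zc - cz) \cdot v = 0$ for $c \in \mathbb{C}$; writing $z = z_+ + z_-$ with $z_\pm$ homogeneous, the Real algebra relation gives $z_+ c = c z_+$ and $z_- c = \bar{c} z_-$, whence $zc - cz = (\bar{c} - c) z_-$, so the condition reduces to $z_- = 0$, matching the hypothesis $z \in A_1$. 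Naturality of $\Phi(z)$ in $V$ is then automatic: for $f \colon V \to W$ in $A\ModR$, the identity $f(zv) = z f(v)$ is precisely $A$-linearity of $f$.

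For $\Psi$, the essential observation is that for any Real $A$-module $V$ and any $v \in V$, left multiplication $\lambda_v \colon A \to V$, $a \mapsto a \cdot v$, is a morphism in $A\ModR$: it is $A$-linear by associativity, and $\mathbb{C}$-linear because the subalgebra $\mathbb{C} \subset A_1$ acts $\mathbb{C}$-linearly on $V$. Naturality of $\eta$ applied to $\lambda_v$ yields the universal formula $\eta_V(v) = \eta_V(\lambda_v(1)) = \lambda_v(\eta_A(1)) = z \cdot v$ with $z := \Psi(\eta)$. Two constraints on $z$ now follow: $A$-linearity of $\eta_A$ forces $za = az$ for all $a \in A$ (take $V = A$ and $v = 1$), so $z \in Z(A)$; and $\mathbb{C}$-linearity of $\eta_A$ gives, via the same decomposition $z = z_+ + z_-$, the identity $(\bar{c} - c) z_- = 0$ for all $c \in \mathbb{C}$, forcing $z \in A_1$.

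The universal formula $\eta_V(v) = z \cdot v$ makes the inverse relations immediate: $\Psi(\Phi(z)) = \Phi(z)_A(1) = z$, and $\Phi(\Psi(\eta))_V(v) = z \cdot v = \eta_V(v)$. Compatibility with the algebra structures follows from $(\Phi(z_1) \circ \Phi(z_2))_V(v) = z_1 z_2 \cdot v = \Phi(z_1 z_2)_V(v)$ together with $\Phi(1)_V = \id_V$. The only substantive step is the $\mathbb{C}$-linearity calculation, where the Real algebra relation is invoked to pin down the grading restriction to $A_1$; without this ingredient one would merely recover $z \in Z(A)$.
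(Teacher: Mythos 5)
Your proof is correct and is exactly the argument the paper has in mind: the paper's proof consists of the single remark that this is "a straightforward variation of the proof that the centre of the category of modules over a unital algebra is isomorphic to the centre of the algebra," and your evaluation-at-the-regular-module argument, with the universal formula $\eta_V(v)=z\cdot v$ obtained from naturality against $\lambda_v$, is precisely that standard proof. Your identification of where the Real structure enters — the $\mathbb{C}$-linearity constraints forcing the odd component $z_-$ to vanish, so that one lands in $Z(A)_1$ rather than all of $Z(A)$ — is the correct "variation" the paper alludes to.
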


\begin{proof}
This is a straightforward variation of the proof that the centre of the category of modules over a unital algebra is isomorphic to the centre of the algebra.
\end{proof}

\begin{Thm}
\label{thm:RealCentre}
There is a canonical $\mathbb{R}$-algebra isomorphism
\[
Z(\Vect_{\mathbb{C}}^{\hat{\theta}}(\hat{\mathcal{G}}))
\simeq
\Gamma_{\Lambda_{\pi}\hat{\mathcal{G}}}(\uptau_{\pi}(\hat{\theta})^{-1}_{\mathbb{C}}).
\]
\end{Thm}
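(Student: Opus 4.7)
The plan is to chain together three already-established results: Proposition \ref{prop:modInterp}, Proposition \ref{prop:centreTwist}, and Lemma \ref{lem:genCent}, with a small compatibility check between the last two.

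First I would invoke Proposition \ref{prop:modInterp}, which gives an $\mathbb{R}$-linear equivalence $\Vect_{\mathbb{C}}^{\hat{\theta}}(\hat{\mathcal{G}}) \simeq \mathbb{C}^{\hat{\theta}}[\hat{\mathcal{G}}] \ModR$. Since the centre of an $\mathbb{R}$-linear category is preserved under $\mathbb{R}$-linear equivalences (it is simply $\End(\id)$), this yields an $\mathbb{R}$-algebra isomorphism
\[
Z(\Vect_{\mathbb{C}}^{\hat{\theta}}(\hat{\mathcal{G}})) \simeq Z(\mathbb{C}^{\hat{\theta}}[\hat{\mathcal{G}}] \ModR).
\]
Next, I would apply Lemma \ref{lem:genCent} to the Real algebra $A = \mathbb{C}^{\hat{\theta}}[\hat{\mathcal{G}}]$, whose $\mathbb{Z}_2$-grading is induced by $\pi$ on morphisms, to obtain an $\mathbb{R}$-algebra isomorphism $Z(\mathbb{C}^{\hat{\theta}}[\hat{\mathcal{G}}] \ModR) \simeq Z(\mathbb{C}^{\hat{\theta}}[\hat{\mathcal{G}}])_1$.

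The key compatibility check is then that the projection to the degree $+1$ part is harmless: the centre $Z(\mathbb{C}^{\hat{\theta}}[\hat{\mathcal{G}}])$ is already concentrated in degree $+1$. This is immediate from the proof of Proposition \ref{prop:centreTwist}: requiring an element $\sum_\omega c_\omega l_\omega$ to commute with all $l_{\id_x}$ forces $\omega$ to be a loop, and then the relations $l_\omega(c_{x_1} l_{\id_{x_1}}) = {^{\pi(\omega)}}c_{x_1}\, l_\omega$ and $(c_{x_1} l_{\id_{x_1}})l_\omega = c_{x_1} l_\omega$ can agree for every $c_{x_1} \in \mathbb{C}$ only when $\pi(\omega) = +1$. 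Hence the centre is supported on $\Lambda_{\pi}\hat{\mathcal{G}}$, giving $Z(\mathbb{C}^{\hat{\theta}}[\hat{\mathcal{G}}])_1 = Z(\mathbb{C}^{\hat{\theta}}[\hat{\mathcal{G}}])$.

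Finally, Proposition \ref{prop:centreTwist} identifies this with $\Gamma_{\Lambda_{\pi}\hat{\mathcal{G}}}(\uptau_{\pi}(\hat{\theta})^{-1}_{\mathbb{C}})$ as $\mathbb{R}$-algebras. Composing the chain
\[
Z(\Vect_{\mathbb{C}}^{\hat{\theta}}(\hat{\mathcal{G}})) \simeq Z(\mathbb{C}^{\hat{\theta}}[\hat{\mathcal{G}}] \ModR) \simeq Z(\mathbb{C}^{\hat{\theta}}[\hat{\mathcal{G}}])_1 = Z(\mathbb{C}^{\hat{\theta}}[\hat{\mathcal{G}}]) \simeq \Gamma_{\Lambda_{\pi}\hat{\mathcal{G}}}(\uptau_{\pi}(\hat{\theta})^{-1}_{\mathbb{C}})
\]
delivers the theorem. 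There is no genuine obstacle once the earlier results are in place; the only subtlety is noting that the degree restriction from Lemma \ref{lem:genCent} is automatic here because the Real conjugation relation \eqref{eq:keyIden2Cocycle} that encodes centrality already lives on $\Lambda_{\pi}\hat{\mathcal{G}}$. The canonicity of the isomorphism is transparent from the construction, since each of the intermediate isomorphisms is canonical.
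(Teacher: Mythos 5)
Your proposal is correct and follows essentially the same route as the paper's proof: Proposition \ref{prop:modInterp}, Lemma \ref{lem:genCent} applied to $A = \mathbb{C}^{\hat{\theta}}[\hat{\mathcal{G}}]$, and Proposition \ref{prop:centreTwist}, with the observation that the explicit description of $Z(\mathbb{C}^{\hat{\theta}}[\hat{\mathcal{G}}])$ shows it is already concentrated in degree $+1$. Your justification of that last point (commutation with the elements $c\,l_{\id_x}$ forcing $\pi(\omega)=+1$) is exactly the content behind the paper's remark that the middle equality follows from the explicit description.
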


\begin{proof}
By Proposition \ref{prop:modInterp}, the categories $\mathbb{C}^{\hat{\theta}}[\hat{\mathcal{G}}] \ModR$ and $\Vect_{\mathbb{C}}^{\hat{\theta}}(\hat{\mathcal{G}})$ are equivalent. 
Proposition \ref{prop:centreTwist} and Lemma \ref{lem:genCent} then give algebra isomorphisms
\[
\Gamma_{\Lambda_{\pi}\hat{\mathcal{G}}}(\uptau_{\pi}(\hat{\theta})^{-1}_{\mathbb{C}}) \simeq Z( \mathbb{C}^{\hat{\theta}}[\hat{\mathcal{G}}]) = Z(\mathbb{C}^{\hat{\theta}}[\hat{\mathcal{G}}])_1 \simeq
Z(\Vect^{\hat{\theta}}(\hat{\mathcal{G}})).
\]
The middle equality follows from the explicit description of $Z(\mathbb{C}^{\hat{\theta}}[\hat{\mathcal{G}}])$.
\end{proof}

\begin{Cor}
\label{cor:dimCentre}
For any finite $\mathbb{Z}_2$-graded group $\hat{\mathsf{G}}$, there is an equality
\[
\dim_{\mathbb{R}} Z(\Vect_{\mathbb{C}}^{\hat{\theta}}(B \hat{\mathsf{G}})) = \frac{1}{\vert \mathsf{G} \vert} \sum_{\substack{(g_1, g_2) \in \mathsf{G}^2  \\ g_1 g_2 = g_2 g_1}} \frac{\hat{\theta}([g_1 \vert g_2])}{\hat{\theta}([g_2 \vert g_1])}.
\]
\end{Cor}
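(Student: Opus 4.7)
The plan is to combine Theorem~\ref{thm:RealCentre} with the real-structure interpretation of flat sections and a standard character-orthogonality count on the double cover $\Lambda B \mathsf{G} \to \Lambda_{\pi} B \hat{\mathsf{G}}$.

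First, Theorem~\ref{thm:RealCentre} specialized to $\hat{\mathcal{G}} = B \hat{\mathsf{G}}$ identifies $Z(\Vect_{\mathbb{C}}^{\hat{\theta}}(B \hat{\mathsf{G}}))$ with the space of flat sections of the Real line bundle $\uptau_{\pi}(\hat{\theta})^{-1}_{\mathbb{C}}$ on $\Lambda_{\pi} B \hat{\mathsf{G}}$, so it suffices to compute the real dimension of this space. Remark~(ii) following Proposition~\ref{prop:dimFlatSect}, applied to $\hat{\mathcal{G}} = \Lambda_{\pi} B \hat{\mathsf{G}}$ (whose double cover is $\Lambda B \mathsf{G}$ by Lemma~\ref{lem:loopGrpdCover}), then yields
\[
\dim_{\mathbb{R}} \Gamma_{\Lambda_{\pi} B \hat{\mathsf{G}}}(\uptau_{\pi}(\hat{\theta})^{-1}_{\mathbb{C}})
=
\dim_{\mathbb{C}} \Gamma_{\Lambda B \mathsf{G}}\bigl(\pi_{\Lambda_{\pi} B \hat{\mathsf{G}}}^{\ast}\uptau_{\pi}(\hat{\theta})^{-1}_{\mathbb{C}}\bigr).
\]
By direct inspection of Theorem~\ref{thm:quotTrans}, on degree~$+1$ morphisms the formula for $\uptau_{\pi}(\hat{\theta})$ coincides with Willerton's formula \eqref{eq:willertonTrans} for $\uptau(\theta)$, where $\theta := \hat{\theta}\vert_{\mathsf{G}} \in Z^{2}(B \mathsf{G})$, so the pullback above is just $\uptau(\theta)^{-1}_{\mathbb{C}}$.

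To finish, I would evaluate $\dim_{\mathbb{C}} \Gamma_{\Lambda B \mathsf{G}}(\uptau(\theta)^{-1}_{\mathbb{C}})$ component by component. Under the equivalence $\Lambda B \mathsf{G} \simeq \mathsf{G} \git \mathsf{G}$, each connected component is $B Z_{\mathsf{G}}(g)$ for a conjugacy class representative $g$, and equation~\eqref{eq:willertonTrans} specialized to $hgh^{-1} = g$ gives
\[
\uptau(\theta)^{-1}([h]g) = \frac{\hat{\theta}([h \vert g])}{\hat{\theta}([g \vert h])}, \qquad h \in Z_{\mathsf{G}}(g),
\]
which is a character of $Z_{\mathsf{G}}(g)$. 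Its flat sections over $B Z_{\mathsf{G}}(g)$ form $\mathbb{C}$ if this character is trivial and $0$ otherwise, so orthogonality of characters and orbit--stabilizer yield
\[
\dim_{\mathbb{C}} \Gamma_{\Lambda B \mathsf{G}}(\uptau(\theta)^{-1}_{\mathbb{C}})
=
\sum_{[g]} \frac{1}{|Z_{\mathsf{G}}(g)|} \sum_{h \in Z_{\mathsf{G}}(g)} \frac{\hat{\theta}([h \vert g])}{\hat{\theta}([g \vert h])}
=
\frac{1}{|\mathsf{G}|} \sum_{\substack{(g,h) \in \mathsf{G}^{2} \\ gh = hg}} \frac{\hat{\theta}([h \vert g])}{\hat{\theta}([g \vert h])}.
\]
Relabelling $(g, h) \mapsto (g_{2}, g_{1})$ and using the symmetry of the commutativity condition produces the stated formula.

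There is no serious obstacle, since the three inputs (Theorem~\ref{thm:RealCentre}, the real-structure interpretation of Remark~(ii), and the explicit shape of $\uptau_{\pi}$) are all in place by this point. The only care required is the bookkeeping: verifying that the pullback of $\uptau_{\pi}(\hat{\theta})$ along the double cover is literally Willerton's $\uptau(\theta)$, and that the relabelling of the commuting pair converts $\hat{\theta}([h\vert g])/\hat{\theta}([g\vert h])$ into the convention $\hat{\theta}([g_{1}\vert g_{2}])/\hat{\theta}([g_{2}\vert g_{1}])$ of the statement.
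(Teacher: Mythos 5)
Your argument is correct, but it takes a slightly different middle step than the paper. The paper's proof is a one-line combination of Theorem \ref{thm:RealCentre} with Proposition \ref{prop:dimFlatSect}: it applies the integral formula $\tfrac{1}{2}\dim_{\mathbb{R}}\Gamma_{\hat{\mathcal{G}}}(\hat{\alpha}_{\mathbb{C}})=\int_{\Lambda^{\refl}_{\pi}\hat{\mathcal{G}}}\uptau^{\refl}_{\pi}(\hat{\alpha})$ directly to $\hat{\alpha}=\uptau_{\pi}(\hat{\theta})^{-1}$ on $\hat{\mathcal{G}}=\Lambda_{\pi}B\hat{\mathsf{G}}$, and the degree-zero formula $\uptau^{\refl}_{\pi}(\hat{\alpha})([\,]\gamma)=\hat{\alpha}([\gamma])$ together with Theorem \ref{thm:quotTrans} turns that integral into the stated weighted count of commuting pairs (the $2$ from $\vert\hat{\mathsf{G}}\vert=2\vert\mathsf{G}\vert$ cancels the $\tfrac12$). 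You instead reduce along the double cover $\Lambda B\mathsf{G}\to\Lambda_{\pi}B\hat{\mathsf{G}}$ via the real-structure claim of Remark (ii), identify the pullback of $\uptau_{\pi}(\hat{\theta})$ with Willerton's $\uptau(\theta)$, and then run the classical orthogonality/orbit--stabilizer count on $\Lambda B\mathsf{G}$; this is exactly the observation the paper records immediately after the corollary, namely $\dim_{\mathbb{R}} Z(\Vect_{\mathbb{C}}^{\hat{\theta}}(B\hat{\mathsf{G}}))=\dim_{\mathbb{C}} Z(\Vect_{\mathbb{C}}^{\theta}(B\mathsf{G}))$, explained conceptually by Proposition \ref{prop:antiLinHomo}. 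Your route buys the more transparent statement that the Real centre is a real form of the ordinary centre, while the paper's route stays entirely within results it has fully proved. The one point to tighten: Remark (ii) is asserted in the paper only with ``it can be shown''; for a self-contained proof you should either verify the complexification isomorphism $\Gamma_{\hat{\mathcal{G}}}(\hat{\alpha}_{\mathbb{C}})\otimes_{\mathbb{R}}\mathbb{C}\simeq\Gamma_{\mathcal{G}}(\alpha_{\mathbb{C}})$ on the two model cases of Proposition \ref{prop:grpdDecomp} (which is easy), cite Proposition \ref{prop:antiLinHomo} (whose proof does not use the corollary, so there is no circularity), or simply substitute Proposition \ref{prop:dimFlatSect} as the paper does. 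Your final relabelling of the commuting pair is also fine, since the sum is symmetric in $(g_1,g_2)$.
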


\begin{proof}
This follows by combining Proposition \ref{prop:dimFlatSect} and Theorem \ref{thm:RealCentre}.
\end{proof}

In particular, the dimension of $Z(\Vect_{\mathbb{C}}^{\hat{\theta}}(B \hat{\mathsf{G}}))$ is independent of the lift $(\hat{\mathsf{G}}, \hat{\theta})$ of $(\mathsf{G}, \theta)$. In fact, by \cite[Theorem 6]{willerton2008}, we have
\[
\dim_{\mathbb{R}} Z(\Vect_{\mathbb{C}}^{\hat{\theta}}(B \hat{\mathsf{G}})) =  \dim_{\mathbb{C}} Z(\Vect_{\mathbb{C}}^{\theta}(B \mathsf{G})).
\]
A conceptual explanation of this equality is as follows.

\begin{Prop}
\label{prop:antiLinHomo}
For each $\varsigma \in \hat{\mathsf{G}} \setminus \mathsf{G}$, the pair $(Q^{\varsigma}, \Theta^{\varsigma})$ induces a $\varsigma$-independent anti-linear algebra involution
\[
q: Z(\Vect_{\mathbb{C}}^{\theta}(B \mathsf{G})) \rightarrow Z(\Vect_{\mathbb{C}}^{\theta}(B \mathsf{G}))
\]
whose fixed point set is $Z(\Vect_{\mathbb{C}}^{\hat{\theta}}(B \hat{\mathsf{G}}))$.
\end{Prop}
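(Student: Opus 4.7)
The plan is to construct $q$ by categorical conjugation with the anti-auto-equivalence $(Q^{\varsigma}, \Theta^{\varsigma})$, verify its formal properties using only naturality, and then extract the fixed subalgebra via the homotopy fixed point equivalence of Proposition \ref{prop:fixedPointEquiv}.

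First I would define $q(\eta)$ for $\eta \in Z(\Vect_{\mathbb{C}}^{\theta}(B \mathsf{G}))$ by the requirement $Q^{\varsigma}(q(\eta)_{(V,\rho)}) = \eta_{Q^{\varsigma}(V,\rho)}$ for each $\theta$-twisted representation $(V,\rho)$; this determines $q(\eta)_{(V,\rho)}$ uniquely because $Q^{\varsigma}$ is a bijection on hom-spaces. Since $Q^{\varsigma}$ is $\mathbb{C}$-anti-linear on hom-spaces and compatible with composition, $q$ is $\mathbb{R}$-linear, $\mathbb{C}$-anti-linear and multiplicative. To prove $q^{2} = \id$, iterate the defining identity to obtain $(Q^{\varsigma})^{2}(q^{2}(\eta)_{(V,\rho)}) = \eta_{(Q^{\varsigma})^{2}(V,\rho)}$; then rewrite the right-hand side using naturality of $\eta$ applied to the intertwiner $\Theta^{\varsigma}_{\rho} : (V,\rho) \to (Q^{\varsigma})^{2}(V,\rho)$ as $\Theta^{\varsigma}_{\rho} \circ \eta_{(V,\rho)} \circ (\Theta^{\varsigma}_{\rho})^{-1}$, and rewrite the left-hand side using naturality of $\Theta^{\varsigma} : \id \Rightarrow (Q^{\varsigma})^{2}$ applied to the endomorphism $q^{2}(\eta)_{(V,\rho)}$ as $\Theta^{\varsigma}_{\rho} \circ q^{2}(\eta)_{(V,\rho)} \circ (\Theta^{\varsigma}_{\rho})^{-1}$. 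Comparing the two expressions yields $q^{2}(\eta)_{(V,\rho)} = \eta_{(V,\rho)}$.

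For the $\varsigma$-independence, any two choices $\varsigma$ and $\varsigma' = g\varsigma$ with $g \in \mathsf{G}$ give auto-equivalences $Q^{\varsigma}$ and $Q^{\varsigma'}$ connected by an invertible natural transformation whose $(V,\rho)$-component is a scalar multiple of $\rho(g)$ determined by $\hat{\theta}$. Since categorical conjugation on the centre factors through the equivalence class of the auto-equivalence used, the induced $q$ depends only on the grading $\pi$ and not on the chosen lift.

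Finally, to identify the fixed points I would apply Proposition \ref{prop:fixedPointEquiv} to view $\Vect_{\mathbb{C}}^{\hat{\theta}}(B \hat{\mathsf{G}})$ as the homotopy fixed point category $\Vect_{\mathbb{C}}^{\theta}(B \mathsf{G})^{h(Q^{\varsigma}, \Theta^{\varsigma})}$, whose objects are pairs $((V,\rho), \psi_{\rho})$ with $\psi_{\rho} : (V,\rho) \xrightarrow{\sim} Q^{\varsigma}(V,\rho)$ compatible with $\Theta^{\varsigma}$, and whose morphisms are $\theta$-twisted morphisms $f$ satisfying $Q^{\varsigma}(f) \circ \psi = \psi \circ f$. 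A natural endomorphism $\tilde{\eta}$ of the identity functor on this category restricts under the forgetful functor to an element $\eta \in Z(\Vect_{\mathbb{C}}^{\theta}(B \mathsf{G}))$. The requirement that each component $\tilde{\eta}_{(V,\rho,\psi)} = \eta_{(V,\rho)}$ be a morphism in the fixed-point category is $Q^{\varsigma}(\eta_{(V,\rho)}) \circ \psi_{\rho} = \psi_{\rho} \circ \eta_{(V,\rho)}$, while naturality of $\eta$ applied to $\psi_{\rho}$ already gives $\eta_{Q^{\varsigma}(V,\rho)} \circ \psi_{\rho} = \psi_{\rho} \circ \eta_{(V,\rho)}$. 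Together these force $Q^{\varsigma}(\eta_{(V,\rho)}) = \eta_{Q^{\varsigma}(V,\rho)}$, that is, $q(\eta) = \eta$. Conversely, any $q$-fixed $\eta$ defines such a $\tilde{\eta}$, giving the $\mathbb{R}$-algebra isomorphism $Z(\Vect_{\mathbb{C}}^{\hat{\theta}}(B \hat{\mathsf{G}})) \simeq Z(\Vect_{\mathbb{C}}^{\theta}(B \mathsf{G}))^{q}$. The main obstacle is ensuring that the homotopy-level identity $(Q^{\varsigma})^{2} \cong \id$ descends to the strict equation $q^{2} = \id$ on the centre rather than merely to an identity up to conjugation by $\Theta^{\varsigma}$; this is resolved by the two naturality computations above, which absorb the coherence data exactly.
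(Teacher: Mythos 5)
Your construction of $q$ and the verification that it is an anti-linear, multiplicative involution are correct, and they follow a genuinely more categorical route than the paper, which instead transports $Q^{\varsigma}$ across the equivalence $\Vect_{\mathbb{C}}^{\theta}(B\mathsf{G})\simeq \mathbb{C}^{\theta}[B\mathsf{G}]\Mod$ to the explicit anti-linear automorphism $q^{\varsigma}\colon \sum_g c_g l_g\mapsto \sum_g \uptau_{\pi}(\hat{\theta})([\varsigma]g)^{-1}\overline{c}_g l_{\varsigma g\varsigma^{-1}}$ and reads off involutivity, $\varsigma$-independence and the fixed points from closedness of $\uptau_{\pi}(\hat{\theta})$ together with Proposition \ref{prop:centreTwist} and Theorem \ref{thm:RealCentre}. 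Your $\varsigma$-independence step is right in outline, but be aware that the asserted natural isomorphism $Q^{\varsigma}\Rightarrow Q^{g\varsigma}$ is exactly where the identity \eqref{eq:keyIden2Cocycle} (equivalently, closedness of $\uptau_{\pi}(\hat{\theta})$) enters: at the algebra level it is the statement $q^{g\varsigma}=\mathrm{Ad}_{l_g}\circ q^{\varsigma}$, so it is not free and deserves the short cocycle computation.

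The genuine gap is in the identification of the fixed point set. You pass from $\tilde{\eta}\in Z\bigl(\Vect_{\mathbb{C}}^{\theta}(B\mathsf{G})^{h(Q^{\varsigma},\Theta^{\varsigma})}\bigr)$ to an element $\eta\in Z(\Vect_{\mathbb{C}}^{\theta}(B\mathsf{G}))$ by ``restriction under the forgetful functor'', but the forgetful functor goes from the fixed-point category to $\Vect_{\mathbb{C}}^{\theta}(B\mathsf{G})$, so natural endomorphisms of the identity restrict in the opposite direction: a $q$-fixed $\eta$ does restrict to the fixed-point category (that is the easy direction, and your naturality computation there is correct), whereas an arbitrary $\tilde{\eta}$ only has components at objects $(V,\rho)$ admitting a homotopy fixed point structure $\psi$, and a priori these could depend on $\psi$. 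Simple objects with $Q^{\varsigma}(V)\not\cong V$, or of quaternionic type, carry no such structure, so $\tilde{\eta}$ does not directly determine an element of $Z(\Vect_{\mathbb{C}}^{\theta}(B\mathsf{G}))$; the surjectivity (and injectivity) of the natural map $Z(\Vect_{\mathbb{C}}^{\theta}(B\mathsf{G}))^{q}\to Z(\Vect_{\mathbb{C}}^{\hat{\theta}}(B\hat{\mathsf{G}}))$ is precisely the content of the statement and is where your argument stops. It can be repaired: by semisimplicity and hyperbolic induction $\HInd_{\mathsf{G}}^{\hat{\mathsf{G}}}$ every simple object is a direct summand of the underlying object of a homotopy fixed point, and naturality then forces a unique, $\psi$-independent and $q$-fixed extension of $\tilde{\eta}$; alternatively, follow the paper and compute both centres explicitly as spaces of flat sections via Proposition \ref{prop:centreTwist} and Theorem \ref{thm:RealCentre}, for which the fixed points of the explicit $q^{\varsigma}$ are visibly $\Gamma_{\Lambda_{\pi} B\hat{\mathsf{G}}}(\uptau_{\pi}(\hat{\theta})^{-1}_{\mathbb{C}})$.
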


\begin{proof}
Under the equivalence $\Vect_{\mathbb{C}}^{\theta}(B\mathsf{G}) \simeq \mathbb{C}^{\theta}[B\mathsf{G}] \Mod$, the functor $Q^{\varsigma}$ becomes the anti-linear algebra automorphism
\[
q^{\varsigma}: \mathbb{C}^{\theta}[B \mathsf{G}] \rightarrow \mathbb{C}^{\theta}[B \mathsf{G}],
\qquad
\sum_{g \in \mathsf{G}} c_{g} l_{g} \mapsto \sum_{g \in \mathsf{G}} \uptau_{\pi}(\hat{\theta})([\varsigma] g)^{-1}  \overline{c}_{g} l_{\varsigma g \varsigma^{-1}}.
\]
Closedness of $\uptau_{\pi}(\hat{\theta})$ implies that $q^{\varsigma}$ squares to $\textnormal{Ad}_{l_{\varsigma^2}}$. It follows that $q^{\varsigma}$ restricts to an anti-linear algebra involution $q: \Gamma_{\Lambda B \mathsf{G}}(\uptau(\theta)^{-1}_{\mathbb{C}}) \rightarrow \Gamma_{\Lambda B \mathsf{G}}(\uptau(\theta)^{-1}_{\mathbb{C}})$ which, again by the closedness of $\uptau_{\pi}(\hat{\theta})$, is independent of $\varsigma$. The explicit form of $q^{\varsigma}$ shows that the fixed point set of $q$ is $\Gamma_{\Lambda_{\pi} B \hat{\mathsf{G}}}(\uptau_{\pi}(\hat{\theta})^{-1}_{\mathbb{C}})$. To finish the proof, apply Theorem \ref{thm:RealCentre}.
\end{proof}

\section{Jandl twisted $2$-vector bundles}
\label{sec:degThreeCocyc}

We study representation theoretic aspects of $Z^{3+ \pi_{\hat{\mathcal{G}}}}(\hat{\mathcal{G}})$. For simplicity, we restrict attention to $\mathbb{Z}_2$-graded groupoids of the form $\hat{\mathcal{G}} = B \hat{\mathsf{G}}$.

\subsection{Thickened Drinfeld doubles}
\label{sec:drinDoub}

Let $\mathsf{G}$ be a finite group and $\eta \in Z^3(B \mathsf{G})$. The $\eta$-twisted Drinfeld double $D^{\eta}(\mathsf{G})$ is a quasi-Hopf algebra with explicitly defined product and coproduct \cite{dijkgraaf1992}. The starting point of this section is Willerton's algebra isomorphism between $D^{\eta}(\mathsf{G})$ and the twisted groupoid algebra $\mathbb{C}^{\uptau(\eta)}[\Lambda B \mathsf{G}]$ \cite[\S 3.1]{willerton2008}. This provides a conceptual definition of the algebra $D^{\eta}(\mathsf{G})$ and leads to short proofs of a number of its fundamental properties, such as a description of its character theory \cite{willerton2008}.

Turning to the Real setting, fix a finite $\mathbb{Z}_2$-graded group $\hat{\mathsf{G}}$.

\begin{Def}
Let $\hat{\eta} \in Z^{3 + \pi_{\hat{\mathsf{G}}}}(B \hat{\mathsf{G}})$ and $\tilde{\eta} \in Z^3(B \hat{\mathsf{G}})$ be lifts of $\eta \in Z^3(B \mathsf{G})$.
\begin{enumerate}[label=(\roman*)]
\item Define $\mathbb{R}$-algebras by $\mathbb{D}^{\hat{\eta}}(\hat{\mathsf{G}}) :=\mathbb{C}^{\uptau_{\pi}(\hat{\eta})}[\Lambda_{\pi} B \hat{\mathsf{G}}]$ and $\mathbb{D}^{\tilde{\eta}}(\hat{\mathsf{G}}) := \mathbb{C}^{\tilde{\uptau}_{\pi}^{\refl}(\tilde{\eta})}[\Lambda_{\pi}^{\refl} B \hat{\mathsf{G}}]$.

\item Define a $\mathbb{C}$-algebra by $D^{\hat{\eta}}(\hat{\mathsf{G}}) :=\mathbb{C}^{\uptau^{\refl}_{\pi}(\hat{\eta})}[\Lambda^{\refl}_{\pi} B \hat{\mathsf{G}}]$.
\end{enumerate}
\end{Def}

Compatibility of the oriented and twisted loop transgression maps, as in diagram \eqref{diag:restrToWillerton} for $\uptau_{\pi}^{\refl}$, implies that $D^{\eta}(\mathsf{G})$ embeds into each of $\mathbb{D}^{\hat{\eta}}(\hat{\mathsf{G}})$, $\mathbb{D}^{\tilde{\eta}}(\hat{\mathsf{G}})$ and $D^{\hat{\eta}}(\hat{\mathsf{G}})$ as the complex subalgebra of degree $1$ morphisms. We therefore refer to any of the algebras in the above definition as twisted thickened Drinfeld doubles.

The results of Section \ref{sec:realChar} can be applied to the representation theory of thickened Drinfeld doubles. To begin, we identify their representation groups. Motivated by \cite[\S 3.2]{willerton2008}, we view these results as Real counterparts of the Freed--Hopkins--Teleman theorem \cite[Theorem 1]{freed2011} in our (much simpler) finite setting. A Real analogue for compact, connected and simply connected Lie groups is \cite[Theorem 5.12]{fok2018}.

\begin{Prop}
\label{prop:RealFHT}
There are isomorphisms of abelian groups
\[
K^{\uptau^{\refl}_{\pi}(\hat{\eta})}(\mathsf{G} \git_{\textnormal{R}} \hat{\mathsf{G}}) \simeq K_0(D^{\hat{\eta}}(\hat{\mathsf{G}}) \Mod),
\]
\[
K^{\tilde{\uptau}^{\refl}_{\pi}(\tilde{\eta})} (\mathsf{G} \git_{\textnormal{R}} \hat{\mathsf{G}}) \simeq K_0(\mathbb{D}^{\tilde{\eta}}(\hat{\mathsf{G}}) \ModR)
\]
and
\[
K^{\uptau_{\pi}(\hat{\eta})} (\mathsf{G} \git \hat{\mathsf{G}}) \simeq K_0(\mathbb{D}^{\hat{\eta}}(\hat{\mathsf{G}})\ModR).
\]
\end{Prop}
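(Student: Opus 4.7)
The plan is to deduce each of the three isomorphisms by unpacking definitions and applying the categorical--algebraic dictionary of Proposition \ref{prop:modInterp}. First, I would invoke the identifications $\mathsf{G} \git \hat{\mathsf{G}} \simeq \Lambda_{\pi} B \hat{\mathsf{G}}$ and $\mathsf{G} \git_{\textnormal{R}} \hat{\mathsf{G}} \simeq \Lambda_{\pi}^{\refl} B \hat{\mathsf{G}}$ recorded in the Example concluding Section \ref{sec:tLGGroupoids} to rewrite each left-hand side as the twisted $K$-theory of the appropriate (unoriented) quotient loop groupoid.

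For the third isomorphism, the transgressed class $\uptau_{\pi}(\hat{\eta}) \in Z^{2 + \pi_{\Lambda_{\pi} B \hat{\mathsf{G}}}}(\Lambda_{\pi} B \hat{\mathsf{G}})$ produced by Theorem \ref{thm:quotTrans} is $\pi$-twisted, so by definition $K^{\uptau_{\pi}(\hat{\eta})}(\Lambda_{\pi} B \hat{\mathsf{G}}) = K_0(\Vect_{\mathbb{C}}^{\uptau_{\pi}(\hat{\eta})}(\Lambda_{\pi} B \hat{\mathsf{G}}))$, and Proposition \ref{prop:modInterp} yields an $\mathbb{R}$-linear equivalence with $\mathbb{C}^{\uptau_{\pi}(\hat{\eta})}[\Lambda_{\pi} B \hat{\mathsf{G}}] \ModR$, which is $\mathbb{D}^{\hat{\eta}}(\hat{\mathsf{G}}) \ModR$ by definition of the thickened Drinfeld double. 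Passing to $K_0$ gives the claim. The second isomorphism is proved identically, substituting $\tilde{\uptau}_{\pi}^{\refl}(\tilde{\eta})$ from Theorem \ref{thm:oriExtraTwistTrans} for $\uptau_{\pi}(\hat{\eta})$ throughout.

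The first isomorphism requires a minor modification. By Theorem \ref{thm:oriTwistTrans}, the cocycle $\uptau_{\pi}^{\refl}(\hat{\eta})$ is an \emph{untwisted} $2$-cocycle on the $\mathbb{Z}_2$-graded groupoid $\Lambda_{\pi}^{\refl} B \hat{\mathsf{G}}$, so the natural category of twists is that of ordinary (non-Real) $\mathbb{C}$-linear twisted vector bundles, and the relevant algebraic counterpart is modules over the $\mathbb{C}$-algebra $D^{\hat{\eta}}(\hat{\mathsf{G}})$. Here I would invoke the direct analogue of Proposition \ref{prop:modInterp}, replacing the $\pi$-linearity constraint in Lemma \ref{lem:generatorRRep}(i) by ordinary $\mathbb{C}$-linearity; the resulting equivalence $\Vect_{\mathbb{C}}^{\uptau_{\pi}^{\refl}(\hat{\eta})}(\Lambda_{\pi}^{\refl} B \hat{\mathsf{G}}) \simeq D^{\hat{\eta}}(\hat{\mathsf{G}}) \Mod$ is a routine unwinding of definitions.

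Since all three arguments reduce to bookkeeping, no serious obstacle arises. The only point that warrants care is choosing the correct flavour (Real versus ordinary) of vector bundle and module category in each case, dictated by whether the transgressed twist is $\pi$-twisted and, equivalently, by whether the thickened Drinfeld double in question is an $\mathbb{R}$-algebra or a $\mathbb{C}$-algebra.
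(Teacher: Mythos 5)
Your proposal is correct and follows essentially the same route as the paper: identify $\mathsf{G} \git_{\textnormal{R}} \hat{\mathsf{G}} \simeq \Lambda_{\pi}^{\refl} B \hat{\mathsf{G}}$ and $\mathsf{G} \git \hat{\mathsf{G}} \simeq \Lambda_{\pi} B \hat{\mathsf{G}}$, then apply Proposition \ref{prop:modInterp} for the two $\pi$-twisted transgressed cocycles and the ordinary (Willerton-style, non-Real) twisted vector bundle/module equivalence for the untwisted cocycle $\uptau_{\pi}^{\refl}(\hat{\eta})$. The only cosmetic difference is that the paper cites Willerton's result for that last equivalence, whereas you note it as a routine $\mathbb{C}$-linear analogue of Proposition \ref{prop:modInterp}, which amounts to the same argument.
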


\begin{proof}
Recall that $\Lambda_{\pi}^{\refl} B \hat{\mathsf{G}} \simeq \mathsf{G} \git_{\textnormal{R}} \hat{\mathsf{G}}$ and $\Lambda_{\pi} B \hat{\mathsf{G}} \simeq \mathsf{G} \git \hat{\mathsf{G}}$. The first isomorphism follows from the equivalence $\Vect_{\mathbb{C}}^{\uptau^{\refl}_{\pi}(\hat{\eta})}(\Lambda_{\pi}^{\refl} B \hat{\mathsf{G}}) \simeq \mathbb{C}^{\uptau^{\refl}_{\pi}(\hat{\eta})}[\Lambda_{\pi}^{\refl} B \hat{\mathsf{G}}] \Mod$ (see \cite[Proposition 8]{willerton2008}) and the second from $\Vect_{\mathbb{C}}^{\tilde{\uptau}^{\refl}_{\pi}(\tilde{\eta})}(\Lambda_{\pi}^{\refl} B \hat{\mathsf{G}}) \simeq \mathbb{C}^{\tilde{\uptau}^{\refl}_{\pi}(\tilde{\eta})}[\Lambda_{\pi}^{\refl} B \hat{\mathsf{G}}] \ModR$ (see Proposition \ref{prop:modInterp}). The third isomorphism is proved in the same way.
\end{proof}

Only the final two isomorphisms of Proposition \ref{prop:RealFHT} involve a form of Real equivariant $K$-theory of $\mathsf{G}$. The first isomorphism still has a Real flavour, however, as it involves $\Lambda_{\pi}^{\refl} B \hat{\mathsf{G}}$. The second isomorphism is the finite analogue of the result of Fok \cite{fok2018}; it would be interesting to combine them to describe the twisted $\hat{\mathsf{G}}$-equivariant $K$-theory of $\mathsf{G}$ for $\hat{\mathsf{G}}$ an arbitrary compact $\mathbb{Z}_2$-graded Lie group.

Writing the appropriate loop groupoid as a disjoint union of standard models (see Proposition \ref{prop:grpdDecomp}) leads to a decomposition of the representation categories of thickened Drinfeld doubles. For example,
\[
D^{\hat{\eta}}(\hat{\mathsf{G}}) \Mod \simeq \bigoplus_{g \in \pi_0(\mathsf{G} \git_{\textnormal{R}} \hat{\mathsf{G}})} \Vect_{\mathbb{C}}^{\uptau^{\refl}_{\pi}(\hat{\eta})_{\vert g}}(B Z_{\hat{\mathsf{G}}}^{\textnormal{R}}(g)),
\]
where $Z_{\hat{\mathsf{G}}}^{\textnormal{R}}(g)=\{ \omega \in \hat{\mathsf{G}} \mid \omega g^{\pi(\omega)} \omega^{-1} = g\}$ is the Real centralizer of $g$. Simple $D^{\hat{\eta}}(\hat{\mathsf{G}})$-modules are therefore labelled by a Real conjugacy class of $\mathsf{G}$ and a simple twisted representation of its Real centralizer. Similarly, Proposition \ref{prop:grpdDecompCat} shows that $\mathbb{D}^{\tilde{\eta}}(\hat{\mathsf{G}}) \ModR$ decomposes as
\[
\bigoplus_{g \in \pi_0(\mathsf{G} \git_{\textnormal{R}} \hat{\mathsf{G}})_{-1}} \Vect_{\mathbb{C}}^{\tilde{\uptau}^{\refl}_{\pi}(\tilde{\eta})_{\vert g}}(B Z_{\hat{\mathsf{G}}}^{\textnormal{R}}(g)) \oplus \bigoplus_{\{g,\overline{g}\} \in \pi_0(\mathsf{G} \git_{\textnormal{R}} \hat{\mathsf{G}})_1} \Vect_{\mathbb{C}}^{\tilde{\uptau}^{\refl}_{\pi}(\tilde{\eta})_{\vert \{g,\overline{g}\}}}( \mathsf{G} \git_{\textnormal{R}} \hat{\mathsf{G}}_{\vert \{g, \overline{g}\}}).
\]
The first sum is over conjugacy classes of $\mathsf{G}$ which are fixed by the involution determined by $\hat{\mathsf{G}}$ and the second is over the $\mathbb{Z}_2$-quotient of its complement. The previous two decompositions, and the quasi-inverse from the proof of Proposition \ref{prop:grpdDecompCat}, show that a collection of twisted (Real) representations of the groupoids appearing on the right hand side determines a representation of the thickened Drinfeld double. This gives Real versions of Dijkgraaf--Pasquier--Roche induction \cite[\S 2.2]{dijkgraaf1992}, \cite[\S 3.3]{willerton2008}.

\subsection{Twisted one-loop characters}
\label{sec:twistOneLoopChar}

The following definition is motivated by the definition of twisted elliptic characters in \cite[\S 3.4]{willerton2008}. See also \cite{hopkins2000}.

\begin{Def}
Let $\hat{\eta} \in Z^{3+\pi_{\hat{\mathsf{G}}}}(B \hat{\mathsf{G}})$. Elements of $\Gamma_{\Lambda \Lambda_{\pi}^{\refl} B \hat{\mathsf{G}}}(\uptau \uptau_{\pi}^{\refl}(\hat{\eta})_{\mathbb{C}})$ are called $\hat{\eta}$-twisted one-loop characters of $\mathsf{G}$.
\end{Def}

To make this definition explicit, let $\mathsf{G}^{(2)} \subset \mathsf{G}^2$ be the set of commuting pairs in $\mathsf{G}$ and let
\[
\hat{\mathsf{G}}^{\langle 2 \rangle} : = \{ (g, \omega) \in \mathsf{G} \times \hat{\mathsf{G}} \mid g \omega = \omega g^{\pi(\omega)}\}
\]
be the set of graded commuting pairs in $\mathsf{G}$. Then a twisted one-loop character is a function $\chi: \hat{\mathsf{G}}^{\langle 2 \rangle} \rightarrow \mathbb{C}$ which satisfies
\[
\chi(\sigma g^{\pi(\sigma)} \sigma^{-1}, \sigma \omega \sigma^{-1}) =  \uptau \uptau_{\pi}^{\refl}(\hat{\eta})([\sigma]g \xrightarrow[]{\omega} g) \chi(g, \omega), \qquad \sigma \in \hat{\mathsf{G}}.
\]

The relevance of this definition to the representation theory of $D^{\hat{\eta}}(\hat{\mathsf{G}})$ is as follows.

\begin{Prop}
\label{prop:charThickDD}
The character map is an isometry
\[
\chi : K_0(D^{\hat{\eta}}(\hat{\mathsf{G}})\Mod) \otimes_{\mathbb{Z}} \mathbb{C} \xrightarrow[]{\sim} \Gamma_{\Lambda \Lambda_{\pi}^{\refl} B \hat{\mathsf{G}}}(\uptau \uptau_{\pi}^{\refl}(\hat{\eta})_{\mathbb{C}}).
\]
\end{Prop}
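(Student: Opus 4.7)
The plan is to deduce the proposition by composing the representation theoretic identification of Proposition \ref{prop:RealFHT} with Willerton's (ordinary, non-Real) character isomorphism \eqref{eq:twistedKIso}. Indeed, the first isomorphism of Proposition \ref{prop:RealFHT} identifies
\[
K_0(D^{\hat{\eta}}(\hat{\mathsf{G}}) \Mod) \simeq K^{\uptau^{\refl}_{\pi}(\hat{\eta})}(\Lambda_{\pi}^{\refl} B \hat{\mathsf{G}}),
\]
realizing each simple $D^{\hat{\eta}}(\hat{\mathsf{G}})$-module as a simple $\uptau^{\refl}_{\pi}(\hat{\eta})$-twisted vector bundle over the essentially finite groupoid $\Lambda_{\pi}^{\refl} B \hat{\mathsf{G}}$. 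Applying the isomorphism \eqref{eq:twistedKIso} to the groupoid $\mathcal{G} = \Lambda_{\pi}^{\refl} B \hat{\mathsf{G}}$ and the $2$-cocycle $\theta = \uptau^{\refl}_{\pi}(\hat{\eta}) \in Z^2(\Lambda_{\pi}^{\refl} B \hat{\mathsf{G}})$ then yields
\[
\chi : K^{\uptau^{\refl}_{\pi}(\hat{\eta})}(\Lambda_{\pi}^{\refl} B \hat{\mathsf{G}}) \otimes_{\mathbb{Z}} \mathbb{C} \xrightarrow[]{\sim} \Gamma_{\Lambda \Lambda_{\pi}^{\refl} B \hat{\mathsf{G}}} (\uptau \uptau^{\refl}_{\pi}(\hat{\eta})_{\mathbb{C}}).
\]
Composing these two maps produces the desired isomorphism, so the content of the proposition is that this composition agrees with the character map on $D^{\hat{\eta}}(\hat{\mathsf{G}})$-modules.

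The compatibility of the two character maps is a direct unwinding of definitions. Under Proposition \ref{prop:RealFHT}, a $D^{\hat{\eta}}(\hat{\mathsf{G}})$-module $V$ corresponds to the twisted vector bundle over $\Lambda_{\pi}^{\refl} B \hat{\mathsf{G}}$ whose fibre at a loop $g$ is $l_g \cdot V$ and whose morphism along $\omega$ is $l_\omega$. Willerton's character applied to this bundle sends an object $(g,\omega) \in \Obj(\Lambda \Lambda_{\pi}^{\refl} B \hat{\mathsf{G}})$, that is, a graded-commuting pair $(g,\omega) \in \hat{\mathsf{G}}^{\langle 2 \rangle}$, to the trace of $l_\omega$ on $l_g \cdot V$, which is precisely the trace of the action of the element $l_g l_\omega$ of $D^{\hat{\eta}}(\hat{\mathsf{G}})$ on $V$. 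Thus the composition agrees with the trace character of $V$.

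For the isometry statement, one compares the two natural inner product structures. On the left, the Hom-pairing on $K_0(D^{\hat{\eta}}(\hat{\mathsf{G}}) \Mod)$ coincides under Proposition \ref{prop:RealFHT} with the Hom-pairing on $K^{\uptau^{\refl}_{\pi}(\hat{\eta})}(\Lambda_{\pi}^{\refl} B \hat{\mathsf{G}})$, since the equivalence of Proposition \ref{prop:modInterp} is $\mathbb{C}$-linear on morphism spaces. The isomorphism \eqref{eq:twistedKIso} is in turn an isometry with respect to the pairing on $\Gamma_{\Lambda \Lambda_{\pi}^{\refl} B \hat{\mathsf{G}}}(\uptau \uptau^{\refl}_{\pi}(\hat{\eta})_{\mathbb{C}})$ given by integration against a transgressed cocycle (this is the content of \cite[Proposition 10]{willerton2008}). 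Composing gives the isometry.

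The main obstacle I anticipate is a purely bookkeeping one: carefully matching the normalization conventions used in the definition of $D^{\hat{\eta}}(\hat{\mathsf{G}})$ (via $\uptau_{\pi}^{\refl}$ and Theorem \ref{thm:oriTwistTrans}) with those appearing implicitly when Willerton's theorem is invoked on the groupoid $\Lambda_{\pi}^{\refl} B \hat{\mathsf{G}}$. Once the identification between $D^{\hat{\eta}}(\hat{\mathsf{G}})$-modules and twisted vector bundles over $\Lambda_{\pi}^{\refl} B \hat{\mathsf{G}}$ is written out explicitly, however, the trace computations match without further work, and no new cocycle identities beyond those already in hand (in particular, the cochain property of $\uptau_{\pi}^{\refl}$ from Theorem \ref{thm:oriTwistTrans}) are needed.
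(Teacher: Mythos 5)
Your proposal is correct and is essentially the paper's argument: since $D^{\hat{\eta}}(\hat{\mathsf{G}})$ is by definition the $\uptau_{\pi}^{\refl}(\hat{\eta})$-twisted groupoid algebra of $\Lambda_{\pi}^{\refl} B \hat{\mathsf{G}}$ with an ordinary (untwisted) $2$-cocycle, the statement is exactly Willerton's character isomorphism \eqref{eq:twistedKIso} applied to $\mathcal{G} = \Lambda_{\pi}^{\refl} B \hat{\mathsf{G}}$, which is all the paper's one-line proof invokes. Your additional unwinding via Proposition \ref{prop:RealFHT} and the trace comparison just makes this identification explicit.
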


\begin{proof}
Since $D^{\hat{\eta}}(\hat{\mathsf{G}}) = \mathbb{C}^{\uptau_{\pi}^{\refl}(\hat{\eta})}[\Lambda_{\pi}^{\refl} B \hat{\mathsf{G}}]$, this follows from \cite[Theorem 11]{willerton2008}.
\end{proof}

The term one-loop character is motivated by two dimensional unoriented topological (or conformal) field theory, where the (closed) one-loop sector is made up by the $2$-torus $\mathbb{T}^2$ and the Klein bottle $\mathbb{K}$. There is a canonical decomposition
\[
\Gamma_{\Lambda \Lambda_{\pi}^{\refl} B \hat{\mathsf{G}}}(\uptau \uptau_{\pi}^{\refl}(\hat{\eta})_{\mathbb{C}}) = \Gamma^{\mathbb{T}^2}_{\Lambda \Lambda_{\pi}^{\refl} B \hat{\mathsf{G}}}(\uptau \uptau_{\pi}^{\refl}(\hat{\eta})_{\mathbb{C}}) \oplus \Gamma^{\mathbb{K}}_{\Lambda \Lambda_{\pi}^{\refl} B \hat{\mathsf{G}}}(\uptau \uptau_{\pi}^{\refl}(\hat{\eta})_{\mathbb{C}}),
\]
where the first and second summands consists of suitably $\hat{\mathsf{G}}$-equivariant functions on the sets $\mathsf{G}^{(2)}$ and $(\mathsf{G} \times (\hat{\mathsf{G}} \setminus \mathsf{G})) \cap \hat{\mathsf{G}}^{\langle 2 \rangle}$, respectively. The (graded) commuting conditions which define these sets are precisely the defining relations of $\pi_1(\mathbb{T}^2)$ and $\pi_1(\mathbb{K})$ in their standard presentations. The first summand is a subspace of the $\eta$-twisted elliptic characters $\Gamma_{\Lambda^2 B \mathsf{G}}(\uptau^2 (\eta)_{\mathbb{C}})$ while the second consists of what we call $\hat{\eta}$-twisted Klein characters. As suggested by the appearance of fundamental groups, the summands can be interpreted in terms of certain moduli spaces of $\mathsf{G}$-bundles on $\mathbb{T}^2$ or $\mathbb{K}$. See \cite{willerton2008}, \cite[\S 3.2]{mbyoung2019} for details.

\begin{Cor}
\label{cor:rankThickDD}
The number of simple $D^{\hat{\eta}}(\hat{\mathsf{G}})$-modules is $\int_{\Lambda^2 \Lambda_{\pi}^{\refl} B \hat{\mathsf{G}}} \uptau^2 \uptau_{\pi}^{\refl} (\hat{\eta})$.
\end{Cor}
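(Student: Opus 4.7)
The proof will chain together the character-theoretic identification of Proposition \ref{prop:charThickDD} with Willerton's dimension formula for flat sections of a line bundle on a finite groupoid. First I would observe that $D^{\hat{\eta}}(\hat{\mathsf{G}}) = \mathbb{C}^{\uptau_{\pi}^{\refl}(\hat{\eta})}[\Lambda_{\pi}^{\refl} B \hat{\mathsf{G}}]$ is a finite-dimensional semisimple complex algebra: its module category is equivalent to $\Vect_{\mathbb{C}}^{\uptau_{\pi}^{\refl}(\hat{\eta})}(\Lambda_{\pi}^{\refl} B \hat{\mathsf{G}})$, and the ordinary (non-Real) analogue of Proposition \ref{prop:completeReduc} gives semisimplicity via the usual Weyl unitarization argument. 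Consequently, the number of isomorphism classes of simple $D^{\hat{\eta}}(\hat{\mathsf{G}})$-modules equals $\dim_{\mathbb{C}}\bigl(K_0(D^{\hat{\eta}}(\hat{\mathsf{G}}) \Mod) \otimes_{\mathbb{Z}} \mathbb{C}\bigr)$.

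Next I would invoke Proposition \ref{prop:charThickDD}, which identifies this complex dimension with $\dim_{\mathbb{C}} \Gamma_{\Lambda \Lambda_{\pi}^{\refl} B \hat{\mathsf{G}}}(\uptau \uptau_{\pi}^{\refl}(\hat{\eta})_{\mathbb{C}})$. This reduces the corollary to a dimension count for a flat $\mathsf{U}(1)$-line bundle on the iterated loop groupoid $\Lambda \Lambda_{\pi}^{\refl} B \hat{\mathsf{G}}$. By Theorem \ref{thm:oriTwistTrans}, $\uptau_{\pi}^{\refl}(\hat{\eta}) \in Z^2(\Lambda_{\pi}^{\refl} B \hat{\mathsf{G}})$ is an \emph{ordinary} (untwisted) $2$-cocycle, so that its oriented loop transgression $\uptau \uptau_{\pi}^{\refl}(\hat{\eta})$ is an ordinary $1$-cocycle on $\Lambda \Lambda_{\pi}^{\refl} B \hat{\mathsf{G}}$; in particular, no Real structure or $\mathbb{Z}_2$-grading interferes with the final dimension computation.

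Finally, Willerton's dimension formula \cite[Theorem 6]{willerton2008} computes the complex dimension of flat sections of the line bundle associated to an untwisted $1$-cocycle as the integral of its loop transgression over the free loop groupoid:
\[
\dim_{\mathbb{C}} \Gamma_{\Lambda \Lambda_{\pi}^{\refl} B \hat{\mathsf{G}}}(\uptau \uptau_{\pi}^{\refl}(\hat{\eta})_{\mathbb{C}})
=
\int_{\Lambda^2 \Lambda_{\pi}^{\refl} B \hat{\mathsf{G}}} \uptau\bigl(\uptau \uptau_{\pi}^{\refl}(\hat{\eta})\bigr)
=
\int_{\Lambda^2 \Lambda_{\pi}^{\refl} B \hat{\mathsf{G}}} \uptau^2 \uptau_{\pi}^{\refl}(\hat{\eta}).
\]
Chaining these three equalities gives the claimed formula, in direct parallel with the proof of Corollary \ref{cor:twoCycDim}. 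There is no genuine obstacle in the argument; the only bookkeeping is verifying semisimplicity and carefully tracking which cochains carry twists versus which are ordinary at each stage, both of which are routine given Sections \ref{sec:realChar}--\ref{sec:twistOneLoopChar}.
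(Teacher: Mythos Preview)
Your proof is correct and follows essentially the same route as the paper: invoke Proposition~\ref{prop:charThickDD} to identify the rank of $K_0(D^{\hat{\eta}}(\hat{\mathsf{G}})\Mod)$ with $\dim_{\mathbb{C}}\Gamma_{\Lambda \Lambda_{\pi}^{\refl} B \hat{\mathsf{G}}}(\uptau \uptau_{\pi}^{\refl}(\hat{\eta})_{\mathbb{C}})$, then apply Willerton's dimension formula \cite[Theorem~6]{willerton2008}. Your additional remarks on semisimplicity and on the untwisted nature of $\uptau_{\pi}^{\refl}(\hat{\eta})$ make explicit what the paper leaves implicit.
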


\begin{proof}
Proposition \ref{prop:charThickDD} implies that
\[
\rank \, K_0(D^{\hat{\eta}}(\hat{\mathsf{G}}) \Mod) =
\dim_{\mathbb{C}} \Gamma_{\Lambda \Lambda_{\pi}^{\refl} B \hat{\mathsf{G}}}(\uptau \uptau_{\pi}^{\refl}(\hat{\eta})_{\mathbb{C}}).
\]
By \cite[Theorem 6]{willerton2008} the right hand side is equal to $\int_{\Lambda^2 \Lambda_{\pi}^{\refl} B \hat{\mathsf{G}}} \uptau^2 \uptau_{\pi}^{\refl} (\hat{\eta})$.
\end{proof}

Straightforward modifications of the previous discussion apply to $\mathbb{D}^{\hat{\eta}}(\hat{\mathsf{G}})$ and $\mathbb{D}^{\tilde{\eta}}(\hat{\mathsf{G}})$. We limit ourselves to describing the character theory of $\mathbb{D}^{\hat{\eta}}(\hat{\mathsf{G}})$, which is rather different from that of $D^{\hat{\eta}}(\hat{\mathsf{G}})$. Theorem \ref{thm:charIso} gives
\[
K_0(\mathbb{D}^{\hat{\eta}}(\hat{\mathsf{G}}) \ModR) \otimes_{\mathbb{Z}} \mathbb{C} \simeq  
\Gamma_{\Lambda_{\pi}^{\refl} \Lambda_{\pi} B \hat{\mathsf{G}}}(\uptau_{\pi}^{\refl} \uptau_{\pi}(\hat{\eta})_{\mathbb{C}}).
\]
The right hand side is the set of functions $\chi: \mathsf{G}^{(2)} \rightarrow \mathbb{C}$ which satisfy
\[
\chi(\sigma g_1 \sigma^{-1} , \sigma g_2^{\pi(\sigma)} \sigma^{-1}) = \uptau_{\pi}^{\refl} \uptau_{\pi}(\hat{\eta})([\sigma]g_1 \xrightarrow[]{g_2} g_1) \cdot \chi(g_1, g_2),
\qquad
\sigma \in \hat{\mathsf{G}}.
\]
Characters of Real $\mathbb{D}^{\hat{\eta}}(\hat{\mathsf{G}})$-representations therefore form a subspace of the space of $\eta$-twisted elliptic characters. In particular, there is no Klein sector.

\subsection{Real pointed fusion categories and their centres}
\label{sec:RealFusion}

In this section, we categorify the $\hat{\theta}$-twisted groupoid algebra $\mathbb{C}^{\hat{\theta}}[\hat{\mathcal{G}}]$ of Section \ref{sec:vbJandl}. We use coefficients $\mathsf{A} = \mathbb{C}^{\times}$. For background on monoidal categories, the reader is referred to \cite{etingof2015}.

To begin, we introduce a Real version of monoidal categories. This can be seen as a modification of the notion of a $\mathbb{Z}_2$-graded extension of a monoidal category which takes into account complex conjugation twists.

\begin{Def}
A Real monoidal category is a $\mathbb{C}$-linear abelian category $\mathcal{C}$ with
\begin{enumerate}[label=(\roman*)]
\item a decomposition $\mathcal{C} = \mathcal{C}^{(1)} \oplus \mathcal{C}^{(-1)}$ into full abelian subcategories,

\item an additive functor $\otimes : \mathcal{C} \times \mathcal{C} \rightarrow \mathcal{C}$ which restricts to $\mathbb{C}$-bilinear functors
\[
\otimes: \mathcal{C}^{(i)} \times {^{i}}\mathcal{C}^{(j)} \rightarrow \mathcal{C}^{(ij)},
\qquad
i,j \in \mathbb{Z}_2,
\]
where ${^{+1}}\mathcal{C}^{(j)} = \mathcal{C}^{(j)}$ and ${^{-1}}\mathcal{C}^{(j)}$ is the complex conjugate category of $\mathcal{C}^{(j)}$,

\item an object $\mathbf{1} \in \mathcal{C}$, together with left and right unitors, and
\item for homogeneous $X_k \in \mathcal{C}$, $k=1,2,3$, natural $\mathbb{C}$-linear associativity isomorphisms $\alpha_{X_3,X_2,X_1}: (X_3 \otimes X_2) \otimes X_1 \rightarrow X_3 \otimes (X_2 \otimes X_1)$
\end{enumerate}
such that the evident triangle and pentagon axioms hold.
\end{Def}

In particular, underlying a Real monoidal category is an $\mathbb{R}$-linear monoidal category. Note also that the subcategory $\mathcal{C}^{(1)}$ is a $\mathbb{C}$-linear monoidal category which contains the unit object $\mathbf{1}$.

\begin{Def}
\begin{enumerate}[label=(\roman*)]
\item A Real fusion category is a finite semisimple Real monoidal category which is rigid and has a simple monoidal unit.

\item A Real fusion category is called pointed if its simple objects are invertible.
\end{enumerate}
\end{Def}

Real pointed fusion categories and their $\mathbb{C}$-linear monoidal equivalences form a groupoid $\mathsf{RPFus}$. The assignment $\mathcal{C} \mapsto \mathcal{C}^{(1)}$ extends to a functor from $\mathsf{RPFus}$ to the groupoid of pointed fusion categories.

\begin{Ex}
Let $\hat{\mathsf{G}}$ be a finite $\mathbb{Z}_2$-graded group and $\hat{\eta} \in Z^{3+\pi_{\hat{\mathsf{G}}}}(B \hat{\mathsf{G}})$. Let $\Vect_{\mathbb{C}}^{\hat{\eta}}(\hat{\mathsf{G}})$ be the $\mathbb{C}$-linear category of finite dimensional $\hat{\mathsf{G}}$-graded complex vector spaces. We write objects of as $V = \oplus_{\omega \in \hat{\mathsf{G}}} V_{\omega}$. Given $\omega \in \hat{\mathsf{G}}$, let $\mathbb{C}_{\omega} \in \Vect_{\mathbb{C}}^{\hat{\eta}}(\hat{\mathsf{G}})$ be the simple object which is a copy of $\mathbb{C}$ in degree $\omega$. Any simple of $\Vect_{\mathbb{C}}^{\hat{\eta}}(\hat{\mathsf{G}})$ is isomorphic to one of this form. Define a Real monoidal structure $\otimes$ on $\Vect_{\mathbb{C}}^{\hat{\eta}}(\hat{\mathsf{G}})$ by
\[
(V^{(2)} \otimes V^{(1)})_{\omega} = \bigoplus_{\substack{\omega_1, \omega_2 \in \hat{\mathsf{G}} \\ \omega = \omega_2 \omega_1}} V^{(2)}_{\omega_2} \otimes_{\mathbb{C}} {^{\pi(\omega_2)}}V^{(1)}_{\omega_1},
\]
with a similar formula for morphisms. The associator component
\[
(V^{(3)}_{\omega_3} \otimes_{\mathbb{C}} {^{\pi(\omega_3)}}V^{(2)}_{\omega_2} ) \otimes_{\mathbb{C}} {^{\pi(\omega_3 \omega_2)}}V^{(1)}_{\omega_1} \rightarrow V^{(3)}_{\omega_3} \otimes_{\mathbb{C}} ({^{\pi(\omega_3)}}V^{(2)}_{\omega_2} \otimes_{\mathbb{C}} {^{\pi(\omega_3 \omega_2)}}V^{(1)}_{\omega_1})
\]
is $\hat{\eta}([\omega_3 \vert \omega_2 \vert \omega_1])$ times the canonical associator. The pentagon axiom is equivalent to the twisted cocycle condition on $\hat{\eta}$. Let $\mathbf{1} = \mathbb{C}_e$ with right and left unitors
\[
\rho_V : V \otimes \mathbb{C}_e \rightarrow V, \qquad v_{\omega} \otimes c \mapsto {^{\pi(\omega)}}c v_{\omega}
\]
and
\[
\lambda_V : \mathbb{C}_e \otimes V \rightarrow V, \qquad c \otimes v_{\omega} \mapsto c v_{\omega}.
\]
Define the dual $V^*$ of $V$ by $(V^*)_{\omega} = {^{\pi(\omega)}} V^{\vee}_{\omega^{-1}}$. The non-trivial evaluation and coevaluation maps are
\[
\widetilde{\ev} : V \otimes V^* \rightarrow \mathbb{C}_e, \qquad v_{\omega} \otimes {^{\pi(\omega)}}f_{\sigma} \mapsto \delta_{\omega, \sigma^{-1}} \hat{\eta}([\omega \vert \omega^{-1} \vert \omega]) f_{\sigma}(v_{\omega}),
\]
where $\delta_{?,?}$ is a delta function, and
\[
\coev_{\omega} : \mathbb{C}_e \rightarrow \mathbb{C}_{\omega} \otimes \mathbb{C}_{\omega}^*, \qquad 1 \mapsto \hat{\eta}([\omega \vert \omega^{-1} \vert \omega])^{-1} \coev(1), \qquad \omega \in \hat{\mathsf{G}}.
\]
Then $\Vect_{\mathbb{C}}^{\hat{\eta}}(\hat{\mathsf{G}})$ is a Real pointed fusion category. The subcategory $\Vect_{\mathbb{C}}^{\hat{\eta}}(\hat{\mathsf{G}})^{(1)} \simeq \Vect_{\mathbb{C}}^{\eta}(\mathsf{G})$ is the $\mathbb{C}$-linear pointed fusion category associated to $\mathsf{G}$ and the restricted $3$-cocycle $\eta \in Z^3(B \mathsf{G})$ as described in \cite[Example 2.3.8]{etingof2015}. 
\end{Ex}

The group $\Aut_{\mathsf{Grp}_{\slash \mathbb{Z}_2}}(\hat{\mathsf{G}})$ of $\mathbb{Z}_2$-graded group automorphisms of $\hat{\mathsf{G}}$ acts by pullback on $H^{3+\pi_{\hat{\mathsf{G}}}}(B \hat{\mathsf{G}})$. The category $\mathsf{RPFus}$ is described by the following result, which has a well-known $\mathbb{C}$-linear  analogue.

\begin{Prop}
\label{prop:RPFusGrpd}
\phantomsection
\begin{enumerate}[label=(\roman*)]
\item Any object of $\mathsf{RPFus}$ is equivalent to a Real pointed fusion category of the form $\Vect_{\mathbb{C}}^{\hat{\eta}}(\hat{\mathsf{G}})$.

\item There is a short exact sequence of groups
\[
1 \rightarrow H^{2+\pi_{\hat{\mathsf{G}}}}(B \hat{\mathsf{G}}) \rightarrow \pi_0\Aut_{\mathsf{RPFus}}(\Vect_{\mathbb{C}}^{\hat{\eta}}(\hat{\mathsf{G}})) \rightarrow \textnormal{Stab}_{\Aut_{\mathsf{Grp}_{\slash \mathbb{Z}_2}}(\hat{\mathsf{G}})}(\hat{\eta}) \rightarrow 1,
\]
where $\pi_0\Aut_{\mathsf{RPFus}}(\Vect_{\mathbb{C}}^{\hat{\eta}}(\hat{\mathsf{G}}))$ is the group of isomorphism classes of autoequivalences of $\Vect_{\mathbb{C}}^{\hat{\eta}}(\hat{\mathsf{G}}) \in \mathsf{RPFus}$.
\end{enumerate}
\end{Prop}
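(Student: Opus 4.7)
For part (i), the plan is to extract from a Real pointed fusion category $\mathcal{C}$ the pair $(\hat{\mathsf{G}}, \hat{\eta})$ by a skeletal argument. First I would define $\hat{\mathsf{G}}$ to be the group of isomorphism classes of simple (equivalently invertible) objects of $\mathcal{C}$, with multiplication induced by $\otimes$; the $\mathbb{Z}_2$-grading $\mathcal{C} = \mathcal{C}^{(1)} \oplus \mathcal{C}^{(-1)}$ yields a surjective homomorphism $\pi: \hat{\mathsf{G}} \to \mathbb{Z}_2$, strongly non-trivial by the existence of simples in $\mathcal{C}^{(-1)}$. Next choose representatives $L_\omega$ for each $\omega \in \hat{\mathsf{G}}$, together with isomorphisms $\phi_{\omega_2, \omega_1}: L_{\omega_2} \otimes L_{\omega_1} \xrightarrow{\sim} L_{\omega_2 \omega_1}$ (normalized so $L_e = \mathbf{1}$ and $\phi$ reduces to the unitors when one argument is $e$). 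The associator $\alpha_{L_{\omega_3}, L_{\omega_2}, L_{\omega_1}}$, conjugated through the $\phi$'s, becomes scalar multiplication by some $\hat{\eta}([\omega_3 \vert \omega_2 \vert \omega_1]) \in \mathbb{C}^\times$. The pentagon axiom unwinds to the twisted cocycle condition $\hat{\eta} \in Z^{3+\pi_{\hat{\mathsf{G}}}}(B\hat{\mathsf{G}})$; crucially, the $+\pi_{\hat{\mathsf{G}}}$ twist appears because when $\pi(\omega_4) = -1$, applying $L_{\omega_4} \otimes -$ to the associator in the pentagon acts $\mathbb{C}$-antilinearly on the scalar $\hat{\eta}([\omega_3 \vert \omega_2 \vert \omega_1])$. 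Finally, the explicit formulas defining $\Vect_{\mathbb{C}}^{\hat{\eta}}(\hat{\mathsf{G}})$ produce a monoidal equivalence $\mathcal{C} \simeq \Vect_{\mathbb{C}}^{\hat{\eta}}(\hat{\mathsf{G}})$ that sends $\mathbb{C}_\omega \mapsto L_\omega$.

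For part (ii), I will set up the sequence by the standard ``act on skeleton, classify fibres'' argument, adapted to the Real setting. Given a $\mathbb{C}$-linear monoidal autoequivalence $(F, J)$ of $\Vect_{\mathbb{C}}^{\hat{\eta}}(\hat{\mathsf{G}})$, the functor $F$ permutes simples and so induces a map $\phi: \hat{\mathsf{G}} \to \hat{\mathsf{G}}$; compatibility with $\otimes$ and with the grading (since $F$ preserves $\mathbb{C}$-linearity of morphisms, hence preserves $\mathcal{C}^{(\pm 1)}$) makes $\phi$ a $\mathbb{Z}_2$-graded group automorphism. The tensorator $J_{\mathbb{C}_{\omega_2}, \mathbb{C}_{\omega_1}} : F(\mathbb{C}_{\omega_2}) \otimes F(\mathbb{C}_{\omega_1}) \xrightarrow{\sim} F(\mathbb{C}_{\omega_2 \omega_1})$ is encoded by scalars $\hat{\theta}([\omega_2 \vert \omega_1]) \in \mathbb{C}^\times$, and the hexagon/compatibility with the associator forces $d\hat{\theta} = \phi^*\hat{\eta} / \hat{\eta}$ in $C^{3+\pi_{\hat{\mathsf{G}}}}(B\hat{\mathsf{G}})$. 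Hence $\phi$ stabilizes $[\hat{\eta}]$, giving the map to $\textnormal{Stab}(\hat{\eta})$; conversely, any such $\phi$ admits a choice of $\hat{\theta}$ and hence lifts, proving surjectivity.

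To identify the kernel, suppose $\phi = \id$. Then $\hat{\theta}$ is itself a twisted $2$-cocycle in $Z^{2+\pi_{\hat{\mathsf{G}}}}(B\hat{\mathsf{G}})$, and two autoequivalences $(F_1, J_1), (F_2, J_2)$ are isomorphic as monoidal functors precisely when their defining cochains $\hat{\theta}_1, \hat{\theta}_2$ differ by the coboundary of a normalized $1$-cochain encoding the natural isomorphism on simples. The twist by $\pi_{\hat{\mathsf{G}}}$ in the cocycle/coboundary conditions reflects that tensoring $\mathbb{C}$-linearly with an odd simple $\mathbb{C}_\omega$ conjugates the scalar attached to the second factor. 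Thus the kernel is identified with $H^{2+\pi_{\hat{\mathsf{G}}}}(B\hat{\mathsf{G}})$, and injectivity into $\pi_0 \Aut_{\mathsf{RPFus}}$ is automatic.

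The main obstacle will be keeping the $\pi$-twist honest: ensuring that all cocycle/coboundary conditions produced by the pentagon and by naturality of $J$ agree with the twisted differential of $C^{\bullet + \pi_{\hat{\mathsf{G}}}}(B\hat{\mathsf{G}})$, with the complex conjugations appearing in exactly the places dictated by the Real monoidal structure. Once the bookkeeping of these conjugations is handled carefully (which parallels the discussion of $\Phi_-$ versus $\Phi$ in Lemma \ref{lem:topVsAlg}), the classical pointed fusion category arguments go through essentially verbatim.
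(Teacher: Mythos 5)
Your proposal is correct and follows essentially the same route as the paper: extract $\hat{\mathsf{G}}$ from the graded group of simples, read $\hat{\eta}$ off the associator at chosen representatives (with the pentagon giving the $\pi$-twisted cocycle condition via the antilinearity of tensoring with odd objects), and for (ii) classify autoequivalences by the induced graded automorphism together with the tensorator scalars, identifying the kernel with $H^{2+\pi_{\hat{\mathsf{G}}}}(B\hat{\mathsf{G}})$ since monoidal isomorphisms shift $\hat{\theta}$ by exact cocycles. Your write-up is in fact somewhat more explicit than the paper's (e.g.\ the relation $d\hat{\theta} = \phi^*\hat{\eta}/\hat{\eta}$ and the surjectivity argument), but the underlying argument is the same.
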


\begin{proof}
Let $\mathcal{C} \in \mathsf{RPFus}$. Its group $\hat{\mathsf{G}}$ of isomorphism classes of simple objects inherits a natural $\mathbb{Z}_2$-grading from the decomposition $\mathcal{C} = \mathcal{C}^{(1)} \oplus \mathcal{C}^{(-1)}$ and compatibility of the functor $\otimes$. Choose a representative simple object for each element of $\hat{\mathsf{G}}$. Then the components of the associator at simple objects define a cocycle $\hat{\eta} \in Z^{3+\pi_{\hat{\mathsf{G}}}}(B \hat{\mathsf{G}})$. The full inclusion $\Vect_{\mathbb{C}}^{\hat{\eta}}(\hat{\mathsf{G}}) \hookrightarrow \mathcal{C}$ is then an equivalence.

Consider the second statement. After noting that any $\Phi \in \Aut_{\mathsf{RPFus}}(\Vect_{\mathbb{C}}^{\hat{\eta}}(\hat{\mathsf{G}}))$ preserves the set of simple objects with its $\mathbb{Z}_2$-grading, we find that the sequence is right exact. Suppose then that $\Phi$ is the identity on objects. The component of the monoidal data
\[
\Phi(\mathbb{C}_{\omega_2}) \otimes \Phi(\mathbb{C}_{\omega_1})
\xrightarrow[]{\sim}
\Phi(\mathbb{C}_{\omega_2} \otimes \mathbb{C}_{\omega_1})
\]
is multiplication by a complex number, say $\hat{\theta}([\omega_2 \vert \omega_1])$. Compatibility of this data with the associator is the condition $\hat{\theta} \in Z^{2+\pi_{\hat{\mathsf{G}}}}(B \hat{\mathsf{G}})$. Changing $\Phi$ within its isomorphism class changes $\hat{\theta}$ by an exact $2$-cocycle. This completes the proof.
\end{proof}

We can now prove a $2$-categorical analogue of Theorem \ref{thm:RealCentre}.

\begin{Thm}
\label{thm:realFusCat}
There is an $\mathbb{R}$-linear equivalence of categories
\[
Z_{D}(\Vect_{\mathbb{C}}^{\hat{\eta}}(\hat{\mathsf{G}})) \simeq \Vect_{\mathbb{C}}^{\uptau_{\pi}(\hat{\eta})^{-1}}(\Lambda_{\pi} B \hat{\mathsf{G}}),
\]
where the left hand side is the Drinfled centre of $\Vect_{\mathbb{C}}^{\hat{\eta}}(\hat{\mathsf{G}})$.
\end{Thm}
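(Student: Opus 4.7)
The plan is to mimic the standard proof that the Drinfeld centre of $\Vect_{\mathbb{C}}^{\eta}(\mathsf{G})$ is equivalent to $\uptau(\eta)^{-1}$-twisted vector bundles over $\Lambda B \mathsf{G}$, with each step lifted to the Real setting where complex conjugation enters through the $\mathbb{Z}_2$-grading of $\hat{\mathsf{G}}$. Throughout, Theorem \ref{thm:quotTrans} serves as the bridge between the $3$-cocycle $\hat{\eta}$ and the $2$-cocycle $\uptau_{\pi}(\hat{\eta})$ that must govern the centre.

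First I would unravel an object of $Z_D(\Vect_{\mathbb{C}}^{\hat{\eta}}(\hat{\mathsf{G}}))$ as a pair $(X, \gamma)$ consisting of a $\hat{\mathsf{G}}$-graded complex vector space $X = \bigoplus_{\omega \in \hat{\mathsf{G}}} X_{\omega}$ together with a half-braiding $\gamma_Y : X \otimes Y \xrightarrow{\sim} Y \otimes X$, natural in $Y$ and satisfying the hexagon axiom. Because the simples $\mathbb{C}_{\sigma}$ generate the category, $\gamma$ is determined by the collection $\{\gamma_{\sigma}\}_{\sigma \in \hat{\mathsf{G}}}$; because tensoring with $\mathbb{C}_{\sigma}$ of degree $\pi(\sigma) = -1$ applies complex conjugation to $X$ inside the Real monoidal structure, each $\gamma_{\sigma}$ is forced to be $\pi(\sigma)$-linear. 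The grading then decomposes $\gamma_{\sigma}$ into maps $X_{\omega} \otimes \mathbb{C}_{\sigma} \to \mathbb{C}_{\sigma} \otimes X_{\sigma \omega \sigma^{-1}}$, each given by a scalar $c_{\sigma, \omega}$ relative to the canonical generators.

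The central computation is the hexagon axiom applied to $\mathbb{C}_{\sigma_2} \otimes \mathbb{C}_{\sigma_1}$: the two factorizations of $\gamma_{\sigma_2 \sigma_1}$ pick up associator corrections of the form $\hat{\eta}([\sigma_2 \vert \sigma_1 \vert \omega])$, $\hat{\eta}([\sigma_2 \vert \sigma_1 \omega \sigma_1^{-1} \vert \sigma_1])$ and $\hat{\eta}([\sigma_2 \sigma_1 \omega \sigma_1^{-1} \sigma_2^{-1} \vert \sigma_2 \vert \sigma_1])$, and their alternating product is precisely $\uptau_{\pi}(\hat{\eta})([\sigma_2 \vert \sigma_1]\omega)$ by the explicit formula of Theorem \ref{thm:quotTrans}. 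This identifies the family $\{c_{\sigma, \omega}\}$ with a $\uptau_{\pi}(\hat{\eta})^{-1}$-twisted vector bundle over $\Lambda_{\pi} B \hat{\mathsf{G}}$ in the sense of Section \ref{sec:vbJandl}, with the $\pi(\sigma)$-linearity of $\gamma_{\sigma}$ matching the Real structure verbatim. A separate naturality check using $\sigma = g \in \mathsf{G}$, combined with Schur-type rigidity for simples of $\Vect_{\mathbb{C}}^{\hat{\eta}}(\hat{\mathsf{G}})$, forces $X_{\omega} = 0$ for $\omega \notin \mathsf{G}$, recovering exactly the object set of $\Lambda_{\pi} B \hat{\mathsf{G}} \simeq \mathsf{G} \git \hat{\mathsf{G}}$.

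In the reverse direction I start from $F \in \Vect_{\mathbb{C}}^{\uptau_{\pi}(\hat{\eta})^{-1}}(\Lambda_{\pi} B \hat{\mathsf{G}})$, form $X = \bigoplus_{g \in \mathsf{G}} F(g)$, and define the half-braidings $\gamma_{\sigma}$ on simples by the values $F(\sigma)$, extended by the standard associator-rescaling trick used in the proof of Proposition \ref{prop:grpdDecompCat} (cf.\ equation \eqref{eq:vbExtension}). The two functors are quasi-inverse by construction, and the key check that the tensor product of half-braidings matches the Real convolution product on $\Vect_{\mathbb{C}}^{\uptau_{\pi}(\hat{\eta})^{-1}}(\Lambda_{\pi} B \hat{\mathsf{G}})$ induced by the multiplication $\mathsf{G} \times \mathsf{G} \to \mathsf{G}$ (which is $\hat{\mathsf{G}}$-equivariant under conjugation) reduces to a further instance of the same cocycle identity. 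The resulting equivalence is then manifestly $\mathbb{R}$-linear and Real monoidal.

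The main obstacle is the hexagon computation: identifying the $2$-cocycle it produces as $\uptau_{\pi}(\hat{\eta})^{-1}$ rather than some superficially similar but inequivalent $2$-cocycle on $\Lambda_{\pi} B \hat{\mathsf{G}}$. Because of the conjugation twists in the Real monoidal structure of $\Vect_{\mathbb{C}}^{\hat{\eta}}(\hat{\mathsf{G}})$, the three associator factors must be evaluated with the correct $\pi$-twisted linearity, and the same care is needed in reconciling left- versus right-action conventions between the categorical and groupoid sides. The explicit formula of Theorem \ref{thm:quotTrans} is precisely calibrated for this bookkeeping, which is why an \emph{a priori} construction of $\uptau_{\pi}$ in all degrees (rather than a low-degree ansatz) is essential here.
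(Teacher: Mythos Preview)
Your proposal is correct and follows essentially the same route as the paper: unpack an object of the Drinfeld centre as a graded vector space with half-braiding, use a naturality argument at scalar endomorphisms of a degree-$1$ simple to force the underlying object to be supported on $\mathsf{G}$, and then identify the hexagon axiom with the $\uptau_{\pi}(\hat{\eta})^{-1}$-twisted cocycle condition via Theorem~\ref{thm:quotTrans}. The paper carries out the support step with the unit object $\mathbb{C}_e$ specifically (where the contradiction $\lambda=\bar{\lambda}$ is most transparent), is quite terse about the hexagon computation and the reverse functor, and does not claim monoidality in the theorem itself---so your write-up is in fact more detailed than the paper's own proof.
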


\begin{proof}
Let $(V, \beta) \in Z_{D}(\Vect_{\mathbb{C}}^{\hat{\eta}}(\hat{\mathsf{G}}))$, so that $V \in \Vect_{\mathbb{C}}^{\hat{\eta}}(\hat{\mathsf{G}})$ and $\beta: - \otimes V \Rightarrow V \otimes -$ is a natural isomorphism which satisfies a hexagon axiom. The component of $\beta$ at $\mathbb{C}_{\omega}$ is the data of $\mathbb{C}$-linear isomorphisms
\[
\beta_{\delta, \omega}: \mathbb{C}_{\omega} \otimes_{\mathbb{C}} {^{\pi(\omega)}}V_{\delta} \xrightarrow[]{\sim} V_{\omega \delta \omega^{-1}}  \otimes_{\mathbb{C}} {^{\pi(\delta)}} \mathbb{C}_{\omega}, \qquad \delta \in \hat{\mathsf{G}}.
\]
If $V_{\delta}$ is non-zero for some $\delta \in \hat{\mathsf{G}} \setminus \mathsf{G}$, then $\beta_{\delta,e}(c \otimes v_{\delta}) = v_{\delta} \otimes \overline{c}$. Consider the morphism $m_{\lambda}: \mathbb{C}_e \rightarrow \mathbb{C}_e$ given by multiplication by $\lambda \in \mathbb{C}$. Naturality of $\beta$ in $\mathbb{C}_e$ requires commutativity of the diagram
\[
\begin{tikzpicture}[baseline= (a).base]
\node[scale=1] (a) at (0,0){
\begin{tikzcd}[column sep=5em, row sep=2.0em]
\mathbb{C}_e \otimes_{\mathbb{C}} V_{\delta} \arrow{r}{\beta_{\delta,e}} \arrow{d}[left]{m_{\lambda} \otimes \id_{V_{\delta}}} & V_{\delta} \otimes_{\mathbb{C}} \overline{\mathbb{C}_e} \arrow{d}{\id_{V_{\delta}} \otimes \overline{m_{\lambda}}} \\
\mathbb{C}_e \otimes_{\mathbb{C}} V_{\delta} \arrow{r}[below]{\beta_{\delta,e}} &  V_{\delta} \otimes_{\mathbb{C}}\overline{\mathbb{C}_e},
\end{tikzcd}
};
\end{tikzpicture}
\]
which is the case if and only if $\lambda \in \mathbb{R}$. It follows that $V$ is supported on $\mathsf{G}$. The hexagon axiom for $(V, \beta)$ implies that the remaining structure maps $\beta_{g, \omega}$, $(g, \omega) \in \mathsf{G} \times \hat{\mathsf{G}}$, give $V$ the structure of a $\uptau_{\pi}(\hat{\eta})^{-1}$-twisted vector bundle over $\Lambda_{\pi} B \hat{\mathsf{G}}$.
\end{proof}

Since it is a Drinfeld double, $Z_{D}(\Vect_{\mathbb{C}}^{\hat{\eta}}(\hat{\mathsf{G}}))$ has canonical $\mathbb{R}$-linear monoidal structure.\footnote{It also has a braiding, but we will not discuss this.} Under the equivalence $\Vect_{\mathbb{C}}^{\uptau_{\pi}(\hat{\eta})} (\Lambda_{\pi} B \hat{\mathsf{G}}) \simeq \mathbb{D}^{\hat{\eta}}(\hat{\mathsf{G}}) \ModR$, this monoidal structure is induced by a quasi-associative coproduct $\Delta: \mathbb{D}^{\hat{\eta}}(\hat{\mathsf{G}}) \rightarrow \mathbb{D}^{\hat{\eta}}(\hat{\mathsf{G}}) \otimes_{\mathbb{C}} \mathbb{D}^{\hat{\eta}}(\hat{\mathsf{G}})$. To describe this, first recall that the subgroup $\textnormal{Inn}(\hat{\mathsf{G}}) \leq \Aut_{\Grp_{\slash \mathbb{Z}_2}}(\hat{\mathsf{G}})$ of inner automorphisms acts trivially on $H^{3+ \pi}(\hat{\mathsf{G}})$. More precisely, for $g_i \in \mathsf{G}$ and $\omega \in \hat{\mathsf{G}}$, we have
\begin{equation}
\label{eq:3CocycConjInv}
\frac{\hat{\eta}([\omega g_3 \omega^{-1} \vert \omega g_3 \omega^{-1} \vert \omega g_3 \omega^{-1} ])}{\hat{\eta}([g_3 \vert g_2 \vert g_1])^{\pi(\omega)}}
=
(dc_{\omega})([g_3 \vert g_2 \vert g_1])
\end{equation}
where $c_{\omega} \in C^2(B \mathsf{G})$ is given by (see \cite[Proposition 8.1]{kim2015})
\[
c_{\omega}([g_2 \vert g_1]) = \frac{\hat{\eta}([\omega g_2 \omega^{-1} \vert \omega \vert g_1])}{\hat{\eta}([\omega \vert g_2 \vert g_1]) \hat{\eta}([\omega g_2 \omega^{-1} \vert \omega g_1 \omega^{-1} \vert \omega])}.
\]
The, for $l_{g \xrightarrow[]{\omega}} \in \mathbb{D}^{\hat{\eta}}(\hat{\mathsf{G}})$ the basis vector corresponding to the morphism $\omega: g \rightarrow \omega g \omega^{-1}$ in $\Lambda_{\pi} B\hat{\mathsf{G}}$, define
\[
\Delta(l_{g \xrightarrow[]{\omega}})
=
\sum_{\substack{ g_1,g_2 \in \mathsf{G} \\ g_2 g_1 = g}} c_{\omega}([g_2 \vert g_1]) l_{g_2 \xrightarrow[]{\omega}} \otimes  l_{g_1 \xrightarrow[]{\omega}}
\]
with associator
\[
\Phi = \sum_{g_1,g_2,g_3 \in \mathsf{G}} \hat{\eta}([g_3 \vert g_2 \vert g_1])l_{g_3 \xrightarrow[]{e}} \otimes l_{g_2 \xrightarrow[]{e}} \otimes l_{g_1 \xrightarrow[]{e}}.
\]
Formally, these are the same definitions as those for the quasi-bialgebra $D^{\eta}(\mathsf{G})$ \cite{dijkgraaf1992}. The defining equation \eqref{eq:3CocycConjInv} implies that $\Phi$ is an associator while the equation (cf. \cite[Corollary 8.3]{kim2015}, which uses slightly different conventions)
\[
\frac{\uptau_{\pi}(\hat{\eta})([\omega_2 \vert \omega_1] g_2)
\uptau_{\pi}(\hat{\eta})([\omega_2 \vert \omega_1] g_1)} {\uptau_{\pi} (\hat{\eta}) ([\omega_2 \vert \omega_1] g_2 g_1)}
=
\frac{c_{\omega_2 \omega_1}([g_2 \vert g_1])}{c_{\omega_1}([g_2 \vert g_1])^{\pi(\omega_2)} \cdot c_{\omega_2}([\omega_1 g_2 \omega_1^{-1} \vert \omega_1 g_1 \omega_1^{-1}])}
\]
implies that $\Delta$ is a morphism of Real algebras. We say that $\mathbb{D}^{\hat{\eta}}(\hat{\mathsf{G}})$ is a Real quasi-bialgebra. Finally, note that $D^{\eta}(\mathsf{G})$ is a complex sub-quasi-bialgebra of $D^{\hat{\eta}}(\hat{\mathsf{G}})_{\textnormal{R}}$.

\subsection{Connection with Real \texorpdfstring{$2$}{}-representation theory}
\label{sec:Real2CharThy}

We relate the categorified group algebra $\Vect_{\mathbb{C}}^{\hat{\eta}}(\hat{\mathsf{G}})$ and the Real $2$-representation theory of $\mathsf{G}$, that is, the $2$-categorical generalization of Sections \ref{sec:vbJandl} and \ref{sec:realChar}. The latter theory is developed in detail in \cite{mbyoung2018c}, so we limit ourselves to a few points.

Let $\Vect_{\mathbb{C}}^{\hat{\eta}}(\hat{\mathsf{G}}) \ModR$ be the bicategory whose objects are left Real $\Vect_{\mathbb{C}}^{\hat{\eta}}(\hat{\mathsf{G}})$-module categories, that is, pairs $(F, \mathcal{M})$ consisting of a finite semisimple $\mathbb{C}$-linear category $\mathcal{M}$ and a $\mathbb{C}$-linear monoidal functor $F : \Vect_{\mathbb{C}}^{\hat{\eta}} (\hat{\mathsf{G}}) \rightarrow \End_{\mathbb{R}}(\mathcal{M})$ such that the functor
\[
F(\mathbb{C}_{\omega}) : {^{\pi(\omega)}}\mathcal{M} \rightarrow \mathcal{M},
\qquad
\omega \in \hat{\mathsf{G}}
\]
is $\mathbb{C}$-linear. The $1$-morphisms of $\Vect_{\mathbb{C}}^{\hat{\eta}}(\hat{\mathsf{G}}) \ModR$ are $\mathbb{C}$-linear functors with intertwining data for the $\Vect_{\mathbb{C}}^{\hat{\eta}}(\hat{\mathsf{G}})$-actions while $2$-morphisms are their compatible $\mathbb{C}$-linear natural transformations. With these definitions, there is an $\mathbb{R}$-linear biequivalence
\begin{equation}
\label{eq:2ModInterp}
\Vect_{\mathbb{C}}^{\hat{\eta}}(\hat{\mathsf{G}}) \ModR
\simeq
2 \Vect_{\mathbb{C}}^{\hat{\eta}}(B\hat{\mathsf{G}}),
\end{equation}
where the right hand side is the bicategory of $\hat{\eta}$-twisted Real $2$-representations of $\mathsf{G}$ on Kapranov--Voevodsky $2$-vector spaces, as defined in \cite[\S 5.4]{mbyoung2018c}. This is the $2$-categorical analogue of Proposition \ref{prop:modInterp}. A central result of \cite{mbyoung2018c} is a categorified character theory for $2 \Vect_{\mathbb{C}}^{\hat{\eta}}(B\hat{\mathsf{G}})$, which we summarize as follows.

\begin{Thm}[{\cite{mbyoung2018c}}]
\label{thm:2CharThy}
A Real $2$-representation $\rho \in 2 \Vect_{\mathbb{C}}^{\hat{\eta}}(B\hat{\mathsf{G}})$ has
\begin{enumerate}[label=(\roman*)]
\item a Real categorical character $\Tr_{\rho} \in \Vect_{\mathbb{C}}^{\uptau_{\pi}^{\refl}(\hat{\eta})}(\Lambda_{\pi}^{\refl} B \hat{\mathsf{G}})$, and

\item a Real $2$-character $\chi_{\rho} \in \Gamma_{\Lambda \Lambda_{\pi}^{\refl} B \hat{\mathsf{G}}}(\uptau \uptau_{\pi}^{\refl}(\hat{\eta})_{\mathbb{C}})$.
\end{enumerate}
\end{Thm}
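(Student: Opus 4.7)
The plan is to categorify the Real character theory of Section~\ref{sec:realChar}. Using the biequivalence \eqref{eq:2ModInterp}, I regard $\rho$ as a left Real $\Vect_{\mathbb{C}}^{\hat{\eta}}(\hat{\mathsf{G}})$-module category $(F, \mathcal{M})$, with $F(\mathbb{C}_{\omega})$ a $\pi(\omega)$-linear endofunctor of $\mathcal{M}$. Then part (i) is the categorification of the assignment of a $\hat{\theta}$-twisted vector bundle on $\hat{\mathcal{G}}$ to its Real character in Proposition~\ref{prop:twistedChar}, and part (ii) follows from (i) by one further application of Willerton's (ordinary) character map.

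For (i), define the fibre of $\Tr_{\rho}$ over a loop $(x,\gamma)\in\Lambda_{\pi}^{\refl}B\hat{\mathsf{G}}$ (so $\gamma \in \mathsf{G}$) to be the $\mathbb{C}$-vector space of $2$-morphisms $\id_{\mathcal{M}} \Rightarrow F(\mathbb{C}_{\gamma})$. For a morphism $\omega : \gamma_1 \to \gamma_2$ in $\Lambda_{\pi}^{\refl} B\hat{\mathsf{G}}$, so that $\gamma_2 = \omega \gamma_1^{\pi(\omega)} \omega^{-1}$, use the intertwiner
\[
F(\mathbb{C}_{\omega})\, F(\mathbb{C}_{\gamma_1^{\pi(\omega)}})\, F(\mathbb{C}_{\omega})^{-1} \cong F(\mathbb{C}_{\gamma_2})
\]
to transport $2$-morphisms from $\gamma_1$ to $\gamma_2$; when $\pi(\omega) = -1$ invoke the evaluation and coevaluation of $\Vect_{\mathbb{C}}^{\hat{\eta}}(\hat{\mathsf{G}})$ to convert $F(\mathbb{C}_{\gamma^{-1}})$ into $F(\mathbb{C}_{\gamma})^{-1}$. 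This produces $\pi(\omega)$-linear structure maps. Composing two such maps and reorganizing via the associator of $\Vect_{\mathbb{C}}^{\hat{\eta}}(\hat{\mathsf{G}})$ yields scalar factors built from $\hat{\eta}$; by direct comparison with the formula \eqref{eq:oriTwistTransForm} of Theorem~\ref{thm:oriTwistTrans}, the resulting $2$-cocycle on $\Lambda_{\pi}^{\refl} B\hat{\mathsf{G}}$ is precisely $\uptau_{\pi}^{\refl}(\hat{\eta})$. Hence $\Tr_{\rho}$ is a $\uptau_{\pi}^{\refl}(\hat{\eta})$-twisted vector bundle.

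For (ii), apply Willerton's character map \cite[Theorem 11]{willerton2008} to $\Tr_{\rho}$, viewed as an object of $\Vect_{\mathbb{C}}^{\uptau_{\pi}^{\refl}(\hat{\eta})}(\Lambda_{\pi}^{\refl} B\hat{\mathsf{G}})$, to obtain the flat section
\[
\chi_{\rho}([\omega]\gamma) = \tr_{\Tr_{\rho}(\gamma)} \Tr_{\rho}(\omega) \in \Gamma_{\Lambda \Lambda_{\pi}^{\refl} B \hat{\mathsf{G}}}\bigl(\uptau\, \uptau_{\pi}^{\refl}(\hat{\eta})_{\mathbb{C}}\bigr).
\]

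The principal obstacle is the identification of the twist in step (i). In the orientation-preserving case $\pi(\omega) = +1$ the scalar bookkeeping reduces to the computation of Willerton's $2$-character \cite[\S 3.4]{willerton2008}, whose twist is $\uptau(\eta)$. The novelty, and technical heart of \cite{mbyoung2018c}, is the orientation-reversing case $\pi(\omega) = -1$: here the $\ev$ and $\coev$ contributions produce exactly the correction factor $\hat{\eta}([\gamma^{-1}\vert\gamma])^{-\Delta_\omega}$ distinguishing $\uptau_{\pi}^{\refl}(\hat{\eta})$ from $\uptau(\eta)$. Matching the two computations degree by degree would be unwieldy without an \emph{a priori} cochain-level description of $\uptau_{\pi}^{\refl}$, which is precisely what Theorem~\ref{thm:twistTransIntro} supplies.
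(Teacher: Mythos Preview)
The paper does not give a proof of this theorem: it is stated as a summary result cited from \cite{mbyoung2018c}, with no argument beyond the surrounding discussion. There is therefore no ``paper's own proof'' to compare your proposal against.

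That said, your sketch is consistent with the commentary the paper does provide. The paper remarks that an important technical point in \cite{mbyoung2018c} is the closedness of $\uptau_{\pi}^{\refl}(\hat{\eta})$, and that the cochain-level construction of Section~\ref{sec:twistedTransGrpdFin} supplies an efficient verification; your final paragraph makes exactly this point. Your outline---define $\Tr_{\rho}(\gamma)$ as a space of $2$-morphisms, transport along $\omega$ using the module structure and rigidity data, identify the resulting twist via Theorem~\ref{thm:oriTwistTrans}, then take ordinary characters for part~(ii)---is the natural categorification of Proposition~\ref{prop:twistedChar} and matches the general shape described in \cite{mbyoung2018c}. One minor point: the fibre of the categorical character is more commonly taken to be the categorical trace $\Hom(\id_{\mathcal{M}}, F(\mathbb{C}_{\gamma}))$ in the sense of natural transformations (equivalently, the $2$-morphisms you name), so your definition is fine, but the detailed verification that the associator and (co)evaluation scalars assemble to $\uptau_{\pi}^{\refl}(\hat{\eta})$ is the genuine work and is only asserted here, not carried out.
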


In particular, the twisted transgression map $\uptau_{\pi}^{\refl}$ plays a central role in the character theory of $2 \Vect_{\mathbb{C}}^{\hat{\eta}}(B\hat{\mathsf{G}})$. An important technical point in \cite{mbyoung2018c} is that the $2$-cochain $\uptau_{\pi}^{\refl}(\hat{\eta})$ is in fact closed. It is possible to verify this directly, but this is a rather unpleasant task. The approach of Section \ref{sec:twistedTransGrpdFin} gives a much more efficient verification.

It can be shown that there is an $\mathbb{R}$-linear monoidal equivalence
\[
Z(2 \Vect_{\mathbb{C}}^{\hat{\eta}}(B\hat{\mathsf{G}}))
\simeq
Z_D(\Vect_{\mathbb{C}}^{\hat{\eta}}(\hat{\mathsf{G}})),
\]
where the right hand size is the monoidal category of pseudonatural transformations of the identity pseudofunctor of $2 \Vect_{\mathbb{C}}^{\hat{\eta}}(B\hat{\mathsf{G}})$. In view of \eqref{eq:2ModInterp}, this equivalence can be proved as a modification of the proof of \cite[Corollary 5.3]{bernaschini2019}. We omit the details. However, let us mention that this equivalence allows to restate Theorem \ref{thm:realFusCat} in a way which is more obviously a $2$-categorical analogue of Theorem \ref{thm:RealCentre}.

\subsection{Discrete torsion in string and M-theory with orientifolds}
\label{sec:discTor}

We show that $\uptau_{\pi}^{\refl}$ encodes the one-loop discrete torsion in unoriented string and M-theory. For analogous results in the oriented setting, see \cite{lupercio2006}.

Consider first discrete torsion in orientifold string theory (or unoriented conformal field theory). A twisted $2$-cocycle $\hat{\theta} \in Z^{2 + \pi_{\hat{\mathsf{G}}}}(B \hat{\mathsf{G}})$ can be seen as an orientifold discrete torsion, in that it is an orientifold compatible $B$-field for orientifold string theory defined on a global quotient by $\hat{\mathsf{G}}$ \cite{distler2011}, \cite{sharpe2011}. The iterated transgression $\uptau \uptau_{\pi}^{\refl}(\hat{\theta})$ is a locally constant function on the groupoid $\Lambda \Lambda_{\pi}^{\refl} B \hat{\mathsf{G}}$. Its value on $(g, \omega) \in \hat{\mathsf{G}}^{\langle 2 \rangle} =\Obj (\Lambda \Lambda_{\pi}^{\refl} B \hat{\mathsf{G}}) = \mathsf{G}^{\langle 2 \rangle}$ is
\[
\hat{\theta}([g^{-1} \vert g])^{- \Delta_{\omega}} \frac{\hat{\theta}([g \vert \omega ])}{\hat{\theta}([\omega \vert g^{\pi(\omega)} ])},
\]
which is precisely the discrete torsion phase factor of $\mathbb{T}^2$ if $\pi(\omega)=1$ and $\mathbb{K}$ if $\pi(\omega) =-1$; see \cite[Eqn. 16]{bantay2003}, \cite[\S 5.1]{sharpe2011}.

Turning to M-theory, we regard a cocycle $\hat{\eta} \in Z^{3 + \pi_{\hat{\mathsf{G}}}}(B \hat{\mathsf{G}})$ as a $M$-theory analogue of discrete torsion, determining a universal orientifold compatible $C$-field \cite{sharpe2011}. Then $\uptau^2 \uptau_{\pi}^{\refl}(\hat{\eta})$ is a locally constant function  on $\Lambda^2 \Lambda_{\pi}^{\refl} B\hat{\mathsf{G}}$. There is a natural bijection
\[
\Obj(\Lambda^2 \Lambda_{\pi}^{\refl} B \hat{\mathsf{G}})
=
\hat{\mathsf{G}}^{\langle 3 \rangle} :=
\{
(g, \omega_1, \omega_2) \in \mathsf{G} \times \hat{\mathsf{G}}^2 \mid (g, \omega_i) \in \hat{\mathsf{G}}^{\langle 2 \rangle}, \; (\omega_1, \omega_2) \in \hat{\mathsf{G}}^{(2)}
\}.
\]
The set $\hat{\mathsf{G}}^{\langle 3 \rangle}$ decomposes according to the degrees of a triple $(g, \omega_1, \omega_2) \in \hat{\mathsf{G}}^{\langle 3 \rangle}$, which we interpret as determining a closed unoriented $3$-manifold $M$ whose orientation double cover is either $\mathbb{T}^3 \sqcup \mathbb{T}^3$ or $\mathbb{T}^3$. 
When $\pi(\omega_1)=\pi(\omega_2)=1$ we have $M= \mathbb{T}^3$ and, from the explicit expression for $\uptau_{\pi}^{\refl}$ from Section \ref{sec:twistedTransGrpdFin}, the value of $\uptau^2 \uptau_{\pi}^{\refl}(\hat{\eta})$ on $(g,\omega_1,\omega_2) \in \hat{\mathsf{G}}^{\langle 3 \rangle}$ is
\begin{equation}
\label{eq:cubeDiscTor}
\frac{\hat{\eta}([\omega_2 \vert  g \vert \omega_1]) \hat{\eta}([\omega_1 \vert \omega_2 \vert g]) \hat{\eta}([ g  \vert \omega_2 \vert \omega_1])}{\hat{\eta}([\omega_2 \vert \omega_1 \vert g]) \hat{\eta}([g \vert \omega_2 \vert \omega_1]) \hat{\eta}([\omega_1 \vert g \vert \omega_2])}.
\end{equation}
The remaining three cases, in which at least one $\omega_i$ is of degree $-1$, correspond to $M = \mathbb{K} \times S^1$. When $\pi(\omega_1)=1 = -\pi(\omega_2)$, the value of $\uptau^2 \uptau_{\pi}^{\refl}(\hat{\eta})$ is
\begin{equation}
\label{eq:unoriCubeDiscTor}
\frac{\hat{\eta}([g^{-1} \vert g \vert \omega_1]) \hat{\eta}([\omega_1 \vert g^{-1} \vert g])}{\hat{\eta}([g^{-1} \vert \omega_1 \vert g])}
\frac{\hat{\eta}([\omega_1 \vert \omega_2 \vert g^{-1}]) \hat{\eta}([g \vert \omega_1 \vert \omega_2]) \hat{\eta}([\omega_2 \vert g^{-1} \vert \omega_1])}{\hat{\eta}([\omega_1 \vert g \vert \omega_2])\hat{\eta}([\omega_2 \vert \omega_1 \vert g^{-1}]) \hat{\eta}([g \vert \omega_2 \vert \omega_1])}.
\end{equation}
When $\pi(\omega_1)=-1 =-\pi(\omega_2)$, the formula is similar. Finally, when $\pi(\omega_1)=\pi(\omega_2)=-1$ the value of $\uptau^2 \uptau_{\pi}^{\refl}(\hat{\eta})$ is
\[
\frac{\hat{\eta}([g^{-1} \vert \omega_2 \vert g^{-1}]) \hat{\eta}([g^{-1} \vert g \vert \omega_1]) \hat{\eta}([\omega_1 \vert g \vert g^{-1}])}{\hat{\eta}([g^{-1} \vert g \vert \omega_2]) \hat{\eta}([\omega_2 \vert g \vert g^{-1}]) \hat{\eta}([g^{-1} \vert \omega_1 \vert g^{-1}])} 
\frac{\hat{\eta}([\omega_1 \vert \omega_2 \vert g]) \hat{\eta}([ g \vert \omega_1 \vert \omega_2]) \hat{\eta}([\omega_2 \vert g^{-1} \vert \omega_1])}{\hat{\eta}([\omega_1 \vert g^{-1} \vert \omega_2]) \hat{\eta}([\omega_2 \vert \omega_1 \vert g]) \hat{\eta}([ g \vert \omega_2 \vert \omega_1])}.
\]
In this final case the triple $(g, \omega_1 \omega_2^{-1}, \omega_2)$ is of the previous type and, after some calculation, the expression is of the form \eqref{eq:unoriCubeDiscTor}. The phases \eqref{eq:cubeDiscTor} and \eqref{eq:unoriCubeDiscTor} appear in the work of Sharpe \cite[\S 6.2]{sharpe2011} as $C$-field discrete torsion phase factors in orientifold M-theory on the manifolds $\mathbb{T}^3$ and $\mathbb{K} \times S^1$, respectively. In Sharpe's notation, we have $g_1 = \omega_2$, $g_2 = g$ and $g_3 = \omega_1$. Explicitly, the equality of a summand of \eqref{eq:unoriCubeDiscTor} with Sharpe's phase factor follows from the identity
\[
\frac{\hat{\eta}([g^{-1} \vert g \vert \omega_1]) \hat{\eta}([\omega_1 \vert g^{-1} \vert g])}{\hat{\eta}([g^{-1} \vert \omega_1 \vert g])} = \frac{\hat{\eta}([g \vert g^{-1} \vert \omega_1]) \hat{\eta}([\omega_1 \vert g \vert g^{-1}])}{\hat{\eta}([g \vert \omega_1 \vert g^{-1}])}, \qquad g, \omega_1 \in \mathsf{G}.
\]


\footnotesize


\bibliographystyle{plain}
\bibliography{mybib}

\def\cprime{$'$} \def\cprime{$'$}
\begin{thebibliography}{10}

\bibitem{atiyah1966}
M.~Atiyah.
\newblock {$K$}-theory and reality.
\newblock {\em Quart. J. Math. Oxford Ser. (2)}, 17:367--386, 1966.

\bibitem{atiyah1969}
M.~Atiyah and G.~Segal.
\newblock Equivariant {$K$}-theory and completion.
\newblock {\em J. Differential Geometry}, 3:1--18, 1969.

\bibitem{bantay2003}
P.~Bantay.
\newblock Symmetric products, permutation orbifolds and discrete torsion.
\newblock {\em Lett. Math. Phys.}, 63(3):209--218, 2003.

\bibitem{barkeshli2019}
M.~Barkeshli, P.~Bonderson, M.~Cheng, C.-M. Jian, and K.~Walker.
\newblock Reflection and time reversal symmetry enriched topological phases of
  matter: path integrals, non-orientable manifolds, and anomalies.
\newblock {\em Comm. Math. Phys.}, 2019.

\bibitem{bernaschini2019}
E.~Bernaschini, C.~Galindo, and M.~Mombelli.
\newblock Group actions on 2-categories.
\newblock {\em Manuscripta Math.}, 159(1-2):81--115, 2019.

\bibitem{brocker1995}
T.~Br\"ocker and T.~tom Dieck.
\newblock {\em Representations of compact {L}ie groups}, volume~98 of {\em
  Graduate Texts in Mathematics}.
\newblock Springer-Verlag, New York, 1995.

\bibitem{brylinski1993}
J.-L. Brylinski.
\newblock {\em Loop spaces, characteristic classes and geometric quantization},
  volume 107 of {\em Progress in Mathematics}.
\newblock Birkh\"{a}user Boston, Inc., Boston, MA, 1993.

\bibitem{brylinski1994}
J.-L. Brylinski and D.~McLaughlin.
\newblock The geometry of degree-four characteristic classes and of line
  bundles on loop spaces. {I}.
\newblock {\em Duke Math. J.}, 75(3):603--638, 1994.

\bibitem{dijkgraaf1992}
R.~Dijkgraaf, V.~Pasquier, and P.~Roche.
\newblock Quasi-{H}opf algebras, group cohomology and orbifold models.
\newblock In {\em Integrable systems and quantum groups ({P}avia, 1990)}, pages
  75--98. World Sci. Publ., River Edge, NJ, 1992.

\bibitem{distler2011}
J.~Distler, D.~Freed, and G.~Moore.
\newblock Orientifold pr\'ecis.
\newblock In {\em Mathematical foundations of quantum field theory and
  perturbative string theory}, volume~83 of {\em Proc. Sympos. Pure Math.},
  pages 159--172. Amer. Math. Soc., Providence, RI, 2011.

\bibitem{etingof2015}
P.~Etingof, S.~Gelaki, D.~Nikshych, and V.~Ostrik.
\newblock {\em Tensor categories}, volume 205 of {\em Mathematical Surveys and
  Monographs}.
\newblock American Mathematical Society, Providence, RI, 2015.

\bibitem{fok2014}
C.-K. Fok.
\newblock The real {$K$}-theory of compact {L}ie groups.
\newblock {\em SIGMA Symmetry Integrability Geom. Methods Appl.}, 10:Paper 022,
  26, 2014.

\bibitem{fok2018}
C.-K. Fok.
\newblock Equivariant twisted {R}eal {$K$}-theory of compact {L}ie groups.
\newblock {\em J. Geom. Phys.}, 124:325--349, 2018.

\bibitem{freed1994}
D.~Freed.
\newblock Higher algebraic structures and quantization.
\newblock {\em Comm. Math. Phys.}, 159(2):343--398, 1994.

\bibitem{freed2011}
D.~Freed, M.~Hopkins, and C.~Teleman.
\newblock Loop groups and twisted {$K$}-theory {I}.
\newblock {\em J. Topol.}, 4(4):737--798, 2011.

\bibitem{freed2013b}
D.~Freed and G.~Moore.
\newblock Twisted equivariant matter.
\newblock {\em Ann. Henri Poincar\'e}, 14(8):1927--2023, 2013.

\bibitem{fuchs2014}
J.~Fuchs, C.~Schweigert, and A.~Valentino.
\newblock A geometric approach to boundaries and surface defects in
  {D}ijkgraaf-{W}itten theories.
\newblock {\em Comm. Math. Phys.}, 332(3):981--1015, 2014.

\bibitem{gawedzki2011}
K.~Gaw\c{e}dzki, R.~Suszek, and K.~Waldorf.
\newblock Bundle gerbes for orientifold sigma models.
\newblock {\em Adv. Theor. Math. Phys.}, 15(3):621--687, 2011.

\bibitem{hopkins2000}
M.~Hopkins, N.~Kuhn, and D.~Ravenel.
\newblock Generalized group characters and complex oriented cohomology
  theories.
\newblock {\em J. Amer. Math. Soc.}, 13(3):553--594, 2000.

\bibitem{kapustin2017}
A.~Kapustin and A.~Turzillo.
\newblock Equivariant topological quantum field theory and symmetry protected
  topological phases.
\newblock {\em J. High Energy Phys.}, (3), 2017.

\bibitem{karoubi1970}
M.~Karoubi.
\newblock Sur la {$K$}-th\'eorie \'equivariante.
\newblock In {\em S\'eminaire {H}eidelberg-{S}aarbr\"ucken-{S}trasbourg sur la
  {K}-th\'eorie (1967/68)}, Lecture Notes in Mathematics, Vol. 136, pages
  187--253. Springer, Berlin, 1970.

\bibitem{karpilovsky1985}
G.~Karpilovsky.
\newblock {\em Projective representations of finite groups}, volume~94 of {\em
  Monographs and Textbooks in Pure and Applied Mathematics}.
\newblock Marcel Dekker, Inc., New York, 1985.

\bibitem{kim2015}
M.~Kim.
\newblock Arithmetic {C}hern--{S}imons theory {I}.
\newblock With an appendix by {B}. {N}oohi. ar{X}iv:1510.05818, 2015.

\bibitem{kong2019}
L.~Kong, Y.~Tian, and S.~Zhou.
\newblock The center of monoidal bicategories in $3+1${D} {D}ijkgraaf--{W}itten
  theory.
\newblock ar{X}iv:1905.04644, 2019.

\bibitem{lupercio2002}
E.~Lupercio and B.~Uribe.
\newblock Loop groupoids, gerbes, and twisted sectors on orbifolds.
\newblock In {\em Orbifolds in mathematics and physics ({M}adison, {WI},
  2001)}, volume 310 of {\em Contemp. Math.}, pages 163--184. Amer. Math. Soc.,
  Providence, RI, 2002.

\bibitem{lupercio2006}
E.~Lupercio and B.~Uribe.
\newblock Holonomy for gerbes over orbifolds.
\newblock {\em J. Geom. Phys.}, 56(9):1534--1560, 2006.

\bibitem{moutuou2012}
E.~Moutuou.
\newblock Twisted groupoid {$KR$}-theory.
\newblock Ph.D. Thesis, Universit\'{e} de Lorraine-Metz and Universit\"{a}t
  Paderborn, 2012.

\bibitem{reynolds1971}
W.~Reynolds.
\newblock Twisted group algebras over arbitrary fields.
\newblock {\em Illinois J. Math.}, 15:91--103, 1971.

\bibitem{schreiber2007}
U.~Schreiber, C.~Schweigert, and K.~Waldorf.
\newblock Unoriented {WZW} models and holonomy of bundle gerbes.
\newblock {\em Comm. Math. Phys.}, 274(1):31--64, 2007.

\bibitem{sharpe2011}
E.~Sharpe.
\newblock Notes on discrete torsion in orientifolds.
\newblock {\em J. Geom. Phys.}, 61(6):1017--1032, 2011.

\bibitem{willerton2008}
S.~Willerton.
\newblock The twisted {D}rinfeld double of a finite group via gerbes and finite
  groupoids.
\newblock {\em Algebr. Geom. Topol.}, 8(3):1419--1457, 2008.

\bibitem{mbyoung2018c}
M.~Young.
\newblock Real representation theory of finite categorical groups.
\newblock ar{X}iv:1804:09053, 2018.

\bibitem{mbyoung2019}
M.~Young.
\newblock Orientation twisted homotopy field theories and twisted unoriented
  {D}ijkgraaf--{W}itten theory.
\newblock {\em Comm. Math. Phys.}, 2019.

\end{thebibliography}

\end{document}